\newtheorem{theorem}{Theorem}[section]
\newtheorem{proposition}{Proposition}[section]
\newtheorem{remark}{Remark}[section]
\newtheorem{lemma}{Lemma}[section]
\numberwithin{equation}{section}
\def\p{\partial}
\def\d{\mathrm{d}}
\def\no{\nonumber}
\def\R{\mathbb{R}}
\def\eps{\varepsilon}
\def\div{\mathrm{div}}
\def\u{\mathbf{u}}
\def\l{\langle}
\def\r{\rangle}
\def\exp{\mathrm{exp}}
\def\M{\mathfrak{M}}
\def\A{\mathcal{A}}
\def\B{\mathcal{B}}
\def\u{\mathfrak{u}}
\def\L{\mathcal{L}}
\def\P{\mathcal{P}}
\def\I{\mathcal{I}}
\newcounter{wronumber}\setcounter{wronumber}{1}
\begin{document}
\title[Compressible Euler limit from Boltzmann equation]
			{Compressible Euler limit from Boltzmann equation with Maxwell reflection boundary condition in half-space}

\author[Ning Jiang]{Ning Jiang}
\address[Ning Jiang]{\newline School of Mathematics and Statistics, Wuhan University, Wuhan, 430072, P. R. China}
\email{njiang@whu.edu.cn}

\author[Yi-Long Luo]{Yi-Long Luo${}^*$}
\address[Yi-Long Luo]
{\newline School of Mathematics, South China University of Technology, Guangzhou, 510641, P. R. China}
\email{luoylmath@scut.edu.cn}

\author[Shaojun Tang]{Shaojun Tang}
\address[Shaojun Tang]
		{\newline School of Mathematical Sciences,
			University of Science and Technology of China,
			Hefei, 230026, P. R. China }
\email{sjtang@ustc.edu.cn}
\thanks{${}^*$ Corresponding author \quad \today}

\maketitle

\begin{abstract}
   Starting from the local-in-time classical solution to the compressible Euler system with impermeable boundary condition in half-space, by employing the coupled weak viscous layers (governed by linearized compressible Prandtl equations with Robin boundary condition) and linear kinetic boundary layers, and the analytical tools in \cite{Guo-Jang-Jiang-2010-CPAM} and some new boundary estimates both for Prandtl and Knudsen layers, we proved the local-in-time existence of Hilbert expansion type classical solutions to the scaled Boltzmann equation with Maxwell reflection boundary condition with accommodation coefficient $\alpha_\eps=O(\sqrt{\eps})$ when the Knudsen number $\eps$ small enough. As a consequence, this justifies the corresponding case of formal analysis in Sone's books \cite{Sone-2002book, Sone-2007-Book}. This also extends the results in \cite{GHW-2020} from specular to Maxwell reflection boundary condition. Both of this paper and \cite{GHW-2020} can be viewed as generalizations of Caflisch's classic work \cite{Caflish-1980-CPAM} to the cases with boundary. \\

   \noindent\textsc{Keywords.} Compressible Euler limit; Boltzmann equation; Maxwell reflection boundary condition; Accommodation coefficients; Hilbert expansion. \\

   \noindent\textsc{AMS subject classifications.}  35B25; 35F20; 35Q20; 76N15; 82C40
\end{abstract}

%\vspace*{10pt}

%\phantomsection
%\addcontentsline{toc}{section}{\contentsname}

%\tableofcontents

%%%%%%%%%%%%%%%%%%%%%%%%%%%%%%%%%%%%%%（正文）%%%%%%%%%%%%%%%%%%%%%%%%%%%%%
%%%%%%%%%%%%%%%%%%%%%%%%%%%%%%%%%%%%%%%%%%%%%%%%%%%%%%%%%%%%%%%%%%%%%%%%%%%

\section{Introduction}

\subsection{Boltzmann equation with Maxwell boundary condition}
Hydrodynamic limits from the Boltzmann equation has been one of the central problems in fluid dynamics and kinetic theory since late 1970's. Among many references, we list some standard books for thorough introduction to the Boltzmann equation and fluid limits, for example, \cite{Cercignani-1988, CIP-1994, SRM-book}. Despite the tremendous progress in this fields (which we will review later), the most important problem basically remains open: the limit from the Boltzmann equation to the compressible Euler system.

More precisely, the Boltzmann equation in the Euler scaling is the following form:
\begin{equation}\label{BE}
\left\{
\begin{array}{l}
\p_t F_\eps + v \cdot \nabla_x F_\eps = \frac{1}{\eps} B (F_\eps, F_\eps) \quad \qquad {\text{on}}\  \R_+ \times \R^3_+ \times \R^3 \,,\\[1.5mm]
F_\eps (0, x, v) = F_\eps^{in} (x,v) \geq 0 \qquad \ \, \qquad {\text{on}}\  \R^3_+ \times \R^3 \,,
\end{array}
\right.
\end{equation}
where the dimensionless number $\eps>0$ is the Knudsen number which is the ratio of mean free path and macroscopic length scale, and $F_\eps (t,x,v) \geq 0$ is the density of particles of velocity $v \in \R^3$ and position $x \in \R^3_+ = \{ x \in \R^3; x_3 > 0 \}$, a half-space. Moreover, the equation \eqref{BE} is imposed with the Maxwellian reflection boundary condition
\begin{equation}\label{MBC}
\gamma_- F_\eps = (1-\alpha_\eps) L \gamma_+ F_\eps + \alpha_\eps K \gamma_+ F_\eps \quad \textrm{on } \R_+ \times \Sigma_- \,,
\end{equation}
For the derivation from the dimensional Boltzamnn equation to the scaled form \eqref{BE}, the readers can find the details in the aforementioned books, or papers \cite{BGL1, BGL2}.

 The Boltzmann collision operator is defined as
\begin{equation}\label{B-collision}
  \begin{aligned}
    B (F_1, F_2) (v) = \int_{\R^3} \int_{\mathbb{S}^2} |v-u|^{\gamma_0} F_1 (u') F_2 (v') b(\theta) \d \omega \d u \\
    - \int_{\R^3} \int_{\mathbb{S}^2} |v-u|^{\gamma_0} F_1 (u) F_2 (v) b(\theta) \d \omega \d u \,,
  \end{aligned}
\end{equation}
where $u' = u + [(v-u) \cdot \omega] \omega$, $v' = u + [(v-u) \cdot \omega] \omega$, $\cos \theta = (v-u) \cdot \omega / |v - u|$, $0 < b (\theta) \leq C |\cos \theta|$, and $0 \leq \gamma_0 \leq 1$ means the hard potential. Such collision operators cover the hard-sphere interaction with an angular cutoff. The accommodation coefficient $\alpha_\eps \in [0,1]$ describes how much the molecules accommodate to the state of the wall. The special case $\alpha_\eps=0$ corresponds to specular reflection, while $\alpha_\eps=1$ refers to complete diffusion. Usually this coefficient can be taken the form $\chi \eps^\beta$ with $\beta\geq 0$. In this paper, we consider the form $\alpha_\eps = \sqrt{2 \pi} \eps^{\frac{1}{2}}$. Analytically, the cases $\beta \neq \frac{1}{2}, 0$ will be similar to the current paper (but with more complicated expansion of boundary layers). For the simplicity of the presentation, we will write it in an separate forthcoming paper. For the case $\beta=0$, the formal analysis will be the same (all the details are in Sone's books \cite{Sone-2002book, Sone-2007-Book}), except the leading order involves {\em nonlinear} compressible Prandtl equation, whose well-posedness (even local) in Sobolev type space is completely open.

We denote $n = (0, 0, -1)$ by the outward normal of $\R^3_+$. Let $\Sigma : = \p \R^3_+ \times \R^3$ be the phase space boundary of $\R^3_+ \times \R^3$. The phase boundary $\Sigma$ can be split by outgoing boundary $\Sigma_+$, incoming boundary $\Sigma_-$, and grazing boundary $\Sigma_0$:
\begin{equation*}
  \begin{aligned}
    & \Sigma_+ = \{ (x, v) : x \in \p \R^3_+ \,, v \cdot n = - v_3 > 0 \} \,, \\
    & \Sigma_- = \{ (x, v) : x \in \p \R^3_+ \,, v \cdot n = - v_3 < 0 \} \,, \\
    & \Sigma_0 = \{ (x, v) : x \in \p \R^3_+ \,, v \cdot n = - v_3 = 0 \} \,.
  \end{aligned}
\end{equation*}
Let $\gamma_\pm F = \mathbbm{1}_{\Sigma_{\pm}} F$. The specular-reflection $L \gamma_+ F_\eps$ and the diffuse-reflection part $K \gamma_+ F_\eps$ in \eqref{MBC} are
\begin{align*}
L \gamma_+ F_\eps (t, x, v) = & F_\eps (t, x, R_x v) \,, \ R_x v = v - 2 (v \cdot n) n = (\bar{v}, - v_3) \,, \\
K \gamma_+ F_\eps (t, x, v) = & \sqrt{2 \pi} M_w (t, \bar{x}, v) \int_{v \cdot n >0} \gamma_+ F_\eps ( v \cdot n ) \d v \,,
\end{align*}
respectively, where $M_w (t, \bar{x}, v)$ is the local Maxwellian distribution function corresponding to the wall (boundary) with the form
\begin{equation}\label{M_w}
  \begin{aligned}
    M_w (t, \bar{x}, v) = \frac{\rho_w (t, \bar{x})}{[2 \pi T_w (t, \bar{x})]^{\frac{3}{2}}} \exp\Big\{ - \frac{|v- u_w (t, \bar{x})|^2}{2 T_w (t, \bar{x})} \Big\} \,.
  \end{aligned}
\end{equation}
The $\rho_w, u_w, T_w$ are, respectively, density, velocity and temperature of the boundary. We also assume
$$u_{w,3} = 0\,,$$
which denotes the boundary wall is fixed. We remark that the wall Maxwellian $M_w$ in \eqref{M_w} plays a key role in this paper. It matches with the leading local Maxwellian from interior, see \eqref{M_w-Perturbation}, which gives the compatibility condition between the initial and boundary data. It is one of the main difference or difficulty of the current paper with \cite{GHW-2020} in which the wall Maxwellian did not appear.

\subsection{History of compressible Euler limits}
There have been many significant progress in the limits to incompressible fluids, such as Navier-Stokes, Stokes, or even Euler. We only list some representative results. One group of results is in the framework of DiPerna-Lions renormalized solutions of the Boltzmann equation \cite{DiPerna-Lions}, i.e., the so-called BGL program, which aimed to justify the limit to Leray solutions of the incompressible Navier-Stokes equations. This was initialized by Bardos-Golse-Levermore \cite{BGL1, BGL2}, and finished by Golse and Saint-Raymond \cite{Golse-SRM-04, Golse-SRM-09}. The corresponding results in bounded domain were carried out in \cite{Masmoudi-SRM-CPAM, Jiang-Masmoudi-CPAM}. We also should mention the incompressible Euler limits of Saint-Raymond \cite{SRM-ARMA03, SRM-IHP09}. Another group of results is in the framework of classical solutions, which are based on nonlinear energy method, semi-group method or hypercoercivity (the latter two further rely on the spectral analysis of the linearized Boltzmann operator), see \cite{Bardos-Ukai1991, Briant-JDE2015, BMM-AA2019, Gallagher-Tristani, Guo-CPAM06, Jang-Kim, Jiang-Xu-Zhao}.

Comparing to the incompressible limits listed above, the limit to the compressible Euler system from the Boltzmann equation is much limited. This is mainly due to our still very poor understanding of the well-posedness of compressible Euler system for which we do not even know how to define weak solutions (at least for multiple spatial dimensions). The only available global-in-time solution is the calibrated BV solution of Glimm in 1965 \cite{Glimm-1965}. Regarding the higher dimensions, there are local-in-time classical solutions (see standard textbook \cite{Majda-1984}). The compressible Euler limit from the Boltzmann equations dates back to Japanese school for analytical data and Caflisch's Hilbert expansion approach which are based on the local well-posedness of compressible Euler system, see \cite{Caflish-1980-CPAM, Nishida1978}. Later improvement of Caflisch result employing the recent progress on the $L^2\mbox{-}L^\infty$ estimate was also used to help justifying the linearized acoustic limit \cite{Guo-Jang-Jiang-2009-KRM, Guo-Jang-Jiang-2010-CPAM}. We should also mention the compressible Euler limit in the context of 1-D Riemann problem away from the initial time in \cite{HWWY-SIMA2013}.

In the domain with boundary, the situation is much more complicated. As formally (and numerically) analyzed by Japanese school (summarized in Sone's books \cite{Sone-2002book, Sone-2007-Book}), to derive the equations of compressible Euler system from the Boltzmann equations with Maxwell reflection condition (with non-zero accommodation coefficient) using Hilbert type expansion, there needed two coupled boundary layers: viscous and kinetic layers. The formal is also called {\em Prandtl layer}, the latter is {\em Knudsen layer} in the physics literatures. The reason that these two types of boundary layers are needed can be explained as follows: as well-known, the leading order in {\em interior} is compressible Euler system whose natural boundary condition is impermeable condition $\u\!\cdot\!n=0$. However, the local Maxwellian governed by  Euler system with this condition does not satisfy the Maxwell reflection boundary condition, except the specular reflection case (i.e., $\alpha_\eps=0$). So, kinetic layers with thickness $\eps$ are needed. In these layers, each term satisfies the linear kinetic boundary layer equations which requires {\em four} solvability conditions (by theorem of Golse-Perthame-Sulem \cite{Golse-Perthame-Sulem-1988-ARMA}), but the number of boundary conditions for compressible Euler system is {\em one}. This mismatch indicates there should be another layer with thickness $\sqrt{\eps}$: Prandtl layer. This name comes from the fact that in this layer, each term satisfies the famous Prandtl equations of compressible type. Specifically, if $\alpha_\eps=O(1)$ the leading term satisfies the {\em nonlinear} compressible Prandtl equations, the higher order terms satisfied the {\em linearized} compressible Prandtl equations. If $\alpha_\eps=O(\eps^\beta)$ with $\beta>0$, the boundary layers are weak, thus all the boundary terms appear in higher order. Thus, nonlinear Prandtl equation does not appear. The current paper aims to give a rigorous justification for the case $\alpha_\eps= O(\sqrt{\eps})$.

In \cite{GHW-2020}, Guo-Huang-Wang started to justify the compressible Euler limit using Hilbert expansion approach, formally derived in Sone's books. They considered the simplest case $\alpha_\eps=0$, i.e., the specular reflection. We emphasize that in this case, the local Maxwellians governed by compressible Euler system with impermeable condition indeed satisfy the specular reflection condition. In this sense, boundary layers seem not needed. However, if the Hilbert expansion is used, at higher orders, the boundary conditions do not match. As a consequence, both fluid and kinetic layers are needed.

In the context of fluid equations, the application of the compressible Prandtl equations is not new. It has been investigated in the vanishing viscosity limit of compressible Naver-Stokes equations. This phenomena is consistent with paper \cite{GHW-2020} and the current paper, because as well-known, the compressible Navier-Stokes system with viscosity and thermal conductivity of order $O(\eps)$ can be derived from the Boltzmann equation (more precisely, by Chapman-Enskog expansion). This means \cite{GHW-2020} and current paper include this limit, but with different boundary conditions for compressible Navier-Stokes: paper \cite{GHW-2020} corresponds to Neumann boundary, the current paper corresponds to mixed Robin boundary. Indeed, at fluid equations level, Xin-Yanagisawa \cite{Xin-Yana-CPAM1999} proved the zero viscosity limit of linearized compressible Navier-Stokes system with Dirichlet condition, in which the linearized compressible Prandtl equations were also used. This result was extended to the case including temperature equation by Ding and the first author of the current paper in \cite{Ding-Jiang}.

We would like to mention that the analysis of coupled viscous and kinetic layers was already used in the work of the first author of this paper with Masmoudi \cite{Jiang-Masmoudi-CPAM} in slightly different form with \cite{GHW-2020} and this paper. The paper \cite{Jiang-Masmoudi-CPAM} is about the incompressible Navier-Stokes limit, which in fact does not need viscous layer. However, the major issue of \cite{Jiang-Masmoudi-CPAM} is to demonstrate the role played by acoustic waves, which is compressible. Thus, the incompressible Navier-Stokes limit (which happens in the time scale $O(\frac{1}{\eps})$) includes the acoustic limit in short time scale $O(1)$. In this sense, the two boundary layers in \cite{Jiang-Masmoudi-CPAM} are the same as the current papers and \cite{GHW-2020} (again, we emphasize that all the layers in \cite{GHW-2020} and here are linear because they appear in higher order terms). The main difference is that \cite{Jiang-Masmoudi-CPAM} works in the framework of renormalized solutions, which can not be expanded. So the coupled viscous and kinetic layers are considered in {\em dual} form in the sense that these layers appear in test functions. More specifically, the kinetic layers equations are the same as here and \cite{GHW-2020}, but with different boundary conditions: in this paper, for Maxwell reflection, the wall Maxwellian is local and nontrivial, while in \cite{Jiang-Masmoudi-CPAM}, the wall Maxwellian is global, i.e., $M(v)$. In \cite{GHW-2020}, it is specular reflection, so there is no wall Maxwellian. For the viscous layer parts, in \cite{Jiang-Masmoudi-CPAM}, the stationary version, i.e., degenerate heat operators are used, while in \cite{GHW-2020} and current paper, the linearized compressible Prandtl equations are used. The more detailed technical difference between \cite{GHW-2020} and the current paper will be explained in Subsection \ref{Sec_1.5}.

\subsection{Hilbert expansion}
Throughout this paper, we use the notation $\bar{U} = (U_1, U_2)$ for any vector $U = (U_1, U_2, U_3) \in \R^3$. Moreover, for simplicity of presentations, we use the notation $V^0 : = V |_{x_3 = 0}$ for any symbol $V = V(x)$, which may be a function, vector or operator. For any derivative operator $D_x$, we denote by $ D_x V^0 = (D_x V)^0 \,.$

For the functional spaces,  $H^s$ denotes the Sobolev space $W^{s,2} (\R^3_+)$ with norm $\| \cdot \|_{H^s}$,  $\| \cdot  \|_2$ and $\| \cdot \|_\infty$ are the $L^2$-norm and $L^\infty$-norm in both $(x,v) \in \R^3_+ \times \R^3$ variables, and $\l \cdot, \cdot \r$ is the $L^2$-inner product.

For any function $G = G(t, \bar{x}, y, v)$, with $(t, \bar{x}, y, v) \in \R_+ \times \R^2 \times \overline{\R}_+ \times \R^3$,  the Taylor expansion at $y = 0$ is
\begin{equation}
  G = G^0 + \sum_{1 \leq l \leq N} \tfrac{y^l}{l !} G^{(l)} + \tfrac{y^{N+1}}{(N+1) !} \widetilde{G}^{(N+1)} \,,
\end{equation}
where the symbols
\begin{equation*}
  \begin{aligned}
    G^{(l)} = (\p_y^l G ) (t, \bar{x}, 0, v) \,, \ \widetilde{G}^{(N+1)} = ( \p_y^{N+1} G ) (t, \bar{x}, \eta, v) \ \textrm{for some } \eta \in (0, y).
  \end{aligned}
\end{equation*}

In this paper, we take the Hilbert expansion approach to rigorously justify the asymptotic behaviors of \eqref{BE}-\eqref{MBC} as $\eps \to 0$. As we already mentioned above, its corresponding formal analysis has basically been down in Sone's books \cite{Sone-2002book, Sone-2007-Book}. Analytically, the key of this approach is the estimate on the {\em remainder}, after suitable truncation.

Due to the thickness of viscous boundary layer is $\sqrt{\eps}$, and the accommodation coefficient $\alpha_\eps=O(\sqrt{\eps})$, we expand $F_\eps (t, x, v)$ by order $\sqrt{\eps}$. We remark that if $\alpha_\eps=O(\eps^\beta)$ with $\beta\neq \frac{1}{2}, 0$, the expansion will be more involved. This will be discussed in a separate forthcoming paper.

\subsubsection{Interior expansion}
The expansion in interior has the form
\begin{equation*}
  \begin{aligned}
    F_\eps (t, x, v) \thicksim \sum_{k \geq 0} \sqrt{\eps}^k F_k (t, x, v) \,.
  \end{aligned}
\end{equation*}
Plugging into \eqref{BE} and collecting the same orders,
\begin{equation}\label{Order_Anal_Interior}
\begin{aligned}
\sqrt{\eps}^{-2}:& \quad 0 = B(F_0, F_0)\,,\\
\sqrt{\eps}^{-1}:& \quad 0 = B(F_0, F_1) + B(F_1, F_0)\,,\\
\sqrt{\eps}^0:& \quad (\p_t + v \cdot \nabla_x) F_0 = B(F_0, F_2) + B(F_2, F_0) + B(F_1, F_1)\,,\\
\sqrt{\eps}^1:& \quad (\p_t + v \cdot \nabla_x) F_1 = B(F_0, F_3) + B(F_3, F_0) + B(F_1, F_2) + B(F_2, F_1)\,,\\
\cdots \cdots &\\
\sqrt{\eps}^k:& \quad (\p_t + v \cdot \nabla_x) F_k = B(F_0, F_{k+2}) + B(F_{k+2}, F_0) + \sum_{\substack{i+j=k+2\,,\\ i, j\ge 1}} B(F_i, F_j)\,.
\end{aligned}
\end{equation}
Then {\em H}-theorem implies that $F_0$ must be a local Maxwellian:
\begin{equation}\label{Maxwellian}
F_0 (t, x, v) : = \M (t, x, v)
= \frac{\rho(t, x)}{(2\pi T(t, x))^{\frac{3}{2}}} \exp \Big(- \frac{|v- \u (t, x)|^2}{2 T(t, x)}\Big)\,.
\end{equation}
 Here $(\rho, \u, T)$ represent the macroscopic density, bulk velocity and temperature, respectively. It is well-known that (see \cite{Sone-2007-Book}, for instance), as $\eps \to 0$, the solutions $F_\eps$ of the Boltzmann equation \eqref{BE} converge to a local Maxwellian $\M$ whose parameters $(\rho, \u, T)$ satisfy the compressible Euler system
\begin{equation}\label{Compressible_Euler_Sys}
\left\{
\begin{aligned}
&\p_t \rho + \div_x (\rho \u) = 0\,,\\
&\p_t (\rho \u) + \div_x (\rho \u \otimes \u) + \nabla p = 0 \,, \\
&\p_t \big[\rho \big( \tfrac{3}{2} T + \tfrac{1}{2} |\u|^2 \big) \big] + \div_x \big[ \rho \u \big( \tfrac{3}{2} T + \tfrac{1}{2} |\u|^2 \big) \big] + \div_x (p \u) = 0
\end{aligned}
\right.
\end{equation}
over $(t, x) \in \R_+ \times \R^3_+$ with the slip boundary condition
\begin{equation}\label{BC-CEuler}
  \begin{aligned}
    \u \cdot n |_{x_3 =0} =  -\u_3 |_{x_3 = 0} = - \u_3^0 = 0 \,,
  \end{aligned}
\end{equation}
where $p= \rho T$ is the pressure. We further impose the initial data (this can be realized by setting special form of the initial data of the Boltzmann equation \eqref{BE}):
\begin{equation}\label{IC-CEuler}
  \begin{aligned}
    (\rho, \u, T) (0, x) = (\rho^{in}, \u^{in}, T^{in}) (x)
  \end{aligned}
\end{equation}
with compatibility condition
\begin{equation*}
  \begin{aligned}
    \u^{in} \cdot n |_{x_3} = 0 \,.
  \end{aligned}
\end{equation*}

By \cite{Schochet-1986-CMP} or \cite{Chen-FMC-2007}, we have the following proposition.

\begin{proposition}\label{Proposition_Compressible_Euler}
	Let $s_0 \ge 3$ and $\rho_\#, T_\# > 0$ with $\rho_\# \sqrt{T_\#} < \frac{1}{3\sqrt{3}}$. Assume $(\rho^{in} - \rho_\#, \u^{in}, T^{in} - T_\#) \in H^{s_0} (\R^3_+)$ satisfies $
	0 < \tfrac{3}{4} \rho_\# \le \rho^{in} (x) \le \tfrac{5}{4} \rho_\# \,,\quad
	0 < \tfrac{3}{4} T_\# \le T^{in} (x) \le \tfrac{5}{4} T_\#  $. Then there is a $\tau >0$ such that the compressible Euler system \eqref{Compressible_Euler_Sys}-\eqref{BC-CEuler}-\eqref{IC-CEuler} admits a unique solution $(\rho, \u, T)$ such that
	$$
	(\rho - \rho_\#, \u, T - T_\#) \in C \big( [0,\tau]; H^{s_0} (\R^3_+) \big) \cap C^1 \big( [0,\tau]; H^{s_0-1} (\R^3_+) \big)
	$$
	and
	$$
	0 < \tfrac{1}{2} \rho_\# \le \rho(t, x) \le \tfrac{3}{2} \rho_\#\,, \quad
	0 < \tfrac{1}{2} T_\# \le T(t, x) \le \tfrac{3}{2} T_\#
	$$
	hold for any $(t, x) \in [0, \tau] \times \R^3_+$. Moreover, the following estimate holds:
	\begin{align}\label{Compressible_Euler_Bound}
	\| (\rho - \rho_\#, \u, T - T_\#) \|_{C \big( [0,\tau]; H^{s_0} (\R^3_+) \big) \cap C^1 \big( [0,\tau]; H^{s_0-1} (\R^3_+) \big)} \le C_0 \,.
	\end{align}
	Here the constants $\tau$, $C_0 >0$ depend only on the $H^{s_0}$-norm of $(\rho^{in} - \rho_\#, \u^{in}, T^{in} - T_\#)$.
\end{proposition}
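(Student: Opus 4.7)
The plan is to adapt the standard symmetric-hyperbolic theory for quasilinear initial-boundary value problems with characteristic boundary, following Schochet \cite{Schochet-1986-CMP}. I would proceed in four steps.

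First, I would symmetrize \eqref{Compressible_Euler_Sys}. Using the variables $U=(p,\u,S)$ with $p=\rho T$ and $S$ a suitable thermodynamic quantity (or equivalently $(\rho,\u,T)$ with a convenient multiplier), the Euler system becomes a quasilinear symmetric hyperbolic system $A_0(U)\p_t U+\sum_{j=1}^3 A_j(U)\p_{x_j}U=0$, with $A_0(U)$ positive definite whenever $\rho,T$ stay in the stated intervals. The boundary condition \eqref{BC-CEuler} is the linear, constant-coefficient condition $\u\cdot n=0$, and on the boundary $A_3(U)$ has constant-rank kernel, making the boundary characteristic of constant multiplicity. The smallness assumption $\rho_\#\sqrt{T_\#}<\frac{1}{3\sqrt 3}$ is exploited here to ensure the needed sign/definiteness on the symmetrizer for the data range under consideration.

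Second, I would construct approximate solutions by a Picard-type iteration: starting from a smooth time-extension $U^{0}(t,x)$ of the initial state, define $U^{n+1}$ as the solution of the \emph{linear} symmetric hyperbolic problem with coefficients frozen at $U^{n}$, the same initial data, and the linear characteristic boundary condition $u_3^{n+1}|_{x_3=0}=0$. Existence for each linear step with characteristic boundary is classical.

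Third, and this is the heart of the matter, I would derive uniform $H^{s_0}$ energy estimates on a small time interval $[0,\tau]$ independent of $n$. Tangential derivatives $\p_t^{\alpha_0}\p_1^{\alpha_1}\p_2^{\alpha_2}$ preserve the boundary condition, so the standard symmetric hyperbolic energy identity combined with the vanishing of $u_3$ kills the boundary flux term, yielding tangential $L^2$ estimates of all orders up to $s_0$. Normal derivatives, which cannot be controlled by naive integration by parts because the boundary is characteristic, are recovered algebraically: the continuity equation expresses $\p_3 u_3$ in terms of tangential derivatives of $(\rho,\u)$, the third momentum equation expresses $\p_3 p$ in terms of tangential derivatives, and iterating (with the energy equation handling $\p_3 T$) trades normal for tangential derivatives at every order. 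A Gronwall argument closes the estimate on a time $\tau$ depending only on $\|(\rho^{in}-\rho_\#,\u^{in},T^{in}-T_\#)\|_{H^{s_0}}$, and a continuity argument preserves the bounds $\tfrac12\rho_\#\le\rho\le\tfrac32\rho_\#$ and $\tfrac12T_\#\le T\le\tfrac32T_\#$ after possibly shrinking $\tau$.

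Fourth, I would pass to the limit: a standard $L^2$ energy estimate on the difference $U^{n+1}-U^{n}$ gives a contraction in a weaker norm, while the uniform $H^{s_0}$ bound together with Aubin--Lions produces strong convergence in $C([0,\tau];H^{s_0-1})$ and weak-$*$ convergence in $L^\infty_tH^{s_0}$. The limit $U$ solves \eqref{Compressible_Euler_Sys}--\eqref{BC-CEuler}--\eqref{IC-CEuler}; time continuity into $H^{s_0}$ is obtained by a Bona--Smith regularization, and uniqueness is immediate from the $L^2$ stability estimate. The main obstacle throughout is the characteristic nature of the boundary: one cannot use a na\"ive symmetric hyperbolic energy to bound $\p_3^k U$, and must systematically exploit the block-structure of the equations to recover normal regularity from tangential regularity; the higher-order compatibility conditions at the corner $\{t=0\}\cap\{x_3=0\}$ needed for $C_tH^{s_0}$ regularity follow inductively from $\u^{in}\cdot n|_{x_3=0}=0$ and the equations themselves.
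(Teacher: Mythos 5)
Your outline follows the same route the paper itself takes: the paper does not prove this proposition but simply cites Schochet \cite{Schochet-1986-CMP} and Chen \cite{Chen-FMC-2007}, whose symmetric-hyperbolic characteristic-boundary theory is exactly what you sketch (symmetrization, iteration of linearized problems with frozen coefficients, tangential energy estimates where the boundary flux dies thanks to $\u\cdot n=0$, algebraic recovery of $\p_{x_3}$-derivatives from the block structure of the equations, then compactness, persistence of the pointwise bounds, continuity via Bona--Smith, and uniqueness from $L^2$ stability). On the whole this is the right skeleton and consistent with what those references deliver.

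One genuine misstep in your write-up: you claim the smallness assumption $\rho_\#\sqrt{T_\#}<\frac{1}{3\sqrt{3}}$ is ``exploited here to ensure the needed sign/definiteness on the symmetrizer.'' That is not its role. The standard Euler symmetrizer (in $(p,\u,S)$ or equivalent variables) is positive definite whenever $\rho>0$ and $T>0$, so local well-posedness in the half-space with the impermeable condition holds with no further constraint on $\rho_\#\sqrt{T_\#}$; the intervals $\tfrac12\rho_\#\le\rho\le\tfrac32\rho_\#$, $\tfrac12T_\#\le T\le\tfrac32T_\#$ are all you need. The smallness condition is recorded in Proposition~\ref{Proposition_Compressible_Euler} because the Euler solution it produces feeds into the remainder estimates: in Lemma~\ref{Lemma-L2-Estimates} one requires
$$c_1 = 1 - 3\rho_\#\sqrt{3T_\#} = 1 - 3\sqrt{3}\,\rho_\#\sqrt{T_\#} > 0$$
so that the reflected boundary integral $c_1\iint_{\Sigma_-}|v\cdot n|\,|L\gamma_+f_{R,\eps}|^2\,\d\sigma_{\Sigma_-}$ is a genuine dissipation term after absorbing the diffuse part $I_{12}$. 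Dropping that condition would leave Proposition~\ref{Proposition_Compressible_Euler} intact but break the $L^2$ boundary estimate for the Boltzmann remainder. Keeping the two roles distinct matters, because otherwise one might wrongly conclude that local existence for compressible Euler requires small data.
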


By using the local Maxwellian $\M (t,x,v)$, the linearized collision operator $\L$ is defined as
\begin{equation*}
  \begin{aligned}
    \L g = - \frac{1}{\sqrt{\M}} \Big\{ B (\M, \sqrt{\M} g) + B (\sqrt{\M} g, \M) \Big\} \,.
  \end{aligned}
\end{equation*}
The null space $\mathcal{N}$ of $\L$ is spanned by (see \cite{Caflish-1980-CPAM}, for instance)
\begin{equation*}
  \begin{aligned}
    \tfrac{1}{\sqrt{\rho}} \sqrt{\M} \,, \ \tfrac{v_i  - \u_i}{\sqrt{\rho T}} \sqrt{\M} \ (i=1,2,3) \,, \ \tfrac{1}{\sqrt{6 \rho}} \left\{ \tfrac{|v - \u|^2}{T} - 3 \right\} \sqrt{\M} \,.
  \end{aligned}
\end{equation*}
The weighted $L^2$-norm
\begin{equation*}
\begin{aligned}
\| g \|^2_\nu = \int_{\R^3_+} \int_{\R^3} |g (x,v)|^2 \nu (v) \d x \d v \,,
\end{aligned}
\end{equation*}
is defined by the collision frequency $\nu (v) \equiv \nu (\M) (v)$
\begin{equation*}
\begin{aligned}
\nu (\M) = \int_{\R^3} \int_{\mathbb{S}^2} b (\theta) |v - v'|^{\gamma_0} \M (v') \d v' \d \omega \,.
\end{aligned}
\end{equation*}
Note that for given $0 \leq \gamma_0 \leq 1$,
\begin{equation}\label{nu-phi}
\begin{aligned}
\nu (\M) \thicksim \rho \l v \r^{\gamma_0} \,,
\end{aligned}
\end{equation}
where $\l v \r = \sqrt{1 + |v|^2}$. Let $\mathcal{P} g$ be the $L^2_v$ projection with respect to $\mathcal{N}$. Then it is well-known (see for example \cite{Caflish-1980-CPAM}) that there exists a positive number $c_0 > 0$ such that
\begin{equation}\label{Hypocoercivity}
  \begin{aligned}
    \l \L g, g \r \geq c_0 \| (\mathcal{I} - \mathcal{P}) g \|^2_\nu
  \end{aligned}
\end{equation}
for $(\rho, \u, T)$ given in Proposition \ref{Proposition_Compressible_Euler}.

For each $k \geq 1$,  $\frac{F_k}{\sqrt{\M}}$ can be decomposed as the macroscopic and microscopic parts:
\begin{equation*}
  \begin{aligned}
    \tfrac{F_k}{\sqrt{\M}} = & \mathcal{P} ( \tfrac{F_k}{\sqrt{\M}} ) + (\mathcal{I} - \mathcal{P}) ( \tfrac{F_k}{\sqrt{\M}} ) \\
    \equiv & \Big\{ \tfrac{\rho_k}{\rho} + u_k \cdot \tfrac{v - \u}{T} + \tfrac{\theta_k}{6 T} ( \tfrac{|v - \u|^2}{T} - 3 ) \Big\} \sqrt{\M} + (\mathcal{I} - \mathcal{P}) ( \tfrac{F_k}{\sqrt{\M}} ) \,.
  \end{aligned}
\end{equation*}
Following \cite{Guo-Jang-Jiang-2009-KRM}, for $k \geq 1$,
\begin{equation}\label{Ker-Arth-Inte}
(\mathcal{I} - \mathcal{P} ) \big(\tfrac{F_k}{\sqrt{\M}}\big) = \mathcal{L}^{-1} \Big( - \tfrac{(\p_t + v \cdot \nabla_x ) F_{k-2} - \sum_{\substack{i+j = k\,,\\ i, j \ge 1}} B (F_i, F_j) }{\sqrt{\M}} \Big) \,,
\end{equation}
and the fluid variables $(\rho_k, u_k, \theta_k)$ obeys the following linear hyperbolic system
\begin{equation}\label{Linear-Hyperbolic-Syst}
  \left\{
    \begin{aligned}
      & \p_t \rho_k + \div_x (\rho u_k + \rho_k \u) = 0 \,, \\
      & \rho \big( \p_t u_k + u_k \cdot \nabla_x \u + \u \cdot \nabla_x u_k \big) - \tfrac{\nabla_x (\rho T)}{\rho} \rho_k + \nabla_x \Big( \tfrac{\rho \theta_k + 3 T \rho_k}{3}\Big) = \mathcal{F}^\bot_u (F_k) \,, \\
      & \rho \Big( \p_t \theta_k + \u \cdot \nabla_x \theta_k + \tfrac{2}{3} \big( \theta_k \div_x \u + 3 T \div_x u_k \big) + 3 u_k \cdot \nabla_x T \Big)
      = \mathcal{G}^\bot_\theta (F_k)\,,
    \end{aligned}
  \right.
\end{equation}
where $(\rho, \u, T)$ is the smooth solution of compressible Euler equations \eqref{Compressible_Euler_Sys}, the source terms $\mathcal{F}^\bot_u (F_k)$ and $\mathcal{G}^\bot_\theta (F_k)$ are defined as
\begin{equation}
  \begin{aligned}
    & \mathcal{F}^\bot_{u, i} (F_k) = - \sum_{j=1}^3 \p_{x_j} \int_{\R^3} T \A_{ij} \tfrac{F_k}{\sqrt{\M}} \d v \ (i = 1,2,3) \,, \\
    & \mathcal{G}^\bot_\theta (F_k) = - \div_x \Big( 2 T^{\frac{3}{2}} \int_{\R^3} \B \tfrac{F_k}{\sqrt{\M}} \d v + \sum_{j=1}^3 2 T \u \cdot \int_{\R^3} \A \tfrac{F_k}{\sqrt{\M}} \d v \Big) - 2 \u \cdot \mathcal{F}^\bot_u (F_k)\,.
  \end{aligned}
\end{equation}
Here $\A \in \R^{3 \times 3}$ and $\B \in \R^3$ are the Burnett functions with entries
\begin{equation}
  \begin{aligned}
    \A_{ij} = & \Big\{ \tfrac{(v_i - \u_i) (v_j - \u_j)}{T} - \delta_{ij} \tfrac{|v - \u|^2}{3 T} \Big\} \sqrt{\M} \quad (1 \leq i, j \leq 3) \,, \\
    \B_i = & \tfrac{v_i - \u_i}{2 \sqrt{T}} \Big( \tfrac{|v - \u|^2}{T} - 5 \Big) \sqrt{\M} \quad (1 \leq i \leq 3) \,.
  \end{aligned}
\end{equation}
Finally, the initial data of \eqref{Linear-Hyperbolic-Syst} are imposed on
\begin{equation}\label{IC-Linear-Hyperbolic}
\begin{aligned}
(\rho_k, u_k, \theta_k) (0, x) = (\rho_k^{in}, u_k^{in}, \theta_k^{in}) (x) \in \R \times \R^3 \times \R \,, \quad k = 1,2,3 \,, \cdots .
\end{aligned}
\end{equation}

\subsubsection{Viscous boundary layer expansion}

In viscous layer, the scaled normal coordinate is needed:
\begin{equation}
  \begin{aligned}
    \zeta = \tfrac{x_3}{\sqrt{\eps}} \,.
  \end{aligned}
\end{equation}
As shown in Sone's book \cite{Sone-2007-Book},  the viscous boundary layer expansion has the form
\begin{equation}
  \begin{aligned}
    F^b_\eps (t, \bar{x}, \zeta) \thicksim \sum_{k \geq 1} \sqrt{\eps}^k F^b_k (t, \bar{x}, \zeta, v) \,,
  \end{aligned}
\end{equation}
where, throughout our paper, the far field condition is always assumed:
\begin{equation}
  \begin{aligned}
    F^b_k (t, \bar{x}, \zeta , v) \to 0 \,, \quad \textrm{as } \zeta \to + \infty \,.
  \end{aligned}
\end{equation}
By similar calculation in \cite{GHW-2020}, plugging $F_\eps + F^b_\eps$ into the Boltzmann equation \eqref{BE} gives
\begin{align}\label{Order_Ana}
\no \sqrt{\eps}^{-1}:\quad & 0= B(\mathfrak{M}^{0}, F^b_1) + B(F^b_1, \mathfrak{M}^{0})\,,\\
\no \sqrt{\eps}^0:\quad & v_3 \cdot \p_\zeta F^b_1 = \big[ B(\mathfrak{M}^{0}, F^b_2) + B(F^b_2, \mathfrak{M}^{0}) \big] + [ B(F^0_1, F^b_1) + B(F^b_1, F^0_1)] \\
\no & \qquad \qquad + B(F^b_1, F^b_1) + \zeta \big[ B(\mathfrak{M}^{(1)}, F^b_1) +  B(F^b_1, \mathfrak{M}^{(1)})\big] \,,\\
\no \sqrt{\eps}: \quad & \p_t F^b_1 + \bar{v} \cdot \nabla_{\bar{x}} F^b_1 + v_3 \cdot \p_\zeta F^b_2 =  \big[ B(\mathfrak{M}^{0}, F^b_3) + B(F^b_3, \mathfrak{M}^{0}) \big] \\
\no & \qquad \quad + \tfrac{\zeta}{1!} \big[ B(\mathfrak{M}^{(1)}, F^b_2) + B(F^b_2, \mathfrak{M}^{(1)}) \big]  + \tfrac{\zeta^2}{2!} \big[ B(\mathfrak{M}^{(2)}, F^b_1) + B(F^b_1, \mathfrak{M}^{(2)}) \big]\\
\no & \qquad \quad + \big[ B(F^0_1, F^b_2) + B(F^b_2, F^0_1) \big] + \big[ B(F^0_2, F^b_1) + B(F^b_1, F^0_2) \big] \\
\no & \qquad \quad + \tfrac{\zeta}{1!} \big[ B(F^{(1)}_1, F^b_1) + B(F^b_1, F^{(1)}_1) \big] + \big[ B(F^b_1, F^b_2) + B(F^b_2, F^b_1) \big]\,,\\
\no & \cdots \cdots\\
\no \sqrt{\eps}^k: \quad & \p_t F^b_k + \bar{v} \cdot \nabla_{\bar{x}} F^b_k + v_3 \cdot \p_\zeta F^b_{k+1} \\
\no & \qquad = \sum_{\substack{i+j=k+2\,,\\ i,j \ge 1 }} B(F^b_i, F^b_j) + \Big[ B(\mathfrak{M}^0, F^b_{k+2}) + B(F^b_{k+2}, \mathfrak{M}^0) \Big]\\
\no & \qquad \quad + \sum_{\substack{l+j=k+2\,,\\ 1\le l \le N\,, j \ge 1}} \tfrac{\zeta^l}{l!} \Big[ B(\mathfrak{M}^{(l)}, F^b_j) + B(F^b_j, \mathfrak{M}^{(l)}) \Big] \\
\no & \qquad \quad + \sum_{\substack{i+j=k+2\,,\\ i,j \ge 1 }} \Big[ B(F^0_i, F^b_j) + B(F^b_j, F^0_i) \Big] \\
& \qquad \quad + \sum_{\substack{i+j+l = k+2\,, \\ 1\le l \le N\,, i, j \ge 1}} \tfrac{\zeta^l}{l!} \Big[ B(F^{(l)}_i, F^b_j) + B(F^b_j, F^{(l)}_i) \Big] \quad {\text{for}}\ k \ge 1 \,,
\end{align}
where the Taylor expansion at $x_3 = 0$ is used:
\begin{equation*}
  \begin{aligned}
    \M = \M^0 + \sum_{1 \leq l \leq N} \tfrac{\zeta^l}{l !} \M^{(l)} + \tfrac{\zeta^{N+1}}{(N+1) !} \widetilde{\M}^{(N+1)} \,, \\
    F_i = F_i^0 + \sum_{1 \leq l \leq N} \tfrac{\zeta^l}{l !} F_i^{(l)} + \tfrac{\zeta^{N+1}}{(N+1) !} \widetilde{F}_i^{(N+1)} \,.
  \end{aligned}
\end{equation*}
Here the number $N \in \mathbb{N}_+$ will be chosen later.

Let
\begin{equation}
  \begin{aligned}
    f^b_k = \tfrac{F^b_k}{\sqrt{\M^0}} \,,
  \end{aligned}
\end{equation}
which can be decomposed as
\begin{equation*}
  \begin{aligned}
    f^b_k = \P^0 f^b_k + (\I - \P^0) f^b_k = \left\{ \tfrac{\rho^b_k}{\rho^0} + u^b_k \cdot \tfrac{v - \u^0}{T^0} + \tfrac{\theta^b_k}{6 T^0} ( \tfrac{|v - \u^0|^2}{T^0} - 3 ) \right\} \sqrt{\M^0} + (\I - \P^0) f^b_k \,.
  \end{aligned}
\end{equation*}
Here $u^b_k = (u^b_{k,1}, u^b_{k,2}, u^b_{k,3}) \in \R^3$. Furthermore, let
\begin{equation}\label{pb_k}
  \begin{aligned}
    p^b_k = \tfrac{\rho^0 \theta^b_k + 3 T^0 \rho^b_k}{3} \,.
  \end{aligned}
\end{equation}
Following the Theorem 1.1 of \cite{GHW-2020}, the following relations are obyed
\begin{equation}\label{ub_13-pb_1}
  \begin{aligned}
    u^b_{1,3} (t, \bar{x}, \zeta) \equiv 0 \,, \ p^b_1 (t, \bar{x}, \zeta) \equiv 0 \,, \ \forall (t, \bar{x}, \zeta) \in [0, \tau] \times \R^2 \times \R_+ \,,
  \end{aligned}
\end{equation}
and $(u^b_{k,1}, u^b_{k,2}, \theta^b_k)$ $(k \geq 1)$ satisfy the following {\em linear compressible Prandtl-type} equations\footnote{In \cite{GHW-2020}, the same system was named as a linear parabolic system.}
\begin{equation}\label{Linear-Prandtl}
  \left\{
    \begin{aligned}
      \rho^0 (\p_t + \bar{\u}^0 \cdot \nabla_{\bar{x}}) u^b_{k,i} + \rho^0 (\p_{x_3} \u^0_3 \zeta + u^0_{1,3}) \p_{\zeta} u^b_{k,i} + & \rho^0 \bar{u}^b_k \cdot \nabla_{\bar{x}} \u^0_i + \tfrac{\p_{x_3} p^0}{3 T^0} \theta^b_k \\
      = & \mu (T^0) \p_{\zeta}^2 u^b_{k,i} + \mathsf{f}^b_{k-1, i} \ (i=1,2) \,, \\
      \rho^0 \p_t \theta^b_k + \rho^0 \bar{\u}^0 \cdot \nabla_{\bar{x}} \theta^b_k + \rho^0 \big( \p_{x_3} \u^0 \zeta + \u^0_{1,3} \big) \p_\zeta \theta^b_k + & \tfrac{2}{3} \rho^0 \div_{x} \u^0 \theta^b_k \\
      = & \tfrac{3}{5} \kappa(T^0) \p_{\zeta\zeta} \theta^b_k + \mathsf{g}^b_{k-1} \,, \\
      \lim_{\zeta \to \infty} (\bar{u}^b_k, \theta^b_k) (t, \bar{x}, \zeta) = & 0 \,,
    \end{aligned}
  \right.
\end{equation}
and $(\I - \P^0) f^b_{k+1}$, $u^b_{k+1, 3}$, $p^b_{k+1}$ are determined by the equations
\begin{equation}\label{f_b_k+1-Kernal-Ortho}
  \begin{aligned}
    (\I - \P^0) & f^b_{k+1} = (\mathcal{L}^0)^{-1} \Big\{ - (\I - \P^0) (v_3 \p_\zeta \P^0 f^b_k) + \tfrac{\zeta}{\sqrt{\M^0}} \big[ B(\M^{(1)}, \sqrt{\M^0} \P^0 f^b_k) \\
   & + B(\sqrt{\M^0} \P^0 f^b_k, \M^{(1)})\big] + \tfrac{1}{\sqrt{\M^0}} \big[ B(F^0_1, \sqrt{\M^0} \P^0 f^b_k) + B(\sqrt{\M^0} \P^0 f^b_k, F^0_1) \big] \\
   & + \tfrac{1}{\sqrt{\M^0}} \big[ B(\sqrt{\M^0} f^b_1, \sqrt{\M^0} \P^0 f^b_k) + B(\sqrt{\M^0} \P^0 f^b_k, \sqrt{\M^0} f^b_1) \big] \Big\} + J^b_{k-1}\,,
  \end{aligned}
\end{equation}
and
\begin{equation}\label{u_b_k+1-derivative}
    \begin{aligned}
    \p_\zeta u^b_{k+1, 3} = - \frac{1}{\rho^0} ( \p_t \rho^b_k + \div_{\bar{x}} ( \rho^0 \bar{u}^b_k + \rho^b_k \bar{\u}^0 ) )\,, \ \lim_{\zeta \to \infty} u^b_{k+1,3} (t, \bar{x}, \zeta) = 0 \,,
    \end{aligned}
\end{equation}
and
\begin{equation}\label{p_b_k+1-derivative}
  \left\{
    \begin{aligned}
      \p_\zeta p^b_{k+1}
      = - \rho^0 \p_t u^b_{k,3} - \rho^0 \bar{\u}^0 \!\cdot\! \nabla_{\bar{x}} u^b_{k,3} + \rho^0 \p_{x_3} \u^0_3 u^b_{k,3} + \tfrac{4}{3} \mu(T^0) \p_{\zeta\zeta} u^b_{k,3} \\
      - \tfrac{4}{3} \rho^0 \p_\zeta \Big[ \big(\p_{x_3} \u^0 \zeta + u^0_{1,3} \big) u^b_{k,3} \Big] - \p_\zeta \l T^0 \A^0_{33}, J^b_{k-1} \r + W^b_{k-1,3} \,, \\
      \lim_{\zeta \to \infty} \theta^b_{k+1} (t, \bar{x}, \zeta) = 0 \,,
    \end{aligned}
  \right.
\end{equation}
where the source terms $\mathsf{f}^b_{k-1, i} \ (i = 1, 2)$ and $\mathsf{g}^b_{k-1}$ are
\begin{equation}
  \begin{aligned}
    \mathsf{f}^b_{k-1, i} = -\rho^0 \p_\zeta [ ( \p_{x_3} \u^0_i \zeta + u^0_{1,i} + u^b_{1,i} ) u^b_{k,3} ] - ( \p_{x_i} - \tfrac{\p_{x_i} p^0}{p^0} ) p^b_k \\
    + W^b_{k-1,i} - T^0 \p_\zeta \l J^b_{k-1}, \A^0_{3i} \r \,, \\
    \mathsf{g}^b_{k-1} = - \rho^0 \p_\zeta \Big[ \big( 3 \p_\zeta T^0 \zeta + \theta^0_1 + \theta^b_1 \big) u^b_{k,3}\Big] + \tfrac{3}{5} H^b_{k-1} - \tfrac{6}{5} (T^0)^{\frac{3}{2}} \p_\zeta \l J^b_{k-1}, \B^0_3 \r\\
    + \tfrac{3}{5} \big\{ 2 \p_t + 2 \bar{\u} ^0 \cdot \nabla_{\bar{x}} + \tfrac{10}{3} \div_{x} \u^0 \big\} p^b_k \,,
  \end{aligned}
\end{equation}
and
\begin{align}\label{W_b_k-1}
  W^b_{k-1,i} = - \sum_{j=1}^2 \p_{x_j} \l T^0 (\I- \P^0) f^b_k, \A^0_{ij} \r \,, \qquad {\text{for}}\ i =1,2,3\,,
\end{align}
\begin{align}\label{H_b_k-1}
  H^b_{k-1} = - \sum_{j=1}^2 \p_{x_j} \l (T^0)^{\frac{3}{2}} \B^0_j + \sum_{l=1}^2 2 T^0 \u^0_l \A^0_{jl},  (\mathcal{I} - \P^0) f^b_k \r - 2 \bar{\u}^0 \cdot \bar{W}^b_{k-1}\,,
\end{align}
and
  \begin{align}\label{J_b_k-1}
    \no J^b_{k-1} = & (\mathcal{L}^0)^{-1} \Big\{ - (\mathcal{I}-\mathcal{P}^0) \Big( \tfrac{1}{\sqrt{\M^0}} \big\{ \p_t + \bar{v} \cdot \nabla_{\bar{x}} \big\} F^b_{k-1} \Big) - (\mathcal{I}-\mathcal{P}^0) \big( v_3 \p_\zeta ( \mathcal{I} - \mathcal{P}^0 \big) f^b_k \big) \\
    \no & + \sum_{\substack{l+j=k+1\,,\\2 \le l \le N, j\ge 1}} \tfrac{\zeta^l}{l!} \tfrac{1}{\sqrt{\M^0}} \big[ B(\M^{(l)}, \sqrt{\M^0} f^b_j) + B(\sqrt{\M^0} f^b_j, \M^{(l)}) \big] \\
    \no & + \sum_{\substack{l+j=k+1\,,\\i\ge 2, j\ge 1}} \tfrac{1}{\sqrt{\M^0}} \big[ B(F^0_i, \sqrt{\M^0} f^b_j) + B(\sqrt{\M^0} f^b_j, F^0_i) \big] \\
    \no & + \sum_{\substack{l+j=k+1\,,\\ i,j\ge 2}} \tfrac{1}{\sqrt{\M^0}} \big[ B(\sqrt{\M^0} f^b_i, \sqrt{\M^0} f^b_j) + B(\sqrt{\M^0} f^b_j, \sqrt{\M^0} f^b_i) \big] \\
    \no & + \sum_{\substack{i+j+k = k+1\,,\\1\le l \le N, i,j \ge 1}} \tfrac{1}{\sqrt{\M^0}} \tfrac{\zeta^l}{l!} \big[ B(F^{(l)}_i, \sqrt{\M^0} f^b_j) + B(\sqrt{\M^0} f^b_j, F^{(l)}_i)\big] \\
    \no & + \tfrac{\zeta}{\sqrt{\M^0}} \big[ B(\M^{(1)}, \sqrt{\M^0} (\mathcal{I} - \mathcal{P}^0) f^b_k) + B(\sqrt{\M^0} (\mathcal{I} - \mathcal{P}^0) f^b_k, \M^{(1)}) \big] \\
    \no & + \tfrac{1}{\sqrt{\M^0}} \big[ B(F^0_1, \sqrt{\M^0} (\mathcal{I} - \mathcal{P}^0) f^b_k) + B(\sqrt{\M^0} (\mathcal{I} - \mathcal{P}^0) f^b_k, F^0_1) \big] \\
    & + \tfrac{1}{\sqrt{\M^0}} \big[ B(\sqrt{\M^0} f^b_1, \sqrt{\M^0} (\mathcal{I} - \mathcal{P}^0) f^b_k) + B(\sqrt{\M^0} (\mathcal{I} - \mathcal{P}^0) f^b_k, \sqrt{\M^0} f^b_1) \big] \Big\} \,.
  \end{align}
We emphasize that $W^b_{k-1}$, $H^b_{k-1}$ and $J^b_{k-1}$ depend on $f^b_j$ $(1 \le j \le k-1)$. Moreover, when $k=1$,  $J^b_0 = W^b_0 = H^b_0 = \mathsf{f}^b_0 = \mathsf{g}^b_0 = 0$. Actually, by \eqref{BC-u_13} below, $u_{1,3}^0 = 0$ in \eqref{Linear-Prandtl}. Finally, the initial conditions of \eqref{Linear-Prandtl} are imposed on
\begin{equation}\label{IC-Prandtl}
\begin{aligned}
(\bar{u}^b_k, \theta^b_k) (0, \bar{x}, \zeta) = (\bar{u}^{b, in}_k , \theta^{b, in}_k ) (\bar{x}, \zeta) \in \R^2 \times \R \,, \quad k = 1,2,3 \,, \cdots
\end{aligned}
\end{equation}
with $\lim_{\zeta \to \infty} (\bar{u}^{b, in}_k , \theta^{b, in}_k) (\bar{x}, \zeta) = 0$.

\subsubsection{Knudsen boundary layer expansion}

In Knudsen layer, the new scaled normal coordinate is introduced:
\begin{equation}
  \begin{aligned}
    \xi = \tfrac{x_3}{\eps} \,.
  \end{aligned}
\end{equation}
Then the Knudsen boundary expansion is defined as
\begin{equation}
  \begin{aligned}
    F^{bb}_\eps (t, \bar{x}, \xi, v) \thicksim \sum_{k \geq 1} \sqrt{\eps}^k F^{bb}_k (t, \bar{x}, \xi, v) \,.
  \end{aligned}
\end{equation}
From plugging $F_\eps + F^b_\eps + F^{bb}_\eps$ in \eqref{BE},
\begin{equation}\label{Order_Anal_Knudsen}
\begin{aligned}
\sqrt{\eps}^{-1} :\qquad & v_3 \cdot \p_\xi F^{bb}_1 = B(\M^0, F^{bb}_1) + B(F^{bb}_1, \M^0)\\
\sqrt{\eps}^0 : \qquad & v_3 \cdot \p_\xi F^{bb}_2 - \big[ B(\M^0, F^{bb}_2) + B(F^{bb}_2, \M^0) \big] \\  = & B(F^0_1 + F^{b,0}_1, F^{bb}_1) + B(F^{bb}_1, F^0_1 + F^{b,0}_1) + B(F^{bb}_1, F^{bb}_1)\\
......&\\
\sqrt{\eps}^k :\qquad & v_3 \cdot \p_\xi F^{bb}_{k+2} - \big[ B(\M^0, F^{bb}_{k+2}) + B(F^{bb}_{k+2}, \M^0) \big] \\
= & - \big\{ \p_t + \bar{v} \cdot \nabla_{\bar{x}}
\big\} F^{bb}_k + \sum_{\substack{j+2l=k+2\,,\\ 1 \le l \le N, j\ge 1}} \frac{\xi^l}{l!} \big[ B(\M^{(l)}, F^{bb}_j) + B(F^{bb}_j, \M^{(l)})\big] \\
& + \sum_{\substack{i+j=k+2\,,\\ i, j\ge 1}} \big[ B(F^0_i + F^{b,0}_i, F^{bb}_j) + B(F^{bb}_j, F^0_i + F^{b,0}_i) + B(F^{bb}_i, F^{bb}_j) \big] \\
& + \sum_{\substack{i+2l+j=k+2\,,\\1\le l \le N, i,j\ge 1}} \frac{\xi^l}{l!} \big[ B(F^{(l)}_i, F^{bb}_j) + B(F^{bb}_j, F^{(l)}_i)\big] \\
& + \sum_{\substack{i+l+j=k+2\,,\\1\le l \le N, i,j\ge 1}} \frac{\xi^l}{l!} \big[ B(F^{b, (l)}_i, F^{bb}_j) + B(F^{bb}_j, F^{b, (l)}_i)\big]\,,
\end{aligned}
\end{equation}
where the Taylor expansion of $F^b_i$ at $\zeta = 0$ is utilized:
\begin{equation*}
  \begin{aligned}
    F_i^b = F_i^{b,0} + \sum_{1 \leq l \leq N} \tfrac{\xi^l}{l!} F_i^{b, (l)} + \tfrac{\xi^{N+1}}{(N+1)!} \widetilde{F}_i^{b, (N+1)} \,.
  \end{aligned}
\end{equation*}

Similar in viscous layer, let $f^{bb}_k = \tfrac{F^{bb}_k}{\sqrt{\M^0}}$, then \eqref{Order_Anal_Knudsen} can be rewritten as
\begin{equation}\label{fbb_k}
  \begin{aligned}
    v_3 \p_\xi f^{bb}_k + \L^0 f^{bb}_k = S^{bb}_k \,, \quad k \geq 1 \,,
  \end{aligned}
\end{equation}
where $S^{bb}_k = S^{bb}_{k,1} + S^{bb}_{k,2}$ with
\begin{align}\label{Sbb_k}
\no S^{bb}_{k,1} =& - \mathcal{P}^0 \Big\{ \frac{(\p_t + \bar{v} \cdot \nabla_{\bar{x}}) F^{bb}_{k-2}}{\sqrt{\M^0}} \Big\} \in \mathcal{N}^0 \,, \\
\no S^{bb}_{k,2} = & \sum_{\substack{j+2l=k\,,\\ 1 \le l \le N, j\ge 1}} \frac{\xi^l}{l!} \frac{1}{\sqrt{\M^0}} \big[ B(\M^{(l)}, \sqrt{\M^0} f^{bb}_j) + B(\sqrt{\M^0} f^{bb}_j, \M^{(l)})\big] \\
\no & + \sum_{\substack{i+j=k\,,\\ i, j\ge 1}} \frac{1}{\sqrt{\M^0}} \big[ B(F^0_i + F^{b,0}_i, \sqrt{\M^0} f^{bb}_j) + B(\sqrt{\M^0} f^{bb}_j, F^0_i + F^{b,0}_i)\big] \\
\no & + \sum_{\substack{i+2l+j=k\,,\\1\le l \le N, i,j\ge 1}} \frac{\xi^l}{l!} \frac{1}{\sqrt{\M^0}} \big[ B(F^{(l)}_i, \sqrt{\M^0} f^{bb}_j) + B(\sqrt{\M^0} f^{bb}_j, F^{(l)}_i)\big] \\
& + \sum_{\substack{i+l+j=k\,,\\1\le l \le N, i,j\ge 1}} \frac{\xi^l}{l!} \frac{1}{\sqrt{\M^0}} \big[ B(F^{b, (l)}_i, \sqrt{\M^0} f^{bb}_j) + B(\sqrt{\M^0} f^{bb}_j, F^{b, (l)}_i)\big] \\
\no & + \sum_{\substack{i+j=k\,,\\ i,j\ge 1}} \frac{1}{\sqrt{\M^0}} B(\sqrt{\M^0} f^{bb}_i, \sqrt{\M^0} f^{bb}_j) - (\mathcal{I} - \mathcal{P}^0) \Big\{ \frac{\{\p_t + \bar{v} \cdot \nabla_{\bar{x}} \} F^{bb}_{k-2}}{\sqrt{\M^0}} \Big\} \in (\mathcal{N}^0)^\bot \,.
\end{align}
Here  the notation $F^{bb}_{-1} = F^{bb}_0=0 $ are used, and
\begin{align*}
  S^{bb}_1 = S^{bb}_{1,1} = S^{bb}_{1,2} =0\,,\ S^{bb}_{2,1} = \mathcal{P}^0 S^{bb}_2 =0 \,.
\end{align*}

\begin{lemma}[\cite{Bardos-Caflisch-Nicolaenko-1986-CPAM}]\label{Lmm-fbb-k1}
	We assume that
	\begin{equation}\label{S_bb_k1}
	  \begin{aligned}
	    S^{bb}_{k,1}= \big\{ a_k + b_k \cdot (v- \u^0) + c_k |v-\u^0|^2 \big\} \sqrt{\M^0}
	  \end{aligned}
	\end{equation}
	satisfy
	\begin{align*}
	  \lim_{\xi \to \infty} e^{\eta \xi} |(a_k, b_k, c_k) (t, \bar{x}, \xi)| = 0
	\end{align*}
	for some positive constant $\eta > 0$. Then there exists a function
	\begin{equation*}
	f^{bb}_{k,1} = \big\{ \Psi_k v_3 + \Phi_{k,1} v_3 (v_1 - \u^0_1) + \Phi_{k,2} v_3 (v_2 - \u^0_2) + \Phi_{k,3} + \Theta_k v_3 |v- \u^0|^2 \big\} \sqrt{\M^0} 	
	\end{equation*}
	such that $v_3 \p_\xi f^{bb}_{k,1} - S^{bb}_{k,1} \in (\mathcal{N}^0)^\bot$, where
	\begin{equation}\label{f_bb_k1_Coef}
	\begin{array}{l}
	\Psi_k (t, \bar{x}, \xi)
	= - \int_\xi^{+\infty} \big( \tfrac{2}{T^0} a_k + 3 c_k \big) (t, \bar{x}, s) \d s\,, \\ [1.5mm]
	\Phi_{k,i} (t, \bar{x}, \xi)
	= - \int_\xi^{+\infty} \tfrac{1}{T^0} b_{k,i} (t, \bar{x}, s) \d s\,,\ i=1,2\,, \\ [1.5mm]
	\Phi_{k,3} (t, \bar{x}, \xi)
	= - \int_\xi^{+\infty} b_{k,3} (t, \bar{x}, s) \d s\,, \\ [1.5mm]
	\Theta_k (t, \bar{x}, \xi) = \tfrac{1}{5(T^0)^2} \int_\xi^{+\infty} a_k (t, \bar{x}, s) \d s\,.
	\end{array}
	\end{equation}
	Moreover, there holds
	\begin{equation*}
	  \begin{aligned}
	    & |v_3 \p_\xi f^{bb}_{k,1} - S^{bb}_{k,1}| \leq C |(a_k, b_k, c_k) (t, \bar{x}, \xi)| \l v \r^4 \sqrt{\M^0} \,, \\
	    & |f^{bb}_{k,1} (t, \bar{x}, \xi, v)| \leq C \l v \r^3 \sqrt{\M^0} \int_\xi^\infty |(a_k, b_k, c_k)| \to 0 \textrm{ as } \xi \to \infty \,.
	  \end{aligned}
	\end{equation*}
\end{lemma}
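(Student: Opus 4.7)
The plan is to determine the five scalar coefficients $\Psi_k, \Phi_{k,1}, \Phi_{k,2}, \Phi_{k,3}, \Theta_k$ by imposing the five solvability conditions implicit in $v_3 \p_\xi f^{bb}_{k,1} - S^{bb}_{k,1} \in (\mathcal{N}^0)^\perp$. Since $\mathcal{N}^0$ is five-dimensional, spanned by the collisional invariants $\sqrt{\M^0}$, $(v_i - \u^0_i)\sqrt{\M^0}$ $(i=1,2,3)$, and $|v-\u^0|^2 \sqrt{\M^0}$, testing this relation against each of them in $L^2_v$ yields five scalar ODEs in $\xi$ for linear combinations of the five unknowns. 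The ansatz is designed precisely so that, after multiplication by $v_3$, these five combinations are nondegenerate.

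The key simplification is the impermeability condition $\u^0_3 = 0$ enforced by \eqref{BC-CEuler}: setting $w := v - \u^0$ one has $v_3 = w_3$, so all Gaussian moments of $\M^0$ with an odd power of some $w_i$ vanish. Exploiting this parity, the $5 \times 5$ derivative system splits into: (i) the $(v_i - \u^0_i)\sqrt{\M^0}$-tests for $i=1,2,3$, which produce three decoupled ODEs $\p_\xi \Phi_{k,i} = b_{k,i}/T^0$ $(i=1,2)$ and $\p_\xi \Phi_{k,3} = b_{k,3}$; and (ii) the $\sqrt{\M^0}$- and $|w|^2\sqrt{\M^0}$-tests, which give a $2 \times 2$ linear system coupling $(\p_\xi \Psi_k, \p_\xi \Theta_k)$ to $(a_k, c_k)$, with coefficient matrix built from the three moments $\int w_3^2 |w|^{2l} \M^0 \d v$ $(l = 0, 1, 2)$. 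This matrix has strictly positive determinant, and elementary inversion produces $\p_\xi \Psi_k = \tfrac{2}{T^0} a_k + 3 c_k$ and $\p_\xi \Theta_k = - \tfrac{1}{5(T^0)^2} a_k$. Each ODE is then integrated from $\xi$ to $+\infty$ with zero limit at infinity, justified by the assumed exponential decay $e^{\eta \xi} |(a_k, b_k, c_k)| \to 0$, giving precisely the formulas \eqref{f_bb_k1_Coef}.

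For the pointwise estimates, both $v_3 \p_\xi f^{bb}_{k,1}$ and $S^{bb}_{k,1}$ are polynomials in $v$ of degree at most $4$ multiplied by $\sqrt{\M^0}$, with coefficients controlled linearly by $|(a_k, b_k, c_k)|(t, \bar{x}, \xi)$; by construction their macroscopic projections coincide, so the residual lies in $(\mathcal{N}^0)^\perp$ and obeys the stated $\langle v \rangle^4 \sqrt{\M^0}$ bound. Similarly, $|f^{bb}_{k,1}|$ is bounded by a degree-$3$ polynomial in $v$ times $\sqrt{\M^0}$ with coefficients dominated by $\int_\xi^\infty |(a_k, b_k, c_k)|$, yielding the claimed decay as $\xi \to \infty$. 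The only delicate point is confirming the nondegeneracy of the $2 \times 2$ block in (ii); this reduces to a direct computation of three Gaussian integrals, after which the remaining work is careful bookkeeping of the $\rho^0, T^0$ constants.
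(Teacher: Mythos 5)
Your proposal is correct. The paper itself offers no proof of this lemma---it simply cites \cite{Bardos-Caflisch-Nicolaenko-1986-CPAM} (the lemma is also stated without proof in \cite{GHW-2020})---so there is no in-text argument to compare against. Your verification is exactly the standard one: test $v_3\p_\xi f^{bb}_{k,1}-S^{bb}_{k,1}$ against the five elements of $\mathcal{N}^0$, use $\u^0_3=0$ and Gaussian parity to decouple the system into three scalar equations for $\p_\xi\Phi_{k,i}$ and a nonsingular $2\times 2$ system for $(\p_\xi\Psi_k,\p_\xi\Theta_k)$ built from the moments $\int w_3^2\M^0=\rho^0T^0$, $\int w_3^2|w|^2\M^0=5\rho^0(T^0)^2$, $\int w_3^2|w|^4\M^0=35\rho^0(T^0)^3$ with determinant $10(\rho^0)^2(T^0)^4>0$, then integrate from $\xi$ to $\infty$ using the assumed exponential decay of $(a_k,b_k,c_k)$. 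I spot-checked the inversion and it reproduces \eqref{f_bb_k1_Coef}, and the degree count (ansatz of degree $3$ in $v$, residual of degree $4$) gives the two pointwise bounds. Essentially the same approach; no gap.
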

It is easy to know that $a_1 = a_2 =0 $, $ b_1 =b_2=0$, $c_1 =c_2 =0$, $\Psi_1=\Psi_2 =0\,,\ \Phi_{1,i} = \Phi_{2,i} =0 \ (i = 1,2,3)$ and $ \Theta_1 = \Theta_2=0$. Denote by $f^{bb}_{k,2} = f^{bb}_k - f^{bb}_{k,1}$. We thereby see that
\begin{equation}\label{KBL-fbb_k2}
  \begin{aligned}
    v_3 \p_\xi f^{bb}_{k,2} + \L^0 f^{bb}_{k,2} = S^{bb}_{k,2} - (v_3 \p_\xi f^{bb}_{k,1} - S^{bb}_{k,1} ) \in (\mathcal{N}^0)^\bot \,, \ \lim_{\xi \to \infty} f^{bb}_{k,2} (t, \bar{x}, \xi, v) = 0 \,.
  \end{aligned}
\end{equation}
Once we impose the following boundary condition on \eqref{KBL-fbb_k2}:
\begin{equation}\label{BC-KBL-fbb_k2}
  \begin{aligned}
    f^{bb}_{k,2} (t, \bar{x}, 0, \bar{v}, v_3) |_{v_3 > 0} = f^{bb}_{k,2} (t, \bar{x}, 0, \bar{v}, - v_3) + \mathbbm{f}_k (t, \bar{x}, \bar{v}, - v_3)
  \end{aligned}
\end{equation}
for some function $\mathbbm{f}_k (t, \bar{x}, \bar{v}, v_3)$ only defined for $v_3 < 0$ and extended to be 0 for $v_3 > 0$, Golse, Perthame and Sulem \cite{Golse-Perthame-Sulem-1988-ARMA} proved that the solvability conditions of \eqref{KBL-fbb_k2}-\eqref{BC-KBL-fbb_k2} were
\begin{equation}\label{Solva_Cond}
  \int_{\R^3}
    \left(
      \begin{array}{c}
        1 \\
        \bar{v} - \bar{\u}^0 \\
        |v - \u^0|^2
      \end{array}
    \right)
    v_3 \mathbbm{f}_k (t, \bar{x}, v) \sqrt{\M^0} \d v \equiv 0 \,.
\end{equation}

\subsubsection{Expansions of the Maxwell reflection boundary condition \eqref{MBC}}

In order to give suitable boundary conditions so that the interior expansions, viscous and Knudsen boundary layers are all well-posed,  the expansions of $F_\eps + F^b_\eps + F^{bb}_\eps$ will be plugged into the Maxwell reflection boundary condition \eqref{MBC} and then utilize the solvability conditions \eqref{Solva_Cond} of the Knudsen boundary layer problem \eqref{KBL-fbb_k2}-\eqref{BC-KBL-fbb_k2}. More precisely, on $\Sigma_-$,
\begin{equation}\label{MBC-Expand-1}
  \begin{aligned}
    \sqrt{\eps}^0 : & \quad L^R \M = 0 \,,  \\
    \sqrt{\eps}^1 : & \quad L^R (F_1 + F_1^b + F_1^{bb}) = L^D \M \,, \\
    \cdots & \cdots \\
    \sqrt{\eps}^k : & \quad L^R (F_k + F^b_k + F^{bb}_k ) = L^D (F_{k-1} + F^b_{k-1} + F^{bb}_{k-1}) \ (k \geq 2) \,,
  \end{aligned}
\end{equation}
where the operators $L^R$ and $L^D$ are defined as
\begin{equation}
  \begin{aligned}
    L^R F = (\gamma_- - L \gamma_+ ) F \,, \quad L^D F = \sqrt{2 \pi} (K \gamma_+ - L \gamma_+ ) F \,.
  \end{aligned}
\end{equation}

Actually, $\sqrt{\eps}^0$-order of \eqref{MBC-Expand-1} can imply the slip boundary condition \eqref{BC-CEuler} of the compressible Euler system \eqref{Compressible_Euler_Sys}. Recalling the definitions
\begin{equation*}
  \begin{aligned}
    f_k = \tfrac{F_k}{\sqrt{\M}} \,, \ f^b_k = \tfrac{F^b_k}{\sqrt{\M^0}} \,, \ f^{bb}_k = \tfrac{F^{bb}_k}{\sqrt{\M^0}} \,, \ f^{bb}_{k,2} = f^{bb}_k - f^{bb}_{k,1}
  \end{aligned}
\end{equation*}
for $k \geq 1$, where the functions $f^{bb}_{k,1}$ ($k \geq 1$) are given in Lemma \ref{Lmm-fbb-k1}, we thereby obtain by direct calculation and \eqref{MBC-Expand-1} that the functions $\mathbbm{f}_k (t, \bar{x}, \bar{v}, v_3)$ ($k \geq 1$) in \eqref{BC-KBL-fbb_k2} are
\begin{equation}\label{fk-KBL}
  \begin{aligned}
    \mathbbm{f}_k (t, \bar{x}, \bar{v}, v_3) = \left\{
    \begin{array}{l}
    0\,, \qquad \textrm{if } v_3 >0\,, \\ [1.5mm]
    (f_k + f^b_k + f^{bb}_{k,1})(t, \bar{x}, 0, \bar{v}, v_3) - (f_k + f^b_k + f^{bb}_{k,1}) (t, \bar{x}, 0, \bar{v}, -v_3) \\
    \qquad + \sqrt{2\pi} \big\{ \big[ \l \gamma_+ (f_{k-1} + f^b_{k-1} + f^{bb}_{k-1}) \r_{\p \R^3_+} \sqrt{\M^0} \big] \\
    \qquad \qquad \qquad \quad - (f_{k-1} + f^b_{k-1} + f^{bb}_{k-1}) \big\} (t, \bar{x}, 0, \bar{v}, v_3)\,,\  \textrm{if } v_3 <0\,,
    \end{array}
    \right.
  \end{aligned}
\end{equation}
where the symbol $\l \gamma_+ f \r_{\p \R^3_+}$ means
\begin{equation*}
  \begin{aligned}
    \l \gamma_+ f \r_{\p \R^3_+}
    = \sqrt{2\pi} \tfrac{M_w (v)}{\M^0} \int_{v \cdot n(x)>0} v \cdot n(x) (\gamma_+ f)\sqrt{\M^0} \d v\,,
  \end{aligned}
\end{equation*}
and  the notations $f_0 = \sqrt{\M}$, $f^b_0 = f^{bb}_0 = 0$ are used, for simplicity of presentation.

Therefore, the following lemma holds:
\begin{lemma}\label{Lmm-Robin-BC}
	Let the local Maxwellian of the boundary $M_w = \M^0$ in \eqref{MBC} and $\mathbbm{f}_k (t, \bar{x}, \bar{v}, v_3)$ be given in \eqref{fk-KBL}. Then the solvability conditions \eqref{Solva_Cond} of the Knudsen boundary layer problem \eqref{KBL-fbb_k2}-\eqref{BC-KBL-fbb_k2} imply that for $k \geq 1$, the linear hyperbolic system \eqref{Linear-Hyperbolic-Syst} has the following slip boundary condition
	\begin{equation}\label{BC-u_k3}
	  \begin{aligned}
	    & u_{k,3} (t, \bar{x}, 0) = - u^b_{k,3} (t, \bar{x}, 0) - T^0 (\Psi_k + 5T^0 \Theta_k) (t, \bar{x}, 0) \\
	    & \ \ \ + \tfrac{(\rho^0 \sqrt{T^0} + 1)}{\rho^0} \sqrt{2 \pi} \int_{\R^2} \int_{-\infty}^0 v_3 (f_{k-1} + f^b_{k-1} + f^{bb}_{k-1}) (t, \bar{x}, 0, \bar{v}, v_3) \sqrt{\M^0} \d \bar{v} \d v_3 \\
	    & = - \int_0^{+\infty} \tfrac{1}{\rho^0} \big[ \p_t \rho^b_{k-1} + \div_{\bar{x}} (\rho^0 \bar{u}^b_{k-1} + \rho^b_{k-1} \bar{\u}^0) \big] (t, \bar{x}, \zeta) \d \zeta - T^0 (\Psi_k + 5T^0 \Theta_k) (t, \bar{x}, 0) \\
	    & \ \ \ + \tfrac{(\rho^0 \sqrt{T^0} + 1)}{\rho^0} \sqrt{2 \pi} \int_{\R^2} \int_{-\infty}^0 v_3 (f_{k-1} + f^b_{k-1} + f^{bb}_{k-1}) (t, \bar{x}, 0, \bar{v}, v_3) \sqrt{\M^0} \d \bar{v} \d v_3\,,
	  \end{aligned}
	\end{equation}
	and for $k \geq 2$, the linear Prandtl-type equations \eqref{Linear-Prandtl} are of the Robin-type boundary conditions
	\begin{equation}\label{Boudary_Equa}
	\left\{
	\begin{array}{l}
	\big( \p_\zeta u^b_{k-1, i} -  \tfrac{ \rho^0 \sqrt{T^0} (2 + \rho^0 \sqrt{T^0})}{\mu(T^0)} u^b_{k-1,i} \big) (t, \bar{x}, 0) = \Lambda^b_{k-1, i} (t, \bar{x}), i=1,2\,, \\[1.5mm]
	\big( \p_\zeta \theta^b_{k-1} - \tfrac{\rho^0 \sqrt{T^0}}{\kappa (T^0)}  ( 2 \rho^0 \sqrt{T^0} + \tfrac{\sqrt{2 \pi}}{3} \rho^0 + \tfrac{2}{3} ) \theta^b_{k-1} \big) (t, \bar{x}, 0) = \Lambda^b_{k-1,\theta} (t, \bar{x})\,,
	\end{array}
	\right.
	\end{equation}
	where $\Psi_k$ and $\Theta_k$ are given in \eqref{f_bb_k1_Coef}, and
	\begin{equation*}
	\begin{aligned}
	& \Lambda_{k-1, i}^b (t, \bar{x}) = \tfrac{\rho^0 \sqrt{T^0}}{\mu(T^0)} u_{k-1, i} (t, \bar{x}, 0) \\
	& + \tfrac{1}{\mu(T^0)} \Big\{\rho^0 (T^0)^2 \Phi_{k,i} + \rho^0 \big[ (u_{1,i} + u^b_{1,i}) u^b_{k-1,3} \big] + T^0 \l \A^0_{3i}, J^b_{k-2} + (\mathcal{I} - \mathcal{P}^0) f_k \r \Big\} (t, \bar{x}, 0) \\
	& - \tfrac{\sqrt{2\pi} }{\mu(T^0)} \int_{\R^2} \int_{-\infty}^0 (v_i - \u^0_i ) v_3 \big[ (\mathcal{I} - \mathcal{P}^0) (f_{k-1} + f^b_{k-1}) + f^{bb}_{k-1} \big] (t, \bar{x}, 0, \bar{v}, v_3) \sqrt{\M^0} \d \bar{v} \d v_3\,,
	\end{aligned}
	\end{equation*}
	and
	\begin{equation*}
	\begin{aligned}
	\Lambda_{k-1, \theta}^b & (t, \bar{x})
	= \tfrac{2 (T^0)^{\frac{3}{2}}}{\kappa(T^0)} \l \B^0_3, (\mathcal{I}-\mathcal{P}^0) f_k + J^b_{k-2} \r (t, \bar{x}, 0) + \tfrac{5}{3\kappa(T^0)} \rho^0 \big[ (\theta_1 + \theta^b_1) u^b_{k-1,3} \big] (t, \bar{x}, 0)\\
	& + \tfrac{\rho^0 T^0}{\kappa (T^0)} \big\{ 10 (T^0)^2 \Theta_k - ( \tfrac{1}{2} + \tfrac{3}{2} T^0 - 2 \rho^0 (T^0)^\frac{3}{2} ) (\Psi_{k-1} + 5T^0 \Theta_{k-1}) \big\} (t, \bar{x}, 0) \\
	& + \tfrac{\rho^0 \sqrt{T^0}}{\kappa(T^0)} \big\{ (\tfrac{2}{3} \rho^0 \sqrt{T^0} + \tfrac{\sqrt{2 \pi}}{2} \rho^0 + 2) \theta_{k-1} - 4 (\sqrt{T^0} - \tfrac{1}{\rho^0}) (T^0 \rho_{k-1} + p^b_{k-1}) \big\} (t, \bar{x}, 0) \\
	& - \tfrac{\sqrt{2\pi}}{\kappa(T^0)} \int_{\R^2} \int_{-\infty}^0 v_3 (|v - \u^0|^2 - 4 \rho^0 (T^0)^\frac{3}{2} ) \\
	& \qquad \qquad \times \big[ (\mathcal{I}-\mathcal{P}^0) (f_{k-1} + f^b_{k-1}) + f^{bb}_{k-1} \big](t, \bar{x}, 0, \bar{v}, v_3)  \sqrt{\M^0} \d \bar{v} \d v_3 \\
	& + \tfrac{\sqrt{2\pi}}{\kappa(T^0)} (\rho^0 \sqrt{T^0} + 1) ( \tfrac{1}{2} + \tfrac{3}{2} T^0 - 2 \rho^0 (T^0)^\frac{3}{2} ) \\
	& \qquad \qquad \times \int_{\R^2} \int_{-\infty}^0 v_3 (f_{k-2} + f^b_{k-2} + f^{bb}_{k-2} )(t, \bar{x}, 0, \bar{v}, v_3)  \sqrt{\M^0} \d \bar{v} \d v_3 \,.
	\end{aligned}
	\end{equation*}
	In particular, if $k = 1$ in \eqref{BC-u_k3}, we have
	\begin{equation}\label{BC-u_13}
	  \begin{aligned}
	    u^0_{1,3} = u_{1, 3} (t, \bar{x}, 0) = \sqrt{T^0} (\rho^0 \sqrt{T^0} + 1) \,.
	  \end{aligned}
	\end{equation}
\end{lemma}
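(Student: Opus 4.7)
The proof is by direct computation of the four moment equations in \eqref{Solva_Cond} after substituting the explicit form \eqref{fk-KBL} of $\mathbbm{f}_k$. The strategy is to test against $\phi(v)\in\{1,\,v_1-\u_1^0,\,v_2-\u_2^0,\,|v-\u^0|^2\}$ in turn, multiply $v_3\mathbbm{f}_k\sqrt{\M^0}$ by $\phi(v)$, and integrate over $\R^3$. Since $\mathbbm{f}_k\equiv 0$ for $v_3>0$, the effective domain is $v_3<0$, where $\mathbbm{f}_k$ splits into an ``antisymmetric'' piece $[f_k+f^b_k+f^{bb}_{k,1}](v_3)-[f_k+f^b_k+f^{bb}_{k,1}](-v_3)$ and a ``wall-Maxwellian'' piece $\sqrt{2\pi}\{\l\gamma_+(f_{k-1}+f^b_{k-1}+f^{bb}_{k-1})\r_{\p\R^3_+}\sqrt{\M^0}-(f_{k-1}+f^b_{k-1}+f^{bb}_{k-1})\}$, all traces being taken at $x_3=0$.

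For the antisymmetric piece, since each $\phi$ is even in $v_3$ (using $\u_3^0=0$) and $\sqrt{\M^0}$ is even in $v_3$, the change of variables $v_3\mapsto-v_3$ in the second summand converts the half-space integral into the full-space integral $\int_{\R^3}\phi\,v_3\sqrt{\M^0}(f_k+f^b_k+f^{bb}_{k,1})\,\d v$. I would then apply the macro/micro decomposition: the $\mathcal{P}^0$-part contributes the macroscopic boundary value $u_{k,3}+u^b_{k,3}|_{\zeta=0}$ when $\phi=1$, vanishes for $\phi=v_i-\u_i^0$ $(i=1,2)$ by oddness in $v_i$, and produces combinations of $\rho_k,\rho^b_k,\theta_k,\theta^b_k$ when $\phi=|v-\u^0|^2$; the $(\mathcal{I}-\mathcal{P}^0)$-part yields the Burnett moments $T^0\l\A^0_{3i},(\mathcal{I}-\mathcal{P}^0)(f_k+f^b_k)\r$ (for $\phi=v_i-\u_i^0$) and the $\B^0_3$-analogue (for $\phi=|v-\u^0|^2$). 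The explicit form of $f^{bb}_{k,1}$ from Lemma~\ref{Lmm-fbb-k1} produces the boundary values $T^0(\Psi_k+5T^0\Theta_k)$, $(T^0)^2\Phi_{k,i}$, $10(T^0)^3\Theta_k$, etc., by direct one-dimensional Gaussian integration.

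For the wall-Maxwellian piece, the choice $M_w=\M^0$ reduces $\l\cdot\r_{\p\R^3_+}$ to a scalar multiple of $\sqrt{2\pi}\int_{v_3<0}(-v_3)(\cdot)\sqrt{\M^0}\,\d v$, so the contribution to $\int v_3\mathbbm{f}_k\sqrt{\M^0}\,\d v$ collapses into the $\sqrt{2\pi}$-weighted half-space flux integrals of $f_{k-1}+f^b_{k-1}+f^{bb}_{k-1}$ plus the explicit Gaussian moments $\int_{v_3<0}v_3\M^0\,\d v$, $\int_{v_3<0}v_3(v_i-\u_i^0)\M^0\,\d v$, $\int_{v_3<0}v_3|v-\u^0|^2\M^0\,\d v$, which produce the structural constants $\rho^0\sqrt{T^0}+1$, $\rho^0\sqrt{T^0}$, $2\rho^0\sqrt{T^0}+\tfrac{\sqrt{2\pi}}{3}\rho^0+\tfrac{2}{3}$, etc.\ appearing in \eqref{BC-u_k3} and \eqref{Boudary_Equa}. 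The alternative integral form in \eqref{BC-u_k3} arises by integrating \eqref{u_b_k+1-derivative} (shifted $k\mapsto k-1$) over $\zeta\in[0,\infty)$ using $\lim_{\zeta\to\infty}u^b_{k,3}=0$. The key $\mu(T^0)\p_\zeta u^b_{k-1,i}|_{\zeta=0}$ and $\tfrac{3}{5}\kappa(T^0)\p_\zeta\theta^b_{k-1}|_{\zeta=0}$ terms that produce the Robin structure of \eqref{Boudary_Equa} come from inserting \eqref{f_b_k+1-Kernal-Ortho} into the Burnett moments $T^0\l\A^0_{3i},(\mathcal{I}-\mathcal{P}^0)f^b_k\r|_{\zeta=0}$ and $2(T^0)^{3/2}\l\B^0_3,(\mathcal{I}-\mathcal{P}^0)f^b_k\r|_{\zeta=0}$, then applying the Chapman--Enskog identities $T^0\l\A^0_{3i},(\mathcal{L}^0)^{-1}\A^0_{3j}\r=\mu(T^0)\delta_{ij}$ and $2(T^0)^{3/2}\l\B^0_3,(\mathcal{L}^0)^{-1}\B^0_3\r=\tfrac{3}{5}\kappa(T^0)$.

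The specialization \eqref{BC-u_13} at $k=1$ follows by substituting $f_0=\sqrt{\M}$, $f^b_0=f^{bb}_0=0$, $u^b_{1,3}|_{\zeta=0}=0$ from \eqref{ub_13-pb_1}, and $\Psi_1=\Theta_1=0$ (since $S^{bb}_1\equiv 0$ forces $a_1=b_1=c_1=0$ in Lemma~\ref{Lmm-fbb-k1}) into \eqref{BC-u_k3}, after which only the explicit Gaussian moment $\sqrt{2\pi}\int_{\R^2}\int_{-\infty}^0 v_3\M^0\,\d\bar v\,\d v_3$ survives. The main obstacle is the algebraic bookkeeping: the source terms $W^b_{k-1}$, $H^b_{k-1}$, $J^b_{k-2}$ in \eqref{Linear-Prandtl} and the tangential-derivative/quadratic-interaction contributions arising when \eqref{f_b_k+1-Kernal-Ortho} is expanded at index $k$ must be carefully collected into $\Lambda^b_{k-1,i}$ and $\Lambda^b_{k-1,\theta}$, and this consolidation depends crucially on the compatibility choice $M_w=\M^0$, which is precisely what reduces the $\mathcal{P}^0$-part of the wall contribution to the clean Maxwellian moments displayed in the statement.
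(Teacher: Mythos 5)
Your proposal follows the same route as the paper's own proof: split $\mathbbm{f}_k = \hat g_k + \mathbbm{g}_k$ into the antisymmetric piece and the wall-Maxwellian piece, compute the four moments of \eqref{Solva_Cond} term by term, use the $v_3\mapsto -v_3$ reflection and the macro/micro decomposition (as in \cite{GHW-2020}) to evaluate $\hat g_k$, and use the choice $M_w=\M^0$ to collapse $\mathbbm{g}_k$ into explicit half-space Gaussian moments. One small caution: the Chapman--Enskog identities you quote are only heuristic placeholders --- the paper's $Z_1$ produces $-\kappa(T^0)\p_\zeta\theta^b_{k-1}$, not $\tfrac{3}{5}\kappa(T^0)\p_\zeta\theta^b_{k-1}$, so the precise normalization of $\l\B^0_3,(\mathcal{L}^0)^{-1}\B^0_3\r$ needs to be taken from the definition of $\kappa(T^0)$ in \eqref{Linear-Prandtl}/\eqref{f_b_k+1-Kernal-Ortho} rather than asserted; the mechanism you describe is nonetheless the correct one.
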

The proof of the lemma will be given in Section \ref{Sec_Robin-BC}. We remark that the Robin-type boundary condition \eqref{Boudary_Equa} will be Neumann-type boundary values and $u_{1,3}^0$ will vanish, if the Maxwell reflection boundary condition \eqref{MBC} is replaced by the specular reflection boundary condition, i.e., by letting $\alpha_\eps = 0$, see \cite{GHW-2020}.

\subsubsection{Truncations of the Hilbert expansion}\label{subsubsec115}

Our goal is to prove the compressible Euler limit from the scaled Boltzmann equation by above Hilbert type expansion. The key point is to prove that the remainders of expansion will go to zero as the Knudsen number $\eps \to 0$. Mathematically, this means we search for a special class of solutions of the original scaled Boltzmann equation for sufficiently small Knudsen number $\eps$. Since the more terms are expanded, the more special the solutions are. We hope that the terms in the expansion are as less as possible.

Our truncated Hilbert expansion takes the following form:
\begin{equation}\label{Hilbert-Expnd-Form}
  \begin{aligned}
     F_\eps (t, x , v) = \M (t,x,v) + \sum_{k=1}^5 \sqrt{\eps}^k \big\{ F_k (t,x,v) + F_k^b (t, \bar{x}, \tfrac{x_3}{\sqrt{\eps}}, v) + F_k^{bb} (t, \bar{x}, \tfrac{x_3}{\eps}, v) \big\} \\
     + \sqrt{\eps}^4 F_{R, \eps} (t, x, v) \geq 0 \,.
  \end{aligned}
\end{equation}
Here $\M$ is governed by the compressible Euler system \eqref{Compressible_Euler_Bound}-\eqref{BC-CEuler}-\eqref{IC-CEuler}.

First, the interior expansions $F_1 = \P (\tfrac{F_1}{\sqrt{\M}}) \sqrt{\M}$ and $F_k = \big\{ \P (\tfrac{F_k}{\sqrt{\M}}) + (\I - \P) (\tfrac{F_k}{\sqrt{\M}}) \big\} \sqrt{\M}$ ($k = 2, 3$), where $(\I - \P) (\tfrac{F_k}{\sqrt{\M}})$ are given by \eqref{Ker-Arth-Inte}, and the fluid variables $(\rho_k, u_k, \theta_k)$ $(1 \leq k \leq 3)$ associated with $\P (\tfrac{F_k}{\sqrt{\M}})$ are solutions of the linear hyperbolic system \eqref{Linear-Hyperbolic-Syst} with initial data \eqref{IC-Linear-Hyperbolic} and slip boundary condition \eqref{BC-u_k3}. In this sense, $F_k$ ($1 \leq k \leq 3$) are completely specified. For $k=4,5$, we take $F_k$ such that their kernel (fluid) parts vanish, i.e., $F_k = [ (\I - \P) \tfrac{F_k}{\sqrt{\M}} ] \sqrt{\M} $, which are defined by \eqref{Ker-Arth-Inte}. They thereby are not completely known terms.

Second, the viscous boundary layer expansions $F_1^b = (\P^0 f_1^b ) \sqrt{\M^0}$ and $F_k^b = \big\{ \P^0 f_k^b + (\I - \P^0) f_k^b \big\} \sqrt{\M^0}$ ($k = 2, 3$), where the kinetic part $(\I - \P^0) f_k^b$ ($k = 2, 3$) are given by \eqref{f_b_k+1-Kernal-Ortho}, and $\P^0 f_k^b$ ($1 \leq k \leq 3$) correspond to the fluid variables $(\rho^b_k, u^b_k, \theta^b_k)$. $(\bar{u}^b_k, \theta^b_k)$ are solutions of the linear Prandtl-type system \eqref{Linear-Prandtl} with initial data \eqref{IC-Prandtl} and Robin-type boundary conditions \eqref{Boudary_Equa}. The functions $(\rho^b_k, u^b_{k,3}) $ $(1 \leq k \leq 3)$ are expressed through equations \eqref{pb_k}, \eqref{ub_13-pb_1}, \eqref{u_b_k+1-derivative} and \eqref{p_b_k+1-derivative}, where the subscript $k+1$ is replaced by $k$. Thus $F_k$ ($1 \leq k \leq 3$) are all known terms, so are $F^b_k$ ($1 \leq k \leq 3$). For $k = 4,5$, we choose $F^b_k$ such that their fluid variables vanish, hence, $F^b_k = [ (\I - \P^0) f^b_k ] \sqrt{\M^0}$, which are expressed by \eqref{f_b_k+1-Kernal-Ortho}, which mean that they are not completely specified.

Finally, we observe from \eqref{fbb_k}-\eqref{Sbb_k} and the boundary values \eqref{BC-KBL-fbb_k2}-\eqref{fk-KBL} that the Knudsen boundary layer expansions $F^{bb}_k$ depends on $F^{bb}_i$ ($i \leq k-1$) and $F^b_j$, $F_j$ ($j \leq k$). Here $F^b_i = F^{bb}_i = F_{i-1} = 0$ for subscript $i \leq 0$ and $F_0 = \M$. Since $\M$, $F_k$ and $F^b_k$ ($k=1,2,3$) are already known, we easily see that $F^{bb}_k$ ($k=1,2,3$) are totally solved. Similarly, $F^{bb}_4$ and $F^{bb}_5$ are not completely solved. Consequently, the corresponding remainder shall be $\sqrt{\eps}^4 F_{R, \eps}$ as in \eqref{Hilbert-Expnd-Form}. We also emphasize that the number $N \in \mathbb{N}_+$ appeared in the Taylor expansions before will be chosen by $N = 4$ for the Hilbert expansion \eqref{Hilbert-Expnd-Form}.

Consequently, from plugging \eqref{Hilbert-Expnd-Form} into \eqref{BE}-\eqref{MBC}, we obtain the remainder equation
\begin{equation}\label{Remainder-F}
  \begin{aligned}
    \p_t F_{R, \eps} + & v \cdot \nabla_x F_{R, \eps} - \tfrac{1}{\eps} [ B(\M, F_{R, \eps}) + B (F_{R, \eps}, \M) ] \\
    = & \sqrt{\eps}^2 B(F_{R, \eps}, F_{R, \eps}) + R_\eps + R^b_\eps + R^{bb}_\eps \\
    & + \sum_{i=1}^5 \sqrt{\eps}^{i-2} [ B(F_i + F^b_i + F^{bb}_i , F_{R,\eps}) + B( F_{R,\eps}, F_i + F^b_i + F^{bb}_i ) ]
  \end{aligned}
\end{equation}
with Maxwell reflection type boundary condition
\begin{equation}\label{BC-Remainder-F}
  \begin{aligned}
    \gamma_- F_{R, \eps} = (1 - \alpha_\eps) L \gamma_+ F_{R, \eps} + \alpha_\eps K \gamma_+ F_{R, \eps} + \sqrt{\eps}^2 \Gamma_\eps \quad \textrm{ on } \Sigma_- \,,
  \end{aligned}
\end{equation}
where
\begin{equation}\label{Gamma_eps}
  \begin{aligned}
    \Gamma_\eps = \tfrac{\alpha_\eps}{\sqrt{\eps}} ( K \gamma_+ - L \gamma_+ ) (F_5 + F^b_5 + F^{bb}_5) \,,
  \end{aligned}
\end{equation}
and
\begin{equation}\label{R_eps}
  \begin{aligned}
    R_\eps = - (\p_t + v \cdot \nabla_x) (F_4 + \sqrt{\eps} F_5) + \sum_{\substack{ i+j \geq 6 \\ 1 \leq i, j \leq 5 }} \sqrt{\eps}^{i+j-6} B(F_i, F_j) \,,
  \end{aligned}
\end{equation}
and
\begin{equation}\label{R_eps_b}
  \begin{aligned}
    R^b_\eps = & - (\p_t + \bar{v} \cdot \nabla_{\bar{x}}) (F^b_4 + \sqrt{\eps} F^b_5) - v_3 \p_\zeta F^b_5 \\
    & + \sum_{\substack{ j+l \geq 6 \\ 1 \leq j \leq 5, 1 \leq l \leq 4 }} \sqrt{\eps}^{j+l-6} \tfrac{\zeta^l}{l !} [B (\M^{(l)}, F^b_j) + B(F^b_j, \M^{(l)})] \\
    & + \sum_{\substack{ i+j \geq 6 \\ 1 \leq i, j \leq 5 }} \sqrt{\eps}^{i+j-6} [ B(F^0_i, F^b_j) + B(F^b_j, F^0_i) + B(F^b_i, F^b_j) ] \\
    & + \sum_{\substack{ i+j + l \geq 6 \\ 1 \leq i, j \leq 5, 1 \leq l \leq 4 }} \sqrt{\eps}^{i+j+l-6} \tfrac{\zeta^l}{l !} [B(F_i^{(l)}, F^b_j) + B(F^b_j, F_i^{(l)})] \\
    & + \tfrac{\zeta^5}{5!} \sum_{j=1}^5 \sqrt{\eps}^{j-1} [ B (\widetilde{\M}^{(5)} + \sum_{i=1}^5 \sqrt{\eps}^i \widetilde{F}^{(5)}_i , F^b_j) + B (F^b_j, \widetilde{\M}^{(5)} + \sum_{i=1}^5 \sqrt{\eps}^i \widetilde{F}^{(5)}_i) ] \,,
  \end{aligned}
\end{equation}
and
  \begin{align}\label{R_eps_bb}
    \no R^{bb}_\eps = & - (\p_t + \bar{v} \cdot \nabla_{\bar{x}}) (F^{bb}_4 + \sqrt{\eps} F^{bb}_5) \\
    \no & + \sum_{\substack{ j+2l \geq 6 \\ 1 \leq j \leq 5, 1 \leq l \leq 4 }} \sqrt{\eps}^{j+2l-6} \tfrac{\xi^l}{l!} [ B(\M^{(l)}, F^{bb}_j) + B(F^{bb}_j, \M^{(l)}) ] \\
    \no & + \sum_{\substack{ i+j \geq 6 \\ 1 \leq i, j \leq 5 }} \sqrt{\eps}^{i+j-6} [B(F^0_i + F^{b,0}_i, F^{bb}_j) + B(F^{bb}_j, F^0_i + F^{b,0}_i) + B( F^{bb}_i, F^{bb}_j ) ] \\
    \no & + \sum_{\substack{ i+j+2l \geq 6 \\ 1 \leq i, j \leq 5, 1 \leq l \leq 4 }} \sqrt{\eps}^{i+j+2l-6} \tfrac{\xi^l}{l!} [ B(\widetilde{F}^{(l)}_i, F^{bb}_j) + B (F^{bb}_j, \widetilde{F}^{(l)}_i) ] \\
    \no & + \sum_{\substack{ i+j+l \geq 6 \\ 1 \leq i, j \leq 5, 1 \leq l \leq 4 }} \sqrt{\eps}^{i+j+l-6} \tfrac{\xi^l}{l!} [ B(\widetilde{F}^{b, (l)}_i, F^{bb}_j) + B (F^{bb}_j, \widetilde{F}^{b, (l)}_i) ] \\
    \no & + \tfrac{\xi^5}{5!} \sum_{j=1}^5 \sqrt{\eps}^{j-1} [ B  (\sqrt{\eps}^5 \widetilde{\M}^{(5)} + \sum_{i=1}^5 \sqrt{\eps}^{i+5} \widetilde{F}^{(5)}_i + \sqrt{\eps}^i \widetilde{F}^{b, (5)}_i , F^{bb}_j) \\
    & \qquad \qquad \qquad \qquad + B (F^{bb}_j, \sqrt{\eps}^5 \widetilde{\M}^{(5)} + \sum_{i=1}^5 \sqrt{\eps}^{i+5} \widetilde{F}^{(5)}_i + \sqrt{\eps}^i \widetilde{F}^{b, (5)}_i) ] \,.
  \end{align}
  Furthermore, the following initial data are imposed on the remainder equation \eqref{Remainder-F}:
  \begin{equation}\label{IC-Remainder-F}
    \begin{aligned}
      F_{R, \eps} (0, x, v) = F_{R, \eps}^{in} (x,v) \,,
    \end{aligned}
  \end{equation}
  which satisfies the compatibility condition on $\Sigma_-$
  \begin{equation*}
    \begin{aligned}
      \gamma_- F_{R, \eps}^{in} = (1 - \alpha_\eps) L \gamma_+ F_{R, \eps}^{in} + \alpha_\eps K \gamma_+ F_{R, \eps}^{in} + \sqrt{\eps}^2 \Gamma_\eps |_{t = 0} \,.
    \end{aligned}
  \end{equation*}

For the remainder $F_{R, \eps}$, let
\begin{equation}\label{f_Reps-h_Reps}
  \begin{aligned}
    f_{R, \eps} = \tfrac{F_{R, \eps}}{\sqrt{\M}} \,, \quad h_{R, \eps}^\ell = \l v \r^\ell \tfrac{F_{R,\eps}}{\sqrt{\M_M}}
  \end{aligned}
\end{equation}
for $\ell \geq 9 - 2 \gamma_0$, where the global Maxwellian $\M_M = \M_M (v)$ is introduced by \cite{Caflish-1980-CPAM}
\begin{align}\label{Global_Maxwellian_M}
\M_M = \tfrac{1}{(2\pi T_M)^{\frac{3}{2}}} \exp \Big\{ - \tfrac{|v|^2}{2 T_M} \Big\} \,.
\end{align}
Here the constant $T_M$ satisfies
\begin{align} \label{T_M}
T_M < \max_{t \in [0, \tau], x \in \R^3_+} T (t, x) < 2 T_M \,.
\end{align}
Then there exists constants $C_1$, $C_2$ such that for some $\frac{1}{2} < z <1$ and for each $(t, x, v) \in [0, \tau] \times \R^3_+ \times \R^3$, the following inequality holds:
\begin{align}\label{M-Bound}
C_1 \M_M \le \M \le C_2 ( \M_M )^z\,.
\end{align}

\subsection{Main results}

We first introduce some notations for convenience for stating our main theorem. For multi-indexes $\alpha = (\alpha_1, \alpha_2, \cdots , \alpha_m), \alpha' = (\alpha_1' , \alpha_2', \cdots , \alpha_m') \in \mathbb{N}^m$, the symbol $\alpha \leq \alpha'$ means $\alpha_i \leq \alpha_i' (i = 1, 2, \cdots, m)$ and $|\alpha| = \alpha_1 + \alpha_2 + \cdots + \alpha_m$.

In order to quantitatively describe the linear hyperbolic system \eqref{Linear-Hyperbolic-Syst} in the half-space $\R^3_+$, let
\begin{equation*}
\p^\alpha_{t, \bar{x}} = \p_t^{\alpha_0} \p_{x_1}^{\alpha_1} \p_{x_2}^{\alpha_2} \,,
\end{equation*}
where $\alpha = (\alpha_0, \alpha_1, \alpha_2) \in \mathbb{N}^3$, and let
\begin{equation}\label{Notation_H_k}
\begin{aligned}
\|f(t)\|^2_{\mathcal{H}^k (\R^3_+)} = \sum_{|\alpha|+i \le k} \|\p_{t, \bar{x}}^\alpha \p^i_{x_3} f(t)\|^2_{L^2(\R^3_+)}\,, \ \|g(t)\|^2_{\mathcal{H}^k(\R^2)} = \sum_{|\alpha|\le k} \|\p_{t, \bar{x}}^\alpha g(t)\|^2_{L^2(\R^2)}
\end{aligned}
\end{equation}
for functions $f (t) = f(t, \bar{x}, x_3)$ and $g(t) = g (t, \bar{x})$. We remark that, for a function $f = f(\bar{x}, x_3)$ independent of the variable $t$, $\| f \|_{\mathcal{H}^k (\R^3_+)}$ is equivalent to the standard Sobolev norm $\| f \|_{H^k (\R^3_+)}$.

While characterizing quantitatively the linear Prandtl-type system \eqref{Linear-Prandtl} associated with the macroscopic parts of the viscous boundary layer, some new norms are required to be introduced. For $l \geq 0$,  the weighted norm is defined as
\begin{equation}\label{L2l}
\begin{aligned}
\| f \|^2_{L^2_l} = \int_{\R^2} \int_{\R_+} (1 + \zeta)^l | f (\bar{x}, \zeta) |^2 \d \bar{x} \d \zeta \,.
\end{aligned}
\end{equation}
We further introduce a weighted Sobolev space $\mathbb{H}_l^r (\R^3_+)$ for any $r, l \geq 0$. Denote by the 2D multi-index $\beta = (\beta_1, \beta_2) \in \mathbb{N}^2$. For any $l, r \geq 0$, let
\begin{equation}\label{Def-l_j}
\begin{aligned}
l_j = l + 2 (r - j) \,, \ 0 \leq j \leq r \,.
\end{aligned}
\end{equation}
We then introduce the norms
\begin{equation}\label{Hrl-t-3D}
\begin{aligned}
& \| f (t) \|^2_{l,r,n} = \sum_{2\gamma + |\beta| =r -n} \| \partial_t^\gamma \partial_{\bar{x}}^\beta \p_{\zeta}^n f (t) \|^2_{L^2_{l_r}} \ (0 \leq n \leq r) \,, \\
& \| f (t) \|^2_{l,r} = \sum_{n=0}^r \| f (t) \|^2_{l,r,n} = \sum_{2\gamma + |\beta| + n =r} \| \partial_t^\gamma \partial_{\bar{x}}^\beta \partial_\zeta^n f (t) \|^2_{L^2_{l_r}} \,, \\
& \| f (t) \|^2_{\mathbb{H}^r_{l, n} (\R^3_+)} = \sum_{j=0}^r \| f (t) \|^2_{l, j, n} \ (n = 0, 1, \cdots, r) \,, \\
& \| f (t) \|^2_{\mathbb{H}^r_l (\R^3_+)} = \sum_{j=0}^r \| f (t) \|^2_{l, j} = \sum_{n=0}^r \| f (t) \|^2_{\mathbb{H}^r_{l, n} (\R^3_+)}
\end{aligned}
\end{equation}
for function $f = f (t, \bar{x}, \zeta)$. For $g = g(\bar{x}, \zeta)$, let
\begin{equation}\label{Hrl-3D}
\begin{aligned}
\| g \|^2_{\mathbb{H}^r_l (\R^3_+)} = \sum_{j=0}^r \sum_{|\beta| + n = j} \| \p_{\bar{x}}^\beta \p_{\zeta}^n g \|^2_{L^2_{l_j}} \,.
\end{aligned}
\end{equation}
Similarly, for function $h = h (t, \bar{x})$, let
\begin{equation}\label{Hr-2D}
\begin{aligned}
\| h (t) \|^2_{\mathbb{H}^r (\R^2)} = \sum_{j=0}^r \| h (t) \|^2_{\Gamma, j} = \sum_{j=0}^r \sum_{2 \gamma + |\beta| = j} \| \p_t^\gamma \p_{\bar{x}}^\beta h (t) \|^2_{L^2 (\R^2)} \,.
\end{aligned}
\end{equation}
In the above norms, one order time derivative is equivalent to two orders space derivative.

We now clarify the initial data of the scaled Boltzmann equation \eqref{BE}. Let
\begin{equation}
  \begin{aligned}
    \M^{in} (x, v) = \tfrac{\rho^{in} (x)}{[2 \pi T^{in} (x)]^\frac{3}{2}} \exp \Big\{ - \tfrac{|v - \u^{in} (x)|^2}{2 T^{in} (x)} \Big\} \,.
  \end{aligned}
\end{equation}
For $1 \leq k \leq 5$,  $F_k^{in} (x,v)$, $F_k^{b, in} (\bar{x}, \tfrac{x_3}{\sqrt{\eps}}, v)$ and $F^{bb, in}_k (\bar{x}, \tfrac{x_3}{\eps}, v)$ can be constructed by the same ways of constructing the expansions $F_k (t, x, v)$, $F^b_k (t, \bar{x}, \tfrac{x_3}{\sqrt{\eps}}, v)$ and $F^{bb}_k (t, \bar{x}, \tfrac{x_3}{\eps}, v)$ in Subsection \ref{subsubsec115}, respectively. More precisely, for $k = 1,2,3$, it just replaces $\M$, $(\rho_k, u_k, \theta_k)$ and $(\bar{u}^b_k, \theta^b_k)$ by $\M^{in}$, $(\rho_k^{in}, u_k^{in}, \theta_k^{in})$ and $(\bar{u}^{b,in}_k, \theta^{b, in}_k)$, respectively. Here we further assume the initial data $(\rho_k^{in}, u_k^{in}, \theta_k^{in})$ and $(\bar{u}^{b,in}_k, \theta^{b, in}_k)$ ($1 \leq k \leq 3$) compatibly satisfy the conditions \eqref{BC-u_k3} and \eqref{Boudary_Equa}, respectively. We impose the well-prepared initial data on the scaled Boltzmann equation \eqref{BE}
\begin{equation}\label{IC-BE-wellprepared}
  \begin{aligned}
    F_\eps (0, x, v) = \M^{in} (x,v) + \sum_{k=1}^5 \sqrt{\eps}^k \big\{ F_k^{in} (x,v) + F_k^{b, in} (\bar{x}, \tfrac{x_3}{\sqrt{\eps}}, v) + F^{bb, in}_k (\bar{x}, \tfrac{x_3}{\eps}, v) \big\} \\
    + \sqrt{\eps}^4 F_{R, \eps}^{in} (x,v) \geq 0 \,.
  \end{aligned}
\end{equation}

Furthermore, for the local Maxwellian distribution $M_w (t,x)$ of the boundary given in \eqref{M_w}, we take the special form
\begin{align}\label{M_w-Perturbation}
M_w (t, \bar{x}, v) = \M^0 (t, \bar{x}, v) \,.
\end{align}

We now state our main theorem.

\begin{theorem}\label{Main-Thm}
	Consider the hard potential interaction $(0 \leq \gamma_0 \leq 1)$ Boltzmann collision kernel $B$ with an angular cutoff $($see \eqref{B-collision}$)$. Let $\ell \geq 9 - 2 \gamma_0$, and integers $s_0$, $s_k$, $s_k^b$, $s_k^{bb}$, $l^b_k$ $(1 \leq k \leq 3)$ be described as in Proposition \ref{Pro_Regularities-Coefficients}. Assume that $\| (\rho^{in}, \u^{in}, T^{in}) \|_{H^{s_0} (\R^3_+)} < \infty$ and
	\begin{equation}\label{Ein}
	  \begin{aligned}
	    \mathcal{E}^{in} : = \sum_{k=1}^3 \Big\{ \| (\rho_k^{in}, u_k^{in}, \theta_k^{in}) \|_{\mathcal{H}^{s_k} (\R^3_+)} + \| (\bar{u}^{b, in}_k, \theta_k^{b, in}) \|_{\mathbb{H}^{s_k^b}_{l^b_k} (\R^3_+)} \Big\} < \infty \,.
	  \end{aligned}
	\end{equation}
	Let $(\rho, \u, T)$ be the solution to the compressible Euler equations \eqref{Compressible_Euler_Sys} over the time interval $t \in [0, \tau]$ constructed in Proposition \ref{Proposition_Compressible_Euler}, which determines the local Maxwellian $\M$ defined in \eqref{Maxwellian}. For $1 \leq k \leq 5$, let $F_k (t,x,v)$, $F_k^b (t, \bar{x}, \tfrac{x_3}{\sqrt{\eps}}, v)$ and $F_k^{bb} (t, \bar{x}, \tfrac{x_3}{\eps}, v)$ be constructed in Proposition \ref{Pro_Regularities-Coefficients}. The local Maxwellian $M_w (t, \bar{x}, v)$ of the boundary is assumed as in \eqref{M_w-Perturbation}. There is a small constant $\eps_0 > 0$ such that if for $\eps \in (0, \eps_0)$
	\begin{equation*}
	  \begin{aligned}
	    \mathcal{E}_R^{in} : = \sup_{\eps \in (0, \eps_0)} \{ \| \tfrac{F_{R,\eps}^{in}}{\sqrt{\M^{in}}} \|_2 + \sqrt{\eps}^3 \| \l v \r^\ell \tfrac{F_{R,\eps}^{in}}{\sqrt{\M_M}} \|_\infty \} < \infty \,,
	  \end{aligned}
	\end{equation*}
	then the scaled Boltzmann equation \eqref{BE} with Maxwell reflection boundary condition \eqref{MBC} and well-prepared initial data \eqref{IC-BE-wellprepared} admits a unique solution for $\eps \in (0, \eps_0)$ over the time interval $t \in [0, \tau]$ with the expanded form \eqref{Hilbert-Expnd-Form}, i.e.,
	\begin{equation*}
	  \begin{aligned}
	    F_\eps (t, x , v) = \M (t,x,v) + \sum_{k=1}^5 \sqrt{\eps}^k \big\{ F_k (t,x,v) + F_k^b (t, \bar{x}, \tfrac{x_3}{\sqrt{\eps}}, v) + F_k^{bb} (t, \bar{x}, \tfrac{x_3}{\eps}, v) \big\} \\
	    + \sqrt{\eps}^4 F_{R, \eps} (t, x, v) \geq 0 \,,
	  \end{aligned}
	\end{equation*}
	where the remainder $F_{R, \eps} (t, x, v)$ satisfies
	\begin{equation*}
	  \begin{aligned}
	    \sup_{t \in [0, \tau]} \Big\{ \| \tfrac{F_{R, \eps} (t)}{\sqrt{\M}} \|_2 + \sqrt{\eps}^3 & \| \l v \r^\ell \tfrac{F_{R,\eps} (t)}{\sqrt{\M_M}} \|_\infty \Big\} \\
	    & \leq C(\tau, \| (\rho^{in} - \rho_\#, \u^{in}, T^{in} - T_\#) \|_{H^{s_0} (\R^3_+)}, \mathcal{E}^{in}, \mathcal{E}_R^{in}) < \infty \,.
	  \end{aligned}
	\end{equation*}
\end{theorem}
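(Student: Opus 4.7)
The plan is to take the expansion coefficients $F_k, F_k^b, F_k^{bb}$ $(1\le k\le 5)$ as supplied by Proposition~\ref{Pro_Regularities-Coefficients} and reduce the theorem to a uniform-in-$\eps$ estimate for the remainder equation \eqref{Remainder-F}--\eqref{BC-Remainder-F}. The natural framework is the coupled $L^2$--$L^\infty$ scheme of \cite{Guo-Jang-Jiang-2010-CPAM}, adapted here to a half-space with Maxwell reflection. I track simultaneously $f_{R,\eps}=F_{R,\eps}/\sqrt{\M}$ in $L^2(\R^3_+\times\R^3)$ and $h_{R,\eps}^\ell=\l v\r^\ell F_{R,\eps}/\sqrt{\M_M}$ in $L^\infty$, aiming at estimates of the shape $\|f_{R,\eps}\|_2\le C\big(1+\sqrt{\eps}^{\,a}\|h_{R,\eps}^\ell\|_\infty\big)$ and $\|h_{R,\eps}^\ell\|_\infty\le C\sqrt{\eps}^{\,-b}\|f_{R,\eps}\|_2+C$, with $a,b$ tuned so that the combination $\|f_{R,\eps}\|_2+\sqrt{\eps}^{\,3}\|h_{R,\eps}^\ell\|_\infty$ closes by Gronwall on $[0,\tau]$; existence of $F_{R,\eps}$ is then obtained by a linearized iteration freezing the nonlinear and diffuse-reflection terms.

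\medskip
\textbf{$L^2$ energy identity.} Taking the inner product of \eqref{Remainder-F} with $F_{R,\eps}/\M$, the microscopic hypocoercivity \eqref{Hypocoercivity} yields the dissipation $\tfrac{c_0}{\eps}\|(\I-\P)f_{R,\eps}\|_\nu^2$. The transport term generates a boundary integral $\tfrac12\int_{\p\R^3_+\times\R^3}v_3 f_{R,\eps}^2\sqrt{\M}\,\d\sigma\d v$; using \eqref{BC-Remainder-F} and Cauchy--Schwarz on the splitting between the specular and diffuse parts, this is converted into a nonnegative boundary dissipation of order $\alpha_\eps=\sqrt{2\pi\eps}$ on the non-Maxwellian mode of $\gamma_+F_{R,\eps}$, plus a source $\sqrt{\eps}^{\,2}\|\Gamma_\eps\|_{L^2(\Sigma_-)}$ that is bounded by the profiles $F_5,F_5^b,F_5^{bb}$. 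The $\sqrt{\eps}^{\,i-2}$ cross terms with $i=1$ are the delicate ones; they split into a macroscopic piece that vanishes up to leading order thanks to \eqref{ub_13-pb_1}, the kernel structure in Lemma~\ref{Lmm-fbb-k1}, and the compatibility identity \eqref{BC-u_13} (exactly mirroring the argument in \cite{GHW-2020}), and a microscopic piece absorbed by the $\nu$-dissipation via a weighted Cauchy--Schwarz. The sources $R_\eps, R^b_\eps, R^{bb}_\eps$ are $O(1)$ in $L^2$ uniformly in $\eps$ thanks to the weighted regularity of the layer profiles in Proposition~\ref{Pro_Regularities-Coefficients}, and the nonlinear term contributes $\sqrt{\eps}^{\,2}\|h_{R,\eps}^\ell\|_\infty\|f_{R,\eps}\|_\nu^2$, which is absorbed once $\sqrt{\eps}^{\,3}\|h_{R,\eps}^\ell\|_\infty$ stays bounded.

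\medskip
\textbf{$L^\infty$ estimate via characteristics.} For $h_{R,\eps}^\ell$ I recast \eqref{Remainder-F} as a transport-absorption equation $(\p_t+v\cdot\nabla_x+\tfrac{\nu}{\eps})h_{R,\eps}^\ell=\tfrac{1}{\eps}K_w h_{R,\eps}^\ell+S$, splitting $\L$ into the multiplier $\nu$ and a compact remainder $K$ conjugated by $\sqrt{\M_M/\M}$, whose control is provided by \eqref{M-Bound}. Integrating along backward characteristics $X(s;t,x,v)=x-(t-s)v$ until they reach either $\{x_3=0\}$ or $\{s=0\}$, and iterating the reflection jump \eqref{BC-Remainder-F} in a Guo-type stochastic cycle, the specular part contributes a factor $1-\alpha_\eps$ per bounce while the diffuse part produces a boundary integral of $h_{R,\eps}^\ell$ against the wall Maxwellian $M_w=\M^0$. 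After $k$ iterations the specular factor decays like $(1-\alpha_\eps)^{k}\lesssim e^{-c\sqrt{\eps}\,k}$, and the remaining double-integral term is reduced via Cauchy--Schwarz in $v$ and a trace lemma to $\|f_{R,\eps}\|_2$. A standard covering of velocity space to exclude nearly grazing trajectories closes the argument and yields $\|h_{R,\eps}^\ell(t)\|_\infty\le C\sqrt{\eps}^{\,-3}\|f_{R,\eps}(t)\|_2+C$, the power $-3$ matching exactly the scaling in the theorem.

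\medskip
\textbf{Existence, positivity and the main obstacle.} A linear iteration freezing the nonlinearity and diffuse kernel at the previous iterate, combined with the uniform estimates above, produces a Cauchy sequence and hence a unique $F_{R,\eps}$ on $[0,\tau]$. Positivity of $F_\eps=\M+\sum\sqrt{\eps}^{\,k}(F_k+F_k^b+F_k^{bb})+\sqrt{\eps}^{\,4}F_{R,\eps}$ is propagated from the well-prepared data \eqref{IC-BE-wellprepared} by the Ukai-type mild formulation, once one notes that all lower-order corrections are pointwise dominated by $\M$ for $\eps$ small. The principal difficulty, and the genuinely new ingredient compared with \cite{GHW-2020}, is the boundary $L^\infty$ bookkeeping: because $M_w=\M^0$ is \emph{local} in $(t,\bar x)$ (in contrast with the specular case, where no wall Maxwellian appears, and with \cite{Jiang-Masmoudi-CPAM}, where $M_w$ is global), the diffuse operator $K\gamma_+$ does not preserve a fixed reference Maxwellian, so at each reflection one has to calibrate the mismatch between $M_w$ and $\M_M$ through \eqref{M-Bound} and the hydrodynamic compatibility \eqref{BC-u_13}; this is exactly the place where the new boundary estimates on the Prandtl and Knudsen layers advertised in the abstract must be used to close the cycle.
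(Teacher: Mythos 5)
Your overall strategy matches the paper's: couple an $L^2$ estimate obtained from hypocoercivity \eqref{Hypocoercivity} with an $L^\infty$ estimate along backward characteristics, close the combined quantity $\sup_t\{\|f_{R,\eps}\|_2+\sqrt{\eps}^{\,3}\|h^\ell_{R,\eps}\|_\infty\}$ by Gr\"onwall, and verify existence by a linear iteration. The scalings $\eps^2\|h^\ell_{R,\eps}\|_\infty$ in the $L^2$ bound and $\sqrt{\eps}^{\,-3}\|f_{R,\eps}\|_2$ in the $L^\infty$ bound match Lemmas \ref{Lemma-L2-Estimates} and \ref{Lemma-L-infty-Estimate}. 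However, there are two genuine gaps.

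The most significant concerns the $L^\infty$ estimate. You propose a Guo-type stochastic cycle with iterated wall bounces, arguing that the specular factor $(1-\alpha_\eps)^k$ decays only after $k\sim\alpha_\eps^{-1}\sim\eps^{-1/2}$ reflections. This does not occur in the half-space: once a backward characteristic hits $\{x_3=0\}$ and is reflected (either specularly, $v_3\mapsto -v_3$, or diffusely, to some $v'$ with $v'\cdot n>0$), the outgoing trajectory moves monotonically into $\{x_3>0\}$ and never returns to the wall. There is at most one bounce. The paper exploits exactly this: the partition \eqref{UV} of $[0,\tau]\times\R^3_+\times\R^3$ into $\mathscr{U}$ (no bounce) and $\mathscr{V}$ (one bounce), with the boundary condition \eqref{IC-h} reducing $\mathscr{V}$ to $\mathscr{U}$ in a single step, see \eqref{CaseV}. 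An argument requiring $O(\eps^{-1/2})$ bounces is both unnecessary and potentially unclosable in finite time. Moreover, the source of the $\sqrt{\eps}^{\,-3}$ factor in Lemma \ref{Lemma-L-infty-Estimate} is not bounce bookkeeping: it comes from the Jacobian $\big|\det(\partial y/\partial v')\big|=(s-s_1)^3\geq(\eps\kappa_*)^3$ when converting the $v'$-integral to a spatial $L^2$ integral in Case 3b, which gives $\|f_{R,\eps}\|_2$ at cost $\eps^{-3/2}$.

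Second, you assert that the $\sqrt{\eps}^{\,-1}$ cross term in the $L^2$ estimate requires cancellations from \eqref{ub_13-pb_1}, Lemma \ref{Lmm-fbb-k1}, and \eqref{BC-u_13}. The paper invokes no such cancellation; it uses the standard argument of \cite{Guo-Jang-Jiang-2010-CPAM} (cf.\ \eqref{RE-L2-RB}): since $\Gamma(g_1,g_2)+\Gamma(g_2,g_1)\perp\mathcal{N}$, only $(\I-\P)f_{R,\eps}$ is paired, and Young's inequality absorbs the result into $\tfrac{c_0}{\eps}\|(\I-\P)f_{R,\eps}\|_\nu^2$. No structural identity of the layer profiles is needed there. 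Finally, you should record that the boundary dissipation coefficient $c_1=1-3\sqrt{3}\rho_\#\sqrt{T_\#}$ in Lemma \ref{Lemma-L2-Estimates} is positive only under the explicit assumption $\rho_\#\sqrt{T_\#}<\tfrac{1}{3\sqrt{3}}$ in Proposition \ref{Proposition_Compressible_Euler}; this smallness condition on the reference state is what converts the boundary integral into a coercive term after combining $I_{11}$ and $I_{12}$ in \eqref{L2-Boundary-Term-I-12-Bound}.
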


\begin{remark}
	Together with Proposition \ref{Pro_Regularities-Coefficients}, Theorem \ref{Main-Thm} shows that as $\eps \to 0$,
	\begin{equation*}
	  \begin{aligned}
	    \sup_{t \in [0, \tau]} \Big\{ \| (\tfrac{F_\eps - \M}{\sqrt{\M}}) (t) \|_2 + \| \l v \r^\ell ( \tfrac{F_\eps - \M}{\sqrt{\M_M}} ) (t) \|_\infty \Big\} \leq C \sqrt{\eps} \to 0 \,.
	  \end{aligned}
	\end{equation*}
	Therefore, while imposed on the well-prepared initial data and accommodation coefficients $\alpha_\eps = \mathcal{O} (\sqrt{\eps})$ as $\eps \to 0$, we have justified the hydrodynamic limit from the Boltzmann equation with Maxwell reflection boundary condition to the compressible Euler system with slip boundary condition for the half-space problem.
\end{remark}

\subsection{Sketch of ideas and novelties}\label{Sec_1.5}

Contrast with the work \cite{GHW-2020}, where the specular reflection boundary condition (in short, SRBC) for the scaled Boltzmann equation \eqref{BE} was considered, we consider the Maxwell reflection boundary condition (briefly, MRBC). We first point out the formal differences between \cite{GHW-2020} and our work. For the MRBC case, the slip boundary value $u_{1, 3} (t, \bar{x}, 0) = \sqrt{T^0} (\rho^0 \sqrt{T^0} + 1) > 0$ (see \eqref{BC-u_13}) for the linear hyperbolic system $(\rho_1, u_1, \theta_1)$ (see \eqref{Linaer_Hyper_Sys}) does not vanish, and for the SRBC case, $u_{1,3} (t, \bar{x}, 0) \equiv 0$. The types of boundary conditions for the linear compressible Prandtl-type equations \eqref{Linear-Prandtl} are also different. For the MRBC case, we obtain Robin-type boundary conditions (see in \eqref{Boudary_Equa}), while the Neumann-type  was derived in the SRBC case. We emphasize that the structures of boundary energy will be more subtle in the Robin-type than that in the Neumann-type while proving the existence of the system \eqref{Linear-Prandtl}. Furthermore, the way to deal with the boundary integral $- \tfrac{1}{2} \iint_{\p \R^3_+ \times \R^3} v_3 |f_{R, \eps} (t, \bar{x}, 0, v)|^2 \d v \d \bar{x}$ in $L^2$ estimates on the remainder $f_{R,\eps}$ is even much more different. For the SRBC case, the boundary integral is zero. However, for the MRBC case, a key boundary energy dissipative rate $\iint_{\Sigma_-} |v \cdot n(x)| |L \gamma_+ f_{R,\eps}|^2 \d \sigma_{\Sigma_-}$ is implied by the boundary integral, provided that the wall Maxwellian $M_w (t, \bar{x}, v)$ in \eqref{M_w} is assumed to be $\M^0$. For details see Lemma \ref{Lemma-L2-Estimates}. There are also many other small and not so essential differences, we will not list them all here.

In the Hilbert expansion approach, we hope the number of expanded terms as small as possible. In this paper, we want to employ $L^2$-$L^\infty$ framework (see \cite{Guo-2010-ARMA,Guo-Jang-Jiang-2010-CPAM}, for instance) to prove that the remainder $\sqrt{\eps}^4 F_{R, \eps} (t, x, v)$ is a higher order infinitesimal as $\eps \to 0$. We claim that the expansion \eqref{Hilbert-Expnd-Form} is {\em optimal} in this framework. The uniform $L^2$-$L^\infty$ estimates rely on an interplay between $L^2$ and $L^\infty$ estimates for the Boltzmann equation. On one hand, following \cite{Guo-Jang-Jiang-2010-CPAM}, $L^2$-estimates will be such that the norm $\| f_{R, \eps} \|_2$ can be bounded by $\sqrt{\eps}^4 \| h_{R, \eps}^\ell \|_\infty$. On the other hand, the norm $\sqrt{\eps}^3 \| h_{R, \eps}^\ell \|_\infty$ will be dominated by $\sqrt{\eps}^3 \| h_{R, \eps}^\ell (0) \|_\infty + \| f_{R, \eps} \|_2 $. For details see Lemma \ref{Lemma-L2-Estimates} and Lemma \ref{Lemma-L-infty-Estimate} below. Based on the previous interplay, we can obtain the uniform bound of the quantity
\begin{equation}\label{Uniform_L2Linfty}
\begin{aligned}
\sup_{t \in [0, \tau]} \Big\{ \| f_{R, \eps} (t) \|_2 + \| \sqrt{\eps}^3 h_{R, \eps}^\ell (t) \|_\infty \Big\} \leq C \,,
\end{aligned}
\end{equation}
provided that $\| f_{R, \eps} (0) \|_2 + \| \sqrt{\eps}^3 h_{R, \eps}^\ell (0) \|_\infty \leq C$ initially holds uniformly for small $\eps > 0$. Consequently, we see that $\sqrt{\eps}^4 F_{R, \eps}$ is bounded by $\mathcal{O} (\sqrt{\eps})$ in $L^2$-$L^\infty$ framework.

Moreover, the order $\sqrt{\eps}^3$ occurred in the uniform bound \eqref{Uniform_L2Linfty} is independent of the number of expanded terms. Indeed, no matter how many terms are expanded, the remainder equations will contain a common linear structure
\begin{equation}\label{Line-Struct}
\begin{aligned}
\p_t f_{R, \eps} + v \cdot \nabla_x f_{R, \eps} + \tfrac{1}{\eps} \L f_{R, \eps} = \textrm{ some other terms} \,.
\end{aligned}
\end{equation}
In $L^2$ estimates, the hypocoercivity of the operator $\L$ in \eqref{Hypocoercivity} produces a dissipative rate $\tfrac{c_0}{\eps} \| (\I - \P) f_{R, \eps} \|^2_\nu$, which is the key point to deal with the singular terms. However, in $L^\infty$ estimates, after transforming $f_{R, \eps}$ to $h_{R,\eps}^\ell$ (by using \eqref{f_Reps-h_Reps}), the equation \eqref{Line-Struct} will be integrated along the trajectory $[X_{cl} (s; t, x, v), V_{cl} (s; t, x , v)]$ (see \eqref{t-V-X-Formula} below). We then have the pointwise estimate
\begin{align*}
| h_{R, \eps}^\ell (t, x, v) | \leq & C \| h_{R, \eps}^\ell (0) \|_\infty + \tfrac{1}{\eps^2} \int_0^t \exp\Big\{ -\tfrac{1}{\eps} \int_s^t \nu (\phi) \d \phi \Big\} \\
& \qquad \times \int_{\R^3 \times \R^3} \big| \Bbbk_{\ell} \big( V_{cl} (s), v^\prime \big) \Bbbk_{\ell} (v^\prime, v^{\prime\prime}) \big| \int_0^s \exp\Big\{ -\tfrac{1}{\eps} \int_{s_1}^s \nu (v^\prime) (\phi) \d \phi \Big\} \\
& \qquad \qquad \qquad \times \big| h^\ell_{R, \eps} \big( s_1, X_{cl} (s_1; s, X_{cl} (s), v^\prime), v^{\prime\prime} \big) \big| \d v^{\prime\prime} \d v^\prime \d s_1 \d s \\
+ & \textrm{ some other controllable quantities} \,,
\end{align*}
where $\Bbbk_\ell (v, v')$ satisfies \eqref{K_c-Kernal-Bound}. As shown in Case 3b of the proof for Lemma \ref{Lemma-L-infty-Estimate}, the second term in RHS of the previous inequality will be bounded by
\begin{equation*}
\begin{aligned}
\tfrac{C}{N_0} \sup_{s \in [0, t]} \| h_{R, \eps}^\ell (s) \|_\infty + \tfrac{C}{\sqrt{\eps}^3} \sup_{s \in [0, t]} \| f_{R, \eps} (s) \|_2
\end{aligned}
\end{equation*}
for $N_0 > 0$ large enough. Comparing to the $L^2$ estimates, we fail to seek a similar hypocoercivity \eqref{Hypocoercivity} in $L^\infty$ estimates, so that the singular term $\tfrac{1}{\eps} \L f_{R, \eps}$ is roughly treated as a source term. Therefore, there is a $\sqrt{\eps}^3$-order disparity between $\sup_{s \in [0, t]} \| h_{R, \eps}^\ell (s) \|_\infty$ and $\sup_{s \in [0, t]} \| f_{R, \eps} (s) \|_2$. Once we expand less number of the terms than that in \eqref{Hilbert-Expnd-Form}, the corresponding remainder $\sqrt{\eps}^m F_{R, \eps}$ ($m \leq 3$) will only be bounded by $\mathcal{O} (\sqrt{\eps}^{m-3})$, which is failed to achieve our goal. Nevertheless, this disparity maybe not intrinsic, and perhaps be covered in some other frameworks. So, we may expand less terms in the Hilbert expansion when proving the compressible Euler limit by employing the Hilbert expansion approach.

When constructing the viscous boundary layers $F^b_k$, one of key points is to focus on the existence of the smooth solutions to the linear compressible Prandtl-type equations \eqref{u-theta_Eq} (or \eqref{Linear-Prandtl}) with Robin-type boundary conditions \eqref{u-theta_BC} (or \eqref{Boudary_Equa}), which derives from the Maxwell reflection boundary condition \eqref{MBC}. Note that the linear compressible Prandtl-type system \eqref{u-theta_Eq} is a degenerated parabolic system with nontrivial Robin-type boundary values \eqref{u-theta_BC}. For the linear system, we can introduce some explicit functions to zeroize the inhomogeneous boundary values (see \eqref{theta_BC}), which actually converts the boundary values into the form of external force terms $(\tilde{f}, \tilde{g})$, see \eqref{tilde-f-g} below. We therefore obtain the system \eqref{Theta_Equa} with zero boundary values. Due to \eqref{Theta_Equa} is a degenerated parabolic system, we can construct a linear parabolic approximate system \eqref{Theta_Appro} while proving the existence of \eqref{Theta_Equa}. When deriving the uniform bounds of the approximate system in the space $L^\infty(0, \tau; \mathbb{H}^k_l(\R^3_+))$, the {\em key point} is to deal with the boundary values of the higher order normal $\zeta$-derivatives on $\{ \zeta = 0 \}$. The idea is employing the structures of equations to convert the higher normal $\zeta$-derivatives to the values of tangential $\bar{x}$-derivatives and time derivatives on $\{ \zeta = 0 \}$, see Lemma \ref{Lemma_Theta_BC_infty} below. Then we can find a key boundary energy
$$\sup_{t \in [0, \tau]} \| B_c (u, \theta) (t) \|^2_{\mathbb{H}^{k-1}(\R^2)} + \int_0^\tau \| B_c (u, \theta) (s) \|^2_{\mathbb{H}^k (\R^2)} \d s \,,$$
where $B_c (u, \theta) = (u, \theta) |_{\zeta=0} - (\tfrac{1}{R_u} b, \tfrac{1}{R_\theta} a)$ (see Lemma \ref{Prop-Prandtl}).

Furthermore, the solution $(u, \theta) (t, \bar{x}, \zeta)$ to the linear compressible Prandtl-type equations \eqref{u-theta_Eq} will produce a loss of derivatives with respect to the boundary values and source terms. More precisely, as shown in Lemma \ref{Prop-Prandtl}, if $(u, \theta) (t, \bar{x}, \zeta)$ is in $L^\infty(0, \tau; \mathbb{H}^k_l(\R^3_+))$, the source terms $(f, g) (t, \bar{x}, \zeta)$ and the boundary values $(b,a) (t, \bar{x})$ should be in $L^\infty (0, \tau; \mathbb{H}^{k+1}_l (\R^3_+))$ and $L^\infty (0, \tau; \mathbb{H}^{k+3}(\R^2))$, respectively. The reasons are as follows. For the system \eqref{Theta_Equa} zeroed boundary values, we shall use the structures of the equations to dominate the boundary values of higher order $\zeta$-derivatives, which involve the new source term $(\tilde{f}, \tilde{g})$ restricted on $\{\zeta=0\}$. As shown in Lemma \ref{Lemma_IV7B}, in order to control the boundary values in the space $L^\infty(0, \tau; \mathbb{H}^k (\R^2))$, the $(\tilde{f}, \tilde{g}) |_{\zeta=0}$ should be also in $L^\infty(0, \tau; \mathbb{H}^k (\R^2) )$. Combining the trace inequalities in Lemma \ref{Lemma_Trace}, the new source terms should be in $ \mathbb{H}^{k+1}_l (\R^3_+)$, so should be the source terms $(f,g)$. Noticing that $\p_t (u_b, \theta_a) \thicksim \p_t (b, a)$ occur in the new source terms $(\tilde{f}, \tilde{g})$ and a first order time derivative is equivalent to a second order tangential $\bar{x}$-derivative in the space $\mathbb{H}^k (\R^2)$, we therefore see that the boundary values $(b,a) (t, \bar{x})$ should be in $L^\infty (0, \tau; \mathbb{H}^{k+3}(\R^2))$.

\subsection{Organization of this paper}

In next section,  the uniform bounds to the expansions $F_k$, $F_k^b$ and $F_k^{bb}$ ($1 \leq k \leq 5$) appeared in \eqref{Hilbert-Expnd-Form} is derived. Section \ref{Sec_Remainder-Bnds} devotes to prove Theorem \ref{Main-Thm} by using the $L^2$-$L^\infty$ arguments. More precisely,  the $L^2$ and $L^\infty$ estimates are given. In Section \ref{Sec_Robin-BC}, it is formally derived the boundary conditions for the linear hyperbolic system \eqref{Linaer_Hyper_Sys} and linear compressible Prandtl-type equations \eqref{Linear-Prandtl}, namely, prove Lemma \ref{Lmm-Robin-BC}. In Section \ref{Sec_Prandtl}, the proof for the existence of the linear compressible Prandtl-type system \eqref{u-theta_Eq} with Robin-type boundary values \eqref{u-theta_BC} is given. Hence, we prove Lemma \ref{Prop-Prandtl}.

\section{Uniform bounds to the solutions of expansions}\label{Sec_Expansion}

In this section, we aim at deriving the uniform bounds to the expansions $F_k$, $F_k^b$ and $F_k^{bb}$ ($1 \leq k \leq 5$).

\subsection{Some auxiliary systems}

In the expansions \eqref{Hilbert-Expnd-Form}, there are some common types systems, which are only distinguished by some known coefficients. For $1 \leq k \leq 3$, the fluid parts of $F_k$ satisfy a linear hyperbolic system, and that of $F_k^b$ obey a linear compressible Prandtl-type systems with Robin-type boundary conditions. Moreover, $F_k^{bb}$ ($1 \leq k \leq 5$) subject to the well-known Knudsen boundary layer equations. We summarize quantitatively these system as follows.

We alway assume that $(\rho, \u, T)$ over the time interval $t \in [0, \tau]$ is the smooth solution to the compressible Euler system \eqref{BE} given in Proposition \ref{Proposition_Compressible_Euler} and $p = \rho T$. We also employ the notation
\begin{equation}\label{Ek}
  \begin{aligned}
    E_k := \sup_{t\in [0, \tau]} \| (\rho - \rho_\#, \u, T - T_\#) (t)\|_{H^k(\R^3_+)}
  \end{aligned}
\end{equation}
for any integer $k \geq 3$.

Let $(\tilde{\rho}, \tilde{u}, \tilde{\theta}) (t, x)$ satisfy the following linear hyperbolic system:
\begin{equation}\label{Linaer_Hyper_Sys}
\begin{aligned}
\left\{
\begin{array}{l}
\p_t \tilde{\rho} + \div_x (\rho \tilde{u} + \tilde{\rho} \u) =0\,, \\ [1.5mm]
\rho (\p_t \tilde{u} + \tilde{u} \cdot \nabla_x \u + \u \cdot \nabla_x \tilde{u}) - \frac{\nabla_x p}{\rho} \tilde{\rho} + \nabla \big( \tfrac{\rho \tilde{\theta} + 3 T \tilde{\rho}}{3} \big) = f\,,\\ [1.5mm]
\rho \big[ \p_t \tilde{\theta} + \tfrac{2}{3} \big( \tilde{\theta} \div_x \u + 3 T \div_x \tilde{u} \big) + \u \cdot \nabla_x \tilde{\theta} + 3 \tilde{u} \cdot \nabla_x T \big] =g\,,
\end{array}
\right.
\end{aligned}
\end{equation}
with $(t, x) \in (0, \tau) \times \R^3_+$. We impose the boundary condition
\begin{equation}\label{BC_LHS}
\tilde{u}_3 (t, \bar{x}, 0) = d (t,x)\,, \forall (t, \bar{x}) \in (0, \tau) \times \R^2
\end{equation}
and initial condition
\begin{equation}\label{Initial_LHS}
(\tilde{\rho}, \tilde{u}, \tilde{\theta}) (0, x) = (\tilde{\rho}_0, \tilde{u}_0, \tilde{\theta}_0) (x)\,.
\end{equation}

Recalling the definitions of the space $\mathcal{H}^k (\R^3_+)$ and $\mathcal{H}^k (\R^2)$ in \eqref{Notation_H_k}, we then have the following lemma for the local existence of smooth solution for the linear hyperbolic system \eqref{Linaer_Hyper_Sys}.

\begin{lemma}[\cite{GHW-2020}] \label{Lemma_Local_Sol_Hyperbolic_Sys}
	Assume that
	\begin{equation*}
	\mathbb{E}_0 : = \|(\tilde{\rho}^0, \tilde{u}^0, \tilde{\theta}^0)\|^2_{\mathcal{H}^k (\R^3_+)} + \sup_{t \in (0, \tau)} \big[\|(f, g)(t)\|^2_{\mathcal{H}^{k+1} (\R^3_+)} + \|d(t)\|_{\mathcal{H}^{k+2} (\R^2)}^2 \big] < +\infty
	\end{equation*}
	with $k \ge 3$, and the compatibility condition is satisfied for the initial data. Then there exists a unique smooth solution to \eqref{Linaer_Hyper_Sys}-\eqref{Initial_LHS} with boundary condition \eqref{BC_LHS} for $t \in [0, \tau]$, such that
	\begin{equation}\label{Energy_Inequ_LHS}
	\begin{aligned}
	\sup_{t \in [0, \tau]} {\|(\tilde{\rho}, \tilde{u}, \tilde{\theta}) (t)\|_{\mathcal{H}^k (\R^3_+)}^2} \le C (\tau, E_{k+2}) \mathbb{E}_0 \,.
	\end{aligned}
	\end{equation}
\end{lemma}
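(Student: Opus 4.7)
The plan is to reduce \eqref{Linaer_Hyper_Sys}--\eqref{Initial_LHS} to a symmetric hyperbolic system with homogeneous characteristic boundary data and then derive anisotropic energy estimates.

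First I would lift the inhomogeneous Dirichlet datum. Picking a smooth cutoff $\chi(x_3)$ supported near $0$ with $\chi(0)=1$, set $\Phi(t, x) := (0, 0, \chi(x_3) d(t, \bar{x}))$ and substitute $\tilde u = V + \Phi$. Trace--extension yields $\|\Phi(t)\|_{\mathcal{H}^{k+1}(\R^3_+)} \lesssim \|d(t)\|_{\mathcal{H}^{k+2}(\R^2)}$, which precisely accounts for the two-derivative loss in the hypothesis on $d$. The shifted unknown $U := (\tilde{\rho}, V, \tilde{\theta})$ then solves a system of the same form with modified source terms (involving at most one derivative of $\Phi$, hence bounded by $\|(f,g)\|_{\mathcal{H}^{k+1}}+\|d\|_{\mathcal{H}^{k+2}}$) and the homogeneous slip condition $V_3|_{x_3=0}=0$.

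Next I would symmetrize. Rescaling continuity by $T/\rho$, momentum by $1/\rho$, and energy by $3/(2T)$ puts the system into Friedrichs symmetric form $A_0 \p_t U + \sum_{i=1}^3 A_i \p_{x_i} U = F$ with $A_0$ uniformly positive definite on $[0,\tau]$ thanks to the bounds of Proposition \ref{Proposition_Compressible_Euler}. At $x_3=0$ the normal flux $A_3$ couples only $V_3$ with $p' := (\rho\tilde\theta+3T\tilde\rho)/3$, and combined with $\u_3^0=0$ and $V_3|_{x_3=0}=0$ this makes the boundary maximally dissipative; in fact $\langle A_3 U,U\rangle|_{x_3=0}$ vanishes identically. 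The core estimate then proceeds in two layers. First, apply tangential derivatives $\p^\alpha_{t,\bar x}$ with $|\alpha|\le k$: these commute with the homogeneous boundary condition, so boundary terms still vanish after commutation, and Moser-type product estimates bound the commutators with the $H^{s_0}$ coefficients $(\rho,\u,T)$. The $E_{k+2}$ on the right-hand side of \eqref{Energy_Inequ_LHS} accommodates the two extra derivatives consumed when time derivatives are traded for spatial derivatives via the equation. A Gr\"onwall argument produces the bound on $\sum_{|\alpha|\le k}\|\p^\alpha_{t,\bar x}U(t)\|^2_{L^2}$. Normal derivatives are then recovered algebraically: the continuity equation gives $\p_{x_3}V_3$ in terms of $\p_t\tilde\rho$ and tangential quantities; the third momentum equation gives $\p_{x_3}p'$ similarly; the horizontal momentum and energy equations then give $\p_{x_3}\tilde\rho$ and $\p_{x_3}\tilde\theta$ up to already-estimated terms. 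Induction on the number of normal derivatives closes the bound in $\mathcal{H}^k(\R^3_+)$.

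For existence I would use a vanishing viscosity $-\eta\Delta$ on the momentum and energy equations with compatible Dirichlet/Neumann data, solve the resulting parabolic system by Galerkin approximation, derive $\eta$-uniform bounds from the same energy scheme (checking that the additional viscous boundary contributions have the correct sign or vanish in the limit), and pass to the limit by weak compactness; uniqueness is the $L^2$ energy estimate for the difference of two solutions. The main obstacle is intrinsic to the characteristic nature of the boundary: recovering normal-derivative estimates requires an algebraic inversion of the tangential block, which succeeds here only because in the linearized compressible Euler system each unknown's normal derivative appears at leading order in one of the three scalar equations. The background condition $\u_3^0\equiv 0$ (Proposition \ref{Proposition_Compressible_Euler}) is what guarantees that this inversion is uniform and non-degenerate throughout $[0,\tau]$.
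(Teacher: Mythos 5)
The paper does not prove this lemma; it is stated as a direct citation to \cite{GHW-2020}, so there is no in-paper argument to compare against. Judged on its own merits, your sketch is a sound outline of the standard treatment of a linear symmetric hyperbolic system with a uniformly characteristic boundary (in the vein of Secchi \cite{Secchi-MMAS-1995}, Schochet \cite{Schochet-1986-CMP}, and Chen \cite{Chen-CAM-1980,Chen-FMC-2007}), which is the framework \cite{GHW-2020} also relies on. The lifting $\tilde u = V + \Phi$ with $\Phi = (0,0,\chi(x_3)d)$, the Friedrichs symmetrization, the observation that $\u^0_3 = 0$ forces $\langle A_3 U, U\rangle|_{x_3=0}$ to vanish under the homogeneous slip condition, the tangential estimate followed by algebraic recovery of normal derivatives via the equations, and the two-derivative loss in $d$ reflecting the anisotropic $\mathcal{H}^k$ structure (with $\p_t$ traded for two $\p_{\bar x}$) are all the right ingredients.

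A few points worth tightening up, although none is a fatal gap. First, the bound $\|\Phi(t)\|_{\mathcal{H}^{k+1}(\R^3_+)}\lesssim\|d(t)\|_{\mathcal{H}^{k+2}(\R^2)}$ is not really a trace--extension estimate: since $\Phi$ is a tensor product $\chi(x_3)d(t,\bar x)$, you get $\|\Phi\|_{\mathcal{H}^s(\R^3_+)}\lesssim\|d\|_{\mathcal{H}^s(\R^2)}$ with no half-derivative loss, and the extra derivative in the hypothesis on $d$ is consumed because the shifted source contains $\p_t\Phi$ and $\nabla_x\Phi$; this is consistent with your conclusion but the reasoning as stated overcounts. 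Second, your claim that every normal derivative is ``recovered algebraically'' is a bit too strong: only $\p_{x_3}\tilde u_3$ (from continuity) and $\p_{x_3}p'$ (from the third momentum equation) are recovered by inverting the acoustic block of $A_3$; the components in $\ker A_3|_{x_3=0}$ — namely $\tilde u_1,\tilde u_2$ and the entropy-like combination of $(\tilde\rho,\tilde\theta)$ — obtain normal-derivative estimates by differentiating the system in $x_3$ and exploiting that $A_3$ restricted to the kernel is $O(x_3)$ near the boundary, so the boundary terms still vanish; this is the place where the characteristic structure actually bites and deserves a sentence. Third, you invoke vanishing viscosity for existence without spelling out which auxiliary boundary conditions you attach to $-\eta\Delta$, how you arrange compatibility of the approximate data, and why the extra viscous boundary terms either vanish or have a sign — this is doable but is exactly the technical work that makes the argument nontrivial, and since the lemma hypothesis explicitly assumes a compatibility condition, the proof should make visible where that assumption enters.
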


As shown in \eqref{Linear-Prandtl} and Lemma \ref{Lmm-Robin-BC}, to construct the solution of viscous boundary layer, the following linear compressible Prandtl type system of $(u, \theta) = (u_1, u_2, \theta) (t, \bar{x}, \zeta)$ should be considered: for $i = 1, 2$,
\begin{equation}\label{u-theta_Eq}
\left\{
\begin{aligned}
& \rho^0 \partial_t u_i + \rho^0 \bar{\u}^0 \cdot \nabla_{\bar{x}} u_i + \rho^0 ( \partial_{x_3} \u^0_3 \zeta + u^0_{1,3} ) \partial_\zeta u_i + \rho^0 u \cdot \nabla_{\bar{x}} \u^0_i + \tfrac{\partial_{x_i} p^0}{3 T^0} \theta = \mu (T^0) \partial_\zeta^2 u_i + f_i \,, \\
& \rho^0 \p_t \theta + \rho^0 \bar{\u}^0 \cdot \nabla_{\bar{x}} \theta + \rho^0 ( \p_{x_3} \u^0_3 \zeta + u^0_{1.3} ) \p_\zeta \theta + \tfrac{2}{3} \rho^0 \div_x \u^0 \theta = \tfrac{3}{5} \kappa(T^0) \p_\zeta^2 \theta + g \,,
\end{aligned}
\right.
\end{equation}
where $(t, \bar{x}, \zeta) \in [0, \tau] \times \R^2 \times \R_+$, $u^0_{1,3} = \sqrt{T^0} (1 + \rho^0 \sqrt{T^0})$ given in \eqref{BC-u_13}, and $(f_1, f_2, g) (t, \bar{x}, \zeta)$ are the known source functions. By the properties of compressible Euler system, we can assume the viscosity and heat conductivity, respectively,
\begin{equation}\label{mu0-kappa0}
\begin{aligned}
\mu (T^0) \geq \mu_0 > 0 \,, \ \tfrac{3}{5} \kappa (T^0) \geq \kappa_0 > 0
\end{aligned}
\end{equation}
for some constants $\mu_0, \kappa_0 > 0$. The system \eqref{u-theta_Eq} should be imposed on the non-homogeneous Robin boundary conditions, namely,
\begin{equation}\label{u-theta_BC}
\left\{
\begin{aligned}
\big( \partial_\zeta u_i - R_u u_i \big) \big|_{\zeta = 0} = b_i (t, \bar{x}) \,, \ \big( \partial_\zeta \theta - R_\theta \theta \big) \big|_{\zeta = 0} = a (t, \bar{x} ) \,, \\
\lim_{\zeta \to \infty} (u, \theta) (t, \bar{x}, \zeta) = 0 \,,
\end{aligned}
\right.
\end{equation}
where the known functions $(R_u, R_\theta) = (R_u, R_\theta) (\rho^0, u^0, T^0)$, and
\begin{equation}\label{LowBnd-1}
\begin{aligned}
R_u \geq R^0_u > 0 \,, R_\theta \geq R^0_\theta > 0
\end{aligned}
\end{equation}
for some constants $R_u^0$ and $R_\theta^0$. We impose \eqref{u-theta_Eq} with initial data
\begin{equation}\label{u-theta_IC}
\begin{aligned}
u (t, \bar{x}, \zeta) |_{t = 0} = u_0 (\bar{x}, \zeta) \,, \ \theta (t, \bar{x}, \zeta) |_{t = 0} = \theta_0 (\bar{x}, \zeta) \,,
\end{aligned}
\end{equation}
which satisfy the corresponding compatibility conditions. We remark that the $\theta$-equation in \eqref{u-theta_Eq}-\eqref{u-theta_IC} is independent of $u$-equation. As a result, we can first solve $\theta$, and then solve $u$, where $\theta$ involved in $u$-equation can be regarded as the known source function.

Recalling the definitions of $\mathbb{H}^k_l (\R^3_+)$ and $\mathbb{H}^k(\R^2)$ in \eqref{Hrl-3D} and \eqref{Hr-2D}, we give the following results.

\begin{lemma}\label{Prop-Prandtl}
	Let $k \geq 3$, $l \geq 0$, and the compatibility conditions for the initial data \eqref{u-theta_IC} be satisfied. Assume
	\begin{equation}\label{IC_norm_theta}
	\begin{aligned}
	\mathbb{E}_1 : = \| u_0 \|^2_{\mathbb{H}^k_l (\R^3_+)} & + \sup_{t\in [0, \tau]} \big( \| b (t) \|^2_{\mathbb{H}^{k+3} (\R^2)} + \| f (t) \|^2_{\mathbb{H}^{k+1}_l (\R^3_+)} \big) \\
	& + \| \theta_0 \|^2_{\mathbb{H}^k_l (\R^3_+)} + \sup_{t\in [0, \tau]} \big( \| a (t) \|^2_{\mathbb{H}^{k+3} (\R^2)} + \| g (t) \|^2_{\mathbb{H}^{k+1}_l (\R^3_+)} \big) < \infty \,.
	\end{aligned}
	\end{equation}
	Then there exists a unique smooth solution $( u, \theta) (t, \bar{x}, \zeta)$ to \eqref{u-theta_Eq}-\eqref{u-theta_IC} over $t \in [0, \tau]$ satisfying
	\begin{equation}\label{theta-bnd}
	  \begin{aligned}
	    \sup_{t\in [0, \tau]} \Big( \| (u, \theta) & (t) \|^2_{\mathbb{H}^k_l (\R^3_+)} + \| B_c (u, \theta) (t) \|^2_{\mathbb{H}^{k-1}(\R^2)} \\
	    & + \int_0^t \| \p_{\zeta} (u, \theta ) (s) \|^2_{\mathbb{H}^k_l (\R^3_+)} + \| B_c (u, \theta) (s) \|^2_{\mathbb{H}^k(\R^2)} \d s \Big) \leq C (\tau, E_{k+1}) \mathbb{E}_1 \,,
	  \end{aligned}
	\end{equation}
	where $B_c (u, \theta) = (u, \theta) |_{\zeta=0} - (\tfrac{1}{R_u} b, \tfrac{1}{R_\theta} a)$.
\end{lemma}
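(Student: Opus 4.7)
The plan is to prove existence, uniqueness, and the bound \eqref{theta-bnd} in four stages: decouple the system and lift away the inhomogeneous Robin data; construct solutions by a uniformly parabolic approximation; derive weighted energy estimates in which the Robin boundary condition produces a favorable dissipative boundary term; and close the estimate for normal $\zeta$-derivatives algebraically from the equation, which is where the loss of derivatives in \eqref{IC_norm_theta} originates. Because the $\theta$-equation in \eqref{u-theta_Eq} does not involve $u$, I would first solve for $\theta$ on its own and then treat the $u$-equation with $\theta$ frozen as a known inhomogeneity, so it suffices to describe the argument for $\theta$; the argument for $u$ is entirely analogous and additionally exploits the bound already obtained for $\theta$.

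For the reduction, I would pick a smooth cutoff $\chi(\zeta)$ with $\chi(0)=1$, $\chi'(0)=0$, and compact support, and set $\theta_a(t,\bar x,\zeta) = -\chi(\zeta)\,a(t,\bar x)/R_\theta$, so that $(\p_\zeta\theta_a - R_\theta\theta_a)|_{\zeta=0} = a$ and $\theta_a$ decays trivially as $\zeta\to\infty$. Then $\Theta := \theta - \theta_a$ satisfies the same equation with a modified source term $\tilde g$ and the homogeneous Robin condition $(\p_\zeta\Theta - R_\theta\Theta)|_{\zeta=0}=0$. The modified source $\tilde g$ contains $\p_t a$, $\bar{\u}^0\cdot\nabla_{\bar x}a$, and second-order $\zeta$-derivatives of $\chi a$, which is the mechanism producing the $\mathbb{H}^{k+3}(\R^2)$ requirement on $a$: in view of the trace inequality for $\mathbb{H}^{k+1}_l(\R^3_+) \hookrightarrow \mathbb{H}^{k}(\R^2)$ on $\{\zeta=0\}$ (Lemma \ref{Lemma_Trace}) and the fact that one time derivative counts as two tangential derivatives in \eqref{Hr-2D}, we need $\tilde g \in \mathbb{H}^{k+1}_l(\R^3_+)$, which forces $a\in \mathbb{H}^{k+3}(\R^2)$ and $g\in\mathbb{H}^{k+1}_l(\R^3_+)$. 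To construct $\Theta$, I would regularize by adding $-\delta\Delta_{\bar x}\Theta$ so that the equation becomes uniformly parabolic in $(\bar x,\zeta)$, obtain a smooth solution on a growing tangential box by standard linear parabolic theory, and pass to the limit $\delta\to 0$ using the uniform bounds described below.

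The core of the proof is the weighted energy estimate. Multiplying the $\Theta$-equation by $(1+\zeta)^l\Theta$ and integrating over $\R^2\times\R_+$, the diffusion term $\tfrac{3}{5}\kappa(T^0)\p_\zeta^2\Theta$ produces $\int(1+\zeta)^l\tfrac{3}{5}\kappa(T^0)(\p_\zeta\Theta)^2$ plus the boundary contribution $-\int_{\R^2}\tfrac{3}{5}\kappa(T^0)\Theta\,\p_\zeta\Theta\,d\bar x|_{\zeta=0}$, which, thanks to $\p_\zeta\Theta|_{\zeta=0}=R_\theta\Theta|_{\zeta=0}$ and \eqref{LowBnd-1}, is $-R_\theta\int\tfrac{3}{5}\kappa(T^0)\Theta^2\,d\bar x|_{\zeta=0}\le 0$; this is the \emph{key} dissipative boundary term, which gives the extra control $\|B_c(U,\Theta)\|_{\mathbb{H}^{k-1}(\R^2)}^2 + \int_0^t\|B_c\|_{\mathbb{H}^k(\R^2)}^2$ in \eqref{theta-bnd}. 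The degenerate convection $\rho^0(\p_{x_3}\u^0_3\,\zeta + u^0_{1,3})\p_\zeta\Theta$ is handled by integration by parts in $\zeta$: the weight-derivative generates $(1+\zeta)^{l-1}\cdot\zeta$, which is bounded by $(1+\zeta)^l$, and the boundary term at $\zeta=0$ survives only through $u^0_{1,3}(t,\bar x)$ and is again controlled by the Robin trace. The same argument, applied after commuting $\p_t^\gamma\p_{\bar x}^\beta$ with the equation (which commutes cleanly with both the equation and the homogeneous Robin condition), yields the tangential part of $\|\Theta\|_{\mathbb{H}^k_l}$; note that the definition \eqref{Def-l_j} of the varying weight $l_j = l+2(r-j)$ is exactly designed so that the commutator of $\zeta\p_\zeta$ with the tangential differentiation structure closes.

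The main obstacle, as flagged in Subsection \ref{Sec_1.5}, is to recover the normal-derivative part $\sum_{2\gamma+|\beta|+n=j}\|\p_t^\gamma\p_{\bar x}^\beta\p_\zeta^n\Theta\|^2_{L^2_{l_j}}$ for $n\ge 2$, because differentiating the equation in $\zeta$ destroys the boundary condition. I would instead solve the equation algebraically for $\p_\zeta^2\Theta$ in terms of $\p_t\Theta$, lower-order tangential derivatives of $\Theta$, $\p_\zeta\Theta$, and $\tilde g$; this expresses the $n=2$ normal-derivative norm in terms of the $n\le 1$ norms already controlled and one extra derivative of $\tilde g$, and iteration yields all normal derivatives — this is precisely why $(f,g)\in\mathbb{H}^{k+1}_l$ rather than $\mathbb{H}^k_l$ is needed. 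To get the boundary norm $\|B_c(U,\Theta)(t)\|_{\mathbb{H}^{k-1}(\R^2)}$ claimed in \eqref{theta-bnd}, I would restrict the same algebraic identity to $\{\zeta=0\}$ and convert the resulting $\p_\zeta$-traces into $B_c$-traces by the Robin identity, controlling the interior contributions by Lemma \ref{Lemma_Trace}. Once all $\delta$-uniform bounds are in hand, weak-$*$ compactness delivers a solution of the original problem, the $u$-equation is then solved identically with $\theta$ as an additional source (estimated as above by $\mathbb{E}_1$), and uniqueness follows from the basic $L^2_l$-energy identity applied to the difference of any two solutions, which has zero data and zero boundary values.
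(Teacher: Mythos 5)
Your proposal is essentially sound, but it takes a genuinely different route from the paper at the crucial step. Both you and the paper start by lifting the Robin data, decoupling $u$ from $\theta$, and exploiting the sign of the Robin boundary term; the divergence is in how the normal derivatives $\p_\zeta^n\Theta$ with $n\ge 2$ are recovered. The paper performs the weighted energy estimate on $\p_t^\gamma\p_{\bar x}^\beta\p_\zeta^n\Theta$ at \emph{every} level $n\le r$; the integration by parts of the $\p_\zeta^{n+2}$ term then produces the boundary quantity $IV_{7B}$ involving $\p_\zeta^{n+1}\Theta|_{\zeta=0}$, which the Robin condition does not determine, and this is controlled via the iterated recursion of Lemma~\ref{Lemma_Theta_BC_infty} and the estimate of Lemma~\ref{Lemma_IV7B} (a nontrivial bookkeeping in $(\mathscr{L}+\tilde{\mathscr{L}})^{m}$ and $\tilde g$). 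You instead propose to close only the $n=0,1$ energy estimates and recover $n\ge2$ algebraically from the equation. This is a simpler strategy, and the weight convention \eqref{Def-l_j}, with one time derivative counted as two tangential derivatives, is exactly what makes it close: $\p_t^{\gamma+1}\p_{\bar x}^\beta\p_\zeta^{n-2}$ sits at the same pseudo-level $j$ and weight $l_j$ as $\p_t^\gamma\p_{\bar x}^\beta\p_\zeta^{n}$, and the $\zeta$-factor from the convection goes to the next weight $l_{j-1}=l_j+2$. If done carefully, your route avoids the paper's Lemmas~\ref{Lemma_Theta_BC_infty} and~\ref{Lemma_IV7B} almost entirely.

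That said, you under-specify two points that your plan cannot skip. First, the $n=1$ estimate cannot be obtained by bootstrap (the equation gives $\p_\zeta^2\Theta$, not $\p_\zeta\Theta$), so you must carry out the energy identity for $\p_t^\gamma\p_{\bar x}^\beta\p_\zeta\Theta$; after integrating the $\p_\zeta^3$ term by parts, the boundary integrand contains $\p_\zeta^2\Theta|_{\zeta=0}$, which the Robin identity alone does not give. This is precisely where you must restrict the equation to $\zeta=0$ (the paper's \eqref{Theta_BC_2_Order}), not merely to produce the $B_c$ norms as you suggest; the resulting $\p_t\Theta|_{\zeta=0}\cdot R_\theta\Theta|_{\zeta=0}$ term, upon integrating in $t$, is what yields the $\sup_t\|B_c(u,\theta)(t)\|^2_{\mathbb{H}^{k-1}}$ control. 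Second, the $-\delta\Delta_{\bar x}$ regularization does nothing for the degenerate term $\p_{x_3}\u^0_3\,\zeta\,\p_\zeta$, whose coefficient grows linearly in $\zeta$ on an unbounded half-line; the weighted integrations by parts in $L^2_{l}$ need to be justified on a truncated domain with a cutoff of the convection (the paper's $\chi_\sigma$ and $[0,3/\sigma]$). Finally, a small sign point: your lifting $\theta_a=-\chi a/R_\theta$ gives $\theta_a|_{\zeta=0}=-a/R_\theta$, so $\Theta|_{\zeta=0}=\theta|_{\zeta=0}+a/R_\theta\ne B_c(\theta)$; use $\theta_a|_{\zeta=0}=+a/R_\theta$ (as the paper does, $\theta_a=(1/R_\theta+2\zeta)a\chi$) so that $B_c$ reduces exactly to the trace of the lifted unknown.
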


The proof of Lemma \ref{Prop-Prandtl} will be given in Section \ref{Sec_Prandtl}.

We then introduce a result on the existence of solutions of the Knudsen boundary layer problem. Consider the following half-space linear equation for $f(t, \bar{x}, \xi, v)$ over $(t, \bar{x}, \xi, v) \in [0, \tau] \times \R^2 \times \R_+ \times \R^3$:
\begin{equation}\label{Knudsen_Boundary_Layer}
\left\{
\begin{aligned}
& v_3 \p_{\xi} f + \mathcal{L}^0 f = S (t, \bar{x}, \xi, v) \,, \\
& f (t, \bar{x}, 0, \bar{v}, v_3) |_{v_3 >0} = f (t, \bar{x}, 0, \bar{v}, -v_3) + \mathbbm{f}_k (t, \bar{x}, \bar{v}, -v_3) \,, \\
& \lim_{\xi \to \infty} f (t, \bar{x}, \xi, v) = 0\,.
\end{aligned}
\right.
\end{equation}
For the above equation, Golse, Perthame and Sulem \cite{Golse-Perthame-Sulem-1988-ARMA} has proved an existence result in the norm $\int_{\R_+ \times \R^3} \l v \r e^{2 \eta \xi} f^2 \d v \d \xi + \int_{\R^3} \| e^{\eta \xi} f \|^2_{L^\infty_\xi} \d v$. Due to the higher order derivatives are required, Guo, Huang and Wang \cite{GHW-2020} gave a more smooth version of existence result, hence the following lemma.

\begin{lemma}\label{Propisition_Knudsen_Layer}
	Let $0 < a < \frac{1}{2}$, $r \geq 0$ and $l \ge 3$. For each $(t, \bar{x}) \in [0, \tau] \times \R^2$, we assume that $S \in (\mathcal{N}^0)^\bot$, $\mathbbm{f}_k (t, \bar{x}, v)$ satisfies the solvability condition \eqref{Solva_Cond}, and
	\begin{equation*}
	\begin{aligned}
	   \mathbb{E}_2 : = \sup_{t \in [0, \tau]} \! \sum_{|\alpha| \le r} \Big\{ \| \l v \r^l (\M^0)^{-a} \p_{t, \bar{x}}^\alpha & \mathbbm{f}_k (t) \|_{L^\infty_{\bar{x},v} \cap L^2_{\bar{x}} L^\infty_v } \\
	   & + \| \l v \r^l (\M^0)^{-a} e^{\lambda_0 \xi} \p_{t, \bar{x}}^\alpha S (t) \|_{L^\infty_{\bar{x}, \xi, v} \cap L^2_{\bar{x}} L^\infty_{\xi, v}}  \Big\} < + \infty
	\end{aligned}
	\end{equation*}
	for some positive constant $\lambda_0 >0$. Then the Knudsen boundary layer equation \eqref{Knudsen_Boundary_Layer} has a unique solution $f(t, \bar{x}, \xi, v)$ satisfying
	\begin{align}\label{Knudsen_Boundary_Layer_Bound}
	\no \sum_{|\alpha| \le r} \sup_{t \in [0, \tau]} \Big\{ \| \l v \r^l (\M^0)^{-a} \p_{t, \bar{x}}^\alpha & f (t, \cdot, 0, \cdot) \|_{L^\infty_{\bar{x},v} \cap L^2_{\bar{x}} L^\infty_v } \\
	& + \| \l v \r^l (\M^0)^{-a} e^{\lambda \xi} \p_{t, \bar{x}}^\alpha f (t) \|_{L^\infty_{\bar{x}, \xi, v} \cap L^2_{\bar{x}} L^\infty_{\xi, v}} \Big\} \le \tfrac{C}{\lambda_0 - \lambda} \mathbb{E}_2
	\end{align}
	for all $\lambda \in (0, \lambda_0)$, where $C > 0$ is independence of $(t, \bar{x})$. Moreover, if $S$ is in $C \big( [0, \tau] \times \R^2 \times \R_+ \times \R^3 \big)$ and $\mathbbm{f}_k (t, \bar{x}, \bar{v}, -v_3)$ is in $C \big( [0, \tau] \times \R^2 \times \R^2 \times \R_+ \big)$, then the solution $f(t, \bar{x}, \xi, v)$ is continuous away from the grazing set $[0, \tau] \times \Sigma_0$.
\end{lemma}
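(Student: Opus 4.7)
The plan is to reduce the problem to the classical Golse-Perthame-Sulem (GPS) half-space theorem \cite{Golse-Perthame-Sulem-1988-ARMA} by freezing the tangential variables $(t,\bar x)$ as parameters, then to upgrade the basic $L^2$-type solution to the quoted weighted $L^\infty$ norms by an iterated Duhamel argument along $\xi$-characteristics, and finally to propagate tangential regularity by induction on $|\alpha|$. Continuity away from $\Sigma_0$ will be read directly off the integral representation. In what follows I write $\mathcal{L}^0 = \nu^0 - K^0$ with $\nu^0(v)\sim \langle v\rangle^{\gamma_0}$ and $K^0$ a Grad-type integral operator.

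\textbf{Step 1 (GPS for frozen $(t,\bar x)$).} Fix $(t,\bar x)\in[0,\tau]\times\R^2$; then $(t,\bar x)$ enter \eqref{Knudsen_Boundary_Layer} only through smooth parameters. Since $S\in(\mathcal{N}^0)^\perp$ and $\mathbbm{f}_k$ satisfies the solvability condition \eqref{Solva_Cond}, GPS produces a unique solution with exponential decay $e^{\eta\xi}$ for some $\eta\in(0,\lambda_0)$, in the norm
\begin{equation*}
\int_{\R_+\times\R^3}\langle v\rangle e^{2\eta\xi}f^2\,dv\,d\xi+\int_{\R^3}\|e^{\eta\xi}f\|_{L^\infty_\xi}^2\,dv<\infty,
\end{equation*}
and in particular the trace $\gamma f(0,\cdot)$ is well-defined.

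\textbf{Step 2 (Weighted $L^\infty$ upgrade).} Integrate along the characteristic $\xi\mapsto \xi$ of the transport operator $v_3\p_\xi$, starting from $\xi=0$ when $v_3>0$ and from $+\infty$ when $v_3<0$, to obtain a Duhamel representation of the form
\begin{equation*}
f(\xi,v)=\mathrm{(boundary/far\text{-}field\ data)}+\int\mathrm{(exponential\ kernel\ in\ }\xi\mathrm{)}\cdot[K^0 f+S](\xi',v)\,d\xi'.
\end{equation*}
Multiply through by $\langle v\rangle^l(\M^0)^{-a}e^{\lambda\xi}$ with $\lambda<\lambda_0$ and iterate once, exploiting the pointwise Grad-type kernel bound for $(\M^0)^{-a}K^0(\M^0)^a$ (which holds uniformly in $(t,\bar x)$ because $(\rho,\u,T)$ are uniformly smooth by Proposition \ref{Proposition_Compressible_Euler}). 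The double integration against the kernel can be absorbed by a small multiple of $\sup\|\langle v\rangle^l(\M^0)^{-a}e^{\lambda\xi}f\|_\infty$, producing the $L^\infty_{\xi,v}$ bound with constant of size $(\lambda_0-\lambda)^{-1}$, coming from $\int_0^\infty e^{(\lambda-\lambda_0)\xi}d\xi$. Squaring the pointwise inequality in $v$ and $\xi$ and integrating in $\bar x$ transfers the same estimate to $L^2_{\bar x}L^\infty_{\xi,v}$, controlled by the corresponding data norm.

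\textbf{Step 3 (Tangential derivatives and main obstacle).} For $1\leq|\alpha|\leq r$, differentiate \eqref{Knudsen_Boundary_Layer} in $(t,\bar x)$ to get
\begin{equation*}
v_3\p_\xi\p^\alpha_{t,\bar x}f+\mathcal{L}^0\p^\alpha_{t,\bar x}f=\p^\alpha_{t,\bar x}S+\sum_{0<\beta\leq\alpha}\binom{\alpha}{\beta}[\p^\beta_{t,\bar x}\mathcal{L}^0]\p^{\alpha-\beta}_{t,\bar x}f,
\end{equation*}
with the boundary condition obtained by differentiating \eqref{Knudsen_Boundary_Layer}$_2$. The commutator coefficients $\p^\beta_{t,\bar x}\mathcal{L}^0$ are smooth polynomials in derivatives of $(\rho^0,\u^0,T^0)$ and hence uniformly bounded on $[0,\tau]\times\R^2$. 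Applying $(\mathcal{I}-\mathcal{P}^0)$ keeps the source in $(\mathcal{N}^0)^\perp$, while the resulting $\mathcal{P}^0$-piece is absorbed into a modified boundary term that still satisfies \eqref{Solva_Cond} (since differentiating the original solvability condition preserves it). Induction on $|\alpha|$, combined with Step 1 and Step 2 applied to each $\p^\alpha_{t,\bar x}f$, yields \eqref{Knudsen_Boundary_Layer_Bound}. The hard part is really Step 2: because $(\M^0)^{-a}$ does not commute with $K^0$, one must verify that the conjugated kernel $(\M^0)^{-a}K^0(\M^0)^a$ still admits a Hilbert-Schmidt-type pointwise bound uniformly in $(t,\bar x)$; once this is in hand, all other pieces (the $(\lambda_0-\lambda)^{-1}$ factor, the $L^2_{\bar x}$-integration, the induction on $|\alpha|$) are routine. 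Continuity of $f$ on $\overline{[0,\tau]\times\R^2\times\R_+\times\R^3}\setminus([0,\tau]\times\Sigma_0)$ follows directly from the Duhamel formula, since characteristics $\xi\mapsto\xi-sv_3$ stay strictly away from the boundary in finite time whenever $v_3\neq 0$.
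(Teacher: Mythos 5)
The paper does not actually prove this lemma; it cites \cite{GHW-2020} for a ``more smooth version'' of the Golse--Perthame--Sulem (GPS) existence theorem, and your roadmap---freeze $(t,\bar x)$ and invoke GPS, upgrade to weighted $L^\infty$ by iterating the Duhamel representation with Grad-type kernel bounds for $(\M^0)^{-a}K^0(\M^0)^{a}$ (valid uniformly in $(t,\bar x)$ for $a<\tfrac{1}{2}$ by Proposition~\ref{Proposition_Compressible_Euler}), then propagate tangential regularity by induction on $|\alpha|$---is indeed the route that proof takes. Steps 1 and 2 are sound as sketched.

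The genuine gap is in Step 3, in the parenthetical ``since differentiating the original solvability condition preserves it.'' This is false as stated: the weight $(1,\bar v-\bar{\u}^0,|v-\u^0|^2)\sqrt{\M^0}$ appearing in \eqref{Solva_Cond} depends on $(t,\bar x)$ through $(\rho^0,\u^0,T^0)$, so differentiating the identity $\int v_3\,\mathbbm{f}_k\,(\cdots)\sqrt{\M^0}\,\d v=0$ produces extra terms from differentiating the weight, and $\p_{t,\bar x}^\alpha\mathbbm{f}_k$ by itself does \emph{not} satisfy \eqref{Solva_Cond}. What actually happens is that the commutator $[\p_{t,\bar x}^\beta,\mathcal{L}^0]\p_{t,\bar x}^{\alpha-\beta}f$ has a nontrivial $\mathcal{P}^0$-part (again because $\mathcal{N}^0$ is $(t,\bar x)$-dependent), which must be removed via a Bardos--Caflisch--Nicolaenko construction as in Lemma~\ref{Lmm-fbb-k1}; the auxiliary function so produced modifies the boundary datum, and it is this \emph{combined} datum---not $\p_{t,\bar x}^\alpha\mathbbm{f}_k$ alone---that satisfies \eqref{Solva_Cond}. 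The clean way to see that it does is structural: the frozen-parameter GPS solutions depend smoothly on $(t,\bar x)$, so $\p_{t,\bar x}^\alpha f$ exists a priori and solves the differentiated system, which must therefore be consistent. Without either that argument or a direct moment computation you have not closed the induction. A smaller point worth verifying: chaining Step~2 through all $|\alpha|\le r$ as you describe multiplies $(\lambda_0-\lambda)^{-1}$-type factors, and one should check that only a single such power survives in \eqref{Knudsen_Boundary_Layer_Bound}.
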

We remark that it is hard to obtain the normal derivatives estimates for the boundary value problem \eqref{Knudsen_Boundary_Layer}. Lemma \ref{Propisition_Knudsen_Layer} merely give the tangential and time derivatives estimates.

\subsection{Control the expansions $F_k$, $F_k^b$ and $F_k^{bb}$ ($1 \leq k \leq 5$)}

Based on results in Lemma \ref{Lemma_Local_Sol_Hyperbolic_Sys}, Lemma \ref{Prop-Prandtl} and \ref{Propisition_Knudsen_Layer}, following the analogously arguments in Proposition 5.1 of \cite{GHW-2020}, we obtain the uniform bounds of the expansions $F_k$, $F_k^b$ and $F_k^{bb}$ ($1 \leq k \leq 5$) as follows. For simplicity, we omit the details of proof here.

\begin{proposition}\label{Pro_Regularities-Coefficients}
	Let $0 < \tfrac{1}{2 z} (1-z) < a < \frac{1}{2}$, where $z \in (\frac{1}{2}, 1)$ is given in \eqref{M-Bound}, and $s_0 \in \mathbb{N}_+$ be in Proposition \ref{Proposition_Compressible_Euler}. There are $s_k, s_k^b, s_k^{bb}, l_k^b \in \mathbb{N}_+$, $p_k, p_k^b, p_k^{bb} \in \R_+$ $(1 \leq k \leq 5)$ satisfying
	\begin{equation*}
	  \begin{aligned}
	    & s_0 \geq s_1 + 10 \,, \ s_1 = s_1^b = s_1^{bb} \,, \ s_{k-1}^{bb} \gg s_k \gg s_k^b \gg s_k^{bb} \gg 1 \ (2 \leq k \leq 5) \,, \\
	    & p_k \gg p_k^b \gg p_k^{bb} \gg p_{k+1} \gg 1 \ (1 \leq k \leq 4) \,,
	  \end{aligned}
	\end{equation*}
	and $l_j^k = l_k^b + 2 (s_k^b - j)$ $(0 \leq j \leq s_k^b, 1 \leq k \leq 5)$ with
	\begin{equation*}
	  \begin{aligned}
	    l^5_j \geq 8 \,, \ l_j^k \geq 2 l_j^{k+1} + 26 \ (1 \leq k \leq 4) \,,
	  \end{aligned}
	\end{equation*}
	such that if the initial data $(\rho_k^{in}, u_k^{in}, \theta_k^{in})$ $(1 \leq k \leq 3)$ in \eqref{IC-Linear-Hyperbolic} and $(\bar{u}^{b, in}_k, \theta_k^{b, in})$ $(1 \leq k \leq 3)$ in \eqref{IC-Prandtl} satisfy \eqref{Ein}, i.e., $\mathcal{E}^{in} < \infty$, then there are solutions $F_k = \sqrt{\M} f_k$, $F^b_k = \sqrt{\M^0} f^b_k$ and $F_k^{bb} = \sqrt{\M^0} f_k^{bb}$ $(1 \leq k \leq 5)$ constructed in Subsection \ref{subsubsec115} over the time interval $t \in [0, \tau]$ subjecting to the uniform bounds
	  \begin{align}
	    \no \sup_{t \in [0, \tau]} \sum_{k=1}^5 \Big\{ & \sum_{\gamma + |\beta| \leq s_k} \| \l v \r^{p_k} \M^{-a} \p_t^\gamma \p_x^\beta f_k (t) \|_{L^2_x L^\infty_v} \\
	    \no & + \sum_{j=0}^{s_k^b} \sum_{2 \gamma + |\bar{\beta}| = j} \| \l v \r^{p_k^b} (\M^0)^{-a} \p_t^\gamma \p_{\bar{x}}^{\bar{\beta}} f^b_k (t) \|_{L^2_{l^k_j} L^\infty_v} \\
	    \no & + \sum_{\gamma + |\bar{\beta}| \leq s_k^{bb}} \| e^{ \frac{\xi}{2^{k-1}}} \l v \r^{p_k^{bb}} (\M^0)^{-a} \p_t^\gamma \p_{\bar{x}}^{\bar{\beta}} f_k^{bb} (t) \|_{L^\infty_{\bar{x}, \xi, v} \cap L^2_{\bar{x}} L^\infty_{\xi, v}} \Big\} \\
	    \leq & C \Big(\tau, \| (\rho^{in} - \rho_\#, \u^{in}, T^{in} - T_\#) \|_{H^{s_0} (\R^3_+)} + \mathcal{E}^{in} \Big) \,.
	  \end{align}
\end{proposition}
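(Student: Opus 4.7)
The plan is an induction on $k = 1, 2, \ldots, 5$. At each level I build, in order, the interior term $F_k$, the viscous layer $F^b_k$, and the Knudsen layer $F^{bb}_k$, invoking Lemma \ref{Lemma_Local_Sol_Hyperbolic_Sys}, Lemma \ref{Prop-Prandtl}, and Lemma \ref{Propisition_Knudsen_Layer} in turn. This order is forced by causality: the microscopic correction $(\mathcal{I}-\mathcal{P})(F_k/\sqrt{\M})$ in \eqref{Ker-Arth-Inte} only sees $F_{k-2}$ and bilinear products of strictly lower interior terms; the slip value $u_{k,3}(t,\bar{x},0)$ in \eqref{BC-u_k3} involves only layers of order $\leq k-1$; the Prandtl sources $\mathsf{f}^b_{k-1,i}$, $\mathsf{g}^b_{k-1}$ and the Robin data \eqref{Boudary_Equa} depend on $F^b_j$ $(j \leq k-1)$ and $F^0_k$ just constructed; and the Knudsen data \eqref{fk-KBL} only needs everything through level $k$. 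So the right-hand sides of the three lemmas are each already available at the moment of their invocation, and the inductive hypothesis at level $k-1$ provides all the required bounds.

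At each sub-step I verify the hypothesis of the relevant lemma. For the hyperbolic part I show that $\mathcal{F}^\bot_u(F_k)$, $\mathcal{G}^\bot_\theta(F_k)$ lie in $\mathcal{H}^{s_k+1}(\R^3_+)$ and the boundary datum in $\mathcal{H}^{s_k+2}(\R^2)$, using the $L^2_x L^\infty_v$-type bounds from the inductive hypothesis together with the Gaussian decay of $\sqrt{\M}$ to integrate the Burnett moments in $v$. For the Prandtl part I check $\mathbb{E}_1 < \infty$ in Lemma \ref{Prop-Prandtl}, which costs three extra tangential derivatives on the Robin datum and one on the source; this is precisely why the hierarchy $s_k \gg s^b_k$ is imposed. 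For the Knudsen part I first split $f^{bb}_k = f^{bb}_{k,1} + f^{bb}_{k,2}$ via Lemma \ref{Lmm-fbb-k1}, verify through Lemma \ref{Lmm-Robin-BC} that the corresponding $\mathbbm{f}_k$ satisfies \eqref{Solva_Cond}, and check $\mathbb{E}_2 < \infty$ in Lemma \ref{Propisition_Knudsen_Layer}; the exponential factor $e^{\lambda_0 \xi}$ comes from the inductive weight $e^{\xi/2^{k-2}}$ on the lower-order Knudsen terms, with the rate halving from one level to the next because $S^{bb}_{k,2}$ in \eqref{Sbb_k} contains self-interactions $B(\sqrt{\M^0} f^{bb}_i, \sqrt{\M^0} f^{bb}_j)$. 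The halving rules for the polynomial weights $p_k, p^b_k, p^{bb}_k$ and the doubling rule $l_j^k \geq 2 l_j^{k+1} + 26$ for the viscous spatial weight are dictated by the bilinear structure of these source terms combined with the Taylor-expansion factors $\zeta^l/l!$ in \eqref{Linear-Prandtl}.

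The main obstacle is the normal-derivative bookkeeping. Lemma \ref{Propisition_Knudsen_Layer} delivers only $\p_t^\gamma \p_{\bar{x}}^{\bar{\beta}}$ estimates, and the $\mathbb{H}^r_l$ norm in Lemma \ref{Prop-Prandtl} trades two tangential derivatives for one time derivative; this mismatch propagates into the source terms of the next level. The normal derivatives of the Prandtl unknowns are recovered by differentiating \eqref{u-theta_Eq} and solving algebraically for $\p_\zeta^2(u,\theta)$ in terms of lower normal order and tangential/temporal derivatives, iteratively reducing any $\p_\zeta^m$ bound to the $\p_\zeta^{\leq 1}$ bounds already delivered by Lemma \ref{Prop-Prandtl}; analogously, the normal derivatives of the Knudsen layer are recovered from \eqref{fbb_k} itself since $v_3 \p_\xi f^{bb}_k = S^{bb}_k - \L^0 f^{bb}_k$. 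Modulo these two routine conversions and a check that the initial data \eqref{IC-BE-wellprepared} satisfies the algebraic compatibility relations \eqref{BC-u_k3}--\eqref{Boudary_Equa} at $t=0$ (which is built into the assumption \eqref{Ein}), the scheme reduces to a mechanical iteration paralleling \cite[Proposition 5.1]{GHW-2020}. The only novelties relative to that work are the nonzero slip $u^0_{1,3} = \sqrt{T^0}(\rho^0 \sqrt{T^0} + 1)$ from \eqref{BC-u_13} and the Robin (rather than Neumann) boundary data \eqref{Boudary_Equa}, whose existence theory is precisely the content of Lemma \ref{Prop-Prandtl}. I choose the hierarchy constants downward from $k=5$ so that at every step the inductive bound dominates the requirement of the next lemma with a fixed derivative and weight surplus, which is why the terminal index $s_0$ must be taken substantially larger than $s_1$.
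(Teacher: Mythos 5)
Your proposal matches the paper's treatment, which is deliberately brief: the paper simply states that the proposition follows ``analogously to Proposition~5.1 of \cite{GHW-2020}'' from Lemmas~\ref{Lemma_Local_Sol_Hyperbolic_Sys}, \ref{Prop-Prandtl}, and~\ref{Propisition_Knudsen_Layer}, and omits all details. Your sketch correctly identifies the three building blocks, the causal ordering of the construction, the fact that the Robin boundary data at level $k$ is supplied by the solvability conditions at level $k+1$ (which in turn depend only on microscopic quantities already determined by levels $\leq k$), and the role of the derivative/weight hierarchy ($s_0 \geq s_1 + 10$, $s_k \gg s_k^b \gg s_k^{bb}$, decreasing $p$'s, and $l_j^k \geq 2 l_j^{k+1} + 26$) in absorbing the loss of derivatives at each step.

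One inaccuracy worth flagging. You state that ``the $\mathbb{H}^r_l$ norm in Lemma~\ref{Prop-Prandtl} trades two tangential derivatives for one time derivative'' and then propose to recover $\p_\zeta^m$ bounds for the Prandtl unknowns by iterating on the equation, ``reducing any $\p_\zeta^m$ bound to the $\p_\zeta^{\leq 1}$ bounds already delivered by Lemma~\ref{Prop-Prandtl}.'' In fact $\mathbb{H}^r_l(\R^3_+)$ as defined in \eqref{Hrl-t-3D} already controls all mixed derivatives $\p_t^\gamma \p_{\bar{x}}^\beta \p_\zeta^n$ with $2\gamma + |\beta| + n \leq r$, so Lemma~\ref{Prop-Prandtl} gives $\p_\zeta^n$ control directly, with no recovery step needed on the viscous side. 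The iterative-recovery device is genuinely required only for the Knudsen layer, since Lemma~\ref{Propisition_Knudsen_Layer} controls only $\p_t^\gamma \p_{\bar{x}}^{\bar{\beta}}$ and the paper explicitly notes that ``it is hard to obtain the normal derivatives estimates for the boundary value problem \eqref{Knudsen_Boundary_Layer}''; there one does indeed use $v_3 \p_\xi f^{bb}_k = S^{bb}_k - \L^0 f^{bb}_k$, paying an extra $\l v \r$-weight per normal derivative (hence the decreasing $p_k^{bb}$'s). Your $\zeta$-side recovery would also work but is not what the hierarchy is calibrated around.

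A second, minor point: the construction order within each level is more staggered than a clean ``interior, then viscous, then Knudsen.'' The microscopic interior correction $(\mathcal{I}-\mathcal{P})(F_{k+1}/\sqrt{\M})$, needed in the Robin datum $\Lambda_k^b$, must be written down before the macroscopic Prandtl at level $k$ is solved, even though the full $F_{k+1}$ is only completed later. This is exactly what your ``causality'' paragraph is getting at and is handled by the same inductive hypothesis, so I flag it only as a bookkeeping wrinkle rather than a gap.
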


We remark that, for $1 \leq k \leq 5$, the viscous boundary layers $F_k^b$ decay algebraically associated with $\zeta > 0$, and the Knudsen boundary layers $F_k^{bb}$ decay exponentially associated with $\xi > 0$. These suffice to dominate the quantities $R_\eps^b$ and $R_\eps^{bb}$ (in \eqref{R_eps_b} and \eqref{R_eps_bb}, respectively) while deriving the uniform $L^2$-$L^\infty$ bounds for the remainder $F_{R,\eps}$.

\section{Uniform bounds for remainder $F_{R,\eps}$: Proof of Theorem \ref{Main-Thm}}\label{Sec_Remainder-Bnds}

In this section, we aim at proving our main theorem by applying the $L^2$-$L^\infty$ arguments \cite{Guo-2010-ARMA,Guo-Jang-Jiang-2009-KRM}, which is sufficient to estimate $\| f_{R, \eps} (t) \|_2$ and $\| h^{\ell}_{R, \eps} (t) \|_\infty$ associated with the remainder $F_{R,\eps}$ in \eqref{Remainder-F}. Here $f_{R,\eps}$ and $h_{R,\eps}^\ell$ are defined in \eqref{f_Reps-h_Reps}. The proof relies on an interplay between $L^2$ and $L^\infty$ estimates for the Boltzmann equation. The $L^2$ norm of $f_{R, \eps}$ is controlled by the $L^\infty$ norm of the high-velocity part and vice versa.

Note that the remainder equation \eqref{Remainder-F} with Maxwell-type reflection boundary condition contains the coefficients $F_k$, $F_k^b$, $F_k^{bb}$ ($1 \leq k \leq 5$) and $R_\eps$, $R_\eps^b$, $R_\eps^{bb}$ composed of $F_k$, $F_k^b$, $F_k^{bb}$ ($1 \leq k \leq 5$). Thanks to Proposition \ref{Pro_Regularities-Coefficients}, following the similar arguments in Section 6 of \cite{GHW-2020}, we can bound the all norms associated with them appeared in estimates of $L^2$-$L^\infty$ arguments by some constants independent of $\eps$. For simplicity of presentation, the corresponding quantities will be directly bounded by a constant $C > 0$ without details in what follows.

We now state the following two key lemmas.

\begin{lemma}[$L^2$ Estimates]\label{Lemma-L2-Estimates}
	Under the same assumptions in Proposition \ref{Pro_Regularities-Coefficients} and $\ell \geq 9 - 2 \gamma_0$, let $(\rho, \u, T)$ be a smooth solution to the Euler equations over $t \in [0, \tau]$ obtained in Proposition \ref{Proposition_Compressible_Euler}. Let $c_0 > 0$ be mentioned in \eqref{Hypocoercivity}, and the local Maxwellian $M_w (t, \bar{x}, v)$ of the boundary be assumed as in \eqref{M_w-Perturbation}. Then there are constants $\eps_0' > 0$ and $
	C = C \big( \M, F_k, F^b_k, F^{bb}_k; 1 \leq k \leq 5 \big) >0 $ such that for all $0 < \eps < \eps_0'$,
	\begin{align}\label{L2-Estimate}
	\no \tfrac{\d }{\d t} \| f_{R, \eps} \|_2^2 + \tfrac{c_0}{\eps} \| (\mathcal{I} - \mathcal{P}) f_{R,\eps} \|_\nu^2 + c_1 \iint_{\Sigma_-} | v \cdot n(x) | | L \gamma_+ f_{R, \eps} |^2 \d \sigma_{\Sigma_-} \\
	\le C \big(1 + \eps^2 \| h^{\ell}_{R,\eps} \|_\infty \big) \big( \| f_{R,\eps} \|_2^2 + \| f_{R,\eps} \|_2 \big) + C \sqrt{\eps}^2
	\end{align}
	over $t \in [0, \tau]$, where $c_1 = 1- 3 \rho_{\#} \sqrt{3 T_\#} > 0$ and $\rho_\#, T_\# >0$ are given in Proposition \ref{Proposition_Compressible_Euler}.
\end{lemma}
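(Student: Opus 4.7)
I would derive the estimate by testing the remainder equation \eqref{Remainder-F} against $f_{R,\eps}=F_{R,\eps}/\sqrt{\M}$ in $L^2(\R^3_+\times\R^3)$. Writing $F_{R,\eps}=\sqrt{\M}\,f_{R,\eps}$ and redistributing the derivatives of $\sqrt{\M}$, the free-transport term produces
\begin{equation*}
\tfrac{1}{2}\tfrac{\d}{\d t}\|f_{R,\eps}\|_2^2 \;-\; \tfrac{1}{2}\iint_{\p\R^3_+\times\R^3} v_3 |f_{R,\eps}|^2 \d v\,\d\bar{x} \;+\; (\text{pieces}\lesssim\|f_{R,\eps}\|_2^2),
\end{equation*}
where the last terms arise from $(\p_t+v\cdot\nabla_x)\sqrt{\M}/\sqrt{\M}$ and are controlled via the smoothness of $(\rho,\u,T)$ from Proposition \ref{Proposition_Compressible_Euler}. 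The linearized collision contribution is $\tfrac{1}{\eps}\l\L f_{R,\eps},f_{R,\eps}\r\geq\tfrac{c_0}{\eps}\|(\I-\P)f_{R,\eps}\|_\nu^2$ by the hypocoercivity \eqref{Hypocoercivity}.

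\textbf{Boundary integral.} This is the principal novelty relative to the specular case of \cite{GHW-2020}. Using $\u^0_3=0$ (so that $\M^0$ is symmetric in $v_3$) and the change of variables $v\mapsto R_xv$ on $\Sigma_+$,
\begin{equation*}
-\tfrac{1}{2}\iint v_3|f_{R,\eps}|^2 \d v\,\d\bar{x} \;=\; \tfrac{1}{2}\iint_{\Sigma_-}|v\cdot n|\bigl(|L\gamma_+f_{R,\eps}|^2-|\gamma_-f_{R,\eps}|^2\bigr)\d\sigma_{\Sigma_-}.
\end{equation*}
Substituting the Maxwell boundary condition \eqref{BC-Remainder-F} with the specialization $M_w=\M^0$ from \eqref{M_w-Perturbation}, the diffuse part factors cleanly as $\alpha_\eps\sqrt{\M^0}\agl{\gamma_+f_{R,\eps}}_{\p\R^3_+}$. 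Convexity of $|\cdot|^2$ on the convex combination $(1-\alpha_\eps)L\gamma_+f_{R,\eps}+\alpha_\eps\sqrt{\M^0}\agl{\gamma_+f_{R,\eps}}_{\p\R^3_+}$, combined with Cauchy--Schwarz on the scalar $\agl{\gamma_+f_{R,\eps}}_{\p\R^3_+}$ and the closed Gaussian moment $\int_{v_3>0}v_3\M^0\d v=\rho^0\sqrt{T^0/(2\pi)}$, yields the contraction
\begin{equation*}
\int_{\Sigma_-}|v\cdot n|\,\bigl|\sqrt{\M^0}\agl{\gamma_+f_{R,\eps}}_{\p\R^3_+}\bigr|^2 \d\sigma \;\leq\; (\rho^0)^2 T^0 \int_{\Sigma_-}|v\cdot n|\,|L\gamma_+f_{R,\eps}|^2 \d\sigma.
\end{equation*}
Together with the uniform bound on $\rho^0\sqrt{T^0}$ from Proposition \ref{Proposition_Compressible_Euler} and the standing smallness $\rho_\#\sqrt{T_\#}<1/(3\sqrt{3})$, a Young's inequality absorbing the inhomogeneous source $\sqrt{\eps}^2\Gamma_\eps/\sqrt{\M^0}$ (which is $O(\sqrt{\eps}^2)$ in the relevant $L^2$ by Proposition \ref{Pro_Regularities-Coefficients}) then produces
\begin{equation*}
-\tfrac{1}{2}\iint v_3|f_{R,\eps}|^2 \d v\,\d\bar{x} \;\geq\; c_1\iint_{\Sigma_-}|v\cdot n|\,|L\gamma_+f_{R,\eps}|^2 \d\sigma_{\Sigma_-} \;-\; C\sqrt{\eps}^{\,4},
\end{equation*}
with $c_1=1-3\rho_\#\sqrt{3T_\#}>0$.

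\textbf{Right-hand side.} The nonlinear contribution $\sqrt{\eps}^2\l B(F_{R,\eps},F_{R,\eps})/\sqrt{\M},f_{R,\eps}\r$ is handled by the standard bilinear collision estimate $\lesssim\eps^2\|h^\ell_{R,\eps}\|_\infty\|f_{R,\eps}\|_\nu^2$, which becomes the $\eps^2\|h^\ell_{R,\eps}\|_\infty\|f_{R,\eps}\|_2^2$ term on the RHS of the lemma. The source $R_\eps+R^b_\eps+R^{bb}_\eps$ paired with $f_{R,\eps}$ contributes $\leq C(\|f_{R,\eps}\|_2+\sqrt{\eps}^{\,2})$; the algebraic decay of $F^b_k$ in $\zeta$ and the exponential decay of $F^{bb}_k$ in $\xi$ guaranteed by Proposition \ref{Pro_Regularities-Coefficients} give the uniform $L^2(\M^{-1/2})$ bounds on $R^b_\eps, R^{bb}_\eps$. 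For the linear cross terms $\sum_{i=1}^5\sqrt{\eps}^{\,i-2}\l[B(F_i+F^b_i+F^{bb}_i,F_{R,\eps})+\text{sym}]/\sqrt{\M},f_{R,\eps}\r$, only $i=1$ is singular. Since $F_1=\P(F_1/\sqrt{\M})\sqrt{\M}$ lies entirely in the kernel, the bilinear $B(F_1,\cdot)+B(\cdot,F_1)$ acting on $\P f_{R,\eps}$ vanishes, so only $(\I-\P)f_{R,\eps}$ survives at leading order. Cauchy--Schwarz then gives $\sqrt{\eps}^{\,-1}\|(\I-\P)f_{R,\eps}\|_\nu\|f_{R,\eps}\|_\nu\leq\tfrac{c_0}{4\eps}\|(\I-\P)f_{R,\eps}\|_\nu^2+C\|f_{R,\eps}\|_2^2$, absorbed into the collision dissipation. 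The layer pieces $F^b_1,F^{bb}_1$ are treated analogously: by \eqref{ub_13-pb_1} one has $u^b_{1,3}=p^b_1=0$, and a trace estimate against the boundary dissipation $c_1\int_{\Sigma_-}|v\cdot n||L\gamma_+f_{R,\eps}|^2$ absorbs the $\sqrt{\eps}^{\,-1}$ scale.

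\textbf{Main obstacle.} The crux is the boundary step: producing a nontrivial dissipation $c_1\int_{\Sigma_-}|v\cdot n||L\gamma_+f_{R,\eps}|^2$ with $c_1$ independent of $\eps$ out of the Maxwell reflection. Three ingredients must cooperate: (i) the choice $M_w=\M^0$, which makes the diffuse reflection factor through $\sqrt{\M^0}$ and closes the Cauchy--Schwarz step in elementary form; (ii) $\u^0_3=0$, which delivers the closed-form moment $\int_{v_3>0}v_3\M^0\d v$; and (iii) the standing smallness $\rho_\#\sqrt{T_\#}<1/(3\sqrt{3})$ from Proposition \ref{Proposition_Compressible_Euler}, which drives the contraction coefficient strictly below one and yields $c_1>0$. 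A secondary challenge is the singular $i=1$ linear term, where both dissipations --- interior hypocoercivity and boundary reflection --- must be used simultaneously together with the hydrodynamic/vanishing-normal-flow structure of $F_1,F^b_1,F^{bb}_1$ to absorb the $\sqrt{\eps}^{\,-1}$ prefactor.
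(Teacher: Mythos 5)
Your proposal is correct in substance and follows the same overall strategy as the paper's proof (test against $f_{R,\eps}$, hypocoercivity for the singular collision term, isolate a positive boundary dissipation), but your treatment of the boundary integral takes a genuinely different route. The paper first rewrites the Maxwell boundary condition in the ``specular plus small correction'' form $\gamma_- f_{R,\eps} = L\gamma_+ f_{R,\eps} + \sqrt{\eps}\,\tilde{\Gamma}_\eps$, exploits the exact cancellation of the pure specular squares, and then decomposes the leftover $I_1 = I_{11}+I_{12}+I_{13}$ where the cross term $I_{12}$ is contracted against $I_{11}$ via Cauchy--Schwarz on the diffuse average. You instead work directly from the Maxwell form, invoke convexity of $|\cdot|^2$ on the convex combination $(1-\alpha_\eps)L\gamma_+ + \alpha_\eps K\gamma_+$, and then Cauchy--Schwarz the resulting $\alpha_\eps\int|v\cdot n||K\gamma_+ f_{R,\eps}|^2$ against the $\int|v\cdot n||L\gamma_+ f_{R,\eps}|^2$ dissipation. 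Both routes hinge on the same three structural facts you identify --- $M_w = \M^0$, $\u^0_3 = 0$ giving the closed Gaussian moment, and the smallness $\rho_\#\sqrt{T_\#} < 1/(3\sqrt{3})$ --- and both produce a positive contraction. Your route is arguably more transparent but yields the slightly different contraction factor $1-(\rho^0\sqrt{T^0})^2$ rather than the paper's $1-\rho^0\sqrt{T^0}$; both are strictly positive under the hypotheses, so this is only cosmetic. Two minor inaccuracies to flag: (1) your derived boundary dissipation coefficient is actually $\tfrac{\alpha_\eps}{2}(1-(\rho^0\sqrt{T^0})^2)$, carrying the factor $\alpha_\eps = O(\sqrt{\eps})$, so writing it as a clean $c_1$ independent of $\eps$ glosses over this (the paper's statement does the same, so this is not a defect of your argument per se, but be aware that the dissipation degenerates as $\eps\to 0$); (2) the boundary-source cost should come out at $O(\sqrt{\eps}^2)$, not $O(\sqrt{\eps}^4)$, once you Young the cross term $\sqrt{\eps}^2\Gamma_\eps\cdot L\gamma_+f_{R,\eps}$ against the $\alpha_\eps$-scaled dissipation ($\sqrt{\eps}^4/\alpha_\eps \sim \sqrt{\eps}^3$) and add the contribution from the $\alpha_\eps|K\gamma_+f_{R,\eps}|^2$ spillover; either way, $+C\sqrt{\eps}^2$ on the right-hand side as in the lemma is the correct, safe conclusion. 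Your treatment of the $i=1$ cross term via the collision-invariant identity (the pairing with $\P f_{R,\eps}$ vanishes because $\P f_{R,\eps}/\sqrt{\M}$ is a linear combination of $1,v,|v|^2$) is the standard and correct mechanism, and it applies equally to $F^b_1$ and $F^{bb}_1$; the extra trace estimate against the boundary dissipation you suggest for the layer pieces is not actually needed.
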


\begin{lemma}[$L^\infty$ Estimate] \label{Lemma-L-infty-Estimate}
	Under the same assumptions in Lemma \ref{Lemma-L2-Estimates}, there are constants $\eps_0'' > 0$ and $
	C = C \big( \M, F_k, F^b_k, F^{bb}_k; 1 \leq k \leq 5 \big) >0 $ such that for all $0 < \eps < \eps_0''$,
	\begin{align} \label{L-infty-Estimate}
	\sup_{s \in [0, \tau]} \| \sqrt{\eps}^3 h^{\ell}_{R, \eps} (s) \|_\infty
	\le C \Big( \| \sqrt{\eps}^3 h^{\ell}_{R, \eps} (0) \|_\infty + \sup_{0 \le s \le t} \| f_{R, \eps} (s) \|_2 + \sqrt{\eps}^2 \Big) \,.
	\end{align}
\end{lemma}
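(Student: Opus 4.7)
The plan is to follow the $L^2$--$L^\infty$ framework of \cite{Guo-Jang-Jiang-2010-CPAM} adapted to the Maxwell reflection setting with local wall Maxwellian $M_w=\M^0$. First I would derive the transport equation satisfied by $h^\ell_{R,\eps}=\l v\r^\ell F_{R,\eps}/\sqrt{\M_M}$. Starting from \eqref{Remainder-F}, dividing through by $\sqrt{\M_M}/\l v\r^\ell$, and using \eqref{M-Bound} to exchange $\M$ and $\M_M$, one arrives at the schematic form
\begin{equation*}
\p_t h^\ell_{R,\eps}+v\!\cdot\!\nabla_x h^\ell_{R,\eps}+\tfrac{\nu(v)}{\eps}h^\ell_{R,\eps}=\tfrac{1}{\eps}K_w h^\ell_{R,\eps}+\mathcal{G}_\eps,
\end{equation*}
where $K_w$ is the standard integral operator with kernel $\Bbbk_\ell(v,v')$ obeying a Grad-type bound, and $\mathcal{G}_\eps$ collects the inhomogeneous pieces. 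By Proposition \ref{Pro_Regularities-Coefficients} the contributions $R_\eps,R^b_\eps,R^{bb}_\eps$, the cross terms $\sqrt{\eps}^{i-2}[B(F_i+F^b_i+F^{bb}_i,F_{R,\eps})+\cdots]$ for $i\ge 1$, and the bilinear $\sqrt{\eps}^2B(F_{R,\eps},F_{R,\eps})$ yield pointwise bounds of order $1+\sqrt{\eps}^{-1}\|h^\ell_{R,\eps}\|_\infty+\sqrt{\eps}^2\|h^\ell_{R,\eps}\|_\infty^2$, each carrying Gaussian decay in $v$.

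Next I would set up the backward trajectory $[X_{cl}(s;t,x,v),V_{cl}(s;t,x,v)]$ honouring the Maxwell reflection. While inside $\R^3_+$ one has straight-line motion $X_{cl}(s)=x-(t-s)v$; at each boundary hit $t_b\in(0,t)$ the condition \eqref{BC-Remainder-F} applies. I split the reflection into three pieces: the specular part $(1-\alpha_\eps)L\gamma_+$ reverses the vertical velocity and the trajectory continues; the diffuse part $\alpha_\eps K\gamma_+$ is expanded out to the explicit boundary integral involving $M_w=\M^0$; the forcing $\sqrt{\eps}^2\Gamma_\eps$ enters as a boundary source of size $O(\sqrt{\eps}^2)$ by \eqref{Gamma_eps} and Proposition \ref{Pro_Regularities-Coefficients}. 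Applying Duhamel once along this trajectory yields a pointwise representation
\begin{equation*}
|h^\ell_{R,\eps}(t,x,v)|\le e^{-\nu t/\eps}\|h^\ell_{R,\eps}(0)\|_\infty+\int_0^t e^{-\int_s^t\nu/\eps}\bigl(\tfrac{1}{\eps}|K_w h^\ell_{R,\eps}|+|\mathcal{G}_\eps|\bigr)\d s+\text{boundary terms}.
\end{equation*}

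The crux is the double iteration: replace $h^\ell_{R,\eps}$ inside $K_w h^\ell_{R,\eps}$ by its own Duhamel expression, producing a double velocity integral with kernel $\Bbbk_\ell(V_{cl}(s),v')\Bbbk_\ell(v',v'')$ and an extra factor of $\eps^{-2}$. I split the integration into (i) $|v'|$ or $|v''|$ large ($\ge N_0$), where the smallness $\int|\Bbbk_\ell|\d v'\lesssim 1/N_0$ gives a $(C/N_0)\sup_s\|h^\ell_{R,\eps}(s)\|_\infty$ contribution; (ii) the thin time slab $s-s_1\le\kappa\eps$ or $t-s\le\kappa\eps$, contributing $(C\kappa)\sup_s\|h^\ell_{R,\eps}\|_\infty$; and (iii) the bulk regime, where I change variables from $v''$ to $y=X_{cl}(s_1;s,X_{cl}(s;t,x,v),v'')$ with Jacobian at most $(\kappa\eps)^{-3}$, then apply Cauchy--Schwarz in $(y,v'')$ against the weight $\sqrt{\M_M}/\l v\r^\ell$ to recognize $\|f_{R,\eps}(s_1)\|_2$. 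Combining the $\eps^{-2}$ prefactor with the $\eps^{-3}$ Jacobian and with the two $\eps$'s gained by integrating the exponential decays in $s$ and $s_1$ produces a net $\eps^{-3}$, so that multiplying by $\sqrt{\eps}^3$ gives the term $\sup_s\|f_{R,\eps}(s)\|_2$ on the right. The cross terms in $\mathcal{G}_\eps$ contribute $O(\sqrt{\eps})\sup\|\sqrt{\eps}^3h^\ell_{R,\eps}\|_\infty$ after Duhamel, absorbable for $\eps<\eps_0''$; the $O(1)$ sources $R_\eps,R^b_\eps,R^{bb}_\eps$ contribute $O(\sqrt{\eps}^5)\ll\sqrt{\eps}^2$; and the boundary source $\sqrt{\eps}^2\Gamma_\eps$ contributes $O(\sqrt{\eps}^2)$ directly.

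The main obstacle is the treatment of repeated bounces against the boundary, since the wall Maxwellian $M_w=\M^0$ is spatially varying and the diffuse reflection couples the pointwise trajectory to an integral of $\gamma_+ F_{R,\eps}$. I would handle this by (a) truncating at a fixed number $N_b$ of reflections on $[0,\tau]$ and bounding the tail by $(1-\alpha_\eps)^{N_b}\ll 1$, (b) at each intermediate bounce dominating the diffuse contribution via the trace dissipation $\iint_{\Sigma_-}|v\!\cdot\!n||L\gamma_+ f_{R,\eps}|^2\d\sigma_{\Sigma_-}$ furnished by Lemma \ref{Lemma-L2-Estimates} together with the prefactor $\alpha_\eps=\sqrt{2\pi}\sqrt{\eps}$, and (c) using \eqref{M-Bound} to keep $M_w/\sqrt{\M_M}$ uniformly bounded. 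Collecting all contributions and choosing $N_0$ large, $\kappa$ small, and $\eps_0''$ small enough to absorb the remaining multiples of $\sup_s\|\sqrt{\eps}^3 h^\ell_{R,\eps}(s)\|_\infty$ on the left hand side yields \eqref{L-infty-Estimate}.
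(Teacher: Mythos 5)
Your treatment of the interior part of the argument — Duhamel along the trajectory, splitting $K_\ell$ into a cutoff piece $K^m_\ell$ and a bounded-kernel piece $K^c_\ell$, iterating the Duhamel representation once more inside the $K^c_\ell$ integral, splitting the resulting double velocity integral into high-velocity/thin-time-slab/bulk regimes, and in the bulk regime changing variables from the intermediate velocity to the spatial position $y=X_{cl}(s_1;s,X_{cl}(s),v')$ with Jacobian $\gtrsim(\kappa_*\eps)^3$ and then applying Cauchy--Schwarz in $(y,v'')$ to produce $\|f_{R,\eps}(s_1)\|_2$ with an overall $\eps^{-3/2}$ prefactor — is essentially the paper's proof. (Small slip: the change of variables should be from $v'$, not $v''$, to $y$; the position $X_{cl}(s_1;s,X_{cl}(s),v')$ depends on $v'$.)

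Where your proposal departs substantially — and where there is a genuine gap — is the boundary. You describe the main obstacle as ``repeated bounces'' and propose to truncate at $N_b$ reflections, bounding the tail by $(1-\alpha_\eps)^{N_b}\ll 1$, and to absorb intermediate diffuse contributions via trace dissipation. Two problems. First, for $\alpha_\eps=\sqrt{2\pi}\,\sqrt{\eps}\to 0$ and any fixed $N_b$, one has $(1-\alpha_\eps)^{N_b}\to 1$; it is not small, so the tail bound fails outright. Second, and more to the point, in the half-space $\R^3_+$ the whole concern is moot: a backward characteristic can hit $\p\R^3_+$ at most once. After applying \eqref{BC-Remainder-F} at the hitting time $t_1=t-t_b$, both the specularly reflected velocity $R_{x_b}v$ and any diffusely reflected velocity $v'$ satisfy $v\cdot n\ge 0$ there, so the continuation of the backward trajectory runs straight back to $s=0$ without further boundary interaction. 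The paper exploits this by partitioning $[0,\tau]\times\R^3_+\times\R^3$ into the set $\mathscr{U}$ of points whose backward trajectory reaches $s=0$ without a boundary hit and its complement $\mathscr{V}$, proving the Duhamel estimate directly on $\mathscr{U}$, and on $\mathscr{V}$ applying the boundary condition \emph{once} to reduce to an $\mathscr{U}$-estimate at $t_1$: $|h^\ell_{R,\eps}(t_1,x_b,v)|\le (1-\alpha_\eps)|h^\ell(t_1,x_b,R_{x_b}v)|+C\alpha_\eps\sup_{v'\cdot n>0}|h^\ell(t_1,x_b,v')|\le 2\|h^\ell_{R,\eps}\mathbbm{1}_{\mathscr{U}}(t_1)\|_\infty$, using \eqref{T_M} and $M_w=\M^0$ to control the factor $M_w\l v\r^\ell/\sqrt{\M_M}\int_{v'\cdot n>0}(v'\cdot n)\sqrt{\M_M}/\l v'\r^\ell\,\d v'$. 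No multi-bounce accounting, no trace-dissipation input from Lemma \ref{Lemma-L2-Estimates} (which is an $L^2$ quantity and does not enter the pointwise $L^\infty$ estimate at all). You should replace your items (a)--(b) with the single-bounce geometric observation and the $\mathscr{U}/\mathscr{V}$ reduction; with that change, the argument closes along exactly the lines you have already laid out for the interior.
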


\begin{proof}[Proof of Theorem \ref{Main-Thm}]
	Based on Lemma \ref{Lemma-L2-Estimates} and \eqref{Lemma-L-infty-Estimate}, one has
	\begin{equation*}
	  \begin{aligned}
	    \tfrac{\d}{\d t} \| f_{R,\eps} \|_2^2 + \tfrac{c_0}{\eps} \| (\mathcal{I} - \mathcal{P}) f_{R,\eps} \|_\nu^2 + c_1 \iint_{\Sigma_-} | v \cdot n(x) | | L \gamma_+ f_{R, \eps} |^2 \d \sigma_{\Sigma_-} \\
	    \leq C \big[ 1 + \sqrt{\eps} ( \| \sqrt{\eps}^3 h^{\ell}_{R, \eps} (0) \|_\infty + \sup_{0 \le s \le t} \| f_{R, \eps} (s) \|_2 + \sqrt{\eps}^2 ) \big] (\| f_{R,\eps} \|^2_2 + \| f_{R,\eps} \|_2) + C \sqrt{\eps}^2 \,.
	  \end{aligned}
	\end{equation*}
	The Gr\"onwall inequality yields
	\begin{equation*}
	  \begin{aligned}
	    \| f_{R,\eps} (t) \|_2^2 + 1 \leq (\| f_{R,\eps} (0) \|_2^2 + 1) e^{C t \big\{ 1 + \sqrt{\eps}^3 + \sqrt{\eps} ( \| \sqrt{\eps}^3 \| h_{R,\eps} (0) \|_\infty + \sup_{t \in [0, \tau]} \| f_{R,\eps} (t) \|_2 ) \big\}} \,.
	  \end{aligned}
	\end{equation*}
	For bounded $\sup_{t \in [0, \tau]} \| f_{R,\eps} (t) \|_2$ and small $\eps \in (0, \min\{ \eps_0', \eps_0'' \} )$, utilizing the Taylor expansion of the exponential function in the above inequality, one has
	\begin{equation*}
	  \begin{aligned}
	    \| f_{R,\eps} (t) \|_2^2 + 1 \leq C_1 (\| f_{R,\eps} (0) \|_2^2 + 1) \big[ 1 + \sqrt{\eps} ( \| \sqrt{\eps}^3 \| h_{R,\eps} (0) \|_\infty + \sup_{t \in [0, \tau]} \| f_{R,\eps} (t) \|_2 ) \big] \,.
	  \end{aligned}
	\end{equation*}
	For $t \leq \tau$, by using the Young inequality, we conclude the proof of Theorem \ref{Main-Thm} by the bound
	\begin{equation*}
	  \begin{aligned}
	    \sup_{t \in [0, \tau]} \| f_{R,\eps} (t) \|_2 \leq C_\tau \big( 1 + \| f_{R, \eps} (0) \|_2 + \| \sqrt{\eps}^3 h_{R,\eps}^\ell (0) \|_\infty \big) \,.
	  \end{aligned}
	\end{equation*}
\end{proof}

\subsection{$L^2$-estimate: Proof of Lemma \ref{Lemma-L2-Estimates}}

From \eqref{Remainder-F} and \eqref{f_Reps-h_Reps}, we see that $f_{R,\eps}$ satisfies
\begin{equation}\label{f_R_epsilon}
\begin{aligned}
\p_t f_{R, \varepsilon} + v \cdot \nabla_{x} f_{R, \varepsilon} +  \tfrac{1}{\varepsilon} L f_{R, \varepsilon} = - \tfrac{ \{\p_t + v \cdot \nabla_x \} \sqrt{\M} }{\sqrt{\M}} f_{R, \eps} +  \tfrac{\sqrt{\eps}^2}{\sqrt{\M}} B(\sqrt{\M} f_{R, \eps}, \sqrt{\M} f_{R,\eps}) \\
+ \sum_{k=1}^5 \sqrt{\eps}^{k-2} \tfrac{1}{\sqrt{\M}} \Big\{ B(F_k + F_k^b + F_k^{bb}, \sqrt{\M} f_{R, \eps}) + B(\sqrt{\M} f_{R, \eps}, F_k + F_k^b + F_k^{bb}) \Big\} \\
+ \tfrac{1}{\sqrt{\M}} ( R_\eps + R_\eps^b + R_\eps^{bb} )
\end{aligned}
\end{equation}
with boundary condition
\begin{equation}\label{f_R-BC}
\begin{aligned}
  f_{R, \eps} (t, \bar{x}, 0, \bar{v}, v_3) |_{v_3 >0} = f_{R, \eps} (t, \bar{x}, 0, \bar{v}, -v_3) + \sqrt{\eps} \tilde{\Gamma}_\eps \,,
\end{aligned}
\end{equation}
where
\begin{equation}\label{tilde-Gamma}
\begin{aligned}
\tilde{\Gamma}_\eps =  \sqrt{\eps} \tfrac{1}{\sqrt{\M^0}} \Gamma_\eps - \tfrac{\alpha_\eps}{\sqrt{\eps}} \Big\{ & f_{R, \eps} (t, \bar{x}, 0, \bar{v}, -v_3) \\
& - \sqrt{2\pi} \tfrac{M_w (v)}{\sqrt{\M^0}} \int_{\R^2} \int_0^{+\infty} v_3 f_{R, \eps} (t, \bar{x}, 0, \bar{v}, -v_3) \sqrt{\M^0} \d \bar{v} \d v_3 \Big\} \,.
\end{aligned}
\end{equation}
Here $\Gamma_\eps$ is given in \eqref{Gamma_eps}.

Multiplying the equation \eqref{f_R_epsilon} by $f_{R,\eps}$, integrating the resultant equation over $\R^3_+ \times \R^3$ and using \eqref{Hypocoercivity}, one obtains
\begin{align}\label{Remiander_Eq-L2-esimate-1}
\no & \tfrac{1}{2} \tfrac{\d}{\d t} \| f_{R, \eps} \|_2^2 + \tfrac{c_0}{\eps} \| (\mathcal{I} - \mathcal{P}) f_{R, \eps} \|_\nu^2 - \tfrac{1}{2} \iint_{\p \R^3_+ \times \R^3} v_3 | f_{R, \eps} (t, \bar{x}, 0, v) |^2 \d v \d \bar{x} \\
\no = & - \iint_{\R^3_+ \times \R^3} \tfrac{(\p_t + v \cdot \nabla_x) \sqrt{\M}}{\sqrt{\M}} |f_{R, \eps}|^2 \d v \d x + \sqrt{\eps}^2 \iint_{\R^3_+ \times \R^3} \tfrac{1}{\sqrt{\M}} B (\sqrt{\M} f_{R,\eps}, \sqrt{\M} f_{R, \eps}) f_{R, \eps} \d v \d x \\
\no + & \sum_{k=1}^5 \sqrt{\eps}^{k-2} \iint_{\R^3_+ \times \R^3} \tfrac{1}{\sqrt{\M}} \big\{ B(F_k + F^b_k + F^{bb}_k, \sqrt{\M} f_{R, \eps}) \\
\no & \qquad \qquad \qquad \qquad \qquad \qquad \qquad \qquad + B(\sqrt{\M} f_{R, \eps}, F_k + F^b_k + F^{bb}_k) \big\} f_{R, \eps} \d v \d x \\
+ & \iint_{\R^3_+ \times \R^3} \tfrac{1}{\sqrt{\M}} \big( R_\eps + R^b_\eps + R^{bb}_\eps \big) \d v \d x \,.
\end{align}
Following the similar arguments in Section 2.1 of \cite{Guo-Jang-Jiang-2010-CPAM},
\begin{align}\label{RE-L2-RB}
 \text{RHS of \eqref{Remiander_Eq-L2-esimate-1}}
\le C_\lambda \eps^2 \| h^{\ell}_{R, \eps} \|_\infty \| f_{R, \eps} \|_2 + C \| f_{R, \eps} \|_2 + C \| f_{R, \eps} \|_2^2 + \tfrac{C \lambda^{3 - \gamma_0}}{\eps} \| (\mathcal{I} - \mathcal{P}) f_{R, \eps} \|_\nu^2
\end{align}
for some small $\lambda >0$ to be determined.

We next focus on estimating the boundary integral term $ - \tfrac{1}{2} \iint_{\p \R^3_+ \times \R^3} v_3 | f_{R, \eps} (t, \bar{x}, 0, v) |^2 \d v \d \bar{x}$. Recalling the boundary condition \eqref{f_R-BC}, one has
\begin{align}\label{L2-Boundary-Term}
\no & \tfrac{1}{2} \iint_{\p \R^3_+ \times \R^3} v_3 |f_{R, \eps} (t, \bar{x}, 0, v)|^2 \d v \d \bar{x} \\
\no = & \tfrac{1}{2} \iint_{\R^2 \times \R^2} \Big\{ \int_{-\infty}^0 + \int_0^{+ \infty} \Big\} v_3 |f_{R, \eps} (t, \bar{x}, 0, \bar{v}, v_3)|^2 \d v_3 \d \bar{v} \d \bar{x} \\
\no = & \tfrac{1}{2} \iint_{\R^2 \times \R^2} \int_{-\infty}^0 v_3 |f_{R, \eps} (t, \bar{x}, 0, \bar{v}, v_3)|^2 \d v_3 \d \bar{v} \d \bar{x} \\
\no & \qquad \qquad \qquad \qquad + \tfrac{1}{2} \iint_{\R^2 \times \R^2} \int_0^{+\infty} v_3 | f_{R, \eps} (t, \bar{x}, 0, \bar{v}, -v_3) + \sqrt{\eps} \widetilde{\Gamma}_\eps|^2 \d v_3 \d \bar{v} \d \bar{x} \\
= & \underbrace{ \sqrt{\eps} \iint\limits_{\R^2 \times \R^2} \int_0^{+\infty} v_3 \widetilde{\Gamma}_\eps f_{R, \eps} (t, \bar{x}, 0, \bar{v}, -v_3) \d v_3 \d \bar{v} \d \bar{x} }_{I_1} + \underbrace{\tfrac{\sqrt{\eps}^2}{2}  \iint\limits_{\R^2 \times \R^2} \int_0^{+\infty} v_3 |\widetilde{\Gamma}_\eps|^2 \d v_3 \d \bar{v} \d \bar{x} }_{I_2} \,.
\end{align}
By using the definition of $\widetilde{\Gamma}_\eps$ in \eqref{tilde-Gamma}, one has
\begin{align}\label{L2-Boundary-Term-I1}
\no I_1 = & - \alpha_\eps \iint\limits_{\R^2 \times \R^2} \int_0^{+\infty} v_3 | f_{R, \eps} (t, \bar{x}, 0, \bar{v}, -v_3)|^2 \d \bar{v} \d v_3 \d \bar{x} \\
\no & + \alpha_\eps \sqrt{2 \pi} \iint\limits_{\R^2 \times \R^2} \int_0^{+\infty} v_3 f_{R, \eps} (t, \bar{x}, 0, \bar{v}, -v_3) \cdot \tfrac{M_w (v)}{\sqrt{\M^0}} \\
\no & \qquad \times \Big( \int_{\R^2} \int_0^{+\infty} v_3^\prime f_{R, \eps} (t, \bar{x}, 0, \bar{v}, -v_3^\prime) \sqrt{\M^0}  \d \bar{v} \d v_3^\prime \Big) \d v_3 \d \bar{v} \d \bar{x} \\
& + 2 \sqrt{\eps}^3 \iint\limits_{\R^2 \times \R^2} \int_0^{+\infty} v_3  f_{R, \eps} (t, \bar{x}, 0, \bar{v}, -v_3) \Gamma_\eps \d v_3 \d \bar{v} \d \bar{x} =: I_{11} + I_{12} + I_{13} \,.
\end{align}
Obviously, the term $I_{11}$ is negative, thus it is a  good term in the $L^2$-estimates as a boundary decay rate. We then control the quantity $I_{12}$. Recalling the assumption \eqref{M_w-Perturbation} of the local Maxwellian $M_w (t, \bar{x}, v)$ of the boundary, we derive from direct calculation that
\begin{align*}
  I_{12} = \sqrt{2 \pi} \alpha_\eps \int_{\R^2} \Big( \int_{\R^2} \int_0^{+\infty} v_3 f_{R, \eps} (t, \bar{x}, 0, \bar{v}, -v_3) \sqrt{\M^0} \d \bar{v} \d v_3 \Big)^2 \d \bar{x} \,.
\end{align*}
By H\"older inequality and the fact $\int_{\R^2} \int_0^{+\infty} v_3 \M^0 \d \bar{v} \d v_3 = \frac{\rho^0 \sqrt{T^0}}{\sqrt{2 \pi}}$, we further infer that
\begin{align}\label{L2-Boundary-Term-I-12-Bound}
\no |I_{12}| \le & \alpha_\eps \int_{\R^2} \rho^0 \sqrt{T^0} \Big( \int_{\R^2} \int_0^{+\infty} v_3 | f_{R, \eps} (t, \bar{x}, 0, \bar{v}, -v_3) |^2 \d \bar{v} \d v_3 \Big) \d \bar{x} \\
\no \le & \| \rho^0 \sqrt{T^0} \|_{L^\infty ([0, \tau] \times \R^2)} \cdot \alpha_\eps \iint_{\R^2 \times \R^2} \int_0^{+\infty} v_3 | f_{R, \eps} (t, \bar{x}, 0, \bar{v}, -v_3) |^2 \d \bar{v} \d v_3 \d \bar{x} \\
\le & - 3 \sqrt{3} \rho_\# \sqrt{T_\#} I_{11} \,,
\end{align}
where the last inequality is derived from the Proposition \ref{Proposition_Compressible_Euler}. Here the constant $3 \sqrt{3} \rho_\# \sqrt{T_\#} \in (0, 1)$ and the symbol $I_{11}$ is given in \eqref{L2-Boundary-Term-I1}.

Moreover, it is deduced from the H\"older inequality and Proposition \ref{Pro_Regularities-Coefficients} that
\begin{align}\label{L2-Boundary-Term-I13}
\no |I_{13}|
\le & \sqrt{\eps}^2 \alpha_\eps \iint_{\R^2 \times \R^2} \int_0^{+\infty} v_3 | f_{R, \eps} (t, \bar{x}, 0, \bar{v}, -v_3) |^2 \d \bar{v} \d v_3 \d \bar{x} \\
& + \frac{1}{4 \sqrt{2 \pi}} \sqrt{\eps}^3 \iint_{\R^2 \times \R^2} \int_0^{+\infty} v_3 | \Gamma_\eps (t, \bar{x}, v) |^2 \d \bar{v} \d v_3 \d \bar{x} \le - \sqrt{\eps}^2 I_{11} + C \sqrt{\eps}^3 \,.
\end{align}
Together with the bounds \eqref{L2-Boundary-Term-I1}, \eqref{L2-Boundary-Term-I-12-Bound} and \eqref{L2-Boundary-Term-I13}, one sees that
\begin{align}\label{L2-Boundary-Term-I1-Bound}
I_1
\le & \big(c_1 - \eps \big) I_{11} + C \sqrt{\eps}^3 \,.	
\end{align}
where $c_1 = 1 - 3 \sqrt{3} \rho_\# \sqrt{T_\#} > 0$

Similarly in estimating of \eqref{L2-Boundary-Term-I1-Bound}, the quantity $|I_2|$ can be bounded by
\begin{align}\label{L2-Boundary-Term-I2-Bound}
|I_2| \le - C_\Gamma \sqrt{\eps} I_{11} + C_\Gamma \sqrt{\eps}^2
\end{align}
for some constant $C_\Gamma >0$. We consequently derive from substituting \eqref{L2-Boundary-Term-I1-Bound} and \eqref{L2-Boundary-Term-I2-Bound} into \eqref{L2-Boundary-Term} that
\begin{align*}
& - \tfrac{1}{2} \iint_{\R^2 \times \R^2} \int_0^{+\infty} v_3 | f_{R, \eps} (t, \bar{x}, 0, v) |^2 \d v \d \bar{x} = -(I_1 + I_2) \\
\ge & \big(c_1 - \eps - C_\Gamma \sqrt{\eps} \big) \alpha_\eps \iint_{\R^2 \times \R^2} \int_0^{+\infty} v_3 | f_{R, \eps} (t, \bar{x}, 0, \bar{v}, -v_3) |^2 \d \bar{v} \d v_3 \d \bar{x} - C \sqrt{\eps}^3 - C_\Gamma \sqrt{\eps}^2 \,,
\end{align*}
We then choose
\begin{align*}
  \eps_0' = c_1 \min\big\{ \tfrac{1}{8}, \tfrac{1}{8 C_\Gamma} \big\} >0 \,.
\end{align*}
Therefore, for any $\eps \in (0, \eps_0')$,
\begin{align}
\no & - \tfrac{1}{2} \iint_{\p \R^3_+ \times \R^3} v_3 | f_{R, \eps} (t, \bar{x}, 0, v) |^2 \d v \d \bar{x} \\
\ge & \tfrac{1}{2} c_1 \iint_{\R^2 \times \R^2} \int_0^{+\infty} v_3 | f_{R, \eps} (t, \bar{x}, 0, \bar{v}, - v_3) |^2 \d \bar{v} \d v_3 \d \bar{x} - C \sqrt{\eps}^2
\end{align}
for some constant $ C > 0$. Together with  \eqref{Remiander_Eq-L2-esimate-1} and \eqref{RE-L2-RB}, we have
\begin{align}\label{Remainder_Eq-L2}
\no & \tfrac{1}{2} \tfrac{\d}{\d t} \| f_{R, \eps} \|_2^2 + \tfrac{c_0 - C \lambda^{3- \gamma_0}}{\eps} \| (\mathcal{I} - \mathcal{P}) f_{R, \eps} \|_\nu^2  \\
& + \tfrac{1}{2} c_1 \iint_{\R^2 \times \R^2} \int_0^{+\infty} v_3 | f_{R, \eps} (t, \bar{x}, 0, \bar{v}, - v_3) |^2 \d v_3 \d \bar{v} \d \bar{x} \\
\no \le & C_\lambda \eps^2 \| h^{\ell}_{R, \eps} \|_\infty \| f_{R, \eps} \|_2 + C \| f_{R, \eps} \|_2 + C \| f_{R, \eps} \|_2^2 + C \sqrt{\eps}^2
\end{align}
for some small $\lambda > 0$ to be determined. We further choose small $\lambda > 0$ such that $c_0 - C \lambda^{3-\gamma_0} \ge \tfrac{1}{2} c_0 >0$, and then finish the proof of Lemma \ref{Lemma-L2-Estimates}.

\subsection{$L^\infty$ estimate for $h^{\ell}_{R, \eps}$: Proof of Lemma \ref{Lemma-L-infty-Estimate}}
As in \cite{Guo-2010-ARMA} or \cite{GHW-2020}, we introduce the following backward $b$-characteristics of Boltzmann equation. Given $(t, x, v)$, let $[X(s), V(s)]$ be determined by
\begin{equation*}
\left\{
\begin{aligned}
&\tfrac{\d }{\d s} X(s) = V(s)\,, \ \tfrac{\d }{\d s} V(s) = 0\,, \\
& [X(t), V(t)] = [x, v]\,.
\end{aligned}
\right.
\end{equation*}
The solution is then given by
\begin{align*}
[X(s), V(s)] = [X (s; t, x, v), V (s; t, x, v)] = [x - (t-s) v, v] \,.
\end{align*}
For any fixed $(x, v)$ with $x \in \overline{\R}^3_+$ and $v_3 \ne 0$, we define its backward exit time $t_b (x,v) \ge 0$ to be the last moment at which the back-time straight line $[X (s; 0, x, v), V (s; 0, x, v)]$ remains in $x \in \overline{\R}^3_+$. Hence, it holds that
\begin{align*}
t_b (x,v) = \sup \big\{ \tau_0 \ge 0: x - \tau_0 v \in \R^3_+ \big\}\,,
\end{align*}
which means $x - t_b v \in \p \R^3_+$, i.e., $x_3 - t_b v_3 = 0$. We also define
\begin{align*}
x_b (x ,v) = x (t_b) = x- t_b v \in \p \R^3_+ \,.
\end{align*}
In the half space, for the case $v_3 <0$, the back-time trajectory is a straight line and does not hit the boundary, see Figure \ref{tb-NOTexist}. For the case $v_3 >0$, the back-time cycle will hit the boundary for one time, see Figure \ref{tb-exist}.
\begin{figure}[h]
	\centering
	\begin{tikzpicture}[node distance=2cm]
		\draw[-, very thick] node[left,scale=1]{$\p \R^3_+$}(0,0)--(3,0);
		\draw[->, thick] (2,-0.1)--(2,-0.6) node[below]{$n$};
		\draw[-,thick, dashed] (1,0)--(2,1);
		\draw[->,thick] (2,1)--(2.7,1.7) node[right]{$v$};
		\draw (2,1) node[below right, scale=1]{$x$};
		\draw (3, 0.1) node[above left]{$t_b (x,v)$};
		\draw[->,dashed, color=red] (2,1)--(3.8,1) node[right, color=black]{time $t$};
		\draw[->,dashed, color=red] (1,0)--(3.8,0) node[right, color=black]{time $t_1 = t - t_b (x,v)$};
		\draw (1,0) node[below]{$x_b (x,v)$};
		\draw[->, thick, color=blue] (0, 0.5)--(1,0);
		\draw (-0.3,0.8) node[below]{\color{brown}$\alpha_\eps$\color{blue}$v'$};
		\draw[->, thick, color=red] (0, 0.9)--(1,0);
		\draw (0,1.5) node[below]{\color{brown}$(1 - \alpha_\eps)$\color{red}$R_{x_b}v$};
	\end{tikzpicture}
	\caption{The case $v \cdot n < 0$: $t_b (x,v)$ exists.}\label{tb-exist}
\end{figure}
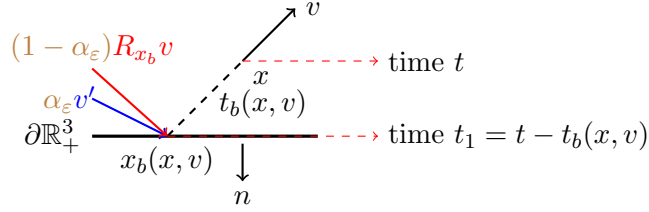
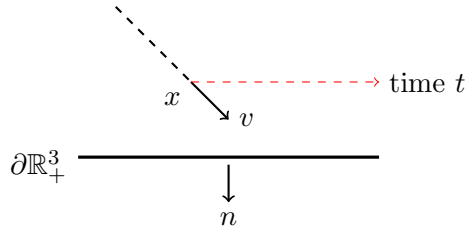
\begin{figure}[h]
	\centering
	\begin{tikzpicture}[node distance=2cm]
		\draw[-, very thick] node[above left,scale=1]{$\p \R^3_+$}(0,0.5)--(4,0.5);
		\draw[->, thick] (2,0.4)--(2,-0.1) node[below]{$n$};
		\draw[-,thick, dashed] (0.5,2.5)--(1.5,1.5);
		\draw[->,thick] (1.5,1.5)--(2,1) node[right]{$v$};
		\draw (1.5,1.5) node[below left, scale=1]{$x$};
		\draw[->,dashed, color=red] (1.5,1.5)--(4,1.5) node[right, color=black]{time $t$};
	\end{tikzpicture}
	\caption{The case $v \cdot n \geq 0$: $t_b (x,v)$ does not exist.}\label{tb-NOTexist}
\end{figure}
Now let $x \in \overline{\R}^3_+$, $(x ,v) \notin \Sigma_0 \cup \Sigma_-$ and $(t_0, x_0, v_0) = (t, x, v)$. By the Maxwell reflection boundary condition \eqref{MBC}, the back-time cycle is defined as
\begin{align}\label{t-V-X-Formula}
\no X_{cl} (s; t, x, v) = & \mathbbm{1}_{[t_1, t)} (s) \big\{ x - (t-s) v \big\} + \mathbbm{1}_{(- \infty, t_1)} (s) \big\{ x - [ (1 - \alpha_\eps)R_{x_b} v + \alpha_\eps v' ] (t-s) \big\} \,, \\
V_{cl} (s; t, x, v) =& \mathbbm{1}_{[t_1, t)} (s) v + \mathbbm{1}_{(- \infty, t_1)} (s) [ (1 - \alpha_\eps)R_{x_b} v + \alpha_\eps v' ] \,,
\end{align}
where $v^\prime \in \mathcal{V} := \{v^\prime: v^\prime \cdot n >0\}$.

According to the trajectories showed in Figure \ref{tb-exist} and \ref{tb-NOTexist}, the points set $[0, \tau] \times \R^3_+ \times \R^3$ can be clarified by following two parts:
\begin{equation}\label{UV}
	\begin{aligned}
		\mathscr{U} = & \{ (t,x,v) \in [0, \tau] \times \R^3_+ \times \R^3 : t \in [0, \tau], v \cdot n \geq 0 \} \\
		& \qquad \cup \{ (t,x,v) \in [0, \tau] \times \R^3_+ \times \R^3 : v \cdot n < 0 \textrm{ and } t_1 = t - t_b (x,v) \leq 0 \} \,, \\
		\mathscr{V} = & [0, \tau] \times \R^3_+ \times \R^3 - \mathscr{U} \\
		= & \{ (t,x,v) \in [0, \tau] \times \R^3_+ \times \R^3 : v \cdot n < 0 \textrm{ and } t_1 = t - t_b (x,v) > 0 \} \,.
	\end{aligned}
\end{equation}
Define a nonnegative number
\begin{equation}\label{t-star}
	t_\star =
	\left\{
	\begin{aligned}
		0 \,, \quad \textrm{if } (t,x,v) \in \mathscr{U} \,, \\
		t_1 \,, \quad \textrm{if } (t,x,v) \in \mathscr{V} \,.
	\end{aligned}
	\right.
\end{equation}
Observe that the trajectory over $s \in [t_\star, t]$ is
\begin{equation}\label{Trajectory}
	\begin{aligned}
		X_{cl} (s; t, x, v) = x - (t-s) v \,, \ V_{cl} (s; t, x, v) = v \,.
	\end{aligned}
\end{equation}

As in \cite{Caflish-1980-CPAM}, we define
\begin{align*}
\mathcal{L}_M g = - \tfrac{1}{\sqrt{\M_M}} \big\{ B(\M, \sqrt{\M_M} g) + B(\sqrt{\M_M} g, \M) \big\} = \big\{ \nu (\M) + K \big\} g \,,
\end{align*}
where $K g = K_1 g - K_2 g$ with
\begin{align*}
K_1 g = & \int_{\R^3 \times \mathbb{S}^2} b(\theta) | u -v |^{\gamma_0} \sqrt{\M_M (u)} \tfrac{\M (v)}{\sqrt{\M_M (v)}} g (u) \d \omega \d u \,, \\
K_2 g = & \int_{\R^3 \times \mathbb{S}^2} b(\theta) | u -v |^{\gamma_0} \M (u^\prime) \tfrac{\sqrt{\M_M (v^\prime)}}{\sqrt{\M_M (v)}} g(v^\prime) \d \omega \d u \\
& + \int_{\R^3 \times \mathbb{S}^2} b(\theta) | u -v |^{\gamma_0} \M (v^\prime) \tfrac{\sqrt{\M_M (u^\prime)}}{\sqrt{\M_M (v)}} g(u^\prime) \d \omega \d u \,.
\end{align*}
Consider a smooth cutoff function $ 0 \le \chi_m \le 1$ such that for $ m >0$, $\chi_m (s) \equiv 1$ for $s \le m$, and $\chi_m (s) \equiv 0$ for $s \ge 2m$. Then we introduce (see \cite{Guo-Jang-Jiang-2010-CPAM})
\begin{align*}
K^m g = & \int_{\R^3 \times \mathbb{S}^2} b(\theta) | u-v |^{\gamma_0} \chi_m ( |u-v| ) \sqrt{\M_M (u)} \tfrac{\M_ (v)}{\M_M (v)} g(u) \d \omega \d u \\
& - \int_{\R^3 \times \mathbb{S}^2} b(\theta) | u-v |^{\gamma_0} \chi_m ( |u-v| ) \M (u^\prime) \tfrac{\sqrt{\M_M (v^\prime)}}{\sqrt{\M_M (v)}} g(v^\prime) \d \omega \d u \\
& - \int_{\R^3 \times \mathbb{S}^2} b(\theta) | u-v |^{\gamma_0} \chi_m ( |u-v| ) \M (v^\prime) \tfrac{\sqrt{\M_M (u^\prime)}}{\sqrt{\M_M (v)}} g(u^\prime) \d \omega \d u \,,
\end{align*}
and further define $K^c g = K g - K^m g$.

\begin{lemma}[Lemma 2.3 of \cite{Guo-Jang-Jiang-2010-CPAM}]\label{Lemma-Bound-of-K}
	\begin{align}\label{K_m-Bound}
	| K^m g (v) | \le C m^{3 + \gamma_0} \nu (\M) \| g \|_\infty
	\end{align}
	and $K^c g(v) = \int_{\R^3} \Bbbk (v, v^\prime) g (v^\prime) \d v^\prime$, where the kernel $\Bbbk (v, v^\prime)$ satisfies
	\begin{align}\label{K_c-Kernal-Bound}
	\Bbbk (v, v^\prime)
	\le C_m \tfrac{\exp\big\{ - c |v-v^\prime|^2 \big\}}{|v-v^\prime| (1 + |v| + |v^\prime|)^{1-\gamma_0}}
	\end{align}
	for some $c > 0$.
\end{lemma}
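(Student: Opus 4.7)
The plan is to verify the two bounds in Lemma \ref{Lemma-Bound-of-K} by following the classical Grad--Carleman analysis of the linearized collision operator, adapted to our setting in which $\M$ is a local Maxwellian compared to the global Maxwellian $\M_M$ through the two-sided bound \eqref{M-Bound}. The argument is essentially the one in \cite{Guo-Jang-Jiang-2010-CPAM}; I will emphasize the places where the locality of $\M$ must be accommodated.

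For the bound $|K^m g(v)| \leq C m^{3+\gamma_0}\nu(\M)\|g\|_\infty$, the strategy is direct pointwise estimation using the compact support of $\chi_m$. For the $K_1^m$ piece, I would bound $\chi_m \leq \mathbbm{1}_{\{|u-v|\leq 2m\}}$, use $|u-v|^{\gamma_0} \leq (2m)^{\gamma_0}$, invoke the angular cutoff $|b(\theta)|\leq C|\cos\theta|$ so that $\int_{\mathbb{S}^2} b\, \d\omega$ is finite, and bound $\sqrt{\M_M(u)}\leq C$. Integrating over the ball of radius $2m$ gives a volume factor $Cm^3$. The remaining factor $\M(v)/\sqrt{\M_M(v)}$ is controlled by \eqref{M-Bound}, which yields $\M(v)/\sqrt{\M_M(v)}\leq C\M_M(v)^{z-1/2}\leq C$ since $z>\tfrac{1}{2}$. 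Finally, \eqref{nu-phi} combined with Proposition \ref{Proposition_Compressible_Euler} gives $\nu(\M)\geq \nu_0>0$ uniformly, so multiplying and dividing by $\nu(\M)$ yields the stated bound. The two $K_2^m$ pieces are treated analogously: the collision identities $|u|^2+|v|^2=|u'|^2+|v'|^2$ and $|u'-v'|=|u-v|$ show that $\chi_m(|u-v|)$ still restricts $(u,\omega)$ to a region of volume $\lesssim m^3$ and keeps all the Maxwellian factors uniformly bounded.

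For the kernel representation of $K^c$, I would split $K^c=K_1^c-K_2^c$. The piece $K_1^c$ is already in kernel form with
\[
\Bbbk_1(v,v') = \int_{\mathbb{S}^2} b(\theta)|v-v'|^{\gamma_0}\bigl(1-\chi_m(|v-v'|)\bigr)\sqrt{\M_M(v')}\,\tfrac{\M(v)}{\sqrt{\M_M(v)}}\,\d\omega,
\]
which is supported on $\{|v-v'|\geq m\}$, so the factor $1/|v-v'|\leq 1/m$ is absorbed into $C_m$; the Maxwellian decay $\sqrt{\M_M(v')}\cdot \M(v)/\sqrt{\M_M(v)}$, combined with the parallelogram identity $|v|^2+|v'|^2\geq \tfrac{1}{2}|v-v'|^2$ and \eqref{M-Bound}, produces the Gaussian $e^{-c|v-v'|^2}$, while polynomial prefactors are reabsorbed by a small loss in the Gaussian exponent. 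The real work is in $K_2^c$, whose pieces involve $g(v')$ and $g(u')$ instead of $g(u)$. To reveal the kernel I would fix $v$ and change variables $(u,\omega)\mapsto v'$ (respectively $(u,\omega)\mapsto u'$) via the classical Grad--Carleman representation, parametrizing $v'$ by three coordinates (one radial plus a 2D sphere). The decisive point is the Jacobian of this transformation, which carries a factor $1/|v-v'|$ (respectively $1/|v-u'|$), explaining the $|v-v'|^{-1}$ appearing in the kernel estimate. After the change of variables, the remaining angular integral over the sphere of radius $|v-v'|$ produces the $(1+|v|+|v'|)^{\gamma_0-1}$ polynomial factor, and the composite Maxwellian weight $\M(u')\sqrt{\M_M(v')}/\sqrt{\M_M(v)}$, together with energy conservation $|u|^2+|v|^2=|u'|^2+|v'|^2$ and the parallelogram inequality, furnishes the exponential $e^{-c|v-v'|^2}$.

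I expect the main obstacle to be the precise Jacobian computation in the Grad--Carleman change of variables, since this requires careful bookkeeping of the collision geometry to isolate the singular factor $|v-v'|^{-1}$; this is classical and fully carried out in \cite{Guo-Jang-Jiang-2010-CPAM}. The only nontrivial adaptation for our situation is that $\M$ is a local Maxwellian rather than a global one, but this is completely handled by \eqref{M-Bound}, which lets us compare $\M(v)/\sqrt{\M_M(v)}$ and $\M(u')$ to powers of $\M_M$ with a favorable exponent $z-\tfrac{1}{2}>0$ and $z>\tfrac{1}{2}$ respectively, preserving the Gaussian decay after absorbing the extra polynomial factors into a slight reduction of the constant $c>0$.
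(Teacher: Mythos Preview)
The paper does not prove this lemma at all; it simply quotes it as Lemma 2.3 of \cite{Guo-Jang-Jiang-2010-CPAM} and uses the result without reproducing the argument. Your sketch follows exactly the classical Grad--Carleman approach underlying that reference (compact support of $\chi_m$ for the $K^m$ bound, the Carleman change of variables with its $|v-v'|^{-1}$ Jacobian for the $K^c$ kernel, and the two-sided Maxwellian comparison \eqref{M-Bound} to handle the local Maxwellian), so there is nothing to compare: you have supplied the argument the paper outsources.
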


\begin{proof}[Proof of Lemma \ref{Lemma-L-infty-Estimate}]
	The proof is similar to \cite{Guo-Jang-Jiang-2010-CPAM}. We will only elaborate on the proof ideas and give a sketch of the proof process. The details of calculation will be omitted for simplicity of presentation. We will especially focus on the details of the relationship between $\| h_{R,\eps}^\ell \|_\infty$ and $\| f_{R,\eps} \|_2$.
	
	We first define
	\begin{align*}
	K_{\ell} g \equiv \l v \r^{\ell} K \Big( \tfrac{g}{\l v \r^{\ell}} \Big) \,.
	\end{align*}
	From the remainder equation \eqref{Remainder-F} and the definition of $h^{\ell}_{R, \eps}$ given in \eqref{f_Reps-h_Reps}, we deduce that
	\begin{align}\label{h_R_eps}
	\no & \p_t h^{\ell}_{R, \eps} + v \cdot \nabla_x h^{\ell}_{R, \eps} + \tfrac{\nu (\M)}{\eps} h^{\ell}_{R, \eps} - \tfrac{1}{\eps} K_{\ell} h^{\ell}_{R, \eps} \\
	= & \tfrac{\sqrt{\eps}^2 \langle v \rangle^{\ell}}{\sqrt{\M}_M} B \Big( \tfrac{\sqrt{\M_M}}{\langle v \rangle^{\ell}} h^{\ell}_{R, \eps}, \tfrac{\sqrt{\M_M}}{\langle v \rangle^{\ell}} h^{\ell}_{R, \eps} \Big) + \tfrac{\langle v \rangle^\ell}{\sqrt{\M_M}} \big( R_\eps + R^b_\eps + R^{bb}_\eps \big) \\
	\no & + \sum_{i =k}^3 \sqrt{\eps}^{k-2} \tfrac{\langle v \rangle^{\ell}}{\sqrt{\M_M}} \Big\{ B \Big( F_k + F^b_k + F^{bb}_k, \tfrac{\sqrt{\M_M}}{\langle v \rangle^{\ell}} h^{\ell}_{R, \eps} \Big) + B \Big( \tfrac{\sqrt{\M_M}}{\langle v \rangle^{\ell}} h^{\ell}_{R, \eps}, F_k + F^b_k + F^{bb}_k \Big) \Big\}
	\end{align}
    with boundary condition on $\Sigma_-$
    \begin{equation}\label{IC-h}
    	\begin{aligned}
    		\gamma_- h^\ell_{R, \eps} = (1 - \alpha_\eps) h^\ell_{R, \eps} (t, x, R_x v) + \alpha_\eps M_w  \tfrac{\l v \r^\ell}{\sqrt{\M_M}} \int_{v^\prime \cdot n >0} (v^\prime \cdot n) \tfrac{\sqrt{\M_M}}{\l v^\prime \r^\ell} h^\ell_{R, \eps} (t, x, v^\prime) \d v^\prime \,.
    	\end{aligned}
    \end{equation}
	For any $(t, x, v)$, integrating \eqref{h_R_eps} along the backward trajectory \eqref{t-V-X-Formula} over $s \in [t_\star, t]$, one obtains that
	\begin{align}\label{h_R_eps-bound-0}
	\no & h^{\ell}_{R, \eps} (t, x, v) = \underbrace{\exp\big\{ - \tfrac{1}{\eps} \int_{t_\star}^t \nu (\phi) \d \phi \big\} h^{\ell}_{R, \eps} (t_\star, X_{cl} (t_\star), V_{cl} (t_\star) )}_{I_1} \\
	\no & + \underbrace{\int_{t_\star}^t \exp\big\{ - \tfrac{1}{\eps} \int_s^t \nu (\phi) \d \phi \big\} \big( \tfrac{1}{\eps} K_{\ell}^m h^{\ell}_{R, \eps} \big) (s, X_{cl} (s), V_{cl} (s)) \d s }_{I_2} \\
	\no & + \underbrace{ \int_{t_\star}^t \exp\big\{ - \tfrac{1}{\eps} \int_s^t \nu (\phi) \d \phi \big\} \big( \tfrac{1}{\eps} K_{\ell}^c h^{\ell}_{R, \eps} \big) (s, X_{cl} (s), V_{cl} (s)) \d s }_{I_3} \\
	\no & + \underbrace{ \int_{t_\star}^t \exp\big\{ - \tfrac{1}{\eps} \int_s^t \nu (\phi) \d \phi \big\} \Big( \sqrt{\eps} \tfrac{\langle v \rangle^{\ell}}{\sqrt{\M_M}} B \big( \tfrac{\sqrt{\M_M}}{\langle v \rangle^{\ell}} h^{\ell}_{R, \eps}, \tfrac{\sqrt{\M_M}}{\langle v \rangle^{\ell}} h^{\ell}_{R, \eps} \big) \Big) (s, X_{cl} (s), V_{cl} (s)) \d s }_{I_4} \\
	\no & \left.
	\begin{aligned}
	& + \sum_{k =1}^5 \sqrt{\eps}^{k-2} \int_{t_\star}^t \exp\big\{ - \tfrac{1}{\eps} \int_s^t \nu (\phi) \d \phi \big\} \Big\{ \tfrac{\langle v \rangle^{\ell}}{\sqrt{\M_M}} B \Big( F_k + F^b_k + F^{bb}_k, \tfrac{\sqrt{\M_M}}{\langle v \rangle^{\ell}} h^{\ell}_{R, \eps} \Big) \\
	& \qquad \qquad \qquad \quad + \tfrac{\langle v \rangle^{\ell}}{\sqrt{\M_M}} B \Big(  \tfrac{\sqrt{\M_M}}{\langle v \rangle^{\ell}} h^{\ell}_{R, \eps}, F_k + F^b_k + F^{bb}_k \Big) \Big\} (s, X_{cl} (s), V_{cl} (s)) \d s
	\end{aligned}
	\right\}{:= I_5} \\
	& + \underbrace{ \int_{t_\star}^t \exp\big\{ - \tfrac{1}{\eps} \int_s^t \nu (\phi) \d \phi \big\} \Big( \tfrac{\langle v \rangle^{\ell}}{\sqrt{\M_M}} \big( R_\eps + R^b_\eps + R^{bb}_\eps \big) \Big) \big(s, X_{cl} (s), V_{cl} (s)\big) \d s }_{I_6} \,,
	\end{align}
	where the simplified symbol has been employed:
	\begin{align*}
	\nu (\phi) = \nu (\M) \big( \phi, X_{cl} (\phi), V_{cl} (\phi) \big) \,.
	\end{align*}

    {\bf Case 1: $(t,x,v) \in \mathscr{U}$.}

    Notice that $t_\star = 0$. Then the term $I_1$ in \eqref{h_R_eps-bound-0} can be bounded by
	\begin{align}\label{h_R_eps-I1-bound}
	|I_1| \le C \eps \langle v \rangle^{-\gamma_0} \| h^{\ell}_{R, \eps} (0) \|_\infty \leq C \eps \| h^{\ell}_{R, \eps} (0) \|_\infty \,.
	\end{align}
	For small $m > 0$, one has
	\begin{align}\label{h_R_eps-I2-to-I6-bound}
	\no & |I_2| \le C m^{3+\gamma_0} \sup_{s \in [0, t]} \| h^{\ell}_{R, \eps} (s) \|_\infty \,, \ |I_4| \le C \sqrt{\eps}^3 \sup_{s \in [0, t]} \| h^{\ell}_{R, \eps} (s) \|_\infty^2 \,, \\
	& |I_5| \le C \sqrt{\eps} \sup_{s \in [0, t]} \| h^{\ell}_{R, \eps} (s) \|_\infty \,, \ |I_6| \le C \sqrt{\eps}^2 \,.
	\end{align}
	
	We then focus on the estimate of the term $I_3$ in \eqref{h_R_eps-bound-0}. Let $\Bbbk_{\ell} (v, v^\prime)$ be the corresponding kernel associated with $K^c_{\ell}$. From Lemma \ref{Lemma-Bound-of-K},
	\begin{align}\label{Bbbk-Bound}
	 | \Bbbk_\ell (v, v^\prime) | \le \tfrac{ C \langle v^\prime \rangle^{\ell} \exp\big\{ - c |v-v^\prime|^2 \big\} }{ |v-v^\prime| \langle v \rangle^{\ell} (1+ |v| + |v^\prime| )^{1-\gamma_0} } \le \tfrac{ C \exp\{ - \frac{3}{4} c |v-v^\prime|^2 \} }{ |v-v^\prime| (1+ |v| + |v^\prime| )^{1-\gamma_0} } \,.
	\end{align}
	We therefore bound $I_3$ in \eqref{h_R_eps-bound-0} by
	\begin{align*}
	|I_3| \le \tfrac{1}{\eps} \int_{t_\star}^t \exp\Big\{ - \tfrac{1}{\eps} \int_s^t \nu (\phi) \d \phi \Big\} \int_{\R^3} \big| \Bbbk_{\ell} (V_{cl} (s), v^\prime) h^{\ell}_{R, \eps} (s, X_{cl} (s), v^\prime) \big| \d v^\prime \d s \,.
	\end{align*}
	We now use \eqref{h_R_eps-bound-0} again to evaluate $h^{\ell}_{R, \eps}$ via replacing $(t, x, v)$ with $(s, X_{cl} (s), v^\prime)$. Together with \eqref{h_R_eps-I2-to-I6-bound}, the above can be further bounded by
	\begin{align}\label{h_R_eps-I3}
	\no & \left.
	\begin{aligned}
	|I_3| \le &\tfrac{1}{\eps} \int_0^t \int_{\R^3} \exp\Big\{ -\tfrac{1}{\eps} \int_s^t \nu (\phi) \d \phi - \tfrac{1}{\eps} \int_0^s \nu (v^\prime) (\phi) \d \phi \Big\} | \Bbbk_\ell (V_{cl} (s), v^\prime ) | \\
	& \qquad \qquad \qquad \qquad \qquad \times | h^{\ell}_{R, \eps} \big( 0, X_{cl} (0; s, X_{cl} (s), v^\prime), v^\prime \big) | \d v^\prime \d s
	\end{aligned}
	\right\}{:= I^1_3} \\
	\no & \left.
	\begin{aligned}
	& + \tfrac{1}{\eps^2} \int_0^t \exp\Big\{ -\tfrac{1}{\eps} \int_s^t \nu(\phi) \d \phi \Big\} \int_{\R^3} | \Bbbk_\ell (V_{cl} (s), v^\prime) | \int_0^s \exp\Big\{ -\tfrac{1}{\eps} \int_{s_1}^s \nu (\phi) \d \phi \Big\} \\
	& \qquad \qquad \times \big| K_{\ell}^m h^{\ell}_{R, \eps} \big( s_1, X_{cl} (s_1; s, X_{cl} (s), v^\prime), V_{cl} (s_1; s, X_{cl} (s), v^\prime) \big) \big| \d s_1 \d v^\prime \d s
	\end{aligned}
	\right\}{:= I^2_3} \\
	\no & \left.
	\begin{aligned}
	& + \tfrac{1}{\eps^2} \int_0^t \exp\Big\{ -\tfrac{1}{\eps} \int_s^t \nu (\phi) \d \phi \Big\} \int_{\R^3 \times \R^3} \big| \Bbbk_{\ell} \big( V_{cl} (s), v^\prime \big) \Bbbk_{\ell} (v^\prime, v^{\prime\prime}) \big| \\
	& \times \int_0^s \exp\Big\{ -\tfrac{1}{\eps} \int_{s_1}^s \nu (v^\prime) (\phi) \d \phi \Big\} \big| h^\ell_{R, \eps} \big( s_1, X_{cl} (s_1; s, X_{cl} (s), v^\prime), v^{\prime\prime} \big) \big| \d v^{\prime\prime} \d v^\prime \d s_1 \d s
	\end{aligned}
	\right\}{:= I^3_3} \\
	\no & + \underbrace{ \tfrac{C}{\eps} \int_0^t \exp\Big\{ -\tfrac{1}{\eps} \int_s^t \nu (\phi) \d \phi \Big\} \int_{\R^3} | \Bbbk_\ell (V_{cl} (s), v^\prime) | \d v^\prime \d s \cdot \sqrt{\eps}^3 \sup_{s \in [0, t]} \| h^{\ell}_{R, \eps} (s) \|_\infty^2 }_{I^4_3} \\
	\no & + \underbrace{ \tfrac{C}{\eps} \int_0^t \exp\Big\{ -\tfrac{1}{\eps} \int_s^t \nu (\phi) \d \phi \Big\} \int_{\R^3} | \Bbbk_\ell (V_{cl} (s), v^\prime) | \d v^\prime \d s \cdot \sqrt{\eps} \sup_{s \in [0, t]} \| h^{\ell}_{R, \eps} (s) \|_\infty }_{I^5_3} \\
	& + \underbrace{ \tfrac{C}{\eps} \int_0^t \exp\Big\{ -\tfrac{1}{\eps} \int_s^t \nu (\phi) \d \phi \Big\} \int_{\R^3} | \Bbbk_\ell (V_{cl} (s), v^\prime) | \d v^\prime \d s \cdot \sqrt{\eps}^2 }_{I^6_3} \,.
	\end{align}
	Except for the term $I_3^3$, the sum of all other terms can be bounded by
	\begin{align}\label{h_R_eps-I31-toI_36}
	\no I^1_3 + I^2_3 + I^4_3 + & I^5_3 + I^6_3 \\
	\le & C \big\{ \| h^{\ell}_{R, \eps} (0) \|_\infty + \sqrt{\eps}^3 \sup_{s \in [0,t]} \| h^{\ell}_{R, \eps} (s) \|_\infty^2 + \sqrt{\eps} \sup_{s \in [0,t]} \| h^{\ell}_{R, \eps} (s) \|_\infty + \sqrt{\eps}^2 \big\} \,.
	\end{align}
	It remains to control the quantity $I^3_3$ in \eqref{h_R_eps-I3}. As in \cite{Guo-Jang-Jiang-2010-CPAM}, we will finish our arguments by the following several cases. Let large $N_0 >0$ and small $\kappa_* > 0$ be undetermined.
	
	Case 1. If $|v| \ge N_0$,
	\begin{align}\label{I_3_3-Case1}
	  I^3_3 \le \tfrac{C}{N_0} \sup_{s \in [0,t]} \| h^{\ell}_{R, \eps} (s) \|_\infty \,.
	\end{align}
	
	Case 2. If either $|v| \le N_0$, $|v^\prime| \ge 2N_0$ or $|v^\prime| \le 2N_0$, $|v^{\prime \prime}| \ge 3N_0$,
	\begin{align}\label{I_3_3-Case2}
	I^3_3 \le C_\eta e^{- \frac{\eta}{8} N^2_0} \sup_{s \in [0,t]} \| h^{\ell}_{R, \eps} (s) \|_\infty
	\end{align}
	for some small $\eta > 0$.
	
	Case 3a. If $|v| \le N_0$, $|v^\prime| \le 2N_0$, $|v^{\prime \prime}| \le 3 N_0$, $s-s_1 \le \eps \kappa_*$,
	\begin{align}\label{I_3_3-Case3a-Bound}
	  I^3_3 \le C_{N_0} \kappa_* \sup_{s \in [0, t]} \| h^{\ell}_{R, \eps} (s) \|_\infty \,.
	\end{align}
	
	Case 3b. If $|v| \le N_0$, $|v^\prime| \le 2N_0$, $|v^{\prime \prime}| \le 3N_0$, $s - s_1 \ge \eps \kappa_*$, together with \eqref{nu-phi}, one has
	\begin{align*}
	I^3_3 \le & \tfrac{C}{\eps^2} \int_0^t \int_{ \big\{ |v^\prime| \le 2N_0, |v^{\prime \prime}| \le 3N_0 \big\} } \int_0^{s - \eps \kappa_*} \exp \Big\{ - \tfrac{C \langle v \rangle^{\gamma_0} (t-s)}{\eps} \Big\} \exp \Big\{ - \tfrac{C \langle v^\prime \rangle^{\gamma_0} (s - s_1)}{\eps} \Big\} \\
	& \times \big| \Bbbk_{\bar{\ell}} (V_{cl} (s), v^\prime) \Bbbk_{\ell} (v^\prime, v^{\prime \prime}) h^{\ell}_{R, \eps} \big(s_1; X_{cl} (s_1; s, X_{cl} (s), v^\prime), v^{\prime\prime} \big) \big| \d s_1 \d v^\prime \d v^{\prime \prime} \d s \,.
	\end{align*}
	We remark that the relation between $\| h_{R,\eps}^\ell \|_\infty$ and $\| f_{R,\eps} \|_2$ comes from this case. By \eqref{Bbbk-Bound}, $\Bbbk_{\ell} (V_{cl} (s), v^\prime)$ has a possible integrable singularity of $\frac{1}{|V_{cl} (s) - v^\prime|}$. We can choose $\Bbbk_{N_0} (V_{cl} (s), v^\prime)$ smooth with compact support such that
	\begin{align}\label{Bbbk_N-Bbbk_l}
	\sup_{|p| \le 3 N_0} \int_{|v^\prime| \le 3N_0} \big| \Bbbk_{N_0} (p, v^\prime) - \Bbbk_{\ell} (p, v^\prime) \big| \d v^\prime \le \tfrac{1}{N_0} \,.
	\end{align}
	Splitting
	\begin{align*}
	\Bbbk_{\ell} (V_{cl} (s), v^\prime) & \Bbbk_{\ell} (v^\prime, v^{\prime \prime}) = \big[ \Bbbk_{\ell} (V_{cl} (s), v^\prime) - \Bbbk_{N_0} (V_{cl} (s), v^\prime) \big] \Bbbk_{\ell} (v^\prime, v^{\prime \prime}) \\
	&  + \big[ \Bbbk_{\ell} (v^\prime, v^{\prime \prime}) - \Bbbk_{N_0} (v^\prime, v^{\prime \prime})   \big] \Bbbk_{N_0} (V_{cl} (s), v^\prime) + \Bbbk_{N_0} (V_{cl} (s), v^\prime) \Bbbk_{N_0} (v^\prime, v^{\prime \prime}) \,,
	\end{align*}
	we derive from \eqref{Bbbk_N-Bbbk_l} that
	\begin{align}\label{I_3_3-Case3b-Bound-0}
	 \no I^3_3 \le & \tfrac{C}{N_0} \sup_{s \in [0,t]} \| h^{\ell}_{R,\eps} (s)\|_\infty \\
	 & + \tfrac{C}{\eps^2} \int_0^t \int_{ \{ |v^\prime| \le 2N_0, |v^{\prime \prime}| \le 3N_0 \} } \int_0^{s- \kappa_* \eps} \exp \big\{ - \tfrac{C \langle v \rangle^{\gamma_0} (t-s)}{\eps} \big\} \exp \big\{ -\tfrac{C \langle v^\prime \rangle^{\gamma_0} (s-s_1)}{\eps} \big\} \\
	\no & \times \big| \Bbbk_{N_0} (V_{cl} (s), v^\prime) \Bbbk_{N_0} (v^\prime, v^{\prime \prime}) h^{\ell}_{R, \eps} \big( s_1, X_{cl} (s_1; s, X_{cl} (s), v^\prime), v^{\prime \prime} \big) \big| \d s_1 \d v^\prime \d v^{\prime \prime} \d s \,,
	\end{align}
	where the following bound has been used:
	\begin{equation*}
	  \sup_{|v^\prime| \le 2N_0} \int_{\{ |v^{\prime \prime}| \le 3N_0\}} | \Bbbk_{\ell} (v^\prime, v^{\prime \prime}) \d v^{\prime \prime} + \sup_{ |V_{cl}(s)| = |v| \le 2N_0} \int_{\{ |v^\prime| \le 2N_0 \}} | \Bbbk_{N_0} (V_{cl}(s), v^\prime) | \d v^\prime \leq C \,.
	\end{equation*}
	For the integration over $v^\prime$ in \eqref{I_3_3-Case3b-Bound-0}, we make a change of variable $v^\prime \to y := X_{cl} \big(s_1; s, X_{cl} (s), v^\prime \big)$. From the explicit formula \eqref{t-V-X-Formula}, one has $\frac{\p y}{\p v^\prime}
	= - (s - s_1) \mathrm{Diag} (1,1,1)$ for $s_1 \in [0, s - \eps \kappa_*]$. Therefore, it holds that
	\begin{align*}
	\Big| \det \Big( \frac{\p y}{\p v^\prime} \Big) (s_1) \Big|
	= (s- s_1)^s \ge (\eps \kappa_*)^3 >0 \ {\text{for}}\ s_1 \in [0, s - \kappa_* \eps] \,,
	\end{align*}
	which yields that for any $|v''| \le 3 N_0$
	\begin{align*}
	& \int_{|v^\prime| \le 2N_0} \big| h^{\ell}_{R, \eps} \big( s_1, X_{cl} (s_1; s, X_{cl} (s), v^\prime), v^{\prime \prime} \big) \big| \d v^\prime \\
	\le & C_{N_0} \Big( \int_{|v^\prime| \le 2N_0} \mathbbm{1}_{\R^3_+} \big( X_{cl} (s_1; s, X_{cl} (s), v^\prime) \big) \big| h^{\ell}_{R, \eps} \big( s_1, X_{cl} (s_1; s, X_{cl} (s), v^\prime) \big) \big|^2 \d v^\prime \Big)^{\frac{1}{2}}\\
	\le & \tfrac{C_{N_0}}{(\kappa_* \eps)^{\frac{3}{2}}} \Big( \int_{|y| \le 3 N_0 (s-s_1)} \big| h^{\ell}_{R, \eps} \big( s_1, y, v^{\prime \prime} ) \big|^2 \d y \Big)^{\frac{1}{2}} \\
	\le & \tfrac{C_{N_0} \big( (s-s_1)^{\frac{3}{2}} + 1 \big)}{ (\kappa_* \eps)^{\frac{3}{2}} } \Big( \int_{\R^3_+} \big| h^{\ell}_{R, \eps} \big( s_1, y, v^{\prime \prime} ) \big|^2 \d y \Big)^{\frac{1}{2}} \,.
	\end{align*}
	Together with the definition of $f_{R, \eps}$ and $h^{\ell}_{R, \eps}$ in \eqref{f_Reps-h_Reps}, the last term in \eqref{I_3_3-Case3b-Bound-0} can be bounded by $\tfrac{ C_{N_0, \kappa_*} }{ \eps^{\frac{3}{2}} } \sup_{s\in [0, t]} \| f_{R, \eps} (s) \|_2$. One thereby has
	\begin{align}\label{I_3_3-Case3b-Bound}
	I^3_3 \le \tfrac{C}{N_0} \sup_{s \in [0,t]} \| h^{\ell}_{R, \eps} (s) \|_\infty + \tfrac{ C_{N_0, \kappa_*} }{ \eps^{\frac{3}{2}} } \sup_{s\in [0, t]} \| f_{R, \eps} (s) \|_2 \,.
	\end{align}
	Collecting the above all estimates implies that
	\begin{equation}\label{CaseU}
		\begin{aligned}
			\| h^\ell_{R, \eps} \mathbbm{1}_{\mathscr{U}} (t) \|_\infty \leq C \eps \| h^\ell_{R, \eps} (0) \|_\infty + \aleph (h^\ell_{R, \eps}, f_{R, \eps}) \,.
		\end{aligned}
	\end{equation}
    where
    \begin{equation}
    	\begin{aligned}
    		\aleph (h^\ell_{R, \eps}, f_{R, \eps}) = & C (m^{3 + \gamma_0} + \sqrt{\eps} + \tfrac{1}{N_0} + \kappa_*) \sup_{s \in [0, t]} \| h^\ell_{R, \eps} (s) \|_\infty \\
    		& + C \sqrt{\eps}^3 \sup_{s \in [0, t]} \| h^\ell_{R, \eps} (s) \|_\infty^2 + \tfrac{C_{N_0. \kappa_*}}{\eps^\frac{3}{2}} \sup_{s \in [0, t]} \| f_{R, \eps} (s) \|_2 + C \sqrt{\eps}^2 \,.
    	\end{aligned}
    \end{equation}
	
	{\bf Case 2: $(t,x,v) \in \mathscr{V}$.}
	
	Note that $t_\star = t_1 > 0$, and $(X_{cl} (t_1), V_{cl} (t_1)) = (x_b (x,v), v) \in \Sigma_-$. Then, denoting by $x_b = x_b (x,v)$,
	\begin{equation*}
		\begin{aligned}
			I_1 = \exp\big\{ - \tfrac{1}{\eps} \int_{t_1}^t \nu (\phi) \d \phi \big\} h^{\ell}_{R, \eps} (t_1, x_b, v ) \,.
		\end{aligned}
	\end{equation*}
	Recalling the boundary condition \eqref{IC-h}, one has
	\begin{equation*}
		\begin{aligned}
			h^\ell_{R, \eps} (t_1, x_b, v ) = (1 - \alpha_\eps) h^\ell_{R, \eps} (t_1, x_b, R_{x_b} v) + \alpha_\eps M_w  \tfrac{\l v \r^\ell}{\sqrt{\M_M}} \int_{v^\prime \cdot n >0} (v^\prime \cdot n) \tfrac{\sqrt{\M_M}}{\l v^\prime \r^\ell} h^\ell_{R, \eps} (t_1, x_b, v^\prime) \d v^\prime \,.
		\end{aligned}
	\end{equation*}
	Thanks to \eqref{T_M} and $M_w / \M^0 = \frac{1}{\rho^0} \sqrt{\frac{2 \pi}{T^0}}$, one has $0 < M_w  \tfrac{\l v \r^\ell}{\sqrt{\M_M}} \int_{v^\prime \cdot n >0} (v^\prime \cdot n) \tfrac{\sqrt{\M_M}}{\l v^\prime \r^\ell} \d v^\prime \leq C$. Since $(t_1, x_b, R_{x_b} v), (t_1, x_b, v^\prime) \in \mathscr{U}$,
	\begin{equation*}
		\begin{aligned}
			|I_1| \leq (1 - \alpha_\eps) | h^\ell_{R, \eps} (t_1, x_b, R_{x_b} v) | + C \alpha_\eps \sup_{v' \in \mathcal{V}} | h^\ell_{R, \eps} (t_1, x_b, v^\prime) | \leq 2 \| h^\ell_{R, \eps} \mathbbm{1}_{\mathscr{U}} (t_1) \|_\infty
		\end{aligned}
	\end{equation*}
    for sufficiently small $\eps > 0$. Via employing the similar arguments in Case 1: $(t,x,v) \in \mathscr{U}$, the other terms except for $I_1$ can be bounded by
    \begin{equation*}
    	\begin{aligned}
    		I_2 + I_3 + I_4 + I_5 + I_6 \leq & 2 \| h^\ell_{R, \eps} \mathbbm{1}_{\mathscr{U}} (t_1) \|_\infty + \aleph (h^\ell_{R, \eps}, f_{R, \eps}) \,.
    	\end{aligned}
    \end{equation*}
    It thereby infers that
    \begin{equation}\label{CaseV}
    	\begin{aligned}
    		\| h^\ell_{R, \eps} \mathbbm{1}_{\mathscr{V}} (t) \|_\infty \leq 4 \| h^\ell_{R, \eps} \mathbbm{1}_{\mathscr{U}} (t_1) \|_\infty \aleph (h^\ell_{R, \eps}, f_{R, \eps}) \,.
    	\end{aligned}
    \end{equation}
	Note that $\mathscr{U} \cup \mathscr{V} = [0, \tau] \times \R^3_+ \times \R^3$. By choosing sufficiently small $\eps > 0$, $m > 0$, $\kappa_* > 0$ and large $N_0 > 0$, \eqref{CaseU} and \eqref{CaseV} conclude Lemma \ref{Lemma-L-infty-Estimate}.
\end{proof}

\section{Boundary conditions for equations of fluid variables: Proof of Lemma \ref{Lmm-Robin-BC}} \label{Sec_Robin-BC}
	
In this section, we will formally derive the boundary conditions for the linear hyperbolic system \eqref{Linear-Hyperbolic-Syst} and linear compressible Prandtl-type equations \eqref{Linear-Prandtl}, which, respectively, are slip boundary conditions and Robin-type boundary conditions, hence, prove Lemma \ref{Lmm-Robin-BC}.

\begin{proof}[Proof of Lemma \ref{Lmm-Robin-BC}]
	We will prove our conclusion from the solvability condition \eqref{Solva_Cond}, i.e.,
	\begin{equation*}
	\int_{\R^3} v_3 \mathbbm{f}_k (t, \bar{x}, v)
	\left(
	\begin{array}{c}
	1 \\
	(\bar{v} - \bar{\u}^0)\\
	|v- \u^0|^2
	\end{array}
	\right)
	\sqrt{\M^0} \d v=0\,,
	\end{equation*}
	where $\mathbbm{f}_k (t, \bar{x}, v)$ is given in \eqref{fk-KBL}. We split
	\begin{equation*}
	  \begin{aligned}
	    \mathbbm{f}_k (t, \bar{x}, v) = \hat{g}_k (t, \bar{x}, v) + \mathbbm{g}_k (t, \bar{x}, v) \,,
	  \end{aligned}
	\end{equation*}
	where, for $v_3 < 0$,
	\begin{equation*}
	  \begin{aligned}
	    \hat{g}_k (t, \bar{x}, v) = & (f_k + f^b_k + f^{bb}_{k,1})(t, \bar{x}, 0, \bar{v}, v_3) - (f_k + f^b_k + f^{bb}_{k,1}) (t, \bar{x}, 0, \bar{v}, -v_3) \,, \\
	    \mathbbm{g}_k (t, \bar{x}, v) = & \sqrt{2 \pi} \big\{ \l \gamma_+ (f_{k-1} + f^b_{k-1} + f^{bb}_{k-1}) \r_{\p \R^3_+} \sqrt{\M^0} - (f_{k-1} + f^b_{k-1} + f^{bb}_{k-1}) (t, \bar{x}, 0, \bar{v}, v_3) \big\} \,,
	  \end{aligned}
	\end{equation*}
	ans they both vanish for $v_3 > 0$. We remark that $\mathbbm{g}_k (t, \bar{x}, v) \equiv 0$ in the case of specular reflection boundary condition, see \cite{GHW-2020}.
	
	First,
	\begin{equation*}
	  \begin{aligned}
	    0 = & \int_{\R^3} v_3 \mathbbm{f}_k (t, \bar{x}, v) \sqrt{\M^0} \d v = \underbrace{ \int_{\R^3} v_3 \hat{g}_k (t, \bar{x}, v) \sqrt{\M^0} \d v }_{Q_1} + \underbrace{ \int_{\R^3} v_3 \mathbbm{g}_k (t, \bar{x}, v) \sqrt{\M^0} \d v }_{Q_2} \,.
	  \end{aligned}
	\end{equation*}
	Following Section 2.4 of \cite{GHW-2020},
	\begin{equation}
	  \begin{aligned}
	    Q_1 = & \rho^0 \big[ u_{k,3}^0 + u_{k,3}^{b,0} + T^0 ( \Psi_k^0 + 5 T^0 \Theta_k^0 ) \big] \\
	    = & \rho^0 \Big\{ u_{k,3}^0 + \int_0^{+ \infty} \tfrac{1}{\rho^0} \big[ \p_t \rho^b_{k-1} + \div_{\bar{x}} \big( \rho^0 \bar{u}^b_{k-1} + \rho^b_{k-1} \bar{\u}^0 \big)\big] \d \zeta + T^0 ( \Psi_k^0 + 5 T^0 \Theta_k^0 ) \Big\} \,.
	  \end{aligned}
	\end{equation}
	Since $M_w = \M^0$ and $\sqrt{2 \pi} \int_{\R^2} \int_{- \infty}^0 v_3 M_w (v) \d \bar{v} \d v_3 = - \rho^0 \sqrt{T^0}$, we derive from the direct calculation that
	\begin{equation}
	  \begin{aligned}
	    Q_2 = & \Big( \sqrt{2 \pi} \int_{\R^2} \int_{- \infty}^0 v_3 M_w (v) \d \bar{v} \d v_3 - 1 \Big) \\
	    & \qquad \qquad \times \sqrt{2 \pi} \int_{\R^2} \int_{-\infty}^0 v_3 (f_{k-1} + f^b_{k-1} + f^{bb}_{k-1}) (t, \bar{x}, 0, \bar{v}, v_3) \sqrt{\M^0} \d \bar{v} \d v_3 \\
	    = & - (\rho^0 \sqrt{T^0} + 1) \sqrt{2 \pi} \int_{\R^2} \int_{-\infty}^0 v_3 (f_{k-1} + f^b_{k-1} + f^{bb}_{k-1}) (t, \bar{x}, 0, \bar{v}, v_3) \sqrt{\M^0} \d \bar{v} \d v_3 \,.
	  \end{aligned}
	\end{equation}
	Consequently, the above three relations imply \eqref{BC-u_k3}, and \eqref{BC-u_13} will automatically hold by letting $k=1$.

Second, for $i=1,2$, one has
\begin{equation}\label{Y1Y2}
  \begin{aligned}
    0 = & \int_{\R^3} (v_i - \u^0_i) v_3 \mathbbm{f}_k (t, \bar{x}, v) \sqrt{\M^0} \d v \\
    = & \int_{\R^3} (v_i - \u^0_i) v_3 \hat{g}_k (t, \bar{x}, v) \sqrt{\M^0} \d v + \int_{\R^3} (v_i - \u^0_i) v_3 \mathbbm{g}_k (t, \bar{x}, v) \sqrt{\M^0} \d v = : Y_1 + Y_2 \,.
  \end{aligned}
\end{equation}
As shown in \cite{GHW-2020}, together with \eqref{BC-u_13}, one has
\begin{equation}\label{Y1}
  \begin{aligned}
    Y_1 = & \big[ - \mu (T^0) \p_\zeta u^b_{k-1,i} + \rho^0 \sqrt{T^0} (1 + \rho^0 \sqrt{T^0}) u^b_{k-1, i} \big] (t, \bar{x}, 0) + \rho^0 \big[ \big(u_{1,i}^0 + u^{b, 0}_{1,i} \big) u^{b, 0}_{k-1,3} \big] \\
    & + T^0 \l \A^0_{3i}, J^b_{k-2} + (\mathcal{I} - \mathcal{P}^0) f_k \r (t, \bar{x}, 0) + \rho^0 (T^0)^2 \big( \delta_{i1} \Phi_{k,1}^0 + \delta_{i2} \Phi_{k,2}^0 \big) \,.
  \end{aligned}
\end{equation}
Since $M_w = \M^0$ and $\sqrt{2 \pi} \int_{\R^2} \int_{- \infty}^0 (v_i - \u^0_i) v_3 M_w (v) \d \bar{v} \d v_3 = 0$ for $i = 1,2$, one has
\begin{equation*}
  \begin{aligned}
    Y_2 = & 2 \pi  \int_{\R^2} \int_{- \infty}^0 (v_i - \u^0_i) v_3 M_w (v) \d \bar{v} \d v_3 \int_{\R^2} \int_{- \infty}^0 (- v_3) (f_{k-1} + f^b_{k-1} + f^{bb}_{k-1}) \sqrt{\M^0} \d \bar{v} \d v_3 \\
    & - \sqrt{2 \pi} \int_{\R^2} \int_{- \infty}^0 (v_i - \u^0_i) v_3 (f_{k-1} + f^b_{k-1} + f^{bb}_{k-1}) \sqrt{\M^0} \d \bar{v} \d v_3 \\
    = & - \sqrt{2 \pi} \int_{\R^2} \int_{- \infty}^0 (v_i - \u^0_i) v_3 (f_{k-1} + f^b_{k-1} + f^{bb}_{k-1}) \sqrt{\M^0} \d \bar{v} \d v_3 \,.
  \end{aligned}
\end{equation*}
Noticing that for $i = 1, 2$,
\begin{equation*}
  \begin{aligned}
    \int_{\R^2} \int_{-\infty}^0 (v_i - \u^0_i) v_3 \P^0 (f_{k-1} + f_{k-1}^b) \sqrt{\M^0} \d \bar{v} \d v_3 = - \tfrac{\rho^0 \sqrt{T^0}}{\sqrt{2 \pi}} ( u_{k-1, i} + u_{k-1, i}^b ) \,,
  \end{aligned}
\end{equation*}
the $Y_2$ can be further computed as
\begin{equation}\label{Y2}
  \begin{aligned}
    Y_2 = & \rho^0 \sqrt{T^0} (u_{k-1, i}^0 + u_{k-1, i}^{b,0}) \\
    & - \sqrt{2 \pi} \int_{\R^2} \int_{- \infty}^0 (v_i - \u^0_i) v_3 [ (\I - \P^0)(f_{k-1} + f^b_{k-1}) + f^{bb}_{k-1}] \sqrt{\M^0} \d \bar{v} \d v_3 \,.
  \end{aligned}
\end{equation}
Then, plugging \eqref{Y1} and \eqref{Y2} into \eqref{Y1Y2} implies the first boundary condition of \eqref{Boudary_Equa}.

At the end,
\begin{equation}\label{Z1Z2}
  \begin{aligned}
    0 = & \int_{\R^3} |v-\u^0|^2 v_3 \mathbbm{f}_k (t, \bar{x}, v) \sqrt{\M^0} \d v \\
    = & \underbrace{ \int_{\R^3} |v-\u^0|^2 v_3 \hat{g}_k (t, \bar{x}, v) \sqrt{\M^0} \d v }_{Z_1} + \underbrace{ \int_{\R^3} |v-\u^0|^2 v_3 \mathbbm{g}_k (t, \bar{x}, v) \sqrt{\M^0} \d v }_{Z_2}
  \end{aligned}
\end{equation}
As computed in Section 2.4 of \cite{GHW-2020},
\begin{equation}\label{Z1}
  \begin{aligned}
    Z_1 = & - \kappa(T^0) \p_\zeta \theta^b_{k-1} (t, \bar{x}, 0) + 2 (T^0)^{\frac{3}{2}} \l \B^0_3, (\mathcal{I}-\mathcal{P}_0) f_k + J^b_{k-2} \r (t, \bar{x}, 0) \\
    & + \tfrac{5}{3} \rho^0  [(\theta_1 + \theta^b_1) u^b_{k-1,3}] (t, \bar{x}, 0) + 10 \rho^0 (T^0)^3 \Theta_k (t, \bar{x}, 0) \,.
  \end{aligned}
\end{equation}
Since $M_w = \M^0$ and
\begin{equation*}
  \begin{aligned}
    \int_{\R^2} \int_{-\infty}^0 v_3 |v - \u^0|^2 M_w (v) \d \bar{v} \d v_3 = - \tfrac{4 \rho^0 (T^0)^\frac{3}{2}}{\sqrt{2 \pi}} \,,
  \end{aligned}
\end{equation*}
one has
\begin{equation*}
  \begin{aligned}
    Z_2 = & 2 \pi \int_{\R^2} \int_{-\infty}^0 v_3 |v - \u^0|^2 M_w (v) \d \bar{v} \d v_3 \int_{\R^2} \int_{-\infty}^0 (- v_3) (f_{k-1} + f_{k-1}^b + f_{k-1}^{bb}) \sqrt{\M^0} \d \bar{v} \d v_3 \\
    & - \sqrt{2 \pi} \int_{\R^2} \int_{-\infty}^0 v_3 |v - \u^0|^2 (f_{k-1} + f_{k-1}^b + f_{k-1}^{bb}) \sqrt{\M^0} \d \bar{v} \d v_3 \\
    = & - \sqrt{2 \pi} \int_{\R^2} \int_{-\infty}^0 v_3 ( |v - \u^0|^2 - 4 \rho^0 (T^0)^\frac{3}{2} ) (f_{k-1} + f_{k-1}^b + f_{k-1}^{bb}) \sqrt{\M^0} \d \bar{v} \d v_3 \,.
  \end{aligned}
\end{equation*}
A direct calculation implies
\begin{equation*}
  \begin{aligned}
    & \int_{\R^2} \int_{-\infty}^0 v_3 ( |v - \u^0|^2 - 4 \rho^0 (T^0)^\frac{3}{2} ) \P^0  (f_{k-1} + f_{k-1}^b) \sqrt{\M^0} \d \bar{v} \d v_3 \\
    = & - \tfrac{\rho^0 \sqrt{T^0}}{\sqrt{2 \pi}} \big( 2 \rho^0 \sqrt{T^0} + \tfrac{\sqrt{2 \pi}}{3} \rho^0 + \tfrac{2}{3} \big) \theta^b_{k-1} + \rho^0 \big( \tfrac{1}{2} + \tfrac{3}{2} T^0 - 2 \rho^0 (T^0)^\frac{3}{2} \big) (u_{k-1, 3} + u^b_{k-1, 3}) \\
    & - \tfrac{\rho^0 \sqrt{T^0}}{\sqrt{2 \pi}} \big( \tfrac{2}{3} \rho^0 \sqrt{T^0} + \tfrac{\sqrt{2 \pi}}{3} \rho^0 + 2 \big) \theta_{k-1} + \tfrac{4 \sqrt{T^0}}{\sqrt{2 \pi}} (\rho^0 \sqrt{T^0} - 1) ( T^0 \rho_{k-1} + p^b_{k-1} ) \,.
  \end{aligned}
\end{equation*}
One therefore obtains
  \begin{align}\label{Z2}
    \no Z_2 = & \rho^0 \sqrt{T^0} \big( 2 \rho^0 \sqrt{T^0} + \tfrac{\sqrt{2 \pi}}{3} \rho^0 + \tfrac{2}{3} \big) \theta^b_{k-1} - 4 \sqrt{T^0} (\rho^0 \sqrt{T^0} - 1) ( T^0 \rho_{k-1} + p^b_{k-1} ) \\
    \no & + \rho^0 \sqrt{T^0} \big( \tfrac{2}{3} \rho^0 \sqrt{T^0} + \tfrac{\sqrt{2 \pi}}{3} \rho^0 + 2 \big) \theta_{k-1} \\
    & - \sqrt{2 \pi} \rho^0 \big( \tfrac{1}{2} + \tfrac{3}{2} T^0 - 2 \rho^0 (T^0)^\frac{3}{2} \big) (u_{k-1, 3} + u^b_{k-1, 3}) \\
    \no & - \sqrt{2 \pi} \int_{\R^2} \int_{-\infty}^0 v_3 ( |v - \u^0|^2 - 4 \rho^0 (T^0)^\frac{3}{2} ) [ (\I - \P^0) (f_{k-1} + f_{k-1}^b) + f_{k-1}^{bb}] \sqrt{\M^0} \d \bar{v} \d v_3 \,.
  \end{align}
Consequently, the second condition of \eqref{Boudary_Equa} is derived from using \eqref{BC-u_k3} and plugging \eqref{Z1}-\eqref{Z2} into \eqref{Z1Z2}, and then the proof of Lemma \ref{Lmm-Robin-BC} is finished.
\end{proof}

\section{Existence for the linear compressible Prandtl-type system \eqref{u-theta_Eq}: Proof of Lemma \ref{Prop-Prandtl}} \label{Sec_Prandtl}

In this section, we focus on the existence of the smooth solutions to the linear compressible Prandtl-type equations \eqref{u-theta_Eq} with Robin-type boundary conditions \eqref{u-theta_BC}, which derives from the Maxwell reflection boundary condition \eqref{MBC}. We remark that if the \eqref{MBC} is replaced by the specular reflection boundary condition, one can obtain Neumann boundary condition, see \cite{GHW-2020}.

Observe that the linear compressible Prandtl-type system \eqref{u-theta_Eq} is a degenerated parabolic system with nontrivial Robin-type boundary values \eqref{u-theta_BC}. First, we can introduce some explicit functions to zeroize the inhomogeneous boundary values (see \eqref{theta_BC}), which actually converts the boundary values into the form of external force term. We therefore obtain the system \eqref{Theta_Equa} with zero boundary values. We then construct a linear parabolic approximate system \eqref{Theta_Appro} while proving the existence of \eqref{Theta_Equa}. When deriving the uniform bounds of the approximate system in the space $L^\infty(0, \tau; \mathbb{H}^k_l(\R^3_+))$, we will employ the structures of equations to convert the higher normal $\zeta$-derivatives to the values of tangential $\bar{x}$-derivatives and time derivatives on $\{ \zeta = 0 \}$, see Lemma \ref{Lemma_Theta_BC_infty} below.

We first quote the following results to control some integral values on the boundary $\p \R^3_+$ by that in the interior $\R^3_+$. More precisely,
\begin{lemma}[Lemma 8.1 of \cite{GHW-2020}]\label{Lemma_Trace}
	Let $\Omega_b : = \{ (\bar{x}, x_3) : \bar{x} \in \R^2, x_3 \in [0, b) \}$ with $1 \leq b \leq \infty$. We assume $f, g \in H^1 (\Omega_b)$. There holds, for any $x_3 \in [0, b)$, that	
	\begin{equation}\label{Trace-1}
	\begin{aligned}
	\Big| \int_{\R^2} (f g) (\bar{x}, x_3) \d \bar{x} \Big| \leq \| \p_{x_3} (f, g) \|_{L^2 (\Omega_b)} \| (f, g) \|_{L^2(\Omega_b)} + \tfrac{1}{b} \| f \|_{L^2(\Omega_b)} \| g \|_{L^2(\Omega_b)} \,.
	\end{aligned}
	\end{equation}
	For $i = 1, 2$, there holds
	\begin{equation}\label{Trace-2}
	\begin{aligned}
	\Big| \int_{\R^2} (\p_{x_i} f  g) (\bar{x}, x_3) \d \bar{x} \Big| \leq \| \p_{x_3} (f, g) \|_{L^2 (\Omega_b)} & \| \p_{x_i} (f, g) \|_{L^2(\Omega_b)} + \tfrac{1}{b} \| \p_{x_i} f \|_{L^2(\Omega_b)} \| g \|_{L^2(\Omega_b)} \,.
	\end{aligned}
	\end{equation}
\end{lemma}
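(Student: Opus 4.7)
The plan is to derive both inequalities from a single averaged fundamental-theorem-of-calculus identity on the slab $\Omega_b$, followed by Cauchy--Schwarz and an elementary scalar rearrangement. The key identity is that, for any $x_3, s \in [0, b)$ and smooth $h_1, h_2$, one has $(h_1 h_2)(\bar{x}, x_3) - (h_1 h_2)(\bar{x}, s) = \int_s^{x_3} \p_\tau(h_1 h_2)(\bar{x}, \tau)\,\d\tau$; averaging over $s \in [0, b)$ yields
\begin{equation*}
(h_1 h_2)(\bar{x}, x_3) = \tfrac{1}{b}\int_0^b (h_1 h_2)(\bar{x}, s)\,\d s + \tfrac{1}{b}\int_0^b\!\!\int_s^{x_3}\p_\tau(h_1 h_2)(\bar{x}, \tau)\,\d\tau\,\d s.
\end{equation*}
When $b = \infty$, the first term drops and the second is replaced by $-\int_{x_3}^{\infty}\p_\tau(h_1 h_2)\,\d\tau$, which is justified by the fact that $H^1(\Omega_\infty)$ functions admit an $L^2$-averaged decay in $x_3$.

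For the first inequality I would take $h_1 = f$, $h_2 = g$ and integrate over $\bar{x} \in \R^2$. The averaged term is controlled by $\tfrac{1}{b}\|f\|_{L^2(\Omega_b)}\|g\|_{L^2(\Omega_b)}$ via Cauchy--Schwarz, matching the last term in the claim. For the remainder, since $[\min(s,x_3),\max(s,x_3)] \subset [0,b)$ the crude bound $\bigl|\tfrac{1}{b}\int_0^b \int_s^{x_3}\,\d\tau\,\d s\bigr| \leq \int_0^b\,\d\tau$ combined with the product rule $\p_\tau(fg) = (\p_\tau f) g + f \p_\tau g$ reduces the remainder to $\|\p_{x_3} f\|\,\|g\| + \|f\|\,\|\p_{x_3} g\|$ (with all norms on $\Omega_b$) after two applications of Cauchy--Schwarz. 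The elementary inequality $ab + cd \leq \sqrt{a^2+c^2}\sqrt{b^2+d^2}$, which follows from AM--GM after squaring, then packages these into $\|\p_{x_3}(f,g)\|_{L^2(\Omega_b)}\|(f,g)\|_{L^2(\Omega_b)}$.

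For the second inequality I would apply the same averaged identity to the product $h_1 h_2 = \p_{x_i} f \cdot g$. The averaged term produces exactly $\tfrac{1}{b}\|\p_{x_i} f\|_{L^2(\Omega_b)}\|g\|_{L^2(\Omega_b)}$. In the remainder I expand $\p_\tau(\p_{x_i} f \cdot g) = (\p_{x_i}\p_\tau f)\,g + \p_{x_i} f \,\p_\tau g$ and, after integrating in $\bar{x}$, use integration by parts in $x_i$ on the first summand to replace the second-order object $(\p_{x_i}\p_\tau f)\,g$ by $-\p_\tau f \cdot \p_{x_i} g$. Cauchy--Schwarz then bounds the remainder by $\|\p_{x_3} f\|\,\|\p_{x_i} g\| + \|\p_{x_i} f\|\,\|\p_{x_3} g\|$, and the same AM--GM rearrangement collapses this into $\|\p_{x_3}(f,g)\|_{L^2(\Omega_b)}\|\p_{x_i}(f,g)\|_{L^2(\Omega_b)}$, which is the desired bound. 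The only subtle point is that this formal manipulation uses the mixed derivative $\p_{x_i}\p_\tau f$; I would therefore first establish both estimates for $f, g \in C_c^\infty(\overline{\Omega_b})$ and then extend to $f, g \in H^1(\Omega_b)$ by density, since every term on the right-hand sides is continuous with respect to the $H^1(\Omega_b)$ topology.
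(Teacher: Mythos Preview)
Your argument is correct and is the standard route to such trace-type inequalities. Note, however, that the paper does not actually give its own proof of this lemma: it is quoted verbatim as Lemma~8.1 of \cite{GHW-2020} and used as a black box, so there is no in-paper proof against which to compare. Your averaged fundamental-theorem-of-calculus identity, followed by Cauchy--Schwarz and the rearrangement $ab+cd\le\sqrt{a^2+c^2}\sqrt{b^2+d^2}$, is precisely the mechanism one expects for this statement, and it recovers the claimed bounds cleanly; the integration by parts in $x_i$ before taking absolute values in the second inequality is the right move to avoid the second-order derivative. One small point you might tighten: in the $b=\infty$ case, the decay $\int_{\R^2}(fg)(\bar x,s)\,\d\bar x\to0$ as $s\to\infty$ deserves one line of justification---it follows because this quantity is $L^1$ in $s$ (Cauchy--Schwarz on each slice, then in $s$) and has a limit at infinity (its $s$-derivative is $L^1$ by the same computation you already do for the remainder), forcing the limit to be zero.
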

We remark that if $b = \infty$, $\Omega_\infty = \R^3_+$ and the $\frac{1}{b}$-terms in \eqref{Trace-1}-\eqref{Trace-2} will be automatically vanished. We will apply Lemma \ref{Lemma_Trace} by letting $b = \frac{3}{\sigma}$, which means $0 < \tfrac{1}{b} \leq \tfrac{1}{3}$ for $0 < \sigma \leq 1$ below.

\begin{proof}[Proof of Lemma \ref{Prop-Prandtl}]
We consider the system \eqref{u-theta_Eq}-\eqref{u-theta_IC}, namely,
\begin{equation}\label{theta_Equa}
 \left\{
 \begin{aligned}
  & \rho^0 ( \partial_t + \bar{\u}^0 \cdot \nabla_{\bar{x}} ) u_i + \rho^0 ( \p_{x_3} \u^0_3 \zeta + u^0_{1,3} ) \partial_\zeta u_i + \rho^0 u \cdot \nabla_{\bar{x}} \u^0_i + \tfrac{\partial_{x_i} p^0}{3 T^0} \theta = \mu (T^0) \partial_\zeta^2 u_i + f_i \,, \\
  & \rho^0 ( \p_t + \bar{\u}^0 \cdot \nabla_{\bar{x}} ) \theta + \rho^0 ( \p_{x_3} \u^0_3 \zeta + u^0_{1,3} ) \p_\zeta \theta + \tfrac{2}{3} \rho^0 \div_x \u^0 \theta = \tfrac{3}{5} \kappa(T^0) \p_{\zeta}^2 \theta + g\,,\\
  & (\p_\zeta \theta - R_\theta \theta)|_{\zeta =0} = a(t, \bar{x}) \,, \ \big( \partial_\zeta u_i - R_u u_i \big) \big|_{\zeta = 0} = b_i (t, \bar{x}) \,, \ i = 1,2 \,, \\
  & \lim_{\zeta \to \infty} ( u, \theta ) (t, \bar{x} , \zeta) = 0 \,,  \\
  & u (t, \bar{x}, \zeta) |_{t=0} = u_0 (\bar{x}, \zeta) \in \R^2 \,, \ \theta (t, \bar{x}, \zeta) |_{t=0} = \theta_0 (\bar{x}, \zeta) \,.
 \end{aligned}
 \right.
\end{equation}
We introduce the following functions $u_b (t, \bar{x}, \zeta) = (u_{b1}, u_{b2}) (t, \bar{x}, \zeta)$ and $\theta_a (t, \bar{x}, \zeta)$ as
\begin{equation*}
  \begin{aligned}
    u_{bi} (t, \bar{x}, \zeta) = ( \tfrac{1}{R_u} + 2 \zeta ) b_i (t, \bar{x}) \chi (\zeta) \,, \ i =1,2 \,, \ \theta_a (t, \bar{x}, \zeta) = \big( \tfrac{1}{R_\theta} + 2 \zeta \big) a(t,\bar{x}) \chi(\zeta)  \,,
  \end{aligned}
\end{equation*}
where
\begin{equation*}
 \chi (\zeta) =
 \left\{
  \begin{array}{l}
   1\,, \qquad 0 \le \zeta \le 1\,,\\
   0\,, \qquad \zeta \ge 2\,,\\
  \end{array}
 \right.
\end{equation*}
is a monotone cut-off function belonging to $C^\infty \big([0, \infty)\big)$. It is easy to verify that
\begin{equation}\label{theta_BC}
  \lim_{\zeta \to \infty} ( u_b, \theta_a) (t, \bar{x}, \zeta) = 0 \,,\ \big( \p_\zeta \theta_a - R_\theta \theta_a \big) |_{\zeta=0} = a (t,\bar{x})\,, \ (\p_{\zeta} u_{b} - R_u u_{b}) |_{\zeta=0} = b (t, \bar{x}) \,.
\end{equation}
Let $\mho = u - u_b \in \R^2$ and $\Theta = \theta - \theta_a$. Combining with \eqref{theta_Equa} implies
\begin{equation}\label{Theta_Equa}
 \left\{
  \begin{aligned}
   & \p_t \mho + \bar{\u}^0 \cdot \nabla_{\bar{x}} \mho + ( \p_{x_3} \u^0_3 \zeta + u_{1,3}^0 ) \p_{\zeta} \mho + \mho \cdot \nabla_{\bar{x}} \bar{\u}^0 + \tfrac{\nabla_{\bar{x}} p^0}{3 T^0} \Theta = \tilde{\mu} \p_{\zeta}^2 \mho + \tilde{f} \,, \\
   & \p_t \Theta + \bar{\u}^0 \cdot \nabla_{\bar{x}} \Theta + ( \p_{x_3} \u^0_3 \zeta + u^0_{1,3} ) \p_\zeta \Theta + \tfrac{2}{3} \div_x \u^0 \Theta = \tilde{\kappa} \p_{\zeta}^2 \Theta + \tilde{g}\,,\\
   & \lim_{\zeta \to \infty} ( \mho, \Theta ) (t, \bar{x}, \zeta) =0\,, \ ( \p_{\zeta} \mho - R_u \mho ) |_{\zeta=0} = 0 \,, \ (\p_\zeta \Theta - R_\theta \Theta) |_{\zeta=0} = 0\,, \\
   & ( \mho, \Theta ) (t, \bar{x}, \zeta) |_{t=0} = ( \mho, \Theta ) (0) : = \big( u_0 (\bar{x}, \zeta) - u_b (0, \bar{x}, \zeta) \,, \theta_0 (\bar{x}, \zeta) -\theta_a (0, \bar{x}, \zeta) \big) \,,
  \end{aligned}
 \right.
\end{equation}
where
\begin{equation}\label{tilde-f-g}
  \begin{aligned}
    \tilde{\mu} = & \tfrac{1}{\rho^0} \mu (T^0) \,, \ \, \tilde{f} = \tfrac{1}{\rho^0} f - \p_t u_b - \bar{\u}^0 \cdot \nabla_{\bar{x}} u_b - ( \p_{x_3} \u^0_3 \zeta + u^0_{1,3} ) \p_{\zeta} u_b - u_b \cdot \nabla_{\bar{x}} \bar{\u}^0 - \tfrac{\nabla_{\bar{x}} p^0}{3 \rho^0 T^0} \theta_a \,, \\
    \tilde{\kappa} = & \tfrac{3}{5 \rho^0} \kappa (T^0)\,,\ \tilde{g} = \tfrac{1}{\rho^0} g - \p_t \theta_a - \bar{\u}^0 \cdot \nabla_{\bar{x}} \theta_a - ( \p_{x_3} \u^0_3 \zeta + u^0_{1,3} ) \p_\zeta \theta_a - \tfrac{2}{3} \div_x \u^0 \theta_a + \tilde{\kappa} \p_{\zeta}^2 \theta_a\,.
  \end{aligned}
\end{equation}
From Proposition \ref{Proposition_Compressible_Euler} and \eqref{mu0-kappa0}, it is easy to see
\begin{equation}\label{LowBnd-2}
  \begin{aligned}
    \tilde{\mu} \geq \tfrac{2 \mu_0}{\rho_\#} : = \tilde{\mu}_0 > 0 \,, \ \tilde{\kappa} \geq \tfrac{2 \kappa_0}{\rho_\#} : = \tilde{\kappa}_0 > 0 \,.
  \end{aligned}
\end{equation}

To prove the existence of smooth solution to \eqref{Theta_Equa}, we first construct the following approximate system
\begin{equation}
	\left\{
	\begin{aligned}\label{Theta_Appro}
		& \p_t \mho + \bar{\u}^0 \cdot \nabla_{\bar{x}} \mho + ( \p_{x_3} \u^0_3 \zeta + u^0_{1,3} ) \chi_\sigma (\zeta) \p_\zeta \mho + \mho \cdot \nabla_{\bar{x}} \bar{\u}^0 + \tfrac{\nabla_{\bar{x}} p^0}{3 T^0} \Theta = \tilde{\mu} \p_{\zeta}^2 \mho + \lambda \Delta_{\bar{x}} \mho + \tilde{f}^\sigma \,, \\
		& \p_t \Theta + \bar{\u}^0 \cdot \nabla_{\bar{x}} \Theta + ( \p_{x_3} \u^0_3 \zeta + u^0_{1,3} ) \chi_\sigma (\zeta) \p_\zeta \Theta + \tfrac{2}{3} \div_x \u^0 \Theta = \tilde{\kappa} \p_\zeta^2 \Theta + \lambda \Delta_{\bar{x}} \Theta + \tilde{g}^\sigma\,,\\
		& ( \p_{\zeta} \mho - R_u \mho ) |_{\zeta=0} = 0 \,, \ (\p_\zeta \Theta - R_\theta \Theta)|_{\zeta=0} = 0\,,\quad ( \mho, \Theta ) (t, \bar{x}, \zeta)|_{\zeta =\frac{3}{\sigma}} = 0\,,\\
		& \mho (0, \bar{x}, \zeta) = ( u_0 - u_b ) \chi_\sigma (\zeta) \,, \ \Theta (0, \bar{x}, \zeta) = (\theta_0 - \theta_a) \chi_\sigma (\zeta)
	\end{aligned}
	\right.
\end{equation}
for $(t, \bar{x}, \zeta) \in [0, \tau] \times \R^2 \times [0, \tfrac{3}{\sigma}]$ and $0 < \sigma, \lambda \leq 1$, where $\chi_\sigma (\zeta) = \chi (\sigma \zeta)$, $\tilde{f}^\sigma = \tilde{f} \chi_\sigma (\zeta)$ and $\tilde{g}^\sigma = \tilde{g} \chi_\sigma (\zeta)$. One notices that the compatibility condition of initial data at $\zeta = \tfrac{3}{\sigma}$ is also satisfied due to the property of $\chi (\zeta)$. For the approximate problem \eqref{Theta_Appro}, we can use the standard linear parabolic theory to obtain the existence of smooth solution in Sobolev space provided that the initial data and $(\rho^0, \u^0, T^0)$ are suitably smooth. To prove the existence of the smooth solutions to \eqref{Theta_Equa}, we only need to obtain some uniform estimates of $( \mho, \Theta)$ associated with $\sigma$ and $\lambda$, then take the limit $\sigma, \lambda \to 0+$.

We will first estimate the uniform bounds on $\Theta$. The derivation of uniform bounds for $\mho$ is similar to that for $\Theta$.  For simplicity, we still employ the notation
\begin{equation*}
  \begin{aligned}
    \| f \|^2_{L^2_l} = \int_{\R^2} \int_0^\frac{3}{\sigma} (1 + \zeta)^l |f (\bar{x}, \zeta)|^2 \d \bar{x} \d \zeta
  \end{aligned}
\end{equation*}
in \eqref{L2l} and, correspondingly, employ the notations $\| f (t) \|^2_{\mathbb{H}^r_l (\R^3_+)}$ and $\| g \|_{\mathbb{H}^k_l (\R^3_+)}$ in \eqref{Hrl-t-3D} and \eqref{Hrl-3D}, respectively. Moreover, the following properties of $\chi_\sigma (\zeta)$ will be frequently used:
  \begin{align}\label{chi-property}
    & \zeta \chi_\sigma (\zeta) |_{\zeta = 0} = \zeta \chi_\sigma (\zeta) |_{\zeta = \frac{3}{\sigma}} = 0 \,, \ \big| \p_\zeta [ \zeta  (1+\zeta)^{l} \chi_\sigma (\zeta) ] \big| \leq C (1 + \zeta)^{l} \ \textrm{for any } l \in \mathbb{N}^* \,, \\
    \no & \big| \p_{\zeta}^n (\zeta \chi_\sigma (\zeta)) \big| \leq C (1 + \zeta) \ \textrm{ for any } n \geq 0 \,, \ [\p_{\zeta}^n, \zeta \chi_\sigma (\zeta)] f |_{\zeta=0} = n \p_{\zeta}^{n-1} f |_{\zeta=0} \ \textrm{ for any } n \geq 1 \,.
  \end{align}
Furthermore, by letting $k \geq 3$, $F^0 = F^0 (\rho^0, \u^0, T^0, \nabla_x (\rho^0, \u^0, T^0))$, and employing the standard Sobolev embedding arguments, one sees that for any $2 \gamma + |\beta| + n \leq k$
  \begin{align*}
    \big| \int_{\R^2} \int_0^\frac{3}{\sigma} & (1 + \zeta)^l | \p_t^\gamma \p_{\bar{x}}^\beta \p_{\zeta}^n ( F^0 \Theta ) |^2 \d \bar{x} \d \zeta \big| \leq \| F^0 \|^2_{L^\infty_{t,\bar{x}}} \| \p_t^\gamma \p_{\bar{x}}^\beta \p_{\zeta}^n \Theta \|^2_{L^2_l} \\
    & + \sum_{\substack{0 \neq |\beta'| + 2 \gamma' \leq |\beta| + 2 \gamma \\ \beta' \leq \beta , \gamma' \leq \gamma}} C_\beta^{\beta'} C_\gamma^{\gamma'} \| \p_t^{\gamma'} \p_{\bar{x}}^{\beta'} F^0 \|^2_{L^\infty_t L^4_{\bar{x}}} \| (1 + \zeta)^\frac{l}{2} \p_t^{\gamma - \gamma'} \p_{\bar{x}}^{\beta - \beta'} \p_{\zeta}^n \Theta \|^2_{L^4_{\bar{x}} L^2_\zeta} \\
    & \leq C ( E_{k+1} ) \| \Theta (t) \|^2_{\mathbb{H}^k_l (\R^3_+)} \leq C (E_{k+1}) \| \Theta (t) \|^2_{\mathbb{H}^k_l (\R^3_+)}
  \end{align*}
holds for all $\Theta \in \mathbb{H}^k_l (\R^3_+)$, which will be directly used to deal with the coefficients depending on $(\rho^0, \u^0, T^0)$ in what follows. Here the symbol $E_{k+1}$ is defined in \eqref{Ek}.

\vspace*{2mm}

{\bf Step 1. The zero-derivatives estimates: $L^2_{l_0}$-bounds.} For integer $l_0$ given in \eqref{Def-l_j} with $s = k$, multiplying the $\Theta$-equation of \eqref{Theta_Appro} with $(1+\zeta)^{l_0} \Theta$, and integrating the resultant over $[0, t] \times \R^2 \times [0, \tfrac{3}{\sigma}]$, one has
 \begin{align*}
 & \tfrac{1}{2} \|\Theta (t)\|_{L^2_{l_0}}^2 - \tfrac{1}{2} \|\Theta (0)\|_{L^2_{l_0}}^2 = - \int_0^t \int_{\R^2} \int_0^{\frac{3}{\sigma}} ( \p_{x_3} \u^0_3 \zeta + u^0_{1,3} ) \chi_\sigma (\zeta) \p_\zeta \Theta (1+\zeta)^{l_0} \Theta \d \bar{x} \d \zeta \d s \\
 & - \int_0^t \int_{\R^2} \int_0^{\frac{3}{\sigma}} \tfrac{2}{3} \div_x \u^0 \Theta (1+\zeta)^{l_0} \Theta \d \bar{x} \d \zeta \d s + \int_0^t \int_{\R^2} \int_0^{\frac{3}{\sigma}} \tilde{\kappa} \p_\zeta^2 \Theta (1+\zeta)^{l_0} \Theta \d \bar{x} \d \zeta \d s\\
 & + \lambda \int_0^t \int_{\R^2} \int_0^{\frac{3}{\sigma}} \Delta_{\bar{x}} \Theta (1+\zeta)^{l_0} \Theta \d \bar{x} \d \zeta \d s + \int_0^t \int_{\R^2} \int_0^{\frac{3}{\sigma}} \tilde{g}^\sigma (1+\zeta)^{l_0} \Theta \d \bar{x} \d \zeta \d s\\
 & =: I_1 + I_2 + I_3 + I_4 + I_5 \,,
 \end{align*}
where $u^0_{1,3} = \sqrt{T^0} (\rho^0 \sqrt{T^0} + 1) > 0$ is given in \eqref{BC-u_13}. Straightforward calculation gives us
\begin{align}\label{I1}
  \no I_1 = & - \int_0^t \int_{\R^2} ( \p_{x_3} \u^0_3 \zeta + u^0_{1,3} ) \chi_\sigma (\zeta) (1+\zeta)^{l_0} \tfrac{1}{2} \Theta^2 |_{\zeta =0}^{\zeta=\tfrac{3}{\sigma}} \d \bar{x} \d s \\
  \no & + \int_0^t \int_{\R^2} \int_0^{\frac{3}{\sigma}} \{ \p_{x_3} \u^0_3 \p_\zeta \big[ \zeta  (1+\zeta)^{l_0} \chi_\sigma (\zeta) \big] + u^0_{1,3} \p_\zeta \big[ (1+\zeta)^{l_0} \chi_\sigma (\zeta) \big] \} \tfrac{1}{2} \Theta^2 \d \bar{x} \d \zeta \d s \\
  \no \le & \tfrac{1}{2} \int_0^t \int_{\R^2} u^0_{1,3} \Theta^2 |_{\zeta = 0} \d \bar{x} \d s + C (E_{k+1}) \int_0^t \|\Theta (s)\|_{L^2_{l_0}}^2 \d s \\
  \leq & C (E_{k+1}) \int_0^t \| \Theta |_{\zeta=0} (s) \|^2_{L^2(\R^2)} \d s + C (E_{k+1}) \int_0^t \|\Theta (s)\|_{L^2_{l_0}}^2 \d s \\
  \no \leq & \tfrac{1}{4} \int_0^t \tilde{\kappa}_0 \| \partial_\zeta \Theta (s) \|^2_{L^2_{l_0}} \d s + C (E_{k+1}) \int_0^t \|\Theta (s)\|_{L^2_{l_0}}^2 \d s \,,
\end{align}
where the facts about $\chi_\sigma (\zeta)$ given in \eqref{chi-property} have been used, and the last inequality is derived from the inequality \eqref{Trace-1} in Lemma \ref{Lemma_Trace} and the Young's inequality. Furthermore,
\begin{align*}
  |I_2| \le C(E_{k+1}) \int_0^t \|\Theta(s)\|_{L^2_{l_0}}^2 \d s\,, \quad |I_5| \le C \int_0^t \|(\Theta, \tilde{g}^\sigma)(s)\|_{L^2_{l_0}}^2 \d s\,.
\end{align*}
Together with the boundary conditions in \eqref{Theta_Appro}, one can also compute
 \begin{align*}
  I_3 = & \int_0^t \int_{\R^2} \tilde{\kappa} (1+\zeta)^{l_0} \p_\zeta \Theta \Theta |_{\zeta=0}^{\zeta=\frac{3}{\sigma}} \d \bar{x} \d s - \int_0^t \int_{\R^2} \int_0^{\frac{3}{\sigma}} (1+\zeta)^{l_0} (\p_\zeta \Theta)^2 \d \bar{x} \d \zeta \d s \\
  & - \int_0^t \int_{\R^2} \int_0^{\frac{3}{\sigma}} l_0 \tilde{\kappa} (1+\zeta)^{l_0-1} \p_\zeta \Theta \Theta \d \bar{x} \d \zeta \d s \\
  \le & - \int_0^t \tilde{\kappa}_0 \|\p_\zeta \Theta(s)\|_{L^2_{l_0}}^2 \d s + \tfrac{1}{2} \int_0^t \tilde{\kappa}_0 \|\p_\zeta \Theta (s)\|_{L^2_{l_0}}^2 \d s\\
  & + C \int_0^t \|\Theta (s)\|_{L^2_{l_0}}^2 \d s - \int_0^t \int_{\R^2} \tilde{\kappa} R_\theta \Theta^2 |_{\zeta=0} \d \bar{x} \d s \\
  \le & - \tfrac{1}{2} \int_0^t \tilde{\kappa}_0 \|\p_\zeta \Theta (s)\|_{L^2_{l_0}}^2 \d s - \int_0^t \tilde{\kappa}_0 R^0_\theta \|\Theta|_{\zeta=0} (s)\|_{L^2(\R^2)}^2 \d s + C \int_0^t \|\Theta (s)\|_{L^2_{l_0}}^2 \d s \,,
 \end{align*}
where the lower bounds \eqref{LowBnd-1} and \eqref{LowBnd-2}, i.e., $ 0 < \tilde{\kappa}_0 \le \tilde{\kappa} $ and $ 0< R^0_\theta \le R_\theta $, are also used. For $I_4$, one has
\begin{equation*}
 I_4 = -\lambda \int_0^t \int_{\R^2} \int_0^{\frac{3}{\sigma}} (1+\zeta)^{l_0} |\nabla_{\bar{x}} \Theta|^2 \d \bar{x} \d \zeta \d s
 = \lambda \int_0^t \|\nabla_{\bar{x}} \Theta (s)\|_{L^2_{l_0}}^2 \d s\,.
\end{equation*}
Therefore, one has
\begin{equation}\label{Theta_L_2}
 \begin{aligned}
   \|\Theta (s)\|_{L^2_{l_0}}^2 + \tfrac{1}{2} \int_0^t \tilde{\kappa}_0 \|\p_\zeta \Theta (s)\|_{L^2_{l_0}}^2 + \lambda \|\nabla_{\tilde{x}} \Theta (s)\|_{L^2_{l_0}}^2 \d s + \int_0^t c_2'  \|\Theta|_{\zeta=0} (s)\|_{L^2(\R^2)}^2 \d s\\
   \le C \big( \|\Theta(0)\|_{L^2_{l_0}}^2 + \int_0^t \|\tilde{g}^{\sigma} (s)\|_{L^2_{l_0}}^2 \d s \big) + C(E_{k+1}) \int_0^t \|\Theta(s)\|_{L^2_{l_0}}^2 \d s\,,
 \end{aligned}
\end{equation}
where $ c_2' =\tilde{\kappa}_0 R^0_\theta > 0$.

\vspace*{2mm}

{\bf Step 2. The higher order $(t, \bar{x})$-derivatives estimates: $\mathbb{H}^r_{l,0} (\R^3_+)$-bounds.} For $2\gamma + |\beta| = r$ with $\gamma \in \mathbb{N}, \beta \in \mathbb{N}^2$ and $1\le r \le k$, applying $\p_t^\gamma \p_{\bar{x}}^\beta$ to the $\Theta$-equation of \eqref{Theta_Appro} infers that
\begin{equation}\label{Theta_High_Derivative_0}
 \begin{aligned}
  & \p_t \p_t^\gamma \p_{\bar{x}}^\beta \Theta + \bar{\u}^0 \cdot \nabla_{\bar{x}} \p_t^\gamma \p_{\bar{x}}^\beta \Theta + ( \p_{x_3} \u^0_3 \zeta + u^0_{1,3} ) \chi_\sigma (\zeta) \p_\zeta \p_t^\gamma \p_{\bar{x}}^\beta \Theta
  \\
  = & \p_t^\gamma \p_{\bar{x}}^\beta (\tilde{\kappa} \p_\zeta^2 \Theta) + \lambda \Delta_{\bar{x}} \p_t^\gamma \p_{\bar{x}}^\beta \Theta - [\p_t^\gamma \p_{\bar{x}}^\beta, \bar{\u}^0 \cdot \nabla_{\bar{x}}]\Theta - \zeta \chi_\sigma (\zeta) [\p_t^\gamma \p_{\bar{x}}^\beta, \p_{x_3} \u^0_3 \p_\zeta]\Theta \\
  & - \chi_\sigma (\zeta) [\p_t^\gamma \p_{\bar{x}}^\beta, u^0_{1,3} \p_{\zeta} ] \Theta - \p_t^\gamma \p_{\bar{x}}^\beta(\tfrac{2}{3} \div_{\bar{x}} \u^0 \Theta) + \p_t^\gamma \p_{\bar{x}}^\beta \tilde{g}^\sigma\,,
 \end{aligned}
\end{equation}
where $[X, Y] = XY-YX$ is the commutator operator. Multiplying \eqref{Theta_High_Derivative_0} with $(1+\zeta)^{l_r} \p_t^\gamma \p_{\bar{x}}^\beta \Theta$ and integrating over $[0,t] \times \R^2 \times [0,\tfrac{3}{\sigma}]$ reduce to
\begin{align}\label{Theta_High_Derivative_1}
  \tfrac{1}{2} \|\p_t^\gamma \p_{\bar{x}}^\beta \Theta(s) \|_{L^2_{l_r}}^2 - \tfrac{1}{2} \|\p_t^\gamma \p_{\bar{x}}^\beta \Theta(0)\|_{L^2_{l_r}}^2 = \sum_{k=1}^9  II_k \,,
\end{align}
where
\begin{align*}
 II_1 = & -\int_0^t \int_{\R^2} \int_0^{\frac{3}{\sigma}} ( \p_{x_3} \u^0_3 \zeta + u^0_{1,3} ) \chi_\sigma (\zeta) \p_\zeta \p_t^\gamma \p_{\bar{x}}^\beta \Theta \cdot (1+\zeta)^{l_r} \p_t^\gamma \p_{\bar{x}}^\beta \Theta \d \bar{x} \d \zeta \d s \,, \\
 II_2 = & \int_0^t \int_{\R^2} \int_0^{\frac{3}{\sigma}} \p_t^\gamma \p_{\bar{x}}^\beta (\tilde{\kappa} \p_\zeta^2 \Theta) (1+\zeta)^{l_r} \p_t^\gamma \p_{\bar{x}}^\beta \Theta \d \bar{x} \d \zeta \d s \,, \\
 II_3 = & \lambda \int_0^t \int_{\R^2} \int_0^{\frac{3}{\sigma}} \Delta_{\bar{x}} \p_t^\gamma \p_{\bar{x}}^\beta \Theta \cdot (1+\zeta)^{l_r} \p_t^\gamma \p_{\bar{x}}^\beta \Theta \d \bar{x} \d \zeta \d s \,, \\
 II_4 = & - \int_0^t \int_{\R^2} \int_0^{\frac{3}{\sigma}} \bar{\u}^0 \cdot \nabla_{\bar{x}} \p_t^\gamma \p_{\bar{x}}^\beta \Theta \cdot (1+\zeta)^{l_r} \p_t^\gamma \p_{\bar{x}}^\beta \Theta \d \bar{x} \d \zeta \d s \,, \\
 II_5 =  & - \int_0^t \int_{\R^2} \int_0^{\frac{3}{\sigma}} [\p_t^\gamma \p_{\bar{x}}^\beta, \bar{\u}^0 \cdot \nabla_{\bar{x}}] \Theta \cdot (1+\zeta)^{l_r} \p_t^\gamma \p_{\bar{x}}^\beta \Theta \d \bar{x} \d \zeta \d s \,, \\
 II_6 = & - \int_0^t \int_{\R^2} \int_0^{\frac{3}{\sigma}} \zeta \chi_\sigma (\zeta) [\p_t^\gamma \p_{\bar{x}}^\beta, \p_{x_3} \u^0_3 \p_\zeta] \Theta \cdot (1+\zeta)^{l_r} \p_t^\gamma \p_{\bar{x}}^\beta \Theta \d \bar{x} \d \zeta \d s \,, \\
 II_7 = & - \int_0^t \int_{\R^2} \int_0^{\frac{3}{\sigma}} \p_t^\gamma \p_{\bar{x}}^\beta (\tfrac{2}{3}\div_x \u^0 \Theta) \cdot (1+\zeta)^{l_r} \p_t^\gamma \p_{\bar{x}}^\beta \Theta \d \bar{x} \d \zeta \d s \,, \\
 II_8 = & \int_0^t \int_{\R^2} \int_0^{\frac{3}{\sigma}} \p_t^\gamma \p_{\bar{x}}^\beta \tilde{g}^\sigma \cdot (1+\zeta)^{l_r} \p_t^\gamma \p_{\bar{x}}^\beta \Theta \d \bar{x} \d \zeta \d s \,, \\
 II_9 = & \int_0^t \int_{\R^2} \int_0^{\frac{3}{\sigma}} \chi_\sigma (\zeta) [\p_t^\gamma \p_{\bar{x}}^\beta, u^0_{1,3} \p_{\zeta} ] \Theta \cdot (1+\zeta)^{l_r} \p_t^\gamma \p_{\bar{x}}^\beta \Theta \d \bar{x} \d \zeta \d s \,,
\end{align*}

We then estimate the terms in the RHS of \eqref{Theta_High_Derivative_1} one by one. Integrating by parts with respect to the variable $\zeta$ and employing the similar derivation of $I_1$ in \eqref{I1} before give us
\begin{equation}\label{II1}
 \begin{aligned}
   II_1 \le & \tfrac{1}{4} \int_0^t \tilde{\kappa}_0 \| \partial_\zeta \p_t^\gamma \p_{\bar{x}}^\beta \Theta (s) \|^2_{L^2_{l_r}} \d s + C(E_{k+1}) \int_0^t \|\p_t^\gamma \p_{\bar{x}}^\beta \Theta (s)\|_{L^2_{l_r}}^2 \d s\,.
 \end{aligned}
\end{equation}
For $II_2$, a direct calculation infers that
\begin{align*}
 II_2 = & \int_0^t \int_{\R^2} \p_t^\gamma \p_{\bar{x}}^\beta (\tilde{\kappa} \p_\zeta \Theta) (1+\zeta)^{l_r} \p_t^\gamma \p_{\bar{x}}^\beta \Theta |_{\zeta=0}^{\zeta = \frac{3}{\sigma}} \d \bar{x} \d s - \int_0^t \int_{\R^2} \int_0^{\frac{3}{\sigma}} \tilde{\kappa} (1+\zeta)^{l_r} (\p_t^\gamma \p_{\bar{x}}^\beta \Theta)^2 \d \bar{x} \d \zeta \d s\\
 & - \int_0^t \int_{\R^2} \int_0^{\frac{3}{\sigma}} (1+\zeta)^{l_r} [\p_t^\gamma \p_{\bar{x}}^\beta, \tilde{\kappa} \p_\zeta] \Theta \cdot \p_\zeta \p_t^\gamma \p_{\bar{x}}^\beta \Theta \d \bar{x} \d \zeta \d s = : II_{21} + II_{22} + II_{23} \,.
\end{align*}
The quantity $II_{21}$ is bounded by
  \begin{align*}
    & - \int_0^t \int_{\R^2} \tilde{\kappa} R_\theta (\p_t^\gamma \p_{\bar{x}}^\beta \Theta)^2 |_{\zeta=0} \d \bar{x} \d s - \int_0^t \int_{\R^2} [\p_t^\gamma \p_{\bar{x}}^\beta, \tilde{\kappa} R_\theta] \Theta \cdot \p_t^\gamma \p_{\bar{x}}^\beta \Theta |_{\zeta=0} \d \bar{x} \d s\\
    \le & - \tfrac{3}{4} \int_0^t \tilde{\kappa}_0 R^0_\theta \|\p_t^\gamma \p_{\bar{x}}^\beta \Theta |_{\zeta=0} (s)\|_{L^2(\R^2)}^2 \d s + C(E_{k+1}) \int_0^t \| \Theta |_{\zeta=0} (s) \|^2_{\mathbb{H}^{r-1} (\R^2)} \d s \,,
  \end{align*}
where the boundary values $\p_t^\gamma \p_{\bar{x}}^\beta (\tilde{\kappa} \p_{\zeta} \Theta) |_{\zeta=0} = \p_t^\gamma \p_{\bar{x}}^\beta (\tilde{\kappa} R_\theta \Theta) |_{\zeta=0}$ and $\p_t^\gamma \p_{\bar{x}}^\beta \Theta |_{\zeta=\frac{3}{\sigma}} = 0$ have been used. The quantity $II_{22}$ is bounded by
\begin{equation*}
  \begin{aligned}
    - \int_0^t \tilde{\kappa}_0 \|\p_\zeta \p_t^\gamma \p_{\bar{x}}^\beta \Theta (s)\|_{L^2_{l_r}}^2 \d s \,,
  \end{aligned}
\end{equation*}
and the quantity $II_{23}$ is bounded by
\begin{equation*}
  \begin{aligned}
    \tfrac{1}{4} \int_0^t \tilde{\kappa}_0 \|\p_\zeta \p_t^\gamma \p_{\bar{x}}^\beta \Theta (s)\|_{L^2_{l_r}}^2 \d s + C(E_{k+1}) \sum_{j=1}^{r-1} \sum_{2\tilde{\gamma} + |\tilde{\beta}| =j} \int_0^t \|\p_\zeta \p_t^{\tilde{\gamma}} \p_{\bar{x}}^{\tilde{\beta}} \Theta (s)\|_{L^2_{l_{j+1}}}^2 \d s \\
    \leq \tfrac{1}{4} \int_0^t \tilde{\kappa}_0 \|\p_\zeta \p_t^\gamma \p_{\bar{x}}^\beta \Theta (s)\|_{L^2_{l_r}}^2 \d s + C(E_{k+1}) \int_0^t \| \p_{\zeta} \Theta (s) \|^2_{\mathbb{H}^{r-1}_{l,0}(\R^3_+)} \d s \,,
  \end{aligned}
\end{equation*}
where we require $l_j \ge l_{j+1}$. The above estimates also require the lower bounds \eqref{LowBnd-1} and \eqref{LowBnd-2}. Consequently,
  \begin{align}\label{II2}
    \no II_2 \leq & -\tfrac{3}{4} \int_0^t \tilde{\kappa}_0 \|\p_\zeta \p_t^\gamma \p_{\bar{x}}^\beta \Theta (s)\|_{L^2_{l_r}}^2 \d s - \tfrac{3}{4} \int_0^t \tilde{\kappa}_0 R^0_\theta \|\p_t^\gamma \p_{\bar{x}}^\beta \Theta |_{\zeta=0} (s)\|_{L^2(\R^2)}^2 \d s \\
    & + C(E_{k+1}) \int_0^t \| \p_{\zeta} \Theta (s) \|^2_{\mathbb{H}^{r-1}_{l,0}(\R^3_+)} + \| \Theta |_{\zeta=0} (s) \|^2_{\mathbb{H}^{r-1} (\R^2)} \d s \,.
  \end{align}
Similarly, one also has
\begin{equation}\label{II3578}
 \begin{aligned}
  & II_3 = -\lambda \int_0^t \|\nabla_{\bar{x}} \p_t^\gamma \p_{\bar{x}}^\beta \Theta (s)\|_{L^2_{l_r}}^2 \d s\,,\ |II_5| \le C(E_{k+1}) \int_0^t  \| \Theta  (s)\|_{\mathbb{H}^r_{l,0}(\R^3_+)}^2 \d s\,,\\
  & |II_7| + |II_8| \le C(E_{k+1}) \int_0^t  \| \Theta  (s)\|_{\mathbb{H}^r_{l,0}(\R^3_+)}^2 \d s + C \int_0^t \|\p_t^\gamma \p_{\bar{x}}^\beta \tilde{g}^\sigma (s)\|_{L^2_{l_r}}^2 \d s\,,
 \end{aligned}
\end{equation}
and
\begin{equation}\label{II4}
 \begin{aligned}
  |II_4| = \big| \int_0^t \int_{\R^2} \int_0^{\frac{3}{\sigma}} \div_{\bar{x}} \bar{\u}^0 (1+\zeta)^{l_r} \tfrac{1}{2} (\p_t^\gamma \p_{\bar{x}}^\beta \Theta (s) )^2 \d \bar{x} \d \zeta \d s \big| \le C(E_{k+1}) \int_0^t \|\p_t^\gamma \p_{\bar{x}}^\beta \Theta (s)\|_{L^2_{l_r}}^2 \d s\,,
 \end{aligned}
\end{equation}
and
 \begin{align}\label{II6}
   \no |II_6| \le & C(E_{k+1}) \int_0^t \int_{\R^2} \int_0^{\frac{3}{\sigma}} (1+\zeta)^{l_r+1} \sum_{j=0}^{r-1} \sum_{2\tilde{\gamma} + |\tilde{\beta}| =j} |\p_\zeta \p_t^{\tilde{\gamma}} \p_{\bar{x}}^{\tilde{\beta}} \Theta| |\p_t^\gamma \p_{\bar{x}}^\beta \Theta| \d \bar{x} \d \zeta \d s\\
   \no \le & C(E_{k+1}) \sum_{j=0}^{r-1} \sum_{2\tilde{\gamma} + |\tilde{\beta}| =j} \Big( \int_0^t \int_{\R^2} \int_0^{\frac{3}{\sigma}} (1+\zeta)^{l_r +2} |\p_\zeta \p_t^{\tilde{\gamma}} \p_{\bar{x}}^{\tilde{\beta}} \Theta|^2 \bar{x} \d \zeta \d s \Big)^{\frac{1}{2}} \\
   \no & \times \Big( \int_0^t \|\p_t^\gamma \p_{\bar{x}}^\beta \Theta (s)\|_{L^2_{l_r}}^2 \d s \Big)^{\frac{1}{2}}\\
   \leq &  C(E_{k+1}) \int_0^t \|\p_t^\gamma \p_{\bar{x}}^\beta \Theta (s)\|_{L^2_{l_r}}^2 + \| \p_{\zeta} \Theta (s) \|^2_{\mathbb{H}^{r-1}_{l,0} (\R^3_+)} \d s \,.
 \end{align}
Here $l_{r-1} = l_r +2$ is required, which gives the definition of $l_j$ in \eqref{Def-l_j}. Furthermore, the quantity $II_9$ can easily bounded by
\begin{equation}\label{II9}
  \begin{aligned}
    II_9 \leq & C(E_{k+1}) \int_0^t \| \p_\zeta \Theta (s) \|^2_{\mathbb{H}^{r-1}_{l,0} (\R^3_+)} \d s + C(E_{k+1}) \int_0^t \| \p_t^\gamma \p_{\bar{x}}^\beta \Theta (s) \|^2_{L^2_{l_r}} \d s \,.
  \end{aligned}
\end{equation}
It thereby derived from substituting the estimates \eqref{II1}, \eqref{II2}, \eqref{II3578}, \eqref{II4}, \eqref{II6} and \eqref{II9} into \eqref{Theta_High_Derivative_1} that
\begin{equation}\label{Theta_High_Derivative_t_x}
 \begin{aligned}
  & \| \Theta(t)\|_{l,r,0}^2 + \int_0^t \tilde{\kappa}_0 \|\p_\zeta \Theta(s) \|_{l,r,0}^2 + \lambda \|\nabla_{\bar{x}} \Theta(s) \|_{l,r,0}^2 + \tilde{\kappa}_0 R^0_\theta \| \Theta |_{\zeta=0} (s)\|_{\Gamma, r}^2 \d s \\
  \le & C \Big( \| \Theta(0)\|_{l,r,0}^2 + \int_0^t \| \tilde{g}^\sigma (s)\|_{l,r,0}^2 \d s \Big) + C(E_{k+1}) \int_0^t \| \Theta (s)\|_{\mathbb{H}^r_{l,0}(\R^3_+)}^2 \d s \\
  & + C(E_{k+1}) \int_0^t \|\p_\zeta \Theta (s) \|_{\mathbb{H}^{r-1}_{l,0}(\R^3_+)}^2 + \| \Theta |_{\zeta=0} (s)\|_{\mathbb{H}^{r-1}(\R^2)}^2\d s
 \end{aligned}
\end{equation}
for $1\le r \le k$ ($k \ge 3$). Together with \eqref{Theta_L_2} and \eqref{Theta_High_Derivative_t_x}, we can apply the induction for $0 \leq l \leq k (k \geq 3)$ to imply
\begin{equation}\label{Theta_tx1x2_Dert}
  \begin{aligned}
    \| \Theta(t)\|_{\mathbb{H}^r_{l,0}(\R^3_+)}^2 + \int_0^t \|\p_\zeta \Theta(s) \|_{\mathbb{H}^r_{l,0}(\R^3_+)}^2 + \lambda \|\nabla_{\bar{x}} \Theta(s) \|_{\mathbb{H}^r_{l,0}(\R^3_+)}^2 + \| \Theta |_{\zeta=0} (s)\|_{\mathbb{H}^r(\R^2)}^2 \d s \\
    \le C(E_{k+1}) \Big( \| \Theta(0)\|_{\mathbb{H}^r_{l,0}(\R^3_+)}^2 + \int_0^t \| (\Theta, \tilde{g}^\sigma) (s)\|_{\mathbb{H}^r_{l,0}(\R^3_+)}^2 \d s \Big)
  \end{aligned}
\end{equation}
for any $0 \leq l \leq k (k \geq 3)$.

\vspace*{2mm}

{\bf Step 3. The mixed $(t, \bar{x}, \zeta)$-derivatives estimates: $\sum_{1 \leq n \leq r} \mathbb{H}^r_{l,n} (\R^3_+)$-bounds.} While estimating the higher order  $(t, \bar{x}, \zeta)$-derivatives, we shall subtly deal with boundary values of higher order $\zeta$-derivatives of $\Theta$. In order to overcome the issues, we first give the following lemma, which can be proved directly. Here we omit the details of proof for simplicity of presentation.
\begin{lemma}\label{Lemma_Theta_BC_infty}
	Assume $\Theta (t, \bar{x}, \zeta)$ is a smooth solution to the linear parabolic equation on $\Theta$ in \eqref{Theta_Appro} over $(t, \bar{x}, \zeta) \in [0,\tau] \times \R^2 \times [0,\tfrac{3}{\sigma}]$. Then the boundary values of higher oder $\zeta$-derivatives of $\Theta$ satisfy that for $m \ge 1$:
	\begin{equation}\label{Theta_BC_infty}
	\begin{aligned}
	\tilde{\kappa} \p_\zeta^{2m+1} \Theta |_{\zeta=\frac{3}{\sigma}} = (\p_t + \bar{\u}^0 \cdot \nabla_{\bar{x}} - \lambda \Delta_{\bar{x}} + \tfrac{2}{3} \div_x \u^0) \p_\zeta^{2m-1} \Theta |_{\zeta = \frac{3}{\sigma}} \,, \ \tilde{\kappa} \p_\zeta^{2m} \Theta |_{\zeta=\frac{3}{\sigma}} = 0
	\end{aligned}
	\end{equation}
	and
	\begin{equation}\label{Theta_Recursion}
	\begin{aligned}
	  \p_\zeta^{m+1} \Theta |_{\zeta=0} = & (\mathscr{L} + \tilde{\mathscr{L}}_{m-1}) \p_\zeta^{m-1} \Theta |_{\zeta=0} + \hat{\mathscr{L}} \p_\zeta^{m-1} \tilde{g}^\sigma |_{\zeta=0} \,,
	\end{aligned}
	\end{equation}
	where
	\begin{align}\label{L-scr}
	  \no & \tilde{\mathscr{L}}_{m-1} f = \tilde{\kappa}^{-1} \big ( \bar{\u}^0 \cdot \nabla_{\bar{x}}+ \tfrac{2}{3} \div_x \u^0 + (m-1) \p_{x_3} \u^0_3\big) f\,,\\
	  & \mathscr{L} f = \tilde{\kappa}^{-1} (\p_t - \lambda \Delta_{\bar{x}} ) f \,, \quad \hat{\mathscr{L}} f = - \tilde{\kappa}^{-1} f\,.
	\end{align}
	Moreover, if $m$ is odd, there holds
	\begin{align*}
	\p_\zeta^{m+1} \Theta |_{\zeta=0} =  \prod_{i=1}^{\frac{m+1}{2}}(\mathscr{L} + \tilde{\mathscr{L}}_{m+1-2i}) \Theta |_{\zeta=0} + \sum_{i=0}^{\frac{m-1}{2}} \prod_{j=1}^i (\mathscr{L} + \tilde{\mathscr{L}}_{m+1-2j}) \hat{\mathscr{L}} \p_\zeta^{m-1-2i} \tilde{g}^\sigma |_{\zeta=0}
	\end{align*}
	If $m$ is even, there holds
	\begin{align*}
	  \p_\zeta^{m+1} \Theta |_{\zeta=0} = \prod_{i=1}^\frac{m}{2} (\mathscr{L} + \tilde{\mathscr{L}}_{m+1-2i}) (R_\theta \Theta) |_{\zeta=0} + \sum_{i=0}^{\frac{m}{2}-1} \prod_{j=1}^i (\mathscr{L} + \tilde{\mathscr{L}}_{m+1-2j}) \hat{\mathscr{L}} \p_\zeta^{m-1-2i} \tilde{g}^\sigma |_{\zeta=0} \,.
	\end{align*}
\end{lemma}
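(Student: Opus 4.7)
The plan is to derive both sets of identities by differentiating the $\Theta$-equation in \eqref{Theta_Appro} in $\zeta$ and evaluating at each boundary, exploiting the special structure of the cutoff $\chi_\sigma$. The key observations are: (i) since $\chi(s)\equiv 0$ for $s\geq 2$, the function $\chi_\sigma(\zeta)=\chi(\sigma\zeta)$ together with all its $\zeta$-derivatives vanishes identically on a neighborhood of $\zeta=3/\sigma$; (ii) since $\chi(s)\equiv 1$ near $0$, we have $\chi_\sigma(0)=1$ while $\chi_\sigma^{(n)}(0)=0$ for every $n\geq 1$; and (iii) all coefficients in the equation other than $\chi_\sigma$ depend only on $(t,\bar{x})$, so they commute freely with $\p_\zeta$.

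For the far boundary, I rewrite the $\Theta$-equation as
\begin{equation*}
\tilde{\kappa}\,\p_\zeta^2\Theta \;=\; (\p_t+\bar{\u}^0\!\cdot\!\nabla_{\bar{x}}+\tfrac{2}{3}\div_x\u^0-\lambda\Delta_{\bar{x}})\Theta \;+\; (\p_{x_3}\u^0_3\,\zeta+u^0_{1,3})\chi_\sigma\p_\zeta\Theta \;-\;\tilde{g}^\sigma,
\end{equation*}
apply $\p_\zeta^{2m-1}$ to both sides, and restrict to $\zeta=3/\sigma$. By observation (i), the middle term on the right and $\tilde{g}^\sigma=\tilde{g}\chi_\sigma$ contribute zero regardless of how many $\p_\zeta$'s land on them, while commuting $\p_\zeta^{2m-1}$ past the $(t,\bar{x})$-coefficients produces exactly the stated identity. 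The second identity $\tilde{\kappa}\p_\zeta^{2m}\Theta|_{\zeta=3/\sigma}=0$ is then obtained by induction on $m$: the base case follows from the Dirichlet condition $\Theta|_{\zeta=3/\sigma}=0$ (which kills every tangential and time derivative of $\Theta$ there) combined with $\p_\zeta^0$ applied to the equation; the induction step reuses the first identity applied to $\p_\zeta^{2m-2}\Theta|_{\zeta=3/\sigma}$.

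For the boundary at $\zeta=0$, I apply $\p_\zeta^{m-1}$ to the same rearranged equation and restrict to $\zeta=0$. By observation (ii), Leibniz on any term containing $\chi_\sigma$ keeps only the summand where $\chi_\sigma$ is not differentiated, and there $\chi_\sigma=1$. The non-trivial combinatorics come from the $\zeta$-weight in the convection term: Leibniz gives $\p_\zeta^{m-1}[\zeta\p_\zeta\Theta]|_{\zeta=0}=(m-1)\p_\zeta^{m-1}\Theta|_{\zeta=0}$, which produces the $(m-1)\p_{x_3}\u^0_3$ factor inside $\tilde{\mathscr{L}}_{m-1}$; the remaining $(t,\bar{x})$-coefficients commute with $\p_\zeta^{m-1}$ to produce $\mathscr{L}$, and the source piece produces $\hat{\mathscr{L}}\p_\zeta^{m-1}\tilde{g}^\sigma$. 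This yields the two-step recursion \eqref{Theta_Recursion}.

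The explicit product formulas follow by iterating the recursion $\lfloor m/2\rfloor$ times, decrementing the $\zeta$-order by $2$ at each step; the iteration terminates at $\Theta|_{\zeta=0}$ when $m$ is odd, and at $\p_\zeta\Theta|_{\zeta=0}=R_\theta\Theta|_{\zeta=0}$ (the Robin boundary condition in \eqref{Theta_Appro}) when $m$ is even, which explains the appearance of the factor $R_\theta$ in the even case. The main bookkeeping obstacle is simply tracking the non-commuting product $\prod_{i}(\mathscr{L}+\tilde{\mathscr{L}}_{m+1-2i})$, since the $m$-index in $\tilde{\mathscr{L}}_{m-1}$ decreases by $2$ along the chain and each factor contains time and tangential differentiations that do not commute with the $(\rho^0,\u^0,T^0)$-dependent coefficients of subsequent factors — but this is a purely algebraic induction once the one-step recursion is in place.
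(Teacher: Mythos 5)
Your computations at the far boundary $\zeta = 3/\sigma$ are correct: $\chi_\sigma$ and all its $\zeta$-derivatives vanish identically on $[2/\sigma, 3/\sigma]$, so both the convection term and $\tilde g^\sigma = \tilde g\,\chi_\sigma$ drop out, and the induction using the Dirichlet condition at $\zeta=3/\sigma$ gives \eqref{Theta_BC_infty}. That part is fine.

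At $\zeta=0$, however, there is a genuine gap. Near $\zeta=0$ one has $\chi_\sigma\equiv 1$, so the convection coefficient is $(\p_{x_3}\u^0_3\,\zeta + u^0_{1,3})$. Applying $\p_\zeta^{m-1}$ by Leibniz and evaluating at $\zeta=0$ gives
\begin{equation*}
  \p_\zeta^{m-1}\!\big[(\p_{x_3}\u^0_3\,\zeta + u^0_{1,3})\p_\zeta\Theta\big]\big|_{\zeta=0}
  = (m-1)\,\p_{x_3}\u^0_3\,\p_\zeta^{m-1}\Theta\big|_{\zeta=0}
    \;+\; u^0_{1,3}\,\p_\zeta^{m}\Theta\big|_{\zeta=0}\,.
\end{equation*}
You keep only the first piece, which indeed produces the $(m-1)\p_{x_3}\u^0_3$ factor in $\tilde{\mathscr{L}}_{m-1}$, and then assert that what remains is just $\mathscr{L}\p_\zeta^{m-1}\Theta$ plus the source piece. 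But the second piece, $\tilde\kappa^{-1}u^0_{1,3}\p_\zeta^{m}\Theta|_{\zeta=0}$, is not of that form: it decrements the $\zeta$-order by one, not two, and $u^0_{1,3}=\sqrt{T^0}\big(\rho^0\sqrt{T^0}+1\big)\neq 0$ by \eqref{BC-u_13}, so it does not vanish. Your proof silently drops this term, and the two-step recursion \eqref{Theta_Recursion} does not follow from the $\Theta$-equation in \eqref{Theta_Appro} as written, nor do the explicit product formulas (the recursion is genuinely mixed one-step/two-step and needs an extra use of the Robin condition $\p_\zeta\Theta|_{\zeta=0}=R_\theta\Theta|_{\zeta=0}$ at every stage, which changes the coefficients).

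For full honesty: this omission traces back to the specular-reflection setting of \cite{GHW-2020}, where $u^0_{1,3}=0$ and the clean two-step recursion does hold, and the lemma statement itself seems to carry that over without modification. But a proof should expose the obstruction rather than reproduce it. A complete argument at $\zeta=0$ must either (a) retain the term $\tilde\kappa^{-1}u^0_{1,3}\p_\zeta^{m}\Theta|_{\zeta=0}$ in the one-step recursion, reduce it further using the Robin condition, and correct the resulting product formulas accordingly, or (b) explain why $u^0_{1,3}$ can be taken to vanish in this context, which \eqref{BC-u_13} contradicts.
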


One notices that
\begin{equation}\label{L-scr-tilde}
  \begin{aligned}
    \tilde{\mathscr{L}}_m \thicksim \tilde{\mathscr{L}} : = \tilde{\kappa}^{-1} \big ( \bar{\u}^0 \cdot \nabla_{\bar{x}}+ \tfrac{2}{3} \div_x \u^0 + \p_{x_3} \u^0_3\big)
  \end{aligned}
\end{equation}
while carrying the operators $\tilde{\mathscr{L}}_m$ in the following estimates. For simplicity of presentation, we will employ the relations on the boundary $\zeta = 0$
\begin{equation}\label{BC-Thicksim}
  \begin{aligned}
    \p_\zeta^{m+1} \Theta \thicksim
    \left\{
      \begin{aligned}
        & (\mathscr{L} + \tilde{\mathscr{L}})^{\frac{m+1}{2}} \Theta + \sum_{i=0}^{\frac{m-1}{2}} (\mathscr{L} + \tilde{\mathscr{L}})^i \hat{\mathscr{L}} \p_\zeta^{m-1-2i} \tilde{g}^\sigma  \ \textrm{for odd } m \,, \\
        & (\mathscr{L} + \tilde{\mathscr{L}})^\frac{m}{2} (R_\theta \Theta) + \sum_{i=0}^{\frac{m}{2}-1} (\mathscr{L} + \tilde{\mathscr{L}})^i \hat{\mathscr{L}} \p_\zeta^{m-1-2i} \tilde{g}^\sigma \ \textrm{for even } m \,,
      \end{aligned}
    \right.
  \end{aligned}
\end{equation}
in the rest of the paper. We then return to the proof of Proposition \ref{Prop-Prandtl}.

\vspace*{2mm}

{\bf Case 1. The first-order $\zeta$-derivatives: $\mathbb{H}^r_{l,1} (\R^3_+)$-bounds.} Applying $\p_t^\gamma \p_{\bar{x}}^\beta \p_\zeta$ to the $\Theta$-equation of \eqref{Theta_Appro} with $2 \gamma + |\beta| = r-1$, $1 \le r \le k$ ($k\ge3$) yields
\begin{equation}\label{Theta_t_x_zeta_1} \begin{aligned}
  & \p_t \p_t^\gamma \p_{\bar{x}}^\beta \p_\zeta \Theta + \bar{\u}^0 \cdot \nabla_{\bar{x}} \p_t^\gamma \p_{\bar{x}}^\beta \p_\zeta \Theta + ( \p_{x_3} \u^0_3 \zeta + u^0_{1,3} ) \chi_\sigma (\zeta) \p_t^\gamma \p_{\bar{x}}^\beta \p_\zeta^2 \Theta \\
  = & \p_t^\gamma \p_{\bar{x}}^\beta (\tilde{\kappa} \p_\zeta^3 \Theta) + \lambda \Delta_{\bar{x}} \p_t^\gamma \p_{\bar{x}}^\beta \p_\zeta \Theta - \zeta \chi_\sigma (\zeta) [\p_t^\gamma \p_{\bar{x}}^\beta, \p_{x_3} \u^0_3 \p_\zeta^2] \Theta \\
  & - \p_{x_3} \u^0_3 \p_\zeta (\zeta \chi_\sigma (\zeta)) \partial_t^\gamma \partial_{\bar{x}}^\beta \partial_\zeta \Theta - [\p_t^\gamma \p_{\bar{x}}^\beta, \bar{\u}^0 \cdot \nabla_{\bar{x}}] \p_\zeta \Theta - [\p_t^\gamma \p_{\bar{x}}^\beta \p_\zeta, u^0_{1,3} \chi_\sigma (\zeta) \p_\zeta] \Theta \\
  & - \p_\zeta (\zeta \chi_\sigma (\zeta)) [\p_t^\gamma \p_{\bar{x}}^\beta, \p_{x_3} \u^0_3 \p_\zeta] \Theta - \p_t^\gamma \p_{\bar{x}}^\beta \p_\zeta (\tfrac{2}{3} \div_x \u^0 \Theta) + \p_t^\gamma \p_{\bar{x}}^\beta \p_\zeta \tilde{g}^\sigma\,.
 \end{aligned}
\end{equation}
By Lemma \ref{Lemma_Theta_BC_infty},
\begin{equation}\label{Theta_BC_2_Order}
  \tilde{\kappa} \p_\zeta^2 \Theta |_{\zeta=0} = \mathscr{L}_{\Theta} (\Theta, \tilde{g}^\sigma) |_{\zeta=0}\,,\quad \Theta |_{\zeta =\frac{3}{\sigma}} = \p_\zeta^2 \Theta |_{\zeta=\frac{3}{\sigma}} = 0\,,
\end{equation}
where $
\mathscr{L}_{\Theta} (\Theta, \tilde{g}^\sigma) = \big( \p_t \Theta - \lambda \Delta_{\bar{x}} \Theta + \bar{\u}^0 \cdot \nabla_{\bar{x}} \Theta + \tfrac{2}{3} \div_x \u^0 \Theta - \tilde{g}^\sigma \big) $. Multiplying \eqref{Theta_t_x_zeta_1} with $(1+\zeta)^{l_r} \p_t^\gamma \p_{\bar{x}}^\beta \p_\zeta \Theta$, and integrating over $[0,t] \times \R^2 \times [0,\tfrac{3}{\sigma}]$, one has
\begin{align*}
  & \tfrac{1}{2} \|\p_t^\gamma \p_{\bar{x}}^\beta \p_\zeta \Theta (t)\|_{L^2_{l_r}}^2 - \tfrac{1}{2} \|\p_t^\gamma \p_{\bar{x}}^\beta \p_\zeta \Theta (0)\|_{L^2_{l_r}}^2 = \sum_{k=0}^{11} III_k \,,
 \end{align*}
 where
 \begin{align*}
  III_1 = & - \int_0^t \int_{\R^2} \int_0^{\frac{3}{\sigma}} \bar{\u}^0 \cdot \nabla_{\bar{x}} \p_t^\gamma \p_{\bar{x}}^\beta \p_\zeta \Theta \cdot (1+\zeta)^{l_r} \p_t^\gamma \p_{\bar{x}}^\beta \p_\zeta \Theta \d \bar{x} \d \zeta \d s \,, \\
  III_2 = & - \int_0^t \int_{\R^2} \int_0^{\frac{3}{\sigma}} ( \p_{x_3} \u^0_3 \zeta + u^0_{1,3} ) \chi_\sigma (\zeta) \p_t^\gamma \p_{\bar{x}}^\beta \p_\zeta^2 \Theta (1+\zeta)^{l_r} \p_t^\gamma \p_{\bar{x}}^\beta \p_\zeta^2 \Theta \d \bar{x} \d \zeta \d s \,, \\
  III_3 = & \int_0^t \int_{\R^2} \int_0^{\frac{3}{\sigma}} \p_t^\gamma \p_{\bar{x}}^\beta (\tilde{\kappa} \p_\zeta^3 \Theta) \cdot (1+\zeta)^{l_r} \p_t^\gamma \p_{\bar{x}}^\beta \p_\zeta \Theta \d \bar{x} \d \zeta \d s \,, \\
  III_4 = & \int_0^t \int_{\R^2} \int_0^{\frac{3}{\sigma}} \lambda \Delta_{\bar{x}} \p_t^\gamma \p_{\bar{x}}^\beta \p_\zeta \Theta \cdot (1+\zeta)^{l_r} \p_t^\gamma \p_{\bar{x}}^\beta \p_\zeta \Theta \d \bar{x} \d \zeta \d s \,, \\
  III_5 = & - \int_0^t \int_{\R^2} \int_0^{\frac{3}{\sigma}} \zeta \chi_\sigma (\zeta) [\p_t^\gamma \p_{\bar{x}}^\beta, \p_{x_3} \u^0_3 \p_\zeta^2] \Theta \cdot (1+\zeta)^{l_r} \p_t^\gamma \p_{\bar{x}}^\beta \p_\zeta \Theta \d \bar{x} \d \zeta \d s \,, \\
  III_6 = & - \int_0^t \int_{\R^2} \int_0^{\frac{3}{\sigma}} \p_{x_3} \u^0_3 \p_\zeta (\zeta \chi_\sigma (\zeta)) \p_t^\gamma \p_{\bar{x}}^\beta \p_\zeta \Theta \cdot (1+\zeta)^{l_r} \p_t^\gamma \p_{\bar{x}}^\beta \p_\zeta \Theta \d \bar{x} \d \zeta \d s \,, \\
  III_7 = & - \int_0^t \int_{\R^2} \int_0^{\frac{3}{\sigma}} [\p_t^\gamma \p_{\bar{x}}^\beta, \bar{\u}^0 \cdot \nabla_{\bar{x}}] \p_\zeta \Theta \cdot (1+\zeta)^{l_r} \p_t^\gamma \p_{\bar{x}}^\beta \p_\zeta \Theta \d \bar{x} \d \zeta \d s \,, \\
  III_8 = & - \int_0^t \int_{\R^2} \int_0^{\frac{3}{\sigma}} \p_\zeta (\zeta \chi_\sigma (\zeta)) [\p_t^\gamma \p_{\bar{x}}^\beta, \p_{x_3} \u^0_3 \p_\zeta] \Theta \cdot (1+\zeta)^{l_r} \p_t^\gamma \p_{\bar{x}}^\beta \p_\zeta \Theta \d \bar{x} \d \zeta \d s \,, \\
  III_9 = & - \int_0^t \int_{\R^2} \int_0^{\frac{3}{\sigma}} \p_t^\gamma \p_{\bar{x}}^\beta \p_\zeta \big( \tfrac{2}{3} \div_x \u^0 \Theta \big) \cdot (1+\zeta)^{l_r} \p_t^\gamma \p_{\bar{x}}^\beta \p_\zeta \Theta \d \bar{x} \d \zeta \d s \,, \\
  III_{10} = & \int_0^t \int_{\R^2} \int_0^{\frac{3}{\sigma}} \p_t^\gamma \p_{\bar{x}}^\beta \p_\zeta \tilde{g}^\sigma \cdot (1+\zeta)^{l_r} \p_t^\gamma \p_{\bar{x}}^\beta \p_\zeta \Theta \d \bar{x} \d \zeta \d s \,, \\
  III_{11} = & - \int_0^t \int_{\R^2} \int_0^{\frac{3}{\sigma}} [\p_t^\gamma \p_{\bar{x}}^\beta \p_\zeta, u^0_{1,3} \chi_\sigma (\zeta) \p_\zeta] \Theta \cdot (1+\zeta)^{l_r} \p_t^\gamma \p_{\bar{x}}^\beta \p_\zeta \Theta \d \bar{x} \d \zeta \d s \,.
\end{align*}
Integration by parts over $\bar{x} \in \R^2$, $\zeta \in [0, \frac{3}{\sigma}]$ and Sobolev theory yield
\begin{align}
  |III_1| \le & C(E_{k+1}) \int_0^t \|\p_t^\gamma \p_{\bar{x}}^\beta \p_\zeta \Theta (s)\|^2_{L^2_{l_r}} \d s\,, \label{III_1}\\
  III_2 \le & \tfrac{1}{4} \int_0^t \tilde{\kappa}_0 \| \partial_\zeta \p_t^\gamma \p_{\bar{x}}^\beta \p_\zeta \Theta (s) \|^2_{L^2_{l_r}} \d s + C(E_{k+1}) \int_0^t \|\p_t^\gamma \p_{\bar{x}}^\beta \p_\zeta \Theta (s)\|_{L^2_{l_r}}^2 \d s\,, \label{III_2}
\end{align}
where the properties of $\chi_\sigma (\zeta)$ in \eqref{chi-property} and the similar arguments of derivation $I_1$ in \eqref{I1} have been used. For the term $III_3$, it is decomposed as
\begin{align*}
  III_3 = & - \int_0^t \int_{\R^2} \int_0^{\frac{3}{\sigma}} \tilde{\kappa} (1+\zeta)^{l_r} ( \p_t^\gamma \p_{\bar{x}}^\beta \p_\zeta^2 \Theta )^2 + \tilde{\kappa} l_r (1+\zeta)^{l_r-1} \p_t^\gamma \p_{\bar{x}}^\beta \p_\zeta^2 \Theta \p_t^\gamma \p_{\bar{x}}^\beta \p_\zeta \Theta \d \bar{x} \d \zeta \d s \\
  & - \int_0^t \int_{\R^2} \int_0^{\frac{3}{\sigma}} [\p_t^\gamma \p_{\bar{x}}^\beta, \tilde{\kappa} \p_\zeta^2] \Theta \cdot [(1+\zeta)^{l_r} + l_r (1+\zeta)^{l_r -1}] \p_t^\gamma \p_{\bar{x}}^\beta \p_\zeta \Theta \\
  & + \int_0^t \int_{\R^2} (1+\zeta)^{l_r} \p_t^\gamma \p_{\bar{x}}^\beta (\tilde{\kappa} \p_\zeta^2 \Theta) \p_t^\gamma \p_{\bar{x}}^\beta \p_\zeta \Theta |_{\zeta=0}^{\zeta=\frac{3}{\sigma}} \d \bar{x} \d s \,.
\end{align*}
By the lower bounds \eqref{LowBnd-2}, H\"older and Young inequalities, the first term in the right-hand side of the decomposition $III_3$ can be bounded by
\begin{equation*}
  \begin{aligned}
    - \tfrac{3}{4} \int_0^t \tilde{\kappa}_0 \|\p_\zeta^2 \p_t^\gamma \p_{\bar{x}}^\beta \Theta (s)\|_{L^2_{l_r}}^2 \d s + C(E_{k+1}) \int_0^t \|\p_t^\gamma \p_{\bar{x}}^\beta \p_\zeta \Theta(s)\|_{L^2_{l_r}}^2 \d s \,,
  \end{aligned}
\end{equation*}
and the second term can be bounded by $ C(E_{k+1}) \int_0^t \|\p_\zeta \Theta(s) \|_{\mathbb{H}^{r-1}_{l,1}(\R^3_+)}^2 \d s $. We further denote the third term by $III_{3B}$. There therefore holds
\begin{equation}\label{III_3_3B}
  \begin{aligned}
    III_3 \leq & III_{3B} - \tfrac{3}{4} \int_0^t \tilde{\kappa}_0 \|\p_\zeta^2 \p_t^\gamma \p_{\bar{x}}^\beta \Theta (s)\|_{L^2_{l_r}}^2 \d s + C(E_{k+1}) \int_0^t \|\p_t^\gamma \p_{\bar{x}}^\beta \p_\zeta \Theta(s)\|_{L^2_{l_r}}^2 \d s \\
    & +  C(E_{k+1}) \int_0^t \|\p_\zeta \Theta(s) \|_{\mathbb{H}^{r-1}_{l,1}(\R^3_+)}^2 \d s \,.
  \end{aligned}
\end{equation}
Due to the boundary values of \eqref{Theta_Appro} and \eqref{Theta_BC_2_Order}, the quantity $III_{3B}$ reads
\begin{align}\label{III_3B_Split}
   \no III_{3B} = & \underbrace{-\int_0^t \int_{\R^2} \p_t^\gamma \p_{\bar{x}}^\beta (\p_t - \lambda \Delta_{\bar{x}}) \Theta \cdot \p_t^\gamma \p_{\bar{x}}^\beta (R_\theta \Theta)|_{\zeta=0} \d \bar{x} \d s}_{III_{3B}^1} \\
   \no & \underbrace{- \int_0^t \int_{\R^2} \p_t^\gamma \p_{\bar{x}}^\beta (\bar{\u}^0 \cdot \nabla_{\bar{x}} \Theta) \cdot \p_t^\gamma \p_{\bar{x}}^\beta (R_\theta \Theta)|_{\zeta=0} \d \bar{x} \d s}_{III_{3B}^2} \\
   & \underbrace{- \int_0^t \int_{\R^2} \p_t^\gamma \p_{\bar{x}}^\beta \big( \tfrac{2}{3} \div_x \u^0 \Theta \big) \cdot \p_t^\gamma \p_{\bar{x}}^\beta (R_\theta \Theta)|_{\zeta=0} \d \bar{x} \d s}_{III_{3B}^3} \\
   \no & + \underbrace{\int_0^t \int_{\R^2} \p_t^\gamma \p_{\bar{x}}^\beta \tilde{g}^\sigma \cdot \p_t^\gamma \p_{\bar{x}}^\beta (R_\theta \Theta)|_{\zeta=0} \d \bar{x} \d s}_{III_{3B}^4}\,.
\end{align}

We first deal with the quantity $III_{3B}^1$. We deduce from integration by parts over $[0,t] \times \R^2 $ that
\begin{align}\label{III_3B_1_Split}
  \no III_{3B}^1 & = - \int_{\R^2}\! \tfrac{1}{2} R_\theta \big(\p_t^\gamma \p_{\bar{x}}^\beta \Theta\big)^2 |_{\zeta=0} |_{s=0}^{s=t} \d \bar{x} - \int_0^t \int_{\R^2} \lambda R_\theta |\nabla_{\bar{x}} \p_t^\gamma \p_{\bar{x}}^\beta \Theta|^2 |_{\zeta=0} \d \bar{x} \d s\\
  \no & + \int_0^t\! \int_{\R^2} \!\tfrac{1}{2} (\p_t + \lambda \Delta_{\bar{x}} ) R_\theta (\p_t^\gamma \p_{\bar{x}}^\beta \Theta)^2 |_{\zeta=0} \d \bar{x} \d s \\
  \no & \underbrace{- \int_0^t \int_{\R^2} \p_t^\gamma \p_{\bar{x}}^\beta \p_t \Theta \cdot [\p_t^\gamma \p_{\bar{x}}^\beta, R_\theta] \Theta |_{\zeta=0} \d \bar{x} \d s}_{III_{3B}^{11}} + \underbrace{\lambda \int_0^t \int_{\R^2} \p_t^\gamma \p_{\bar{x}}^\beta \Delta_{\bar{x}} \Theta \cdot [\p_t^\gamma \p_{\bar{x}}^\beta, R_\theta] \Theta |_{\zeta=0} \d \bar{x} \d s}_{III_{3B}^{12}}\\
  \no & \le - \tfrac{R^0_\theta}{2} \|\p_t^\gamma \p_{\bar{x}}^\beta \Theta |_{\zeta=0} (t)\|_{L^2(\R^2)}^2 - \int_0^t \lambda R^0_\theta \|\nabla_{\bar{x}} \p_t^\gamma \p_{\bar{x}}^\beta \Theta |_{\zeta=0} (s) \|_{L^2(\R^2)}^2 \d s \\
  &  + C(E_{k+1}) \Big( \| \Theta (0)\|_{\mathbb{H}^{r}_l(\R^3_+)}^2 + \int_0^t \| \p_t^\gamma \p_{\bar{x}}^\beta \Theta|_{\zeta=0} (s) \|^2_{L^2(\R^2)} \d s \Big) + III_{3B}^{11} + III_{3B}^{12} \,,
\end{align}
where the lower bounds \eqref{LowBnd-1} and the inequality
 $$|- \int_{\R^2} \tfrac{1}{2} R_\theta (\p_t^\gamma \p_{\bar{x}}^\beta \Theta)^2 (0) |_{\zeta=0} \d \bar{x} | \le C(E_{k+1}) \| \Theta (0)\|_{\mathbb{H}^{r}_l(\R^3_+)}^2 $$ implied by \eqref{Trace-1} in Lemma \ref{Lemma_Trace} have been used in the last inequality. For the term $III_{3B}^{11}$, if $\beta =0, 2\gamma = r-1$,
\begin{align*}
  III_{3B}^{11} = & - \int_{\R^2} \p_t^\gamma \Theta \cdot [\p_t^\gamma, R_\theta] \Theta |_{\zeta=0} |_{s=0}^{s=t} \d \bar{x} - \int_0^t \int_{\R^2} \p_t^\gamma \Theta \cdot \p_t [\p_t^\gamma, R_\theta] \Theta |_{\zeta=0} \d \bar{x} \d s \\
  \le & \tfrac{R^0_\theta}{4} \|\p_t^\gamma \p_{\bar{x}}^\beta \Theta |_{\zeta=0} (t)\|_{L^2(\R^2)}^2 + C(E_{k+1}) \| \Theta |_{\zeta=0} (t) \|^2_{\mathbb{H}^{r-3}(\R^2)} \\
  & + C(E_{k+1}) \Big( \| \Theta (0) \|^2_{\mathbb{H}^r_l(\R^3_+)} + \int_0^t \| \Theta (s) \|^2_{\mathbb{H}^r_{l,0} (\R^3_+)} + \| \Theta (s) \|^2_{\mathbb{H}^r_{l,1} (\R^3_+)} \d s \Big) \,,
\end{align*}
where the lower bounds \eqref{LowBnd-1} and trace inequality \eqref{Trace-1} in Lemma \ref{Lemma_Trace} are used.

If $|\beta| \ge 2$, without loss of generality, we assume $\beta \ge 2 e_1 = 2(1,0)$, i.e., $\beta_1 \ge 2$. Then, \eqref{Trace-2} in Lemma \ref{Lemma_Trace} implies
\begin{align*}
  III_{3B}^{11} = & \int_0^t \int_{\R^2} \p_{x_1} \big( \p_t^\gamma \p_{\bar{x}}^{\beta - 2e_1} \p_t \Theta \big) \cdot \p_{x_1} \big\{ [\p_t^\gamma \p_{\bar{x}}^\beta, R_\theta] \Theta \big\} |_{\zeta=0} \d \bar{x} \d s\\
  \le & C(E_{k+1}) \int_0^t \| \Theta (s) \|^2_{\mathbb{H}^r_{l,0} (\R^3_+)} + \| \Theta (s) \|^2_{\mathbb{H}^r_{l,1} (\R^3_+)} \d s \,.
\end{align*}
If $|\beta|=1, 2 \gamma = r-2$, where $r$ is required to be even. Then,
\begin{align*}
  III_{3B}^{11} = & - \sum_{i=1}^2 \delta_r (\beta=e_i) \int_0^t \int_{\R^2} \p_t^\gamma \p_{\bar{x}}^{e_i} \p_t \Theta \p_{\bar{x}}^{e_i} R_\theta \p_t^{\gamma} \Theta |_{\zeta=0} \d \bar{x} \d s \\
  & + \int_0^t \int_{\R^2} \p_t^\gamma \p_{\bar{x}}^\beta \Theta \cdot \p_t \p_{\bar{x}}^\beta [ \p_t^{\gamma} , R_\theta ] \Theta |_{\zeta=0} \d \bar{x} \d s - \int_{\R^2} \p_t^\gamma \p_{\bar{x}}^\beta \Theta \cdot \p_{\bar{x}}^\beta [ \p_t^{\gamma}, R_\theta ] \Theta |_{\zeta=0} |_{s=0}^{s=t} \d \bar{x} \\
  \le & \sum_{i=1}^2 \delta_r (\beta=e_i) \int_0^t \int_{\R^2} \p_t^\gamma \p_t \Theta \p_{\bar{x}}^{e_i} ( \p_{\bar{x}}^{e_i} R_\theta \p_t^{\gamma} \Theta ) |_{\zeta=0} \d \bar{x} \d s \\
  & + C(E_{k+1}) \sum_{j=0}^{r-1} \sum_{2\tilde{\gamma} + |\tilde{\beta}|=j} \int_0^t \|\p_t^{\tilde{\gamma}} \p_{\bar{x}}^{\tilde{\beta}} \Theta |_{\zeta=0} (s)\|_{L^2 (\R^2)}^2 \d s \\
  & + \tfrac{R^0_\theta}{8} \|\p_t^\gamma \p_{\bar{x}}^\beta \Theta |_{\zeta=0} (t)\|_{L^2(\R^2)}^2 + C(E_{k+1}) \sum_{j=0}^{r-3} \sum_{2\tilde{\gamma} + |\tilde{\beta}|=j} \| \p_t^{\tilde{\gamma}} \p_{\bar{x}}^{\tilde{\beta}} \Theta |_{\zeta=0} (t)\|_{L^(\R^2)}^2\\
  & + C(E_{k+1}) \sum_{j=0}^{r-1} \sum_{2\tilde{\gamma} + |\tilde{\beta}|=j} \| \p_t^{\tilde{\gamma}} \p_{\bar{x}}^{\tilde{\beta}} \Theta |_{\zeta=0} (0)\|_{L^(\R^2)}^2 \\
  \le & \tfrac{R^0_\theta}{8} \|\p_t^\gamma \p_{\bar{x}}^\beta \Theta |_{\zeta=0} (t)\|_{L^2(\R^2)}^2 + C(E_{k+1}) \| \Theta |_{\zeta=0} (t) \|^2_{\mathbb{H}^{r-3}(\R^2)} \\
  & + \epsilon_0 \sum_{i=1}^2 \delta_r (\beta=e_i) \int_0^t \| \p_t^{\gamma+1} \Theta |_{\zeta=0} (s) \|^2_{L^2(\R^2)} \d s \\
  & + C(E_{k+1}) \Big( \| \Theta (0) \|^2_{\mathbb{H}^r_l(\R^3_+)} + \int_0^t \| \Theta (s) \|^2_{\mathbb{H}^r_{l,0}(\R^3_+)} + \| \Theta (s) \|^2_{\mathbb{H}^r_{l,1}(\R^3_+)} \d s \Big)
\end{align*}
for some small $\epsilon_0 > 0$ to be determined, where \eqref{Trace-1} is used and
\begin{equation}\label{delta-r-beta}
\delta_r (\beta=e_i) =
 \left\{
  \begin{array}{l}
    1\,, \qquad r\, {\text{is\ even\ and}}\, \beta = e_i\,,\\
    0\,,\qquad {\text{otherwise}}\,.
  \end{array}
 \right.
\end{equation}
Consequently,
\begin{align} \label{Es_III_3B_11}
  \no III_{3B}^{11} \le & \tfrac{3 R^0_\theta}{8} \|\p_t^\gamma \p_{\bar{x}}^\beta \Theta |_{\zeta=0} (t)\|_{L^2(\R^2)}^2 + C(E_{k+1}) \| \Theta |_{\zeta=0} (t) \|^2_{\mathbb{H}^{r-3}(\R^2)} \\
  & + \epsilon_0 \sum_{i=1}^2 \delta_r (\beta=e_i) \int_0^t \| \p_t^{\gamma+1} \Theta |_{\zeta=0} (s) \|^2_{L^2(\R^2)} \d s \\
  \no & + C(E_{k+1}) \Big( \| \Theta (0) \|^2_{\mathbb{H}^r_l(\R^3_+)} + \int_0^t \| \Theta (s) \|^2_{\mathbb{H}^r_{l,0}(\R^3_+)} + \| \Theta (s) \|^2_{\mathbb{H}^r_{l,1}(\R^3_+)} \d s \Big) \,.
\end{align}
For $III_{3B}^{12}$, as similarly in \eqref{Es_III_3B_11}, one has
\begin{align}\label{III_2B_12}
  III_{3B}^{12} & = -\lambda \int_0^t \int_{\R^2} \nabla_{\bar{x}} \p_t^\gamma \p_{\bar{x}}^\beta \Theta \cdot \nabla_{\bar{x}} [\p_t^\gamma \p_{\bar{x}}^\beta, R_\theta] \Theta |_{\zeta=0} \d \bar{x} \d s \\
  \no \le & \tfrac{1}{2} \lambda R^0_\theta \int_0^t \|\nabla_{\bar{x}} \p_t^\gamma \p_{\bar{x}}^\beta \Theta |_{\zeta=0} (s) \|_{L^2(\R^2)}^2 \d s + C(E_{k+1}) \int_0^t \| \Theta (s) \|^2_{\mathbb{H}^r_{l,0}(\R^3_+)} + \| \Theta (s) \|^2_{\mathbb{H}^r_{l,1}(\R^3_+)} \d s \,.
\end{align}
Therefore, from plugging \eqref{Es_III_3B_11} and \eqref{III_2B_12} into \eqref{III_3B_1_Split}, one has
\begin{align}\label{Es_III_3B_1}
 \no III_{3B}^1 \le & - \tfrac{R^0_\theta}{8} \|\p_t^\gamma \p_{\bar{x}}^\beta \Theta |_{\zeta=0} (t)\|_{L^2(\R^2)}^2 - \tfrac{1}{2} \lambda \int_0^t R^0_\theta \|\nabla_{\bar{x}} \p_t^\gamma \p_{\bar{x}}^\beta \Theta |_{\zeta=0} (s) \|_{L^2(\R^2)}^2 \d s\\
 \no & + \epsilon_0 \sum_{i=1}^2 \delta_r (\beta=e_i) \int_0^t \| \p_t^{\gamma+1} \Theta |_{\zeta=0} (s) \|^2_{L^2(\R^2)} \d s + C(E_{k+1}) \| \Theta |_{\zeta=0} (t) \|^2_{\mathbb{H}^{r-3}(\R^2)} \\
 & + C(E_{k+1}) \Big( \| \Theta (0) \|^2_{\mathbb{H}^r_l(\R^3_+)} + \int_0^t \| \Theta (s) \|^2_{\mathbb{H}^r_{l,0}(\R^3_+)} + \| \Theta (s) \|^2_{\mathbb{H}^r_{l,1}(\R^3_+)} \d s \Big) \,.
\end{align}

For the term $III_{3B}^{2}$, there holds
\begin{align}\label{III_3B_2}
  \no III_{3B}^2 = & -\int_0^t \int_{\R^2} [\p_t^\gamma \p_{\bar{x}}^\beta, \bar{\u}^0 \cdot \nabla_{\bar{x}} ]\Theta \cdot \p_t^\gamma \p_{\bar{x}}^\beta (R_\theta \Theta) |_{\zeta=0} \d \bar{x} \d s \\
  \no + \int_0^t \int_{\R^2} & \p_t^{\tilde{\gamma}} \p_{\bar{x}}^{\tilde{\beta}} \Theta \div_{\bar{x}} \big\{ \bar{\u}^0 [\p_t^\gamma \p_{\bar{x}}^\beta, R_\theta] \Theta \big\}|_{\zeta=0} \d \bar{x} \d s + \int_0^t \int_{\R^2} \div_{\bar{x}} (R_\theta \bar{\u}^0) \tfrac{1}{2} \big( \p_t^\gamma \p_{\bar{x}}^\beta \Theta\big)^2|_{\zeta=0} \d \bar{x} \d s\\
  \le & C(E_{k+1}) \int_0^t \| \Theta (s) \|^2_{\mathbb{H}^r_{l,0}(\R^3_+)} + \| \Theta (s) \|^2_{\mathbb{H}^r_{l,1}(\R^3_+)} \d s \,.
\end{align}
Moreover, the trace inequality \eqref{Trace-1} in Lemma \ref{Lemma_Trace} also implies
\begin{align}\label{III_3B_3}
  III_{3B}^3 \le & C(E_{k+1}) \int_0^t \| \Theta (s) \|^2_{\mathbb{H}^r_{l,0}(\R^3_+)} + \| \Theta (s) \|^2_{\mathbb{H}^r_{l,1}(\R^3_+)} \d s \,.
\end{align}
and
\begin{align}\label{III_3B_4}
  III_{3B}^4 \le C(E_{k+1}) \Big( \int_0^t \| ( \Theta, \tilde{g}^\sigma ) (s)\|_{\mathbb{H}^r_{l,0}(\R^3_+)}^2 + \| ( \Theta, \tilde{g}^\sigma ) (s)\|_{\mathbb{H}^r_{l,1}(\R^3_+)}^2 \d s\Big)\,.
\end{align}
It is thereby derived from substituting the bounds \eqref{Es_III_3B_1}-\eqref{III_3B_4} into \eqref{III_3B_Split}, and combining with \eqref{III_3_3B} that
\begin{align}\label{III_3}
 \no III_3 \le & - \tfrac{3}{4} \int_0^t \tilde{\kappa}_0 \|\p_\zeta \p_t^\gamma \p_{\bar{x}}^\beta \p_\zeta \Theta (s)\|_{L^2_{l_r}}^2 \d s +  C(E_{k+1}) \int_0^t \|\p_\zeta \Theta(s) \|_{\mathbb{H}^{r-1}_{l,1}(\R^3_+)}^2 \d s\\
 \no & - \tfrac{R^0_\theta}{8} \|\p_t^\gamma \p_{\bar{x}}^\beta \Theta|_{\zeta=0} (t)\|_{L^2(\R^2)}^2 - \tfrac{1}{2} \lambda R^0_\theta \int_0^t \|\nabla_{\bar{x}} \p_t^\gamma \p_{\bar{x}}^\beta \Theta|_{\zeta=0}(s)\|_{L^2(\R^2)}^2 \d s \\
 \no & + \epsilon_0 \sum_{i=1}^2 \delta_r (\beta=e_i) \int_0^t \| \p_t^{\gamma+1} \Theta |_{\zeta=0} (s) \|^2_{L^2(\R^2)} \d s + C(E_{k+1}) \| \Theta |_{\zeta=0} (t) \|^2_{\mathbb{H}^{r-3}(\R^2)} \\
 & + C(E_{k+1}) \Big( \| \Theta (0) \|^2_{\mathbb{H}^r_l(\R^3_+)} + \int_0^t \| ( \Theta, \tilde{g}^\sigma ) (s)\|_{\mathbb{H}^r_{l,0}(\R^3_+)}^2 + \| ( \Theta, \tilde{g}^\sigma ) (s)\|_{\mathbb{H}^r_{l,1}(\R^3_+)}^2 \d s\Big) \,.
\end{align}
Direct calculation gives us
\begin{align}\label{III_4}
  III_4 = -\lambda \int_0^t \|\nabla_{\bar{x}} \p_t^\gamma \p_{\bar{x}}^\beta \p_\zeta \Theta (s)\|_{L^2_{l_r}}^2 \d s\,.
\end{align}
For $III_5$ and $III_6$, one has
\begin{align}\label{III_5}
 \no III_5 \le & C(E_{k+1}) \sum_{j=0}^{r-2} \sum_{2\tilde{\gamma} + |\tilde{\beta}| = j} \int_0^t \int_{\R^2} \int_0^{\frac{3}{\sigma}} (1+\zeta)^{l_r+1} |\p_t^{\tilde{\gamma}} \p_{\bar{x}}^{\tilde{\beta}} \p_\zeta^2 \Theta| |\p_t^\gamma \p_{\bar{x}}^\beta \p_\zeta \Theta| \d \bar{x} \d \zeta \d s\\
 \no \le & C(E_{k+1}) \sum_{j=0}^{r-2} \sum_{2\tilde{\gamma} + |\tilde{\beta}| = j} \Big( \int_0^t \int_{\R^2} \int_0^{\frac{3}{\sigma}} (1+\zeta)^{l_r+2} |\p_\zeta \p_t^{\tilde{\gamma}} \p_{\bar{x}}^{\tilde{\beta}} \p_\zeta \Theta |^2 \d \bar{x} \d \zeta \d s \Big)^{\frac{1}{2}} \\
 \no & \times \Big( \int_0^t \int_{\R^2} \int_0^{\frac{3}{\sigma}} (1+\zeta)^{l_r} | \p_t^{\gamma} \p_{\bar{x}}^{\beta} \p_\zeta \Theta |^2 \d \bar{x} \d \zeta \d s \Big)^{\frac{1}{2}}\\
 \le &  C(E_{k+1}) \int_0^t \|\p_t^{\gamma} \p_{\bar{x}}^{\beta} \p_\zeta \Theta (s)\|_{L^2_{l_r}}^2 \d s + C(E_{k+1}) \int_0^t \| \p_{\zeta} \Theta (s) \|^2_{\mathbb{H}^{r-1}_{l,1}(\R^3_+)} \d s\,,
\end{align}
and
\begin{align}\label{III_6}
 III_6 \le & C(E_{k+1}) \int_0^t \|\p_t^{\gamma} \p_{\bar{x}}^{\beta} \p_\zeta \Theta(s)\|_{L^2_{l_r}}^2 \d s\,.
\end{align}
Similarly, one also has
\begin{align}\label{III_789}
   III_7 + III_8 + III_9 \le C(E_{k+1}) \int_0^t \| \Theta (s) \|^2_{\mathbb{H}^r_{l,1} (\R^3_+)} \d s \,,
\end{align}
and
\begin{align}\label{III_10}
 III_{10} + III_{11} \le C \int_0^t \|\p_t^{\gamma} \p_{\bar{x}}^{\beta} \p_\zeta (\tilde{g}^\sigma, \Theta) (s)\|_{L^2_{l_r}}^2 \d s + C(E_{k+1}) \int_0^t \| \p_\zeta \Theta (s) \|^2_{\mathbb{H}^{r-1}_{l,1} (\R^3_+)} \,.
\end{align}
In summary, it is derived from \eqref{III_1}-\eqref{III_2}, \eqref{III_3}-\eqref{III_10} and summing up for $2 \gamma + |\beta| = r - 1$ that
\begin{align}\label{Theta_1zeta}
   \no & \| \Theta(s)\|_{l,r,1}^2 + \int_0^t \tilde{\kappa}_0 \|\p_\zeta \Theta(s)\|_{l,r,1}^2 + \lambda \|\nabla_{\bar{x}} \Theta(s)\|_{l,r,1}^2 \d s \\
   \no & + \tfrac{R^0_\theta}{4} \| \Theta|_{\zeta=0} (t)\|_{\Gamma, r-1}^2 + \lambda R^0_\theta \int_0^t \|\nabla_{\bar{x}} \Theta |_{\zeta=0} (s) \|_{\Gamma, r-1}^2 \d s \\
   \no & \le C(E_{k+1}) \int_0^t \|\p_\zeta \Theta(s) \|_{\mathbb{H}^{r-1}_{l,1}(\R^3_+)}^2 \d s + C(E_{k+1}) \| \Theta |_{\zeta=0} (t) \|^2_{\mathbb{H}^{r-3}(\R^2)}\\
   & + \epsilon_0 \sum_{2 \gamma + |\beta| = r - 1} \sum_{i=1}^2 \delta_r (\beta=e_i) \int_0^t \| \p_t^{\gamma+1} \Theta |_{\zeta=0} (s) \|^2_{L^2(\R^2)} \d s  \\
   \no & + C(E_{k+1}) \Big( \| \Theta (0) \|^2_{\mathbb{H}^r_l(\R^3_+)} + \int_0^t \| ( \Theta, \tilde{g}^\sigma ) (s)\|_{\mathbb{H}^r_{l,0}(\R^3_+)}^2 + \| ( \Theta, \tilde{g}^\sigma ) (s)\|_{\mathbb{H}^r_{l,1}(\R^3_+)}^2 \d s\Big)
\end{align}
for all $1 \leq r \leq k (k \geq 3)$ and some undetermined small constant $\epsilon_0 > 0$. Together with \eqref{Theta_L_2} and \eqref{Theta_1zeta}, the induction for $1 \leq r \leq k$ implies that
  \begin{align}\label{Theta_1zeta_Dert}
    \no & \| \Theta(s)\|_{\mathbb{H}^r_{l,1}(\R^3_+)}^2 + \int_0^t \|\p_\zeta \Theta(s)\|_{\mathbb{H}^r_{l,1}(\R^3_+)}^2 + \lambda \|\nabla_{\bar{x}} \Theta(s)\|_{\mathbb{H}^r_{l,1}(\R^3_+)}^2 \d s \\
    \no & + \| \Theta|_{\zeta=0} (t)\|_{\mathbb{H}^{r-1}(\R^2)}^2 + \lambda \int_0^t \|\nabla_{\bar{x}} \Theta |_{\zeta=0} (s) \|_{\mathbb{H}^{r-1}(\R^2)}^2 \d s \\
    \no \le & C(E_{k+1}) \epsilon_0 \underbrace{ \sum_{j=0}^r \sum_{2 \gamma + |\beta| = j - 1} \sum_{i=1}^2 \delta_j (\beta=e_i) \int_0^t \| \p_t^{\gamma+1} \Theta |_{\zeta=0} (s) \|^2_{L^2(\R^2)} \d s }_{= : \mathscr{A}_{B,1}^r (\Theta) (t)}  \\
    & + C(E_{k+1}) \Big( \| \Theta (0) \|^2_{\mathbb{H}^r_l(\R^3_+)} + \int_0^t \| ( \Theta, \tilde{g}^\sigma ) (s)\|_{\mathbb{H}^r_{l,0}(\R^3_+)}^2 + \| ( \Theta, \tilde{g}^\sigma ) (s)\|_{\mathbb{H}^r_{l,1}(\R^3_+)}^2 \d s\Big)
  \end{align}
for some small $\epsilon_0 > 0$ to determined.

{\bf Case 2. $n$-order $\zeta$-derivatives: $\mathbb{H}^r_{l,n}(\R^3_+)$-bounds ($2 \leq n \leq r$).} For $2 \le n \le r$ with $2 \gamma + |\beta| = r-n$, applying $\p_t^\gamma \p_{\bar{x}}^\beta \p_\zeta^n$ to the $\Theta$-equation of \eqref{Theta_Appro} infers that
\begin{equation}\label{Theta_t_x_High_0}
 \begin{aligned}
  & \p_t \p_t^\gamma \p_{\bar{x}}^\beta \p_\zeta^n \Theta + \bar{\u}^0 \cdot \nabla_{\bar{x}} \p_t^\gamma \p_{\bar{x}}^\beta \p_\zeta^n \Theta + [\p_t^\gamma \p_{\bar{x}}^\beta, \bar{\u}^0 \cdot \nabla_{\bar{x}}] \p_\zeta^n \Theta + \p_{x_3} \u^0_3 [\p_\zeta^n, \zeta \chi_\sigma (\zeta)] \p_\zeta \p_t^\gamma \p_{\bar{x}}^\beta \Theta \\
  & + ( \p_{x_3} \u^0_3 \zeta + u^0_{1,3} ) \chi_\sigma (\zeta) \p_\zeta \p_t^\gamma \p_{\bar{x}}^\beta \p_\zeta^n \Theta + \p_\zeta^n \big\{ \zeta \chi_\sigma(\zeta) [\p_t^\gamma \p_{\bar{x}}^\beta, \p_{x_3} \u^0_3] \p_\zeta \Theta \big\} \\
  =& \p_t^\gamma \p_{\bar{x}}^\beta (\tilde{\kappa} \p_\zeta^{n+2} \Theta) + \lambda \Delta_{\bar{x}} \p_t^\gamma \p_{\bar{x}}^\beta \p_\zeta^n \Theta + \p_t^\gamma \p_{\bar{x}}^\beta \p_\zeta^n \tilde{g}^\sigma \\
  & - \p_t^\gamma \p_{\bar{x}}^\beta \p_\zeta^n (\tfrac{2}{3} \div_x \u^0 \Theta) - [ \p_t^\gamma \p_{\bar{x}}^\beta \p_\zeta^n, u^0_{1,3} \chi_\sigma (\zeta) \p_\zeta ] \Theta \,.
 \end{aligned}
\end{equation}
From taking inner product with $(1+\gamma)^{l_r} \p_\gamma^{\gamma} \p_{\bar{x}}^{\beta} \p_\zeta^n \Theta$ over $[0,t] \times \R^2 \times [0,\tfrac{3}{\sigma}]$ in \eqref{Theta_t_x_High_0}, one obtains
\begin{align*}
  & \tfrac{1}{2} \|\p_t^\gamma \p_{\bar{x}}^\beta \p_\zeta^n \Theta (t)\|_{L^2_{l_r}}^2 - \tfrac{1}{2} \|\p_t^\gamma \p_{\bar{x}}^\beta \p_\zeta^n \Theta (0)\|_{L^2_{l_r}}^2 = \sum_{k=1}^{10} IV_k \,,
\end{align*}
where
\begin{align*}
  IV_1 = & - \int_0^t \int_{\R^2} \int_0^{\frac{3}{\sigma}} \bar{\u}^0 \cdot \nabla_{\bar{x}} \p_t^\gamma \p_{\bar{x}}^\beta \p_\zeta^n \Theta \cdot (1+\zeta)^{l_r} \p_t^\gamma \p_{\bar{x}}^\beta \p_\zeta^n \Theta \d \bar{x} \d \zeta \d s \,, \\
  IV_2 = & - \int_0^t \int_{\R^2} \int_0^{\frac{3}{\sigma}} [\p_t^\gamma \p_{\bar{x}}^\beta, \bar{\u}^0 \cdot \nabla_{\bar{x}}] \p_\zeta^n \Theta \cdot  (1+\zeta)^{l_r} \p_t^\gamma \p_{\bar{x}}^\beta \p_\zeta^n \Theta \d \bar{x} \d \zeta \d s \,, \\
  IV_3 = & - \int_0^t \int_{\R^2} \int_0^{\frac{3}{\sigma}} ( \p_{x_3} \u^0_3 \zeta + u^0_{1,3} ) \chi_\sigma (\zeta) \p_\zeta \p_t^\gamma \p_{\bar{x}}^\beta \p_\zeta^n \Theta \cdot (1+\zeta)^{l_r} \p_t^\gamma \p_{\bar{x}}^\beta \p_\zeta^n \Theta \d \bar{x} \d \zeta \d s \,, \\
  IV_4 = & - \int_0^t \int_{\R^2} \int_0^{\frac{3}{\sigma}} \p_{x_3} \u^0_3 [\p_\zeta^n, \zeta \chi_\sigma (\zeta)] \p_\zeta \p_t^\gamma \p_{\bar{x}}^\beta \Theta \cdot (1+\zeta)^{l_r} \p_t^\gamma \p_{\bar{x}}^\beta \p_\zeta^n \Theta \d \bar{x} \d \zeta \d s \,, \\
  IV_5 = & - \int_0^t \int_{\R^2} \int_0^{\frac{3}{\sigma}} \p_t^\gamma \p_{\bar{x}}^\beta \p_\zeta^n (\tfrac{2}{3} \div_x \u^0 \Theta) \cdot (1+\zeta)^{l_r} \p_t^\gamma \p_{\bar{x}}^\beta \p_\zeta^n \Theta \d \bar{x} \d \zeta \d s \,, \\
  IV_6 = & - \int_0^t \int_{\R^2} \int_0^{\frac{3}{\sigma}} \p_\zeta^n \big\{ \zeta \chi_\sigma(\zeta) [\p_t^\gamma \p_{\bar{x}}^\beta, \p_{x_3} \u^0_3] \p_\zeta \Theta \big\} \cdot (1+\zeta)^{l_r} \p_t^\gamma \p_{\bar{x}}^\beta \p_\zeta^n \Theta \d \bar{x} \d \zeta \d s \,, \\
  IV_7 = & \int_0^t \int_{\R^2} \int_0^{\frac{3}{\sigma}}\p_t^\gamma \p_{\bar{x}}^\beta (\tilde{\kappa} \p_\zeta^{n+2} \Theta)  \cdot (1+\zeta)^{l_r} \p_t^\gamma \p_{\bar{x}}^\beta \p_\zeta^n \Theta \d \bar{x} \d \zeta \d s \,, \\
  IV_8 = & \int_0^t \int_{\R^2} \int_0^{\frac{3}{\sigma}} \lambda \Delta_{\bar{x}} \p_t^\gamma \p_{\bar{x}}^\beta \p_\zeta^n \Theta  \cdot (1+\zeta)^{l_r} \p_t^\gamma \p_{\bar{x}}^\beta \p_\zeta^n \Theta \d \bar{x} \d \zeta \d s \,, \\
  IV_9 = & \int_0^t \int_{\R^2} \int_0^{\frac{3}{\sigma}} \p_t^\gamma \p_{\bar{x}}^\beta \p_\zeta^n \tilde{g}^\sigma  \cdot (1+\zeta)^{l_r} \p_t^\gamma \p_{\bar{x}}^\beta \p_\zeta^n \Theta \d \bar{x} \d \zeta \d s \,, \\
  IV_{10} = & - \int_0^t \int_{\R^2} \int_0^{\frac{3}{\sigma}} [ \p_t^\gamma \p_{\bar{x}}^\beta \p_\zeta^n, u^0_{1,3} \chi_\sigma (\zeta) \p_\zeta ] \Theta \cdot (1+\zeta)^{l_r} \p_t^\gamma \p_{\bar{x}}^\beta \p_\zeta^n \Theta \d \bar{x} \d \zeta \d s \,.
\end{align*}
It is easy to estimate that
\begin{equation*}
 \begin{aligned}
  |IV_1| + |IV_2| \le C(E_{k+1}) \int_0^t \| \Theta (s) \|_{l,r,n}^2 \d s\,.
 \end{aligned}
\end{equation*}
Together with the properties of $\chi_\sigma (\zeta)$ in \eqref{chi-property},
\begin{equation*}
 \begin{aligned}
  IV_3 \le \tfrac{1}{4} \int_0^t \tilde{\kappa}_0 \| \partial_\zeta \p_t^\gamma \p_{\bar{x}}^\beta \p_\zeta^n \Theta (s) \|^2_{L^2_{l_r}} \d s + C(E_{k+1}) \int_0^t \|\p_t^\gamma \p_{\bar{x}}^\beta \p_\zeta^n \Theta (s)\|_{L^2_{l_r}}^2 \d s \,,
 \end{aligned}
\end{equation*}
and
\begin{equation*}
 \begin{aligned}
  |IV_4| \le & C(E_{k+1}) \sum_{\tilde{n}=0}^{n-1} \int_0^t (1+\zeta)^{l_r +1} |\p_t^\gamma \p_{\bar{x}}^\beta \p_\zeta^{\tilde{n}+1} \Theta| |\p_t^\gamma \p_{\bar{x}}^\beta \p_\zeta^n \Theta| \d s \\
  \le & C(E_{k+1}) \sum_{\tilde{n}=1}^n \int_0^t \| \Theta (s) \|^2_{\mathbb{H}^r_{l, \tilde{n}}(\R^3_+)} \d s \,.
 \end{aligned}
\end{equation*}
Moreover, we similarly have
\begin{align*}
 |IV_6| \le & C(E_{k+1}) \int_0^t \| \p_{\zeta} \Theta (s) \|^2_{\mathbb{H}^{r-1}_{l,n} (\R^3_+)} + C(E_{k+1}) \sum_{\tilde{n}=1}^n \int_0^t \| \Theta (s) \|^2_{\mathbb{H}^r_{l, \tilde{n}}(\R^3_+)} \d s \,,
\end{align*}
and
\begin{equation*}
\begin{aligned}
 & |IV_5| \le C(E_{k+1}) \int_0^t \| \Theta (s) \|^2_{\mathbb{H}^r_{l,n}(\R^3_+)} \d s \,, \quad IV_8 = -\lambda \int_0^t \|\nabla_{\bar{x}} \p_t^\gamma \p_{\bar{x}}^\beta \p_\zeta^n \Theta (s)\|_{L^2_{l_r}}^2 \d s\,, \\
 & |IV_9| + |IV_{10}| \le C \int_0^t \| \p_t^\gamma \p_{\bar{x}}^\beta \p_\zeta^n (\tilde{g}^\sigma, \Theta) (s)\|_{L^2_{l_r}}^2 \d s + C(E_{k+1}) \int_0^t \| \p_\zeta \Theta (s) \|^2_{\mathbb{H}^{r-1}_{l,n}(\R^3_+)} \d s \,.
\end{aligned}
\end{equation*}

It remains to control the term $IV_7$, which can be decomposed as follows:
\begin{align}\label{IV_7_Decomposed}
 \no IV_7 = & \underbrace{\int_0^t \int_{\R^2}  \p_t^\gamma \p_{\bar{x}}^\beta (\tilde{\kappa} \p_\zeta^{n+1} \Theta) (1+\zeta)^{l_r} \p_t^\gamma \p_{\bar{x}}^\beta \p_\zeta^n \Theta |_{\zeta=0}^{\zeta=\frac{3}{\sigma}} \d \bar{x} \d s}_{IV_{7B}} \\
 \no & - \int_0^t \int_{\R^2} \int_0^{\frac{3}{\sigma}} \tilde{\kappa} (1+\zeta)^{l_r} (\p_\zeta \p_t^\gamma \p_{\bar{x}}^\beta \p_\zeta^n \Theta)^2 \d \bar{x} \d \zeta \d s \\
 \no & - \int_0^t \int_{\R^2} \int_0^{\frac{3}{\sigma}} [\p_t^\gamma \p_{\bar{x}}^\beta, \tilde{\kappa} \p_\zeta^{n+1}] \Theta \cdot (1+\zeta)^{l_r} \p_\zeta \p_t^\gamma \p_{\bar{x}}^\beta \p_\zeta^n \Theta \d \bar{x} \d \zeta \d s \\
 & - \int_0^t \int_{\R^2} \int_0^{\frac{3}{\sigma}} \p_t^\gamma \p_{\bar{x}}^\beta (\tilde{\kappa} \p_\zeta^{n+1} \Theta) \cdot l_r (1+\zeta)^{l_r-1} \p_t^\gamma \p_{\bar{x}}^\beta \p_\zeta^n \Theta \d \bar{x} \d \zeta \d s \,.
\end{align}
Due to the lower bound \eqref{LowBnd-2}, the second term in the right-hand side of \eqref{IV_7_Decomposed} can be bounded by
\begin{equation*}
  \begin{aligned}
    - \int_0^t \tilde{\kappa}_0 \|\p_\zeta \p_t^\gamma \p_{\bar{x}}^\beta \p_\zeta^n \Theta\|_{L^2_{l_r}}^2 \d s \,,
  \end{aligned}
\end{equation*}
and the last two terms in that can be bounded by
\begin{equation*}
  \begin{aligned}
    \tfrac{1}{4} \int_0^t \tilde{\kappa}_0 \|\p_\zeta \p_t^\gamma \p_{\bar{x}}^\beta \p_\zeta^n \Theta\|_{L^2_{l_r}}^2 \d s + C(E_{k+1}) \int_0^t \| \p_{\zeta} \Theta (s) \|^2_{\mathbb{H}^{r-1}_{l,n}(\R^3_+)} + \| \Theta (s) \|^2_{\mathbb{H}^r_{l,n}(\R^3_+)} \d s \,.
  \end{aligned}
\end{equation*}
Together with the bound of $IV_{7B}$ in Lemma \ref{Lemma_IV7B} below, one has
  \begin{align*}
    IV_7 \le & - \tfrac{3}{4} \int_0^t \tilde{\kappa}_0 \|\p_\zeta \p_t^\gamma \p_{\bar{x}}^\beta \p_\zeta^n \Theta\|_{L^2_{l_r}}^2 \d s - \mathscr{E}_{B, n}^{\gamma, \beta} (\Theta) (t) + C(\tau, E_{k+1}) \sup_{t \in [0,\tau]} \| \tilde{g}^\sigma (t) \|^2_{\mathbb{H}^{r+1}_l (\R^3_+)} \\
    & + \epsilon_0 \mathscr{A}_{B,n} (\Theta) (t) + C(E_{k+1}) \int_0^t \| \p_{\zeta} \Theta (s) \|^2_{\mathbb{H}^{r-1}_{l,n}(\R^3_+)} + \| \Theta (s) \|^2_{\mathbb{H}^r_{l,n}(\R^3_+)} \d s \\
    & + C(E_{k+1}) \Big( \| \Theta (0) \|^2_{\mathbb{H}^r_l(\R^3_+)} + \sum_{p=0, 1} \| \Theta (t) \|^2_{\mathbb{H}^r_{l,p}(\R^3_+)} + \int_0^t \sum_{p=0, 1} \| \Theta (s) \|^2_{\mathbb{H}^r_{l,p}(\R^3_+)} \d s \Big)
  \end{align*}
  for some undetermined small constant $\epsilon_0 > 0$.

Collecting the above bounds of $IV_i (1 \le i \le 9)$ and summing up for $2 \gamma + |\beta| = r - n$ reduce to
\begin{equation*}
 \begin{aligned}
   & \| \Theta (t)\|_{l,r,n}^2 + \int_0^t \tilde{\kappa}_0 \|\p_\zeta \Theta (s)\|_{l,r,n}^2 + \lambda \|\nabla_{\bar{x}} \Theta (s)\|_{l,r,n}^2 \d s + \sum_{2\gamma + |\beta| + n =r} \mathscr{E}_{B, n}^{\gamma, \beta} (\Theta) (t) \\
   \le & C(E_{k+1}) \Big( \sum_{2\gamma +|\beta| + n = r} \epsilon_0 \mathscr{A}_{B,n} (\Theta) (t) + \sum_{p=0, 1} \| \Theta (t) \|^2_{\mathbb{H}^r_{l,p}(\R^3_+)} + \int_0^t \| \p_{\zeta} \Theta (s) \|^2_{\mathbb{H}^{r-1}_{l,n}(\R^3_+)} \Big) \\
   & + C(\tau, E_{k+1}) \Big( \| \Theta (0) \|^2_{\mathbb{H}^r_l (\R^3_+)} + \int_0^t \sum_{0 \leq \tilde{n} \leq n} \| \Theta (s) \|^2_{\mathbb{H}^r_{l,\tilde{n}}(\R^3_+)} \d s + \sup_{t \in [0,\tau]} \| \tilde{g}^\sigma (t) \|^2_{\mathbb{H}^{r+1}_l(\R^3_+)} \Big)
 \end{aligned}
\end{equation*}
for $2 \leq r \leq k (k \geq 3)$ and $2 \leq n \leq r$. Together with \eqref{Theta_L_2}, \eqref{Theta_tx1x2_Dert} and \eqref{Theta_1zeta_Dert}, one derives from the induction for $1 \leq r \leq k (k \geq 3)$ that
\begin{equation}\label{Theta_nzeta_Dert}
  \begin{aligned}
    & \| \Theta (t) \|^2_{\mathbb{H}^r_{l,n}(\R^3_+)} + \sum_{j=0}^r \sum_{2\gamma + |\beta| = j-n} \mathscr{E}_{B,n}^{\gamma, \beta} (\Theta) (t) \\
    & \qquad \qquad \qquad \qquad + \int_0^t \| \p_{\zeta} \Theta (s) \|^2_{\mathbb{H}^r_{l,n}(\R^3_+)} + \lambda \| \nabla_{\bar{x}} \Theta (s) \|^2_{\mathbb{H}^r_{l,n}(\R^3_+)} \d s \\
    \le & C(\tau, E_{k+1}) \Big( \epsilon_0 \mathscr{A}_{B,n}^r (\Theta) (t) + \epsilon_0 \mathscr{A}_{B,1}^r (\Theta) (t) + \| \Theta (0) \|^2_{\mathbb{H}^r_l(\R^3_+)} \Big) \\
    & + C(\tau, E_{k+1}) \Big( \sum_{0 \leq \tilde{n} \leq n} \int_0^t \| \Theta (s) \|^2_{\mathbb{H}^r_{l,\tilde{n}}(\R^3_+)} + \sup_{t \in [0,\tau]} \| \tilde{g}^\sigma (t) \|^2_{\mathbb{H}^{r+1}_l(\R^3_+)} \Big)
  \end{aligned}
\end{equation}
for $2 \leq r \leq k (k \geq 3)$, $2 \leq n \leq r$ and some small $\epsilon_0 > 0$ to be determined, where
\begin{equation}
  \begin{aligned}
    \mathscr{A}_{B,n}^r (\Theta) (t) = & \sum_{j=0}^r \sum_{2\gamma + |\beta| = j-n} \mathscr{A}_{B,n}^{\gamma, \beta} (\Theta) (t) \\
    = & \sum_{j=0}^r \sum_{2\gamma + |\beta| = j-n} \sum_{i=1}^2 \delta_j (\beta = e_i) \int_0^t \| \p_t^{\gamma + \frac{n+1}{2}} \Theta |_{\zeta=0} (s) \|^2_{L^2(\R^2)} \d s \,.
  \end{aligned}
\end{equation}
Finally, collecting the estimates \eqref{Theta_L_2}, \eqref{Theta_tx1x2_Dert}, \eqref{Theta_1zeta_Dert} and \eqref{Theta_nzeta_Dert} implies
  \begin{align}\label{Hr_l}
    \no & \| \Theta (t) \|^2_{\mathbb{H}^r_l(\R^3_+)} + \int_0^t \| \p_{\zeta} \Theta (s) \|^2_{\mathbb{H}^r_l(\R^3_+)} + \lambda \| \nabla_{\bar{x}} \Theta (s) \|^2_{\mathbb{H}^r_l(\R^3_+)} \d s \\
    \no & + \widetilde{\mathscr{E}}_{B,r} (\Theta) (t) + \lambda \int_0^t \| \nabla_{\bar{x}} \Theta |_{\zeta=0} (s) \|^2_{\mathbb{H}^{r-1}(\R^2)} \d s \\
    \no & \le C(\tau, E_{k+1}) \Big( \epsilon_0 \sum_{n=1}^r \mathscr{A}_{B,n}^r (\Theta) (t) + \| \Theta (0) \|^2_{\mathbb{H}^r_l(\R^3_+)} \Big) \\
    & + C(\tau, E_{k+1}) \Big( \int_0^t \| \Theta (s) \|^2_{\mathbb{H}^r_l(\R^3_+)} + \sup_{t \in [0,\tau]} \| \tilde{g}^\sigma (t) \|^2_{\mathbb{H}^{r+1}_l(\R^3_+)} \Big)
  \end{align}
for any $1 \leq r \leq k (k \geq 3)$, where
\begin{equation}
  \begin{aligned}
    \widetilde{\mathscr{E}}_{B,r} (\Theta) (t) = & \sum_{j=0}^r \sum_{n=2}^r \sum_{2\gamma + |\beta| = j-n} \mathscr{E}_{B,n}^{\gamma, \beta} (\Theta) (t) + \sum_{n=0,1} \delta_n \| \Theta |_{\zeta=0} (t) \|^2_{\mathbb{H}^{r-1}(\R^2)} \\
    & + \sum_{n=0,1} \int_0^t (1 - \delta_n) \| \Theta |_{\zeta=0} (s) \|^2_{\mathbb{H}^r(\R^2)} + \delta_n \lambda \| \nabla_{\bar{x}} \Theta |_{\zeta=0} (s) \|^2_{\mathbb{H}^{r-1}(\R^2)} \d s \,.
  \end{aligned}
\end{equation}
Here $\delta_n$ is defined in Lemma \ref{Lemma_IV7B}, i.e., $\delta_n = 0$ for even $n$ and $\delta_n = 1$ for odd $n$. In particular, we set $r = k \geq 3$ in \eqref{Hr_l}. We need to control the quantity $\sum_{n=1}^k \mathscr{A}_{B,n}^k (\Theta) (t)$ in the right-hand side of \eqref{Hr_l}, namely,
\begin{equation*}
  \begin{aligned}
    \sum_{n=1}^k \mathscr{A}_{B,n}^k (\Theta) (t) = \sum_{n=1}^k \sum_{j=0}^k \sum_{2\gamma + |\beta| = j-n} \sum_{i=1}^2 \delta_j (\beta = e_i) \int_0^t \| \p_t^{\gamma + \frac{n+1}{2}} \Theta |_{\zeta=0} (s) \|^2_{L^2(\R^2)} \d s \,.
  \end{aligned}
\end{equation*}
It is easily derived from the trace inequality \eqref{Trace-1} in Lemma \ref{Lemma_Trace} that
\begin{equation}
  \begin{aligned}
    \sum_{n=1}^k \sum_{j=0}^{k-1} \sum_{2\gamma + |\beta| = j-n} \sum_{i=1}^2 \delta_j (\beta = e_i) \int_0^t \| \p_t^{\gamma + \frac{n+1}{2}} \Theta |_{\zeta=0} (s) \|^2_{L^2(\R^2)} \d s \\
    \le C \int_0^t \| \Theta (s) \|^2_{\mathbb{H}^k_l(\R^3_+)} \d s \,.
  \end{aligned}
\end{equation}
We now consider the case $j=k$. Recall the definition $\delta_k (\beta = e_i)$ in \eqref{delta-r-beta}, i.e., $\delta_k (\beta = e_i) = 1$ for even $k$ and $\beta = e_i$, otherwise $\delta_k (\beta = e_i) = 0$. If $k$ is odd,
\begin{equation*}
  \begin{aligned}
    \sum_{n=1}^k \sum_{2\gamma + |\beta| = k-n} \sum_{i=1}^2 \delta_k (\beta = e_i) \int_0^t \| \p_t^{\gamma + \frac{n+1}{2}} \Theta |_{\zeta=0} (s) \|^2_{L^2(\R^2)} \d s = 0 \,.
  \end{aligned}
\end{equation*}
If $k$ is even,
\begin{equation*}
  \begin{aligned}
    \sum_{n=1}^k \sum_{2\gamma + |\beta| = k-n} \sum_{i=1}^2 \delta_k (\beta = e_i) \int_0^t \| \p_t^{\gamma + \frac{n+1}{2}} \Theta |_{\zeta=0} (s) \|^2_{L^2(\R^2)} \d s \\
    \leq C_k' \int_0^t \| \Theta |_{\zeta=0} (s) \|^2_{\mathbb{H}^r(\R^2)} \d s \le C_k \widetilde{\mathscr{E}}_{B,r} (\Theta) (t)
  \end{aligned}
\end{equation*}
for some constant $C_k > 0$. There therefore holds
\begin{equation}\label{epsilon0_term}
  \begin{aligned}
    C (\tau, E_{k+1}) \epsilon_0 \sum_{n=1}^k & \mathscr{A}_{B,n}^k (\Theta) (t) \\
    & \le \epsilon_0 C(\tau, E_{k+1}) C_k \widetilde{\mathscr{E}}_{B,r} (\Theta) (t) + C (\tau, E_{k+1}) \int_0^t \| \Theta (s) \|^2_{\mathbb{H}^k_l(\R^3_+)} \d s \,.
  \end{aligned}
\end{equation}
We now choose an $\epsilon_0 \in (0, 1)$ such that
\begin{equation*}
  \begin{aligned}
    0 < \epsilon_0 C(\tau, E_{k+1}) C_k \le \tfrac{1}{2} \,.
  \end{aligned}
\end{equation*}
By plugging \eqref{epsilon0_term} into the inequality \eqref{Hr_l} with $r = k$, one has
  \begin{align}\label{Uniform_Theta1}
    \no & \| \Theta (t) \|^2_{\mathbb{H}^k_l(\R^3_+)} + \int_0^t \| \p_{\zeta} \Theta (s) \|^2_{\mathbb{H}^k_l(\R^3_+)} + \lambda \| \nabla_{\bar{x}} \Theta (s) \|^2_{\mathbb{H}^k_l(\R^3_+)} \d s \\
    \no & + \tfrac{1}{2} \widetilde{\mathscr{E}}_{B,k} (\Theta) (t) + \lambda \int_0^t \| \nabla_{\bar{x}} \Theta |_{\zeta=0} (s) \|^2_{\mathbb{H}^{k-1}(\R^2)} \d s \\
    & \le C(\tau, E_{k+1}) \Big( \| \Theta (0) \|^2_{\mathbb{H}^k_l(\R^3_+)} + \int_0^t \| \Theta (s) \|^2_{\mathbb{H}^k_l(\R^3_+)} \d s + \sup_{t \in [0,\tau]} \| \tilde{g}^\sigma (t) \|^2_{\mathbb{H}^{k+1}_l(\R^3_+)} \Big) \,.
  \end{align}

By the analogous arguments of the bound \eqref{Uniform_Theta1} for $\Theta$, one can also obtain the uniform bounds for $\mho$
\begin{equation}\label{Uniform_mho1}
  \begin{aligned}
    & \| \mho (t) \|^2_{\mathbb{H}^k_l(\R^3_+)} + \int_0^t \| \p_{\zeta} \mho (s) \|^2_{\mathbb{H}^k_l(\R^3_+)} + \lambda \| \nabla_{\bar{x}} \mho (s) \|^2_{\mathbb{H}^k_l(\R^3_+)} \d s \\
    & + \tfrac{1}{2} \widetilde{\mathscr{E}}_{B,k} (\mho) (t) + \lambda \int_0^t \| \nabla_{\bar{x}} \mho |_{\zeta=0} (s) \|^2_{\mathbb{H}^{k-1}(\R^2)} \d s \\
    & \le C(\tau, E_{k+1}) \Big( \| \mho (0) \|^2_{\mathbb{H}^k_l(\R^3_+)} + \int_0^t \| ( \mho, \Theta) (s) \|^2_{\mathbb{H}^k_l(\R^3_+)} \d s + \sup_{t \in [0,\tau]} \| \tilde{f}^\sigma (t) \|^2_{\mathbb{H}^{k+1}_l(\R^3_+)} \Big) \,.
  \end{aligned}
\end{equation}
Here the quantity $\int_0^t \| ( \mho, \Theta) (s) \|^2_{\mathbb{H}^k_l(\R^3_+)} \d s$ is resulted from the term $\tfrac{\nabla_{\bar{x}} p^0}{3 T^0} \Theta$ in the $\mho$-equation of \eqref{Theta_Equa}. From \eqref{Uniform_Theta1} and \eqref{Uniform_mho1},
\begin{equation*}
  \begin{aligned}
    & \| ( \mho, \Theta) (t) \|^2_{\mathbb{H}^k_l(\R^3_+)} + \int_0^t \| \p_{\zeta} ( \mho, \Theta) (s) \|^2_{\mathbb{H}^k_l(\R^3_+)} + \lambda \| \nabla_{\bar{x}} ( \mho, \Theta) (s) \|^2_{\mathbb{H}^k_l(\R^3_+)} \d s \\
    & + \tfrac{1}{2} \widetilde{\mathscr{E}}_{B,k} (\mho, \Theta) (t) + \lambda \int_0^t \| \nabla_{\bar{x}} ( \mho, \Theta) |_{\zeta=0} (s) \|^2_{\mathbb{H}^{k-1}(\R^2)} \d s \\
    & \le C(\tau, E_{k+1}) \Big( \| ( \mho, \Theta) (0) \|^2_{\mathbb{H}^k_l(\R^3_+)} + \int_0^t \| ( \mho, \Theta) (s) \|^2_{\mathbb{H}^k_l(\R^3_+)} \d s + \sup_{t \in [0,\tau]} \| ( \tilde{f}^\sigma, \tilde{g}^\sigma ) (t) \|^2_{\mathbb{H}^{k+1}_l(\R^3_+)} \Big) \,,
  \end{aligned}
\end{equation*}
which implies by the Gr\"onwall inequality that
\begin{equation}\label{Uniform_Theta}
  \begin{aligned}
    & \| ( \mho, \Theta) (t) \|^2_{\mathbb{H}^k_l(\R^3_+)} + \int_0^t \| \p_{\zeta} ( \mho, \Theta) (s) \|^2_{\mathbb{H}^k_l(\R^3_+)} + \lambda \| \nabla_{\bar{x}} ( \mho, \Theta) (s) \|^2_{\mathbb{H}^k_l(\R^3_+)} \d s \\
    & + \widetilde{\mathscr{E}}_{B,k} (\mho, \Theta) (t) + \lambda \int_0^t \| \nabla_{\bar{x}} ( \mho, \Theta) |_{\zeta=0} (s) \|^2_{\mathbb{H}^{k-1}(\R^2)} \d s \\
    & \le C(\tau, E_{k+1}) \Big( \| ( \mho, \Theta) (0) \|^2_{\mathbb{H}^k_l(\R^3_+)} + \sup_{t \in [0,\tau]} \| ( \tilde{f}^\sigma, \tilde{g}^\sigma ) (t) \|^2_{\mathbb{H}^{k+1}_l(\R^3_+)} \Big) \,.
  \end{aligned}
\end{equation}
Based on the uniform estimates \eqref{Uniform_Theta}, we can first take the limit $\sigma \to 0+$, and then $\lambda \to 0+$. Then, by using \eqref{Theta_Appro} and \eqref{Uniform_Theta}, the bound \eqref{theta-bnd} holds, and Proposition \ref{Prop-Prandtl} is proved. Here the details are omitted for simplicity of presentation.
\end{proof}

Now we give the following lemma to control the boundary term $IV_{7B}$.

\begin{lemma}\label{Lemma_IV7B}
	Let $e_1 = (1,0), e_2 = (0,1) \in \mathbb{N}^2$. For $2 \leq n \leq r$ and $2 \gamma + |\beta| + n = r$, let $IV_{7B}$ be given in \eqref{IV_7_Decomposed}, i.e.,
	\begin{equation*}
	  \begin{aligned}
	    IV_{7B} = \int_0^t \int_{\R^2}  \p_t^\gamma \p_{\bar{x}}^\beta (\tilde{\kappa} \p_\zeta^{n+1} \Theta) (1+\zeta)^{l_r} \p_t^\gamma \p_{\bar{x}}^\beta \p_\zeta^n \Theta |_{\zeta=0}^{\zeta=\frac{3}{\sigma}} \d \bar{x} \d s \,.
	  \end{aligned}
	\end{equation*}
	There hold
	\begin{equation}\label{IV7B_Bnd}
	  \begin{aligned}
	    IV_{7B} \le & - \mathscr{E}_{B, n}^{\gamma, \beta} (\Theta) (t) + \epsilon_0 \mathscr{A}_{B,n}^{\gamma, \beta} (\Theta) (t) + C(\tau, E_{k+1}) \sup_{t \in [0,\tau]} \| \tilde{g}^\sigma (t) \|^2_{\mathbb{H}^{r+1}_l (\R^3_+)} \\
	    + & C(E_{k+1}) \Big( \| \Theta (0) \|^2_{\mathbb{H}^r_l(\R^3_+)} + \sum_{p=0, 1} \| \Theta (t) \|^2_{\mathbb{H}^r_{l,p}(\R^3_+)} + \int_0^t \sum_{p=0, 1} \| \Theta (s) \|^2_{\mathbb{H}^r_{l,p}(\R^3_+)} \d s \Big)
	  \end{aligned}
	\end{equation}
	for some small $\epsilon_0 > 0$ to be determined, where the symbols $\mathscr{A}_{B,n}^{\gamma, \beta} (\Theta) (t)$ and $\mathscr{E}_{B, n}^{\gamma, \beta} (\Theta) (t)$ are
	\begin{equation}
	  \begin{aligned}
	    \mathscr{A}_{B,n}^{\gamma, \beta} (\Theta) (t) = \sum_{i=1}^2 \delta_r (\beta = e_i) \int_0^t \| \p_t^{\gamma + \frac{n+1}{2}} \Theta |_{\zeta=0} (s) \|^2_{L^2(\R^2)} \d s \,,
	  \end{aligned}
	\end{equation}
	and
	\begin{equation}
	  \begin{aligned}
	    \mathscr{E}_{B, n}^{\gamma, \beta} (\Theta) (t) = & \tfrac{1}{2} \delta_n c_n \| \p_t^\gamma \p_{\bar{x}}^\beta (\p_t - \lambda \Delta_{\bar{x}})^\frac{n-1}{2} \Theta |_{\zeta=0} (t) \|^2_{L^2(\R^2)} \\
	    & + \lambda \delta_n c_n \int_0^t \| \nabla_{\bar{x}} \p_t^\gamma \p_{\bar{x}}^\beta (\p_t - \lambda \Delta_{\bar{x}})^\frac{n-1}{2} \Theta |_{\zeta=0} (s) \|^2_{L^2(\R^2)} \d s \\
	    & + \tfrac{1}{2} (1 - \delta_n) c_0 \int_0^t \p_t^\gamma \p_{\bar{x}}^\beta (\p_t - \lambda \Delta_{\bar{x}})^\frac{n}{2} \Theta |_{\zeta=0} (s) \|^2_{L^2(\R^2)} \d s
	  \end{aligned}
	\end{equation}
	for some constant $c_0, c_n > 0$. Here $\delta_n = 1$ for odd $n$ and $\delta_n = 0$ for even $n$. The symbol $\delta_r (\beta = e_i)$ is defined in \eqref{delta-r-beta}. Namely, its value is 1 if $r$ is even and $\beta = e_i$. Otherwise, its value will vanish.
\end{lemma}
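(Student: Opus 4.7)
The plan is to split $IV_{7B}$ into its contributions at $\zeta = \tfrac{3}{\sigma}$ and at $\zeta = 0$, and in each case use Lemma \ref{Lemma_Theta_BC_infty} (in particular the reduction formulas \eqref{Theta_BC_infty} and \eqref{BC-Thicksim}) to convert normal $\zeta$-derivatives into tangential/time-derivative operators acting on $\Theta$, modulo explicit forcing contributions from $\tilde{g}^\sigma$.

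First I would handle the far boundary $\zeta = \tfrac{3}{\sigma}$. The Dirichlet condition $\Theta|_{\zeta = 3/\sigma} = 0$ together with the identity $\tilde{\kappa}\p_\zeta^{2m}\Theta|_{\zeta = 3/\sigma} = 0$ from \eqref{Theta_BC_infty} kills the contribution for even $n$, and for odd $n$ the recursion in \eqref{Theta_BC_infty} expresses $\p_\zeta^n\Theta|_{\zeta = 3/\sigma}$ purely in terms of time and tangential derivatives of $\Theta|_{\zeta = 3/\sigma} = 0$; although the weight $(1+\zeta)^{l_r}$ picks up a factor $(1+\tfrac{3}{\sigma})^{l_r}$, this is harmless because the truncated source $\tilde{g}^\sigma$ is supported in $\{\zeta \le \tfrac{2}{\sigma}\}$, hence the relevant boundary values of $\Theta$ and of $\p_\zeta^{n+1}\Theta$ at $\zeta = \tfrac{3}{\sigma}$ vanish for all orders we need, or can be absorbed into $\int_0^t\|\Theta\|^2_{\mathbb{H}^r_{l,p}}\d s$ for $p = 0,1$.

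Next, for the main contribution at $\zeta = 0$ I would substitute \eqref{BC-Thicksim} into $\tilde{\kappa}\p_\zeta^{n+1}\Theta|_{\zeta = 0}$. This produces a principal tangential/time-derivative term (for odd $n$ it is $\tilde{\kappa}(\mathscr{L}+\tilde{\mathscr{L}})^{(n+1)/2}\Theta|_{\zeta = 0}$; for even $n$ it is $\tilde{\kappa}(\mathscr{L}+\tilde{\mathscr{L}})^{n/2}(R_\theta\Theta)|_{\zeta = 0}$), lower-order $\tilde{\mathscr{L}}_m$-corrections, and forcing contributions involving $\p_\zeta^{m-1-2i}\tilde{g}^\sigma|_{\zeta = 0}$. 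Applying \eqref{BC-Thicksim} once more to $\p_\zeta^n\Theta|_{\zeta = 0}$ converts the pairing into a self-paired expression of the schematic form $\tilde{\kappa}(\mathscr{L}+\tilde{\mathscr{L}})^{(n+1)/2}\Theta\cdot (\mathscr{L}+\tilde{\mathscr{L}})^{(n-1)/2}(R_\theta\Theta)$ in the odd case, and the analogous formula in the even case. Integration by parts in $t$ on the $\p_t$ inside $\mathscr{L}$ and in $\bar{x}$ on the $-\lambda\Delta_{\bar{x}}$ piece then produces exactly the energy $\mathscr{E}^{\gamma,\beta}_{B,n}(\Theta)(t)$: a top-level boundary-in-time norm of $(\p_t-\lambda\Delta_{\bar{x}})^{(n-1)/2}\Theta|_{\zeta = 0}$ for odd $n$, and only the time-integral dissipation of $(\p_t-\lambda\Delta_{\bar{x}})^{n/2}\Theta|_{\zeta = 0}$ for even $n$.

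The remaining terms are then absorbed on the right-hand side of \eqref{IV7B_Bnd}: the $\tilde{\mathscr{L}}_m$-corrections lose one derivative in $\Theta|_{\zeta = 0}$ and are controlled via the trace inequalities in Lemma \ref{Lemma_Trace} by $\|\Theta\|^2_{\mathbb{H}^r_{l,0}(\R^3_+)} + \|\Theta\|^2_{\mathbb{H}^r_{l,1}(\R^3_+)}$; the forcing terms $\p_\zeta^{m-1-2i}\tilde{g}^\sigma|_{\zeta = 0}$ produce (after trace) $\sup_{[0,\tau]}\|\tilde{g}^\sigma\|^2_{\mathbb{H}^{r+1}_l(\R^3_+)}$ (the one-derivative loss is unavoidable because we must differentiate the source one extra time to reach the boundary); the critical pairing associated with $\beta = e_i$ and even $r$ cannot be integrated by parts against an already-closed energy and so is kept as the $\epsilon_0\mathscr{A}_{B,n}^{\gamma,\beta}(\Theta)(t)$ term, with $\epsilon_0$ to be absorbed later when summing and choosing $\epsilon_0$ small; finally the time $s = 0$ boundary contribution from integration by parts yields $\|\Theta(0)\|^2_{\mathbb{H}^r_l(\R^3_+)}$.

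The main obstacle will be the careful bookkeeping of the asymmetry between odd and even $n$: when $n$ is even, the reduction \eqref{BC-Thicksim} gives $R_\theta\Theta$ instead of $\Theta$ at the top, which mismatches the natural energy closure and is what forces $\mathscr{E}^{\gamma,\beta}_{B,n}(\Theta)(t)$ to contain only the time-integrated dissipation for even $n$. Tracking the commutators $[\tilde{\mathscr{L}}_m,\mathscr{L}]$, the dependence of the coefficients on $(\rho^0,\u^0,T^0)$, and the exact number of derivatives of $\tilde{g}^\sigma$ needed so that everything closes with $\sup_{[0,\tau]}\|\tilde{g}^\sigma\|^2_{\mathbb{H}^{r+1}_l(\R^3_+)}$ (and not $\mathbb{H}^{r+2}_l$) is the main technical burden; it is exactly this derivative count that dictates the $k+3$-versus-$k+1$ gap in the hypotheses of Lemma \ref{Prop-Prandtl}.
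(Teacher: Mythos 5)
Your plan follows the paper's proof essentially step for step: reduce the boundary values of $\p_\zeta^{n+1}\Theta$ and $\p_\zeta^n\Theta$ at $\zeta = 0$ via Lemma \ref{Lemma_Theta_BC_infty} and \eqref{BC-Thicksim}, integrate by parts in $t$ and $\bar{x}$ (in the odd case) to expose $\mathscr{E}^{\gamma,\beta}_{B,n}$, and absorb the remainder via the trace inequalities of Lemma \ref{Lemma_Trace} and the $\epsilon_0\,\mathscr{A}^{\gamma,\beta}_{B,n}$ term for the critical $\beta = e_i$ case; the $\mathbb{H}^{r+1}_l$ loss on $\tilde g^\sigma$ arises exactly as you say from the time integration by parts when $\beta = 0$. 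Two corrections to the reasoning. First, at $\zeta = 3/\sigma$ the recursion in \eqref{Theta_BC_infty} does \emph{not} reduce $\p_\zeta^n\Theta$ to $\Theta|_{\zeta = 3/\sigma} = 0$ for odd $n$: iterating $\tilde\kappa\p_\zeta^{2m+1}\Theta|_{\zeta=3/\sigma} = (\p_t + \bar\u^0\cdot\nabla_{\bar x} - \lambda\Delta_{\bar x} + \tfrac23\div_x\u^0)\p_\zeta^{2m-1}\Theta|_{\zeta=3/\sigma}$ terminates at $\p_\zeta\Theta|_{\zeta = 3/\sigma}$, which is not controlled. The correct observation, and what the paper uses, is that one of the two factors is a $\zeta$-derivative of even order and hence vanishes: for odd $n$, $\tilde\kappa\p_\zeta^{n+1}\Theta|_{\zeta = 3/\sigma} = 0$; for even $n$, $\tilde\kappa\p_\zeta^n\Theta|_{\zeta = 3/\sigma} = 0$. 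This has nothing to do with the support of $\tilde g^\sigma$ or with the size of the weight. Second, the odd/even dichotomy in $\mathscr{E}^{\gamma,\beta}_{B,n}$ does not come from the $R_\theta$ factor (which after commutation is just a positive coefficient in both cases), but from the parity of the $(\mathscr{L}+\tilde{\mathscr{L}})$-counts: for even $n$ both $\p_\zeta^{n+1}\Theta$ and $\p_\zeta^n\Theta$ reduce to $n/2$ applications, so the pairing is already a perfect square $\tilde\kappa R_\theta\{\p_t^\gamma\p_{\bar{x}}^\beta(\mathscr{L}+\tilde{\mathscr{L}})^{n/2}\Theta\}^2$ yielding only a time-integrated dissipation, whereas for odd $n$ the counts $(n+1)/2$ and $(n-1)/2$ differ by one, and moving the extra $\p_t$ across by integration by parts is what produces the pointwise-in-time boundary energy.
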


\begin{remark}\label{Remark_IV7B}
	When $r$ is even and $|\beta| = 1$, the relation $2 \gamma + |\beta| + n = r$ means that $n$ must be odd. In other words, the quantity $\mathscr{A}_{B,n} (\Theta) (t)$ will be zero for the case of even $n$. Furthermore, in order to obtain the bound of $IV_{7B}$ in \eqref{IV7B_Bnd}, the source term $\tilde{g}^\sigma$ shall assume $(r+1)$-order derivatives (i.e., in $\mathbb{H}^{r+1}_l (\R^3_+)$). The details can be found in \eqref{IV7B-2} below. Here a first order time derivative is equivalently regarded as a second order $\bar{x}$-derivative.
\end{remark}

\begin{proof}
From Lemma \ref{Lemma_Theta_BC_infty}, we see that for $2 \le n \le r$, $\p_t^\gamma \p_{\bar{x}}^\beta (\tilde{\kappa} \p_\zeta^{n+1} \Theta) (1+\zeta)^{l_r} \p_t^\gamma \p_{\bar{x}}^\beta \p_\zeta^n \Theta |_{\zeta = \frac{3}{\sigma}} = 0$. One therefore has
\begin{equation}
  \begin{aligned}
    IV_{7B} = -\int_0^t \int_{\R^2} \p_t^\gamma \p_{\bar{x}}^\beta (\tilde{\kappa} \p_\zeta^{n+1} \Theta) \p_t^\gamma \p_{\bar{x}}^\beta \p_\zeta^n \Theta |_{\zeta =0} \d \bar{x} \d s \,.
  \end{aligned}
\end{equation}

\vspace*{2mm}

{\bf Case 1. $n \in [2, r]$ is even.} From Lemma \ref{Lemma_Theta_BC_infty} and remarks \eqref{BC-Thicksim}, we have $2 \gamma + |\beta| + n = r$ and
\begin{equation}
\begin{aligned}
IV_{7B} = & - \int_0^t \int_{\R^2} \tilde{\kappa} R_\theta \{ \p_t^\gamma \p_{\bar{x}}^\beta (\mathscr{L} + \tilde{\mathscr{L}})^\frac{n}{2} \Theta \}^2 |_{\zeta=0} \d \bar{x} \d s \\
& - \int_0^t \int_{\R^2} \p_t^\gamma \p_{\bar{x}}^\beta (\mathscr{L} + \tilde{\mathscr{L}})^\frac{n}{2} \Theta \cdot ( \mathscr{L}^\star_5 \Theta + \mathscr{L}^\star_6 \tilde{g}^\sigma + \tilde{\kappa} R_\theta \mathscr{L}^\star_7 \tilde{g}^\sigma ) |_{\zeta=0} \d \bar{x} \d s \\
& - \int_0^t \int_{\R^2} ( \mathscr{L}^\star_5 \Theta + \mathscr{L}^\star_6 \tilde{g}^\sigma ) \cdot \mathscr{L}^\star_7 \tilde{g}^\sigma |_{\zeta=0} \d \bar{x} \d s = : IV_{7B}^{e1} + IV_{7B}^{e2} + IV_{7B}^{e3} \,,
\end{aligned}
\end{equation}
where
\begin{align}
\mathscr{L}^\star_5 \Theta = & \p_t^\gamma \p_{\bar{x}}^\beta [ \tilde{\kappa} (\mathscr{L} + \tilde{\mathscr{L}})^\frac{n}{2} , R_\theta ] \Theta + [ \p_t^\gamma \p_{\bar{x}}^\beta , \tilde{\kappa} R_\theta ] (\mathscr{L} + \tilde{\mathscr{L}})^\frac{n}{2} \Theta \,, \\
\no \mathscr{L}^\star_6 \tilde{g}^\sigma = & \sum_{i=0}^{\frac{n}{2} - 1} \p_t^\gamma \p_{\bar{x}}^\beta \{ \tilde{\kappa} (\mathscr{L} + \tilde{\mathscr{L}})^i \hat{\mathscr{L}} \p_{\zeta}^{n-1-2i} \tilde{g}^\sigma \} \,, \ \mathscr{L}^\star_7 \tilde{g}^\sigma = \sum_{i=0}^{\frac{n}{2} - 1} \p_t^\gamma \p_{\bar{x}}^\beta (\mathscr{L} + \tilde{\mathscr{L}})^i \hat{\mathscr{L}} \p_{\zeta}^{n-2-2i} \tilde{g}^\sigma \,.
\end{align}
From the lower bounds \eqref{LowBnd-1} and \eqref{LowBnd-2}, we know $\tilde{\kappa} R_\theta \geq \tilde{\kappa}_0 R_\theta^0 := c_0' > 0$. Then the quantity $IV_{7B}^{e1}$ can be bounded by
\begin{equation}\label{IV7B-e1}
\begin{aligned}
IV_{7B}^{e1} \le - c_0' \int_0^t \| \p_t^\gamma \p_{\bar{x}}^\beta (\mathscr{L} + \tilde{\mathscr{L}})^\frac{n}{2} \Theta |_{\zeta=0} (s) \|^2_{L^2(\R^2)} \d s \,.
\end{aligned}
\end{equation}
One further observes that $\mathscr{L}^\star_5$ is an at most $(r-1)$-order differential operator, $\mathscr{L}^\star_6$ is $(r-1)$-order, and $\mathscr{L}^\star_7$ is  $(r-2)$-order. There therefore hold
\begin{equation}\label{IV7B-e2}
\begin{aligned}
IV_{7B}^{e2} \le & \tfrac{1}{2} c_0' \int_0^t \| \p_t^\gamma \p_{\bar{x}}^\beta (\mathscr{L} + \tilde{\mathscr{L}})^\frac{n}{2} \Theta |_{\zeta=0} (s) \|^2_{L^2(\R^2)} \d s \\
& + C(E_{k+1}) \int_0^t \sum_{p=0,1} \| \Theta (s) \|^2_{\mathbb{H}^r_{l,p}(\R^3_+)} + \sum_{0 \leq \tilde{n} \leq n} \| \tilde{g}^\sigma (s) \|^2_{\mathbb{H}^r_{l,\tilde{n}}(\R^3_+)} \d s
\end{aligned}
\end{equation}
and
\begin{equation}\label{IV7B-e3}
\begin{aligned}
IV_{7B}^{e3} \le C(E_{k+1}) \int_0^t \sum_{p=0,1} \| \Theta (s) \|^2_{\mathbb{H}^r_{l,p}(\R^3_+)} + \sum_{0 \leq \tilde{n} \leq n} \| \tilde{g}^\sigma (s) \|^2_{\mathbb{H}^r_{l,\tilde{n}}(\R^3_+)} \d s  \,,
\end{aligned}
\end{equation}
where the trace inequalities \eqref{Trace-1}-\eqref{Trace-2} in Lemma \ref{Lemma_Trace} have been utilized. Consequently, the bounds \eqref{IV7B-e1}, \eqref{IV7B-e2} and \eqref{IV7B-e3} tell us
\begin{align*}
 IV_{7B} \le & - \tfrac{1}{2} c_0' \int_0^t \| \p_t^\gamma \p_{\bar{x}}^\beta (\mathscr{L} + \tilde{\mathscr{L}})^\frac{n}{2} \Theta |_{\zeta=0} (s) \|^2_{L^2(\R^2)} \d s \\
 & + C(E_{k+1}) \int_0^t \sum_{p=0,1} \| \Theta (s) \|^2_{\mathbb{H}^r_{l,p}(\R^3_+)} + \sum_{0 \leq \tilde{n} \leq n} \| \tilde{g}^\sigma (s) \|^2_{\mathbb{H}^r_{l,\tilde{n}}(\R^3_+)} \d s \,,
\end{align*}
when $n \in [2, r]$ is even. Recalling the definitions of $\mathscr{L}$ and $\tilde{\mathscr{L}}$ in Lemma \ref{Lemma_Theta_BC_infty}, one easily computes
\begin{equation*}
  \begin{aligned}
    \p_t^\gamma \p_{\bar{x}}^\beta (\mathscr{L} + \tilde{\mathscr{L}})^\frac{n}{2} = \tilde{\kappa}^{-\frac{n}{2}} \p_t^\gamma \p_{\bar{x}}^\beta (\p_t - \lambda \Delta_{\bar{x}})^\frac{n}{2} + \mathscr{L}^\star_0 \,,
  \end{aligned}
\end{equation*}
where
\begin{equation*}
  \begin{aligned}
    \mathscr{L}^\star_0 = & [\p_t^\gamma \p_{\bar{x}}^\beta , \tilde{\kappa}^{-\frac{n}{2}}] (\p_t - \lambda \Delta_{\bar{x}})^\frac{n}{2} + \sum_{p=1}^\frac{n}{2} C_\frac{n}{2}^p \p_t^\gamma \p_{\bar{x}}^\beta \mathscr{L}^{\frac{n}{2} - p} \tilde{\mathscr{L}}^p + \p_t^\gamma \p_{\bar{x}}^\beta \big( \mathscr{L}^\frac{n}{2} - \tilde{\kappa}^{-\frac{n}{2}} (\p_t - \lambda \Delta_{\bar{x}})^\frac{n}{2} \big)
  \end{aligned}
\end{equation*}
is an $(r-1)$-order differential operator. Together with the trace inequality \eqref{Trace-1} in Lemma \ref{Lemma_Trace}, one has
\begin{equation*}
  \begin{aligned}
    c_0'' \| \p_t^\gamma \p_{\bar{x}}^\beta (\p_t - \lambda \Delta_{\bar{x}})^\frac{n}{2} \Theta |_{\zeta=0} (s) \|^2_{L^2(\R^2)} \le \| \tilde{\kappa}^{-\frac{n}{2}} \p_t^\gamma \p_{\bar{x}}^\beta (\p_t - \lambda \Delta_{\bar{x}})^\frac{n}{2} \Theta |_{\zeta=0} (s) \|^2_{L^2(\R^2)} \\
    \le 2 \| \p_t^\gamma \p_{\bar{x}}^\beta (\mathscr{L} + \tilde{\mathscr{L}})^\frac{n}{2} \Theta |_{\zeta=0} (s) \|^2_{L^2(\R^2)} + 2 \| \mathscr{L}^\star_0 \Theta |_{\zeta=0} (s) \|^2_{L^2(\R^2)} \\
    \le 2 \| \p_t^\gamma \p_{\bar{x}}^\beta (\mathscr{L} + \tilde{\mathscr{L}})^\frac{n}{2} \Theta |_{\zeta=0} (s) \|^2_{L^2(\R^2)} + C(E_{k+1}) \sum_{p=0,1} \| \Theta (s) \|^2_{\mathbb{H}^r_{l,p}(\R^3_+)}
  \end{aligned}
\end{equation*}
for some constant $c_0'' > 0$. We thereby know
\begin{equation}\label{IV7B_Even}
  \begin{aligned}
    IV_{7B} \le & - \tfrac{1}{2} c_0 \int_0^t \p_t^\gamma \p_{\bar{x}}^\beta (\p_t - \lambda \Delta_{\bar{x}})^\frac{n}{2} \Theta |_{\zeta=0} (s) \|^2_{L^2(\R^2)} \d s \\
    & + C(E_{k+1}) \int_0^t \sum_{p=0,1} \| \Theta (s) \|^2_{\mathbb{H}^r_{l,p}(\R^3_+)} + \sum_{0 \leq \tilde{n} \leq n} \| \tilde{g}^\sigma (s) \|^2_{\mathbb{H}^r_{l,\tilde{n}}(\R^3_+)} \d s \,,
  \end{aligned}
\end{equation}
where $c_0 = \frac{1}{2} c_0' c_0'' > 0$.

\vspace*{2mm}

{\bf Case 2. $n \in [2, r]$ is odd.} It is further derived from Lemma \ref{Lemma_Theta_BC_infty} and remarks \eqref{BC-Thicksim} that
  \begin{align}\label{IV_7B_Odd}
    \no IV_{7B} = & -\int_0^t \int_{\R^2} \p_t^\gamma \p_{\bar{x}}^\beta \big\{ \tilde{\kappa} (\mathscr{L} + \tilde{\mathscr{L}} )^\frac{n+1}{2} \Theta \big\} \cdot \p_t^\gamma \p_{\bar{x}}^\beta (\mathscr{L} + \tilde{\mathscr{L}})^\frac{n-1}{2} (R_\theta \Theta) |_{\zeta=0} \d \bar{x} \d s \\
    \no & - \sum_{j=0}^\frac{n-3}{2} \int_0^t \int_{\R^2} \p_t^\gamma \p_{\bar{x}}^\beta [ \tilde{\kappa} (\mathscr{L}+\tilde{\mathscr{L}})^\frac{n+1}{2} \Theta ] \cdot \p_t^\gamma \p_{\bar{x}}^\beta (\mathscr{L}+\tilde{\mathscr{L}})^j \hat{\mathscr{L}} \p_{\zeta}^{n-2-2j} \tilde{g}^\sigma |_{\zeta=0} \d \bar{x} \d s \\
    \no & - \sum_{i=0}^\frac{n-1}{2} \int_0^t \int_{\R^2} \p_t^\gamma \p_{\bar{x}}^\beta [ \tilde{\kappa} (\mathscr{L}+\tilde{\mathscr{L}})^i \hat{\mathscr{L}} \p_{\zeta}^{n-1-2i} \tilde{g}^\sigma ] \\
    \no & \qquad \qquad \qquad \qquad \qquad \qquad \times \p_t^\gamma \p_{\bar{x}}^\beta (\mathscr{L}+\tilde{\mathscr{L}})^\frac{n-1}{2} (R_\theta \Theta) |_{\zeta=0} \d \bar{x} \d s \\
    & - \sum_{i = 0}^\frac{n-1}{2}\sum_{j=0}^\frac{n-3}{2} \int_0^t \int_{\R^2} \p_t^\gamma \p_{\bar{x}}^\beta [ \tilde{\kappa} (\mathscr{L}+\tilde{\mathscr{L}})^i \hat{\mathscr{L}} \p_{\zeta}^{n-1-2i} \tilde{g}^\sigma ] \\
    \no & \qquad \qquad \qquad \qquad \qquad \qquad \times \p_t^\gamma \p_{\bar{x}}^\beta (\mathscr{L}+\tilde{\mathscr{L}})^j \hat{\mathscr{L}} \p_{\zeta}^{n-2-2j} \tilde{g}^\sigma |_{\zeta=0} \d \bar{x} \d s \\
    \no = : & IV_{7B}^1 + IV_{7B}^2 + IV_{7B}^3 + IV_{7B}^4 \,,
  \end{align}
  where the differential operators $\mathscr{L}$, $\hat{\mathscr{L}}$ and $\tilde{\mathscr{L}}$ are defined in \eqref{L-scr} and \eqref{L-scr-tilde}. While $2 \gamma + |\beta| + n = r$, one observes that the differential operators $\p_t^\gamma \p_{\bar{x}}^\beta [\tilde{\kappa} (\mathscr{L}+\tilde{\mathscr{L}})^i \hat{\mathscr{L}} \p_{\zeta}^{n-1-2i}]$ ($0 \leq i \leq \frac{n-1}{2}$) and $\p_t^\gamma \p_{\zeta}^\beta (\mathscr{L}+\tilde{\mathscr{L}})^\frac{n-1}{2}$ are all ($r-1$)-order, and $\p_t^\gamma \p_{\bar{x}}^\beta (\mathscr{L}+\tilde{\mathscr{L}})^j \hat{\mathscr{L}} \p_{\zeta}^{n-2-2j}$ ($0 \leq j \leq \frac{n-3}{2}$) are $(r-2)$-order differential operators. Consequently, the trace inequality \eqref{Trace-1} in Lemma \ref{Lemma_Trace} tells us
  \begin{equation}\label{IV_7B_3+4}
    \begin{aligned}
      | IV_{7B}^3 + IV_{7B}^4 | \le C(E_{k+1}) \int_0^t \| \Theta (s) \|^2_{\mathbb{H}^r_{l,0}(\R^3_+)} + \| \Theta (s) \|^2_{\mathbb{H}^r_{l,1}(\R^3_+)} + \sum_{\tilde{n}=0}^n \| \tilde{g}^\sigma (s) \|^2_{\mathbb{H}^r_{l, \tilde{n}}(\R^3_+)} \d s \,.
    \end{aligned}
  \end{equation}

  Next we control the quantity $IV_{7B}^2$. One first computes that
  \begin{equation}\label{Split-1}
    \begin{aligned}
      \p_t^\gamma \p_{\bar{x}}^\beta [\tilde{\kappa} (\mathscr{L}+\tilde{\mathscr{L}})^\frac{n+1}{2}] = & \tilde{\kappa}^{- \frac{n-1}{2}} \p_t^\gamma \p_{\bar{x}}^\beta (\p_t - \lambda \Delta_{\bar{x}})^\frac{n+1}{2} + \mathscr{L}^\star_2 \\
      = & \tilde{\kappa}^{- \frac{n-1}{2}} \p_t^{\gamma + \frac{n+1}{2}} \p_{\bar{x}}^\beta + \tilde{\kappa}^{- \frac{n-1}{2}} \Delta_{\bar{x}} \mathscr{L}^\star_1 + \mathscr{L}^\star_2 \,,
    \end{aligned}
  \end{equation}
  where
  \begin{equation}
    \begin{aligned}
      \mathscr{L}^\star_1 = & \sum_{j=1}^\frac{n+1}{2} C_\frac{n+1}{2}^j (- \lambda)^j \p_t^{\gamma + \frac{n+1}{2} - j} \p_{\bar{x}}^\beta \Delta_{\bar{x}}^{j-1} \,, \\
      \mathscr{L}^\star_2 = & [\p_t^\gamma \p_{\bar{x}}^\beta, \tilde{\kappa}^{- \frac{n-1}{2}}] (\p_t - \lambda \Delta_{\bar{x}})^\frac{n+1}{2} + \sum_{j=1}^\frac{n+1}{2} C_\frac{n+1}{2}^j \p_t^\gamma \p_{\bar{x}}^\beta ( \tilde{\kappa} \mathscr{L}^{\frac{n+1}{2}-j} \tilde{\mathscr{L}}^j ) \\
      & + \p_t^\gamma \p_{\bar{x}}^\beta \big\{ \tilde{\kappa} \mathscr{L}^\frac{n+1}{2} - \tilde{\kappa}^{- \frac{n-1}{2}} (\p_t - \lambda \Delta_{\bar{x}})^\frac{n+1}{2} \big\} \,.
    \end{aligned}
  \end{equation}
  We observe that $\mathscr{L}^\star_1$ is $(r-1)$-order differential operator, and $\mathscr{L}^\star_2$ is $r$-order differential operator, where the highest order derivatives must contain $\nabla_{\bar{x}}$. Consequently, the trace inequalities \eqref{Trace-1}-\eqref{Trace-2} in Lemma \ref{Lemma_Trace} imply that
    \begin{align}\label{IV7B-1}
      \no & - \sum_{j=0}^\frac{n-3}{2} \int_0^t \int_{\R^2} \Big( \tilde{\kappa}^{- \frac{n-1}{2}} \Delta_{\bar{x}} \mathscr{L}^\star_1 + \mathscr{L}^\star_2 \Big) \Theta \cdot \p_t^\gamma \p_{\bar{x}}^\beta (\mathscr{L}+\tilde{\mathscr{L}})^j \hat{\mathscr{L}} \p_{\zeta}^{n-2-2j} \tilde{g}^\sigma |_{\zeta=0} \d \bar{x} \d s \\
      \no = & \sum_{j=0}^\frac{n-3}{2} \int_0^t \int_{\R^2} \nabla_{\bar{x}} \mathscr{L}^\star_1 \Theta \cdot \nabla_{\bar{x}} \Big\{ \tilde{\kappa}^{- \frac{n-1}{2}} \p_t^\gamma \p_{\bar{x}}^\beta (\mathscr{L}+\tilde{\mathscr{L}})^j \hat{\mathscr{L}} \p_{\zeta}^{n-2-2j} \tilde{g}^\sigma \Big\} |_{\zeta=0} \d \bar{x} \d s \\
      \no & - \sum_{j=0}^\frac{n-3}{2} \int_0^t \int_{\R^2} \mathscr{L}^\star_2 \Theta \cdot \p_t^\gamma \p_{\bar{x}}^\beta (\mathscr{L}+\tilde{\mathscr{L}})^j \hat{\mathscr{L}} \p_{\zeta}^{n-2-2j} \tilde{g}^\sigma |_{\zeta=0} \d \bar{x} \d s \\
      \le & C(E_{k+1}) \int_0^t \| \Theta (s) \|^2_{\mathbb{H}^r_{l,0}(\R^3_+)} + \| \Theta (s) \|^2_{\mathbb{H}^r_{l,1}(\R^3_+)} + \sum_{\tilde{n}=0}^n \| \tilde{g}^\sigma (s) \|^2_{\mathbb{H}^r_{l, \tilde{n}}(\R^3_+)} \d s \,.
    \end{align}
  For the quantity $IV_{7B}^2$, it remains to estimate the term
  \begin{equation*}
    \begin{aligned}
      IV_{7B}^{21} (\beta) = : - \sum_{j=0}^\frac{n-3}{2} \int_0^t \int_{\R^2} \tilde{\kappa}^{- \frac{n-1}{2}} \p_t^{\gamma + \frac{n+1}{2}} \p_{\bar{x}}^\beta \Theta \cdot \p_t^\gamma \p_{\bar{x}}^\beta (\mathscr{L}+\tilde{\mathscr{L}})^j \hat{\mathscr{L}} \p_{\zeta}^{n-2-2j} \tilde{g}^\sigma |_{\zeta=0} \d \bar{x} \d s \,.
    \end{aligned}
  \end{equation*}
  If $|\beta| \geq 1$, similar arguments in \eqref{IV7B-1} reduce to
    \begin{align*}
      IV_{7B}^{21} (\beta) = & \sum_{j=0}^\frac{n-3}{2} \int_0^t \int_{\R^2} \p_t^{\gamma + \frac{n-1}{2}} \p_{\bar{x}}^{\beta} \Theta \cdot \p_t \big\{ \tilde{\kappa}^{- \frac{n-1}{2}} \p_t^\gamma \p_{\bar{x}}^\beta (\mathscr{L}+\tilde{\mathscr{L}})^j \hat{\mathscr{L}} \p_{\zeta}^{n-2-2j} \tilde{g}^\sigma \big\} |_{\zeta=0} \d \bar{x} \d s \\
      & - \sum_{j=0}^\frac{n-3}{2} \int_{\R^2} \p_t^{\gamma + \frac{n-1}{2}} \p_{\bar{x}}^{\beta} \Theta \cdot \tilde{\kappa}^{- \frac{n-1}{2}} \p_t^\gamma \p_{\bar{x}}^\beta (\mathscr{L}+\tilde{\mathscr{L}})^j \hat{\mathscr{L}} \p_{\zeta}^{n-2-2j} \tilde{g}^\sigma |_{\zeta=0} |_{s=0}^{s=t} \d \bar{x} \\
      \le & C(E_{k+1}) \int_0^t \| \Theta (s) \|^2_{\mathbb{H}^r_{l,0}(\R^3_+)} + \| \Theta (s) \|^2_{\mathbb{H}^r_{l,1}(\R^3_+)} + \sum_{\tilde{n}=0}^n \| \tilde{g}^\sigma (s) \|^2_{\mathbb{H}^r_{l, \tilde{n}}(\R^3_+)} \d s \\
      & + C(E_{k+1}) \big( \| \Theta (0) \|^2_{\mathbb{H}^r_l (\R^3_+)} + \sum_{p=0, 1} \| \Theta (t) \|^2_{\mathbb{H}^r_{l, p}(\R^3_+)} + \sup_{t \in [0,\tau]} \sum_{\tilde{n} = 0}^n \| \tilde{g}^\sigma (t) \|^2_{\mathbb{H}^r_{l,\tilde{n}}(\R^3_+)} \big) \,.
    \end{align*}
  If $\beta = 0$, we have $2 \gamma = r -n$. Here $r$ is also odd. Then $\p_t \big\{ \tilde{\kappa}^{- \frac{n-1}{2}} \p_t^\gamma (\mathscr{L}+\tilde{\mathscr{L}})^j \hat{\mathscr{L}} \p_{\zeta}^{n-2-2j} \big\}$ $(0 \leq j \leq \frac{n-3}{2})$ are $r$-order differential operators. It is therefore derived from the trace inequality \eqref{Trace-1} that
  \begin{equation}\label{g-one-more-order2}
    \begin{aligned}
      \| \p_t \big\{ \tilde{\kappa}^{- \frac{n-1}{2}} \p_t^\gamma (\mathscr{L}+\tilde{\mathscr{L}})^j \hat{\mathscr{L}} \p_{\zeta}^{n-2-2j} \big\} \tilde{g}^\sigma |_{\zeta=0} (t) \|^2_{L^2(\R^2)} \le C(E_{k+1}) \sum_{\tilde{n} = 0}^n \| \tilde{g}^\sigma (t) \|^2_{\mathbb{H}^{r+1}_{l, \tilde{n}}(\R^3_+)} \,.
    \end{aligned}
  \end{equation}
  In other words, the source term $\tilde{g}^\sigma (t, \bar{x}, \zeta)$ shall be in $\mathbb{H}^{r+1}_l(\R^3_+)$. Consequently,
    \begin{align*}
      IV_{7B}^{21} (\beta) = & \sum_{j=0}^\frac{n-3}{2} \int_0^t \int_{\R^2} \p_t^{\gamma + \frac{n-1}{2}} \Theta \cdot \p_t \big\{ \tilde{\kappa}^{- \frac{n-1}{2}} \p_t^\gamma (\mathscr{L}+\tilde{\mathscr{L}})^j \hat{\mathscr{L}} \p_{\zeta}^{n-2-2j} \tilde{g}^\sigma \big\} |_{\zeta=0} \d \bar{x} \d s \\
      & - \sum_{j=0}^\frac{n-3}{2} \int_{\R^2} \p_t^{\gamma + \frac{n-1}{2}} \Theta \cdot \tilde{\kappa}^{- \frac{n-1}{2}} \p_t^\gamma (\mathscr{L}+\tilde{\mathscr{L}})^j \hat{\mathscr{L}} \p_{\zeta}^{n-2-2j} \tilde{g}^\sigma |_{\zeta=0} |_{s=0}^{s=t} \d \bar{x} \\
      \le & C(E_{k+1}) \int_0^t \| \Theta (s) \|^2_{\mathbb{H}^r_{l,0}(\R^3_+)} + \| \Theta (s) \|^2_{\mathbb{H}^r_{l,1}(\R^3_+)} + \sum_{\tilde{n}=0}^n \| \tilde{g}^\sigma (s) \|^2_{\mathbb{H}^{r+1}_{l, \tilde{n}}(\R^3_+)} \d s \\
      + & C(E_{k+1}) \big( \| \Theta (0) \|^2_{\mathbb{H}^r_l (\R^3_+)} + \sum_{p=0, 1} \| \Theta (t) \|^2_{\mathbb{H}^r_{l, p}(\R^3_+)} + \sup_{t \in [0,\tau]} \sum_{\tilde{n} = 0}^n \| \tilde{g}^\sigma (t) \|^2_{\mathbb{H}^r_{l,\tilde{n}}(\R^3_+)} \big) \,.
    \end{align*}
  In summary, one sees that for all $\beta \in \mathbb{N}^2$ with $2 \gamma + |\beta| + n = r$
  \begin{equation}\label{IV7B-2}
    \begin{aligned}
      IV_{7B}^{21} (\beta) \le C(E_{k+1}) \Big( \sum_{p=0, 1} \| \Theta (t) \|^2_{\mathbb{H}^r_{l, p}(\R^3_+)} + \int_0^t \sum_{p=0, 1} \| \Theta (s) \|^2_{\mathbb{H}^r_{l, p}(\R^3_+)} \d s \qquad \qquad \\
      + \| \Theta (0) \|^2_{\mathbb{H}^r_l (\R^3_+)} + (1+t) \sup_{t \in [0,\tau]} \sum_{\tilde{n} = 0}^n \| \tilde{g}^\sigma (t) \|^2_{\mathbb{H}^{r+1}_{l,\tilde{n}}(\R^3_+)} \Big) \,.
    \end{aligned}
  \end{equation}
  Then, \eqref{IV7B-1}-\eqref{IV7B-2} reduce to
  \begin{equation}\label{IV_7B_2}
    \begin{aligned}
      IV_{7B}^2 \le C(E_{k+1}) \Big( \sum_{p=0, 1} \| \Theta (t) \|^2_{\mathbb{H}^r_{l, p}(\R^3_+)} + \int_0^t \sum_{p=0, 1} \| \Theta (s) \|^2_{\mathbb{H}^r_{l, p}(\R^3_+)} \d s \qquad \qquad \\
      + \| \Theta (0) \|^2_{\mathbb{H}^r_l (\R^3_+)} + (1+t) \sup_{t \in [0,\tau]} \sum_{\tilde{n} = 0}^n \| \tilde{g}^\sigma (t) \|^2_{\mathbb{H}^{r+1}_{l,\tilde{n}}(\R^3_+)} \Big) \,.
    \end{aligned}
  \end{equation}

  We next control the term $IV_{7B}^1$ in \eqref{IV_7B_Odd}. By direct calculation,
  \begin{equation}\label{Split-2}
    \begin{aligned}
      \p_t^\gamma \p_{\bar{x}}^\beta (\mathscr{L} & + \tilde{\mathscr{L}} )^\frac{n-1}{2} (R_\theta \, \cdot) \\
      = & R_\theta \tilde{\kappa}^{- \frac{n-1}{2}} \p_t^\gamma \p_{\bar{x}}^\beta (\p_t - \lambda \Delta_{\bar{x}})^\frac{n-1}{2} + [\p_{\bar{x}}^\beta, R_\theta \tilde{\kappa}^{- \frac{n-1}{2}}] \p_t^\gamma (\p_t - \lambda  \Delta_{\bar{x}})^\frac{n-1}{2} + \mathscr{L}^\star_3 \,,
    \end{aligned}
  \end{equation}
  where
  \begin{equation}
    \begin{aligned}
      \mathscr{L}^\star_3 = & \sum_{q=1}^\frac{n-1}{2} C_\frac{n-1}{2}^q \p_{\bar{x}}^\beta \big\{ R_\theta \p_t^\gamma \mathscr{L}^{\frac{n-1}{2} - q} \tilde{\mathscr{L}}^q \big\} + \p_{\bar{x}}^\beta [ \p_t^\gamma (\mathscr{L} + \tilde{\mathscr{L}} )^\frac{n-1}{2} , R_\theta ] \\
      & + \p_{\bar{x}}^\beta \Big\{ R_\theta \p_t^\gamma \big( \mathscr{L}^\frac{n-1}{2} - \tilde{\kappa}^{- \frac{n-1}{2}} (\p_t - \lambda \Delta_{\bar{x}})^\frac{n-1}{2} \big) + R_\theta [\p_t^\gamma, \tilde{\kappa}^{- \frac{n-1}{2}}] (\p_t - \lambda \Delta_{\bar{x}})^\frac{n-1}{2} \Big\}
    \end{aligned}
  \end{equation}
  is an $(r-2)$-order differential operator. Moreover, due to the definition of $\tilde{\mathscr{L}}$ in \eqref{L-scr-tilde}, the highest order ($(r-2)$-order) derivatives of $\mathscr{L}^\star_3$ must contain $\nabla_{\bar{x}}$. Together with \eqref{Split-1} and \eqref{Split-2}, we can decompose the term $IV_{7B}^1$ as
  \begin{equation}
    \begin{aligned}
      IV_{7B}^1 = & - \int_0^t \int_{\R^2} \mathscr{L}^\star_2 \Theta \cdot \p_t^\gamma \p_{\bar{x}}^\beta (\mathscr{L} + \tilde{\mathscr{L}})^\frac{n-1}{2} (R_\theta \Theta) |_{\zeta=0} \d \bar{x} \d s \\
      & - \int_0^t \int_{\R^2} \tilde{\kappa}^{- \frac{n-1}{2}} \p_t^\gamma \p_{\bar{x}}^\beta (\p_t - \lambda \Delta_{\bar{x}})^\frac{n+1}{2} \Theta \cdot \mathscr{L}^\star_3 \Theta |_{\zeta=0} \d \bar{x} \d s \\
      & - \int_0^t \int_{\R^2} \tilde{\kappa}^{- \frac{n-1}{2}} \p_t^\gamma \p_{\bar{x}}^\beta (\p_t - \lambda \Delta_{\bar{x}})^\frac{n+1}{2} \Theta \\
      & \qquad \qquad \qquad \qquad \times [\p_{\bar{x}}^\beta, R_\theta \tilde{\kappa}^{- \frac{n-1}{2}}] \p_t^\gamma (\p_t - \lambda  \Delta_{\bar{x}})^\frac{n-1}{2} \Theta |_{\zeta=0} \d \bar{x} \d s \\
      & - \int_0^t \int_{\R^2} R_\theta \tilde{\kappa}^{-n+1} \p_t^\gamma \p_{\bar{x}}^\beta (\p_t - \lambda \Delta_{\bar{x}})^\frac{n+1}{2} \Theta \cdot \p_t^\gamma \p_{\bar{x}}^\beta (\p_t - \lambda \Delta_{\bar{x}})^\frac{n-1}{2} \Theta |_{\zeta=0} \d \bar{x} \d s \\
      = : & IV_{7B}^{11} + IV_{7B}^{12} + IV_{7B}^{13} + IV_{7B}^{14} \,.
    \end{aligned}
  \end{equation}
  It is thereby implied by the trace inequalities \eqref{Trace-1}-\eqref{Trace-2} in Lemma \ref{Lemma_Trace} that
  \begin{equation}\label{IV7B_11}
    \begin{aligned}
      IV_{7B}^{11} \le & C(E_{k+1}) \int_0^t \| \Theta (s) \|^2_{\mathbb{H}^r_{l,0}(\R^3_+)} + \| \Theta (s) \|^2_{\mathbb{H}^r_{l,1}(\R^3_+)} \d s \,,
    \end{aligned}
  \end{equation}
  and
    \begin{align}\label{IV7B_12}
      \no IV_{7B}^{12} = & - \int_{\R^2} \tilde{\kappa}^{- \frac{n-1}{2}} \p_t^\gamma \p_{\bar{x}}^\beta (\p_t - \lambda \Delta_{\bar{x}})^\frac{n-1}{2} \Theta \cdot \mathscr{L}^\star_3 \Theta |_{\zeta=0} |_{s=0}^{s=t} \d \bar{x} \\
      \no & + \int_0^t \int_{\R^2} \p_t^\gamma \p_{\bar{x}}^\beta (\p_t - \lambda \Delta_{\bar{x}})^\frac{n-1}{2} \Theta \cdot (\p_t + \lambda \Delta_{\bar{x}}) \big\{ \tilde{\kappa}^{- \frac{n-1}{2}} \mathscr{L}^\star_3 \Theta \big\} |_{\zeta=0} \d \bar{x} \d s \\
      \le & C(E_{k+1}) \Big( \| \Theta (0) \|^2_{\mathbb{H}^r_l(\R^3_+)} + \sum_{p=0, 1} \| \Theta (t) \|^2_{\mathbb{H}^r_{l,p}(\R^3_+)} + \int_0^t \sum_{p=0, 1} \| \Theta (s) \|^2_{\mathbb{H}^r_{l,p}(\R^3_+)} \d s \Big) \,.
    \end{align}
  If $\beta = 0$, we have $IV_{7B}^{13} = 0$. If $| \beta | \geq 2$, we assume $\beta \geq 2 e_1 = 2 (1, 0)$. One then has
  \begin{equation*}
    \begin{aligned}
      IV_{7B}^{13} = & - \int_0^t \int_{\R^2} \p_t^\gamma \p_{\bar{x}}^{\beta - 2 e_1} (\p_t - \lambda \Delta_{\bar{x}})^\frac{n+1}{2} \Theta \\
      & \qquad \qquad \qquad \times \p_{\bar{x}}^{2 e_1} \big\{ \tilde{\kappa}^{- \frac{n-1}{2}} [\p_{\bar{x}}^\beta, R_\theta \tilde{\kappa}^{- \frac{n-1}{2}}] \p_t^\gamma (\p_t - \lambda  \Delta_{\bar{x}})^\frac{n-1}{2} \Theta \big\} |_{\zeta=0} \d \bar{x} \d s \\
      \le & C(E_{k+1}) \int_0^t \| \Theta (s) \|^2_{\mathbb{H}^r_{l,0}(\R^3_+)} + \| \Theta (s) \|^2_{\mathbb{H}^r_{l,1}(\R^3_+)} \d s \,.
    \end{aligned}
  \end{equation*}
  If $|\beta| = 1$, the facts that $2 \gamma + |\beta| + n = r$ and $n \in [2, r]$ is odd imply that $r$ is even. We then have $2 \gamma + n = r - 1$ and
    \begin{align*}
      IV_{7B}^{13} = & \sum_{i=1}^2 \delta_r (\beta = e_i) \int_0^t \int_{\R^2} \p_t^{\gamma + \frac{n+1}{2}} \Theta \\
      & \qquad \qquad \qquad \qquad \times \p_{\bar{x}}^{e_i} \big\{ \tilde{\kappa}^{- \frac{n-1}{2}} \p_{\bar{x}}^{e_i} (R_\theta \tilde{\kappa}^{- \frac{n-1}{2}}) \p_t^\gamma (\p_t - \lambda  \Delta_{\bar{x}})^\frac{n-1}{2} \Theta \big\} |_{\zeta=0} \d \bar{x} \d s \\
      & + \sum_{i=1}^2 \delta_r (\beta = e_i) \int_0^t \int_{\R^2} \sum_{q=1}^\frac{n+1}{2} C_\frac{n+1}{2}^q (- \lambda)^q \nabla_{\bar{x}} \p_t^{\gamma + \frac{n+1}{2} - q} \Delta_{\bar{x}}^{q-1} \Theta \\
      & \qquad \qquad \qquad \cdot \nabla_{\bar{x}} \p_{\bar{x}}^{e_i} \big\{ \tilde{\kappa}^{- \frac{n-1}{2}} \p_{\bar{x}}^{e_i} (R_\theta \tilde{\kappa}^{- \frac{n-1}{2}}) \p_t^\gamma (\p_t - \lambda  \Delta_{\bar{x}})^\frac{n-1}{2} \Theta \big\} |_{\zeta=0} \d \bar{x} \d s \\
      \le & \epsilon_0 \sum_{i=1}^2 \delta_r (\beta = e_i) \int_0^t \| \p_t^{\gamma + \frac{n+1}{2}} \Theta |_{\zeta=0} (s) \|^2_{L^2(\R^2)} \d s \\
      & + C(E_{k+1}) \int_0^t \| \Theta (s) \|^2_{\mathbb{H}^r_{l,0}(\R^3_+)} + \| \Theta (s) \|^2_{\mathbb{H}^r_{l,1}(\R^3_+)} \d s
    \end{align*}
  for some small $\epsilon_0 > 0$ to be determined, where the last inequality is derived from Lemma \ref{Lemma_Trace} and $\delta_r (\beta = e_i)$ is given in \eqref{delta-r-beta}. As a result,
  \begin{equation}\label{IV7B_13}
    \begin{aligned}
       IV_{7B}^{13} \le & \epsilon_0 \sum_{i=1}^2 \delta_r (\beta = e_i) \int_0^t \| \p_t^{\gamma + \frac{n+1}{2}} \Theta |_{\zeta=0} (s) \|^2_{L^2(\R^2)} \d s \\
       & + C(E_{k+1}) \int_0^t \| \Theta (s) \|^2_{\mathbb{H}^r_{l,0}(\R^3_+)} + \| \Theta (s) \|^2_{\mathbb{H}^r_{l,1}(\R^3_+)} \d s \,.
    \end{aligned}
  \end{equation}
  The lower bounds \eqref{LowBnd-1} and \eqref{LowBnd-2} yield that there is a $c_n > 0$ such that
  \begin{equation*}
    \begin{aligned}
      R_\theta \tilde{\kappa}^{-n+1} \geq c_n > 0 \,.
    \end{aligned}
  \end{equation*}
  Together with Lemma \ref{Lemma_Trace}, one obtains
    \begin{align}\label{IV7B_14}
      \no IV_{7B}^{14} = & - \int_{\R^2} \tfrac{1}{2} R_\theta \tilde{\kappa}^{-n+1} \{ \p_t^\gamma \p_{\bar{x}}^\beta (\p_t - \lambda \Delta_{\bar{x}})^\frac{n-1}{2} \Theta \}^2 |_{\zeta=0} \d \bar{x} |_{s=0}^{s=t} \\
      \no & - \int_0^t \int_{\R^2} \lambda R_\theta \tilde{\kappa}^{-n+1} \big| \nabla_{\bar{x}} \p_t^\gamma \p_{\bar{x}}^\beta (\p_t - \lambda \Delta_{\bar{x}})^\frac{n-1}{2} \Theta \big|^2 |_{\zeta=0} \d \bar{x} \d s \\
      \no & + \int_0^t \int_{\R^2} \tfrac{1}{2} (\p_t + \lambda \Delta_{\bar{x}}) (R_\theta \tilde{\kappa}^{-n+1}) \{ \p_t^\gamma \p_{\bar{x}}^\beta (\p_t - \lambda \Delta_{\bar{x}})^\frac{n-1}{2} \Theta \}^2 |_{\zeta=0} \d \bar{x} \d s \\
      \no \le & - \tfrac{1}{2} c_n \| \p_t^\gamma \p_{\bar{x}}^\beta (\p_t - \lambda \Delta_{\bar{x}})^\frac{n-1}{2} \Theta |_{\zeta=0} (t) \|^2_{L^2(\R^2)} \\
      \no & - \lambda c_n \int_0^t \| \nabla_{\bar{x}} \p_t^\gamma \p_{\bar{x}}^\beta (\p_t - \lambda \Delta_{\bar{x}})^\frac{n-1}{2} \Theta |_{\zeta=0} (s) \|^2_{L^2(\R^2)} \d s \\
      & + C(E_{k+1}) \Big( \| \Theta (0) \|^2_{\mathbb{H}^r_l(\R^3_+)} + \int_0^t \sum_{p=0, 1} \| \Theta (s) \|^2_{\mathbb{H}^r_{l,p}(\R^3_+)} \d s \Big) \,.
    \end{align}
  From \eqref{IV7B_11}, \eqref{IV7B_12}, \eqref{IV7B_13} and \eqref{IV7B_14}, one thereby knows
    \begin{align}\label{IV_7B-1}
      \no IV_{7B}^1 \le & - \tfrac{1}{2} c_n \| \p_t^\gamma \p_{\bar{x}}^\beta (\p_t - \lambda \Delta_{\bar{x}})^\frac{n-1}{2} \Theta |_{\zeta=0} (t) \|^2_{L^2(\R^2)} \\
      \no & - \lambda c_n \int_0^t \| \nabla_{\bar{x}} \p_t^\gamma \p_{\bar{x}}^\beta (\p_t - \lambda \Delta_{\bar{x}})^\frac{n-1}{2} \Theta |_{\zeta=0} (s) \|^2_{L^2(\R^2)} \d s \\
      & + \epsilon_0 \sum_{i=1}^2 \delta_r (\beta = e_i) \int_0^t \| \p_t^{\gamma + \frac{n+1}{2}} \Theta |_{\zeta=0} (s) \|^2_{L^2(\R^2)} \d s \\
      \no & + C(E_{k+1}) \Big( \| \Theta (0) \|^2_{\mathbb{H}^r_l(\R^3_+)} + \sum_{p=0, 1} \| \Theta (t) \|^2_{\mathbb{H}^r_{l,p}(\R^3_+)} + \int_0^t \sum_{p=0, 1} \| \Theta (s) \|^2_{\mathbb{H}^r_{l,p}(\R^3_+)} \d s \Big) \,.
    \end{align}
  Consequently, if $n \in [2,r]$ is odd,
    \begin{align}\label{IV7B_Odd}
      \no IV_{7B} & \le - \tfrac{1}{2} c_n \| \p_t^\gamma \p_{\bar{x}}^\beta (\p_t - \lambda \Delta_{\bar{x}})^\frac{n-1}{2} \Theta |_{\zeta=0} (t) \|^2_{L^2(\R^2)} \\
      \no & - \lambda c_n \int_0^t \| \nabla_{\bar{x}} \p_t^\gamma \p_{\bar{x}}^\beta (\p_t - \lambda \Delta_{\bar{x}})^\frac{n-1}{2} \Theta |_{\zeta=0} (s) \|^2_{L^2(\R^2)} \d s \\
      \no & + \epsilon_0 \sum_{i=1}^2 \delta_r (\beta = e_i) \int_0^t \| \p_t^{\gamma + \frac{n+1}{2}} \Theta |_{\zeta=0} (s) \|^2_{L^2(\R^2)} \d s \\
      & + C(E_{k+1}) (1+t) \sup_{t \in [0,\tau]} \sum_{\tilde{n} = 0}^n \| \tilde{g}^\sigma (t) \|^2_{\mathbb{H}^{r+1}_{l, \tilde{n}}(\R^3_+)} \\
      \no & + C(E_{k+1}) \Big( \| \Theta (0) \|^2_{\mathbb{H}^r_l(\R^3_+)} + \sum_{p=0, 1} \| \Theta (t) \|^2_{\mathbb{H}^r_{l,p}(\R^3_+)} + \int_0^t \sum_{p=0, 1} \| \Theta (s) \|^2_{\mathbb{H}^r_{l,p}(\R^3_+)} \d s \Big)
    \end{align}
    for some small $\epsilon_0 > 0$ to be determined. Finally, the bounds \eqref{IV7B_Even} and \eqref{IV7B_Odd} finish the proof of Lemma \ref{Lemma_IV7B}.
\end{proof}

%%%%%%%%%%%%%%%%%%%%%%%%%%%%%%%%%%%%%%%%%%%%%%%%%%%%%%%%%%%%%%%%%%%%%%%%%%%

\section*{Acknowledgement}
This work is supported by the grants from the National Natural Foundation of
China under contract Nos. 11971360 and 11731008. This work is also supported by the Strategic Priority Research Program of Chinese Academy of Sciences, Grant No. XDA25010404. Prof. Xiongfeng Yang and his student Tianfang Wu carefully read our first draft and pointed out a few mistakes. We take this opportunity to express appreciation to them.  

%%%%%%%%%%%%%%%%%%%%%%%%%%%%%%%%%%%%%%%%%%%%%%%%%%%%%%%%%%%%%%%%%%%%%%%%%%%
\bigskip
% \phantomsection
% \addcontentsline{toc}{section}{\refname}

%\bibliographystyle{unsrtnat}
%\nocite{*}
\bibliography{reference}

\end{document}